\documentclass[]{amsart}

\usepackage{allneeded}
\usepackage{enumerate}
\usepackage{xypic}

\begin{document}

\title{On the Slice-Ribbon Conjecture for Montesinos knots}%

\author{Ana G. Lecuona}%

\email{lecuona@mail.dm.unipi.it}%

\address{Dipartimento di Matematica, Universit\`a di Pisa, 56127 Pisa, Italy}%

\subjclass[2000]{57M25}%
\keywords{Montesinos links, slice-ribbon conjecture, rational homology balls}%

\date{October 23, 2009}%
\begin{abstract}
We establish the slice-ribbon conjecture for a large family of Montesinos' knots by means of Donaldson's theorem  on the intersection forms of definite $4$-manifolds.
\end{abstract}
\maketitle

\begin{quote}
\begin{footnotesize}
\tableofcontents
\end{footnotesize}
\end{quote}

\section{Introduction}\label{s:int}

A celebrated open  question in knot theory is the slice-ribbon conjecture due to Fox \cite{b:Fo} in 1962. Recall that a knot $K\subseteq S^3$ is called (smoothly) \textbf{slice} if it bounds a smoothly and properly embedded disc $D^2\hookrightarrow D^4$ in the 4-ball $D^4$, and \textbf{ribbon} if it bounds an immersed disc $D^2\looparrowright S^3$ with only ribbon singularities (see \cite[p.\ 70]{b:Ka} for the definition). It is not difficult to prove that every ribbon knot is slice: simply push the ribbon singularities into the fourth dimension to obtain an embedded $D^2\hookrightarrow D^4$. The converse, whether every slice knot is ribbon, is the well known Fox's slice-ribbon conjecture.

In recent works \cite{b:Li,b:GJ} the conjecture has been proved for all $2$-bridge knots and for infinitely many $3$-stranded pretzel knots. Both families of knots are particular cases of the significantly broader family of Montesinos links (first constructed in \cite{b:Mo2}). In \cite{b:Wi} it is shown that no member of a five parameter family of Montesinos knots is slice. In the present work, in order to introduce Montesinos links we follow \cite[p.\ 18, Theorem~$(c)$]{b:Si}, where these links are defined as the boundary of $2$-di\-men\-sional plumbings with star-shaped plumbing graphs. A \textbf{star-shaped} graph  is a connected tree with a distinguished vertex $v_0$ (called the central vertex) such that the degree of any vertex other than the central one is $\leq 2$. In a weighted star-shaped graph $\Gamma$ each vertex represents a twisted band, that is a $D^1$-bundle over $S^1$, embedded in $S^3$, with the number of half-twists given by the weight of the vertex. Bands are plumbed together precisely when the corresponding vertices are adjacent (see Figure~\ref{f:bandas} for an example). The result of this plumbing construction is a surface $B_\Gamma\subset S^3$ whose boundary $\Ml_{\Gamma}$ is, by definition, a Montesinos link. Since $S^3=\partial D^4$, we can push the interior of $B_\Gamma$ into the interior of $D^4$. It follows that the double covering of $D^{4}$ branched over $B_{\Gamma}$ is the $4$-dimensional plumbing $M_\Gamma$, obtained by plumbing $D^{2}$-bundles over $S^{2}$ according to the graph $\Gamma$, which defined the Montesinos link. The boundary $Y_{\Gamma}:=\partial M_{\Gamma}$ is a Seifert space (see \cite{b:Ra} for a proof) with as many singular fibers as legs of the graph $\Gamma$. A \textbf{leg} of a star-shaped graph is any connected component of the graph obtained by removing the central vertex. The involution $u$ that defines the covering $M_{\Gamma}\rightarrow D^{4}\simeq M_{\Gamma}/u$, turns the Seifert space $Y_{\Gamma}$ into the double covering of $S^{3}$ branched along the Montesinos link $\Ml_{\Gamma}$. Restricting our attention to three-legged star-shaped graphs $\Gamma$, it is well known \cite[Theorem~12.29]{b:BZ} that the Seifert space $Y_{\Gamma}$ is the double covering of $S^{3}$ branched along exactly one Montesinos link (up to link isotopy).

In the present work, following in part the approach of \cite{b:Li}, we study the family $\wp$ of all three-legged connected plumbing graphs $\Gamma$ such that:
\begin{itemize}
\item $I(\Gamma):=\sum_{i=0}^n (a_i-3)<-1$, where by $-a_0,...,-a_n$ we denote the weights of the vertices of $\Gamma$; and
\item the central vertex has weight less or equal to $-3$ and every non central vertex has weight less or equal to $-2$.
\end{itemize} 
Our choice of the family $\wp$ is motivated by what follows. In \cite{b:Li}, the fact that a linear graph $\Gamma$ has a dual graph $\Gamma'$ such that $Y_\Gamma=-Y_{\Gamma'}$ and $M_\Gamma, M_{\Gamma'}$ are both negative definite, is strongly used. Indeed, this property allows one to assume, without loss of generality in the case of $2$-bridge links, that $I(\cdot)<0$. In our case, a \virg{dual} graph $\Gamma'$ still exists, but it is always the case that one of $M_\Gamma,M_{\Gamma'}$ is indefinite. This forces us to restrict to the case of plumbing graphs $\Gamma$ such that $I(\Gamma)<0$. In the present work, we deal with  the case $I(\Gamma)<-1$. For the case $I(\Gamma)=-1$ so far we have obtained partial results \cite{b:Le}. We hope to return to this case in a future paper. The second condition defining the family $\wp$ is due to technical reasons. More specifically, we will show that, for every $\Gamma\in\wp$ such that the Seifert space $Y_{\Gamma}$ bounds a rational homology ball, we have $I(\Gamma)\in\{-4,-3,-2\}$ and we will study separately the three possible cases. It turns out that, allowing the central vertex of the graph $\Gamma$ to have weight $-2$, the fact that $Y_{\Gamma}$ bounds a rational homology ball does not imply that $I(\Gamma)$ is bounded from below. 

We note that, for every Montesinos knot $\Ml_P$ with $P\in\wp$ we have that $\Ml_P$ is neither a $3$-stranded pretzel knot nor a member of the family studied in \cite{b:Wi}, since these two families of knots have both associated negative graphs with central vertex of weight $-2$. 

Our main result is the following.

\begin{thm}\label{t:int}
Consider $\Gamma\in\wp$. The Seifert space $Y_\Gamma$ is the boundary of a rational homology ball $W$ if and only if there exist a surface $\Sigma$ and a ribbon immersion $\Sigma\looparrowright S^{3}$ such that $\partial\Sigma=\Ml_{\Gamma}$ and $\chi (\Sigma)=1$.
\end{thm}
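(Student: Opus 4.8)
The statement is an equivalence, and I would treat the two implications separately, with the forward direction carrying essentially all of the difficulty. For the reverse implication, suppose $\Ml_{\Gamma}$ bounds a ribbon immersion of a surface $\Sigma$ with $\chi(\Sigma)=1$. The plan is to push the ribbon singularities into the interior of $D^4$, producing a properly embedded surface $\Sigma'\hookrightarrow D^4$ with $\partial\Sigma'=\Ml_{\Gamma}$ and $\chi(\Sigma')=1$, and then take $W$ to be the double cover of $D^4$ branched over $\Sigma'$. By construction $\partial W=Y_{\Gamma}$, since $Y_{\Gamma}$ is the double cover of $S^3$ branched along $\Ml_{\Gamma}$. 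It then remains to verify that $W$ is a rational homology ball. An Euler characteristic count gives $\chi(W)=2-\chi(\Sigma')=1$, and the ribbon structure furnishes a handle decomposition of $W$ with equally many $1$- and $2$-handles, from which $H_*(W;\mathbb{Q})\cong H_*(\mathrm{pt};\mathbb{Q})$ follows; this is the standard mechanism by which ribbon surfaces of $\chi=1$ produce rational homology balls.

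For the forward implication, assume $Y_{\Gamma}=\partial W$ with $W$ a rational homology ball. Since $\Gamma\in\wp$ has central weight $\leq -3$ and non-central weights $\leq -2$, the intersection form $Q_{\Gamma}$ of the plumbing $M_{\Gamma}$ is negative definite. I would glue $M_{\Gamma}$ to $-W$ along their common boundary to form a closed, smooth, negative definite $4$-manifold $X=M_{\Gamma}\cup_{Y_{\Gamma}}(-W)$. Because $W$ is a rational homology ball, a Mayer--Vietoris argument shows $b_2(X)=b_2(M_{\Gamma})$ and that the inclusion $M_{\Gamma}\hookrightarrow X$ realizes $Q_{\Gamma}$ as a finite-index, hence full-rank, sublattice of the intersection lattice of $X$. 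Donaldson's diagonalization theorem then forces the form of $X$, and therefore $Q_{\Gamma}$, to embed into the standard negative diagonal lattice $(\mathbb{Z}^N,-\mathrm{id})$, where $N$ is the number of vertices of $\Gamma$.

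The heart of the matter is the combinatorial analysis of which star-shaped lattices $Q_{\Gamma}$ admit such an embedding. Encoding each vertex as a vector in $\mathbb{Z}^N$ whose square equals its weight and whose pairwise products record the adjacency dictated by $\Gamma$, I would classify the possible embeddings leg by leg, tracking how each of the three legs attaches to the central vertex. As the introduction anticipates, this analysis should cut the admissible graphs down to those with $I(\Gamma)\in\{-4,-3,-2\}$ and, within each of the three cases, yield an explicit description of the surviving weight vectors as a structured list of families. Finally, for every graph in this list I would exhibit a concrete ribbon move presenting $\Ml_{\Gamma}$ as the boundary of a ribbon surface with $\chi=1$, thereby closing the equivalence.

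The main obstacle is precisely this lattice-embedding classification for three-legged graphs. Unlike the linear case underlying the two-bridge story of \cite{b:Li}, the central vertex couples the three legs, so the embeddings of the individual legs cannot be analyzed in isolation; controlling the interaction at the central vertex and ruling out the a priori unbounded family of weight configurations down to the three bounded values of $I(\Gamma)$ is where the bulk of the technical work lies. A secondary difficulty is the explicit production of ribbon surfaces for each surviving family, which must be carried out by hand and matched against the output of the classification.
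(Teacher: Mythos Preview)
Your proposal is correct and follows essentially the same strategy as the paper: the reverse direction via the branched double cover of $D^4$ over the pushed-in ribbon surface, and the forward direction via Donaldson's theorem applied to $M_\Gamma\cup_{Y_\Gamma}(-W)$, followed by a lattice-embedding classification (carried out in the paper as Theorems~\ref{l:strings1} and~\ref{l:2}) and then an explicit construction of ribbon surfaces. One refinement worth noting: rather than producing the ribbon surfaces purely by hand for each family, the paper uses the lattice-embedding data to locate a specific $(-1)$-framed $2$-handle addition to $M_\Gamma$ which, via Montesinos' theorem, corresponds to a single ribbon move on $\Ml_\Gamma$ reducing it to a split link (an unknot together with a $2$-bridge link already known to be ribbon, or two unknots), so the second step is more systematic than you anticipate.
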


Our analysis gives a complete list, Theorems~\ref{l:strings1} and \ref{l:2}, of the Seifert spaces $Y_\Gamma$ with $\Gamma\in\wp$ which bound rational homology balls, providing a partial anser to a question of Andrew Casson \cite[Problem~4.5]{b:Ki}.

Theorem \ref{t:int} immediately implies, 

\begin{cor}\label{c:int}
The slice-ribbon conjecture holds true for all Montesinos knots $\Ml_{\Gamma}$ with $\Gamma\in\wp$.
\end{cor}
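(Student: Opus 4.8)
\section*{Proof proposal for Corollary~\ref{c:int}}

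The plan is to deduce the corollary directly from Theorem~\ref{t:int} together with two standard facts. Since every ribbon knot is slice, as recalled in the introduction, the only implication that requires proof is that every slice knot among the $\Ml_\Gamma$, $\Gamma\in\wp$, is ribbon. So I would fix $\Gamma\in\wp$ with $\Ml_\Gamma$ a knot, assume $\Ml_\Gamma$ is slice, and aim to produce a ribbon disc for it.

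First I would invoke the classical relationship between sliceness and double branched covers. If $\Ml_\Gamma$ bounds a smoothly and properly embedded disc $D\hookrightarrow D^4$, then the double cover $W$ of $D^4$ branched along $D$ is a rational homology $4$-ball whose boundary is the double cover of $S^3=\partial D^4$ branched along $\Ml_\Gamma$. By the discussion in the introduction, this branched double cover is exactly the Seifert space $Y_\Gamma$. Hence $Y_\Gamma=\partial W$ bounds a rational homology ball.

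Next I would feed this into Theorem~\ref{t:int}. Since $Y_\Gamma$ bounds a rational homology ball, the theorem yields a surface $\Sigma$ and a ribbon immersion $\Sigma\looparrowright S^3$ with $\partial\Sigma=\Ml_\Gamma$ and $\chi(\Sigma)=1$. Taking $\Sigma$ connected, its boundary $\partial\Sigma=\Ml_\Gamma$ is a single knot, so $\Sigma$ has exactly one boundary circle. A connected surface with one boundary component satisfies $\chi(\Sigma)=1-2g$ in the orientable case ($g$ the genus) and $\chi(\Sigma)=1-k$ in the non-orientable case ($k\geq 1$ the number of cross-caps). The value $\chi(\Sigma)=1$ is incompatible with non-orientability, where $k\geq 1$ forces $\chi\leq 0$, and in the orientable case it forces $g=0$; thus $\Sigma$ is a disc. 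The ribbon immersion of this disc, having only ribbon singularities, is precisely a ribbon disc for $\Ml_\Gamma$, so $\Ml_\Gamma$ is ribbon.

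I do not expect any genuine obstacle at the level of the corollary: all the substance is concentrated in Theorem~\ref{t:int}, and the present deduction only combines the branched-cover characterization of sliceness with the elementary Euler-characteristic computation above. The one point deserving a little care is the reduction to a connected surface with connected boundary, ensuring that the hypothesis $\chi(\Sigma)=1$ really pins down a disc rather than a disc together with extraneous closed components.
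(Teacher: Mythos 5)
Your proposal is correct and follows exactly the paper's own argument: pass to the double cover of $D^4$ branched along the slicing disc (the paper cites \cite[Lemma~17.2]{b:Ka} for the fact that this is a rational homology ball with boundary $Y_\Gamma$) and then apply Theorem~\ref{t:int}. Your closing Euler-characteristic discussion pinning down that $\Sigma$ is a disc is a reasonable elaboration of what the paper dismisses as "immediate" (and is in any case guaranteed by the explicit constructions in Lemmas~\ref{l:sigma_cl} and \ref{l:sigma_ncl}, which produce a ribbon disc whenever $\Ml_\Gamma$ is a knot).
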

\begin{proof}
Let $\Gamma\in\wp$ be such that the knot $\Ml_{\Gamma}\subset S^{3}$ is slice. Let $D^{2}\hookrightarrow D^{4}$ be a smooth slicing disc for $\Ml_{\Gamma}$ and $W$ the $2$-fold cover of $D^4$ branched along $D^2$. It is well known \cite[Lemma~17.2]{b:Ka} that $W$ is a rational homology ball and that $\partial W=Y_{\Gamma}$. It follows immediately from Theorem~\ref{t:int} that the knot $\Ml_{\Gamma}$ is ribbon.
\end{proof}

In \cite{b:BS,b:CH,b:Fi,b:FS,b:St} other families of Seifert spaces bounding rational homology balls were constructed. A case by case comparison shows that the intersection between these families and the family studied in this work is essentially empty.

The strategy to approach Theorem~\ref{t:int} can be sketched as follows. All the plumbing graphs $\Gamma\in\wp$ give rise to negative definite $4$-manifolds $M_\Gamma$ with boundary $\partial M_\Gamma=Y_\Gamma$. Therefore, if we assume the existence of a rational homology ball $W$ with $\partial W=Y_\Gamma$, we can build a closed, oriented, negative definite, $4$-manifold $X_\Gamma$ as $M_\Gamma\cup_{Y_\Gamma} (-W)$. Donaldson's celebrated theorem \cite{b:Do} implies that the intersection lattice $(\Z^n,Q_{X_\Gamma})$ is isomorphic to the standard negative diagonal intersection lattice $(\Z^n,-\mathrm{Id})$, where $n:=b_2(X_\Gamma)=b_2(M_\Gamma)$. Therefore, the intersection lattice $(\Z^n,Q_{M_\Gamma})$ must embed in the standard negative definite intersection lattice of equal rank; that is, there must exist a monomorphism $\iota:\Z^n\rightarrow\Z^n$ such that $Q_{M_\Gamma}(\alpha,\beta)=-\mathrm{Id}(\iota(\alpha),\iota(\beta))$ for every $\alpha,\beta\in\Z^n\cong H_2(M_\Gamma;\Z)/\Tors$. The first step in the proof of Theorem~\ref{t:int} consists of a careful analysis of this obstruction, determining which among the intersection lattices $(\Z^n,Q_{M_\Gamma})$, with $\Gamma\in\wp$, admit an embedding into the standard negative diagonal lattice. This analysis leads to a list of candidates $P\in\wp$ (Theorems~\ref{l:strings1} and \ref{l:2}) such that the associated Seifert spaces $Y_P$ may bound rational homology balls. In the first part of this paper we use techniques and results from \cite{b:Li}.

In the second step of the proof, we find explicitly, for each $\Ml_P$ with $P$ in the list of candidates, the ribbon surface claimed in Theorem~\ref{t:int}. In \cite{b:Li,b:GJ}, once they arrive to their respective lists of candidates, the construction of the surface is not directly related to the analysis done in the first part of the proof. Our approach to the construction of the ribbon surfaces is different. In a first attempt we tried to find systematically the bands that describe the ribbon surfaces on the diagrams of our candidates, but we were soon discouraged after realizing how complicated and random they seemed to be in the standard projection of a Montesinos link (see Figure~\ref{f:band} for an example). 
\begin{figure}
\begin{center}
\includegraphics{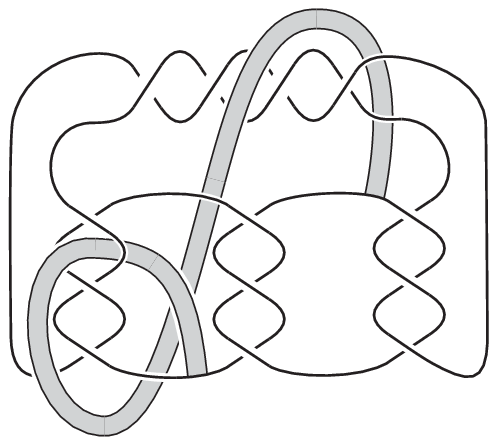}
\hcaption{The Montesinos knot $\Ml(-4;(-3,1)(-3,1)(-3,1))$, with the notation inherited from the classical notation for Seifert spaces, is ribbon. In fact, performing a ribbon move along the gray band we obtain two unlinked unknots.}
\label{f:band}
\end{center}
\end{figure}
To overcome this difficulty, instead of working with the link diagrams corresponding to the candidates $P$, we focus our attention on the corresponding $4$-dimensional plumbings $M_P$. The analysis done in the first step of the proof suggests how to modify $M_P$ with the addition of a $2$-handle, yielding a $4$-manifold $M'$ whose boundary is the double cover of $S^3$ branched over a link. It turns out that this link bounds a surface with Euler characteristic equal to $2$. A theorem due to Montesinos \cite{b:Mo} implies that the added $2$-handle corresponds to a ribbon move on the initial link $\Ml_P$ and this concludes the proof.

The paper is organized as follows: in Section~\ref{s:prel} we give a quick overview of some basic facts on Seifert spaces and Montesinos links and we introduce the necessary definitions in order to state Theorems~\ref{l:strings1} and \ref{l:2}. These give a list of candidates $P\subseteq\wp$ such that the knots $\Ml_P$ may be slice. We postpone the long and technical proof of Theorems~\ref{l:strings1} and \ref{l:2} to Sections~\ref{s:contractions} to \ref{s:ss}. A brief sketch of the proof can be found at the end of Section~\ref{s:prel}. In Section~\ref{c:bande} we construct a ribbon surface with Euler characteristic $1$ for every Montesinos' link stemming from the list of candidates and we prove Theorem~\ref{t:int}.
\\

\begin{quote}
\footnotesize
\textbf{Acknowledgments.} This work is part of my Ph.D.\ thesis at the University of Pisa. I wish to thank my advisor, Paolo Lisca, without whom this work would not have been completed. His expertise, guidance and encouragement have been precious throughout these years. I also thank Jose F. Fernando and Marco Mazzucchelli for their comments on an earlier version of the manuscript,  which led to a clearer exposition in the paper.
\end{quote}

\section{The candidates}\label{s:prel}
In this section we recall some terminology and well-known results concerning Seifert spaces and Montesinos links. Furthermore, we introduce the necessary concepts to state Theorems~\ref{l:strings1} and \ref{l:2}, which will be proved in Sections~\ref{s:contractions} to \ref{s:ss}.

\vspace{1.5mm}
\noindent\textbf{Seifert spaces and Montesinos links. }
Let $\Gamma$ be a \textbf{plumbing graph}, that is, a graph in which every vertex $v_i$ carries an integer weight $a_i$, $i=1,...,n$. Associated to each vertex $v_i$ is the $4$-dimensional disc bundle $X\rightarrow S^2$ with Euler number $a_i$. If the vertex $v_i$ has $d_i$ edges connected to it in the graph $\Gamma$, we choose $d_i$ disjoint discs in the base of $X\rightarrow S^2$ and call the disc bundle over the $j$th disc $B_{ij}=D^2\times D^2$. When two vertices are connected by an edge, we identify $B_{ij}$ with $B_{kl}$ by exchanging the base and fiber coordinates and smoothing the corners. This pasting operation is called \textbf{plumbing} (for a more general treatment we refer the reader to \cite{b:GS}), and the resulting smooth $4$-manifold $M_\Gamma$ is said to be obtained by plumbing according to $\Gamma$. The Kirby diagram of $M_{\Gamma}$ has an unknot for each vertex of the tree, and whenever two vertices are joined by an edge the corresponding unknots will be linked forming a Hopf link. Each framing will be the Euler number of the corresponding $D^{2}$-bundle. 

The group $H_2(M_\Gamma;\Z)$ has a natural basis represented by the zero-sections of the plumbed bundles. We note that all these sections are embedded $2$-spheres, and they can be oriented in such a way that the intersection form of $M_\Gamma$ will be given by the matrix $Q_\Gamma=(q_{ij})_{i,j=1,...,n}$ with the entries
$$q_{ij}
=
\left\{ 
\begin{array}{cl}
a_i & \mbox{if } i=j;\\
1 & \mbox{if } i \mbox{ is connected to } j \mbox{ by an edge;}\\
0 & \mbox{otherwise}.
\end{array}
\right.
$$ 
We will call $(\Z^n,Q_\Gamma)$ the intersection lattice associated to $\Gamma$. 

Notice that, since $M_{\Gamma}$ is a $2$-handlebody,  any matrix representing the intersection form of $M_{\Gamma}$ is also a presentation matrix for $H_{1}(\partial M_{\Gamma};\Z)$ (see e.g. Corollary 5.3.12 in \cite{b:GS}). In particular, $H_{1}(\partial M_{\Gamma};\Z)$ is finite if and only if $\det(Q_{\Gamma})\neq 0$, and in this case
\begin{equation}\label{e:dt}
\begin{split}
|\det(Q_\Gamma)|=|H_1(\partial M_\Gamma;\Z)|.
\end{split}
\end{equation}

The plumbing construction along a star-shaped plumbing graph $\Gamma$ yields a $4$-manifold $M_\Gamma$ whose boundary $Y_\Gamma:=\partial M_\Gamma$ is a \textbf{Seifert manifold} (see \cite{b:Ra} for a proof). Seifert manifolds are oriented, closed $3$-manifolds admitting a fixed point free action of $S^1$ and they are classified by their \virg{Seifert invariants} \cite{b:OR,b:Se}. The unnormalized Seifert invariant of the manifold $Y_\Gamma$ is the collection of numbers, $(b;(\alpha_1,\beta_1),...,(\alpha_r,\beta_r))$, where $b,\alpha_i,\beta_i\in\Z$, $\alpha_i\geq 1$ and $\mathrm{gcd}(\alpha_i,\beta_i)=1$ for all $i\in\{1,...,r\}$. This information can be read off from a star-shaped plumbing graph $\Gamma$ as follows. First of all, the number $r$ is precisely the number of legs of $\Gamma$. The number $b$ is the weight of the central vertex. Finally, if the weights on the $i$-th leg are $\{-a_{1,i},\cdots,-a_{k_i,i}\}$, then the irreducible fraction $\frac{\alpha_i}{\beta_i}$ is recovered from the continued fraction decomposition
$$ \frac{\alpha_i}{\beta_i}=[a_{1,i},...,a_{k,i}]:=a_{1,i}-\frac{1}{\displaystyle a_{2,i} - \frac{ \bigl. 1}{\displaystyle \ddots\ _{\displaystyle {a_{k_{i}-1,i}} -\frac {\bigl. 1}{a_{k_i,i}}}} }\ .$$
Notice that if $a_{j,i}\geq 2$ for every $j,i$, then we necessarily have $0< \beta_i< \alpha_i$. Among the properties of continued fractions we will need Riemenschneider's point rule \cite{b:Ri}, which we now briefly recall. Let $p>q>0$ be coprime integers, and suppose
$$\frac{p}{q}=[a_1,...a_\ell],\ a_i\geq 2,\s \frac{p}{p-q}=[b_1,...,b_k],\ b_j\geq 2.$$
Then, the coefficients $a_1,...,a_\ell$ and $b_1,...,b_k$ are related by a diagram of the form 
\begin{center}
\includegraphics[scale=0.8]{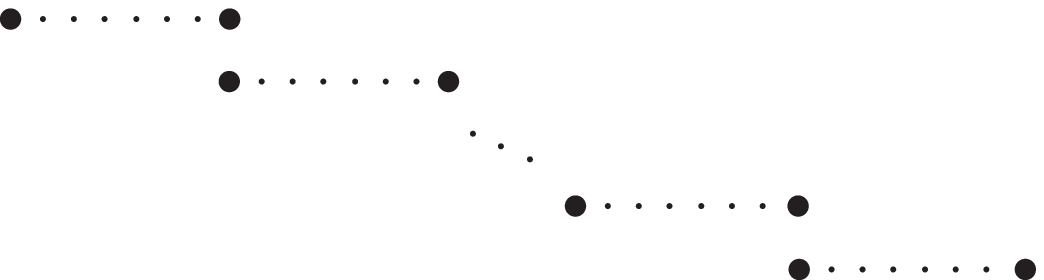}
\end{center}
where the $i$-th row contains $a_i-1$ \virg{points} for $i=1,...,l,$ and the first point of each row is vertically aligned with the last point of the previous row. The point rule says that there are $k$ columns, and the $j$-th column contains $b_j-1$ points for every $j=1,...,k$. For example for $\frac{17}{7}=[3,2,4]$ and $\frac{17}{10}=[2,4,2,2]$ the corresponding diagram is given by 
\begin{center}
\includegraphics{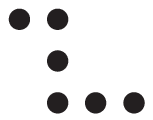}
\end{center}

\begin{rem}\label{r:riem}
Given two strings of integers $(a_1,...,a_n)$ and $(b_1,...,b_m)$, we consider the following operations:
\begin{itemize}
\item[$(1)$]$(a_1,...,a_n),(b_1,...,b_m)\longrightarrow (a_1,...,a_n+1),(b_1,...,b_m,2)$
\item[$(2)$]$(a_1,...,a_n),(b_1,...,b_m)\longrightarrow (a_1,...,a_n,2),(b_1,...,b_m+1)$
\end{itemize}
It is straightforward to check that if we start with $a_1=(2)$, $b_1=(2)$ the strings obtained using the above described operations are related to one another by Riemenschneider's point rule.
\end{rem}

Every Seifert manifold $Y_\Gamma(b;(\alpha_1,\beta_1),...,(\alpha_r,\beta_r))$ satisfies (see e.g.\ Lemma~4.2 in \cite{b:NR})
$$|H_1(Y_\Gamma;\Z)|=\abs{\alpha_1\cdots\alpha_r\cdot\cbra{\sum_{i=1}^r\frac{\beta_i}{\alpha_i}+b}}$$
and therefore, by \eqref{e:dt}, we have
\begin{equation}\label{e:det}
|\det(Q_\Gamma)|=\abs{\alpha_1\cdots\alpha_r\cdot\cbra{\sum_{i=1}^r\frac{\beta_i}{\alpha_i}+b}}.
\end{equation}

\begin{figure}[h!]
\begin{center}
\includegraphics[scale=0.7]{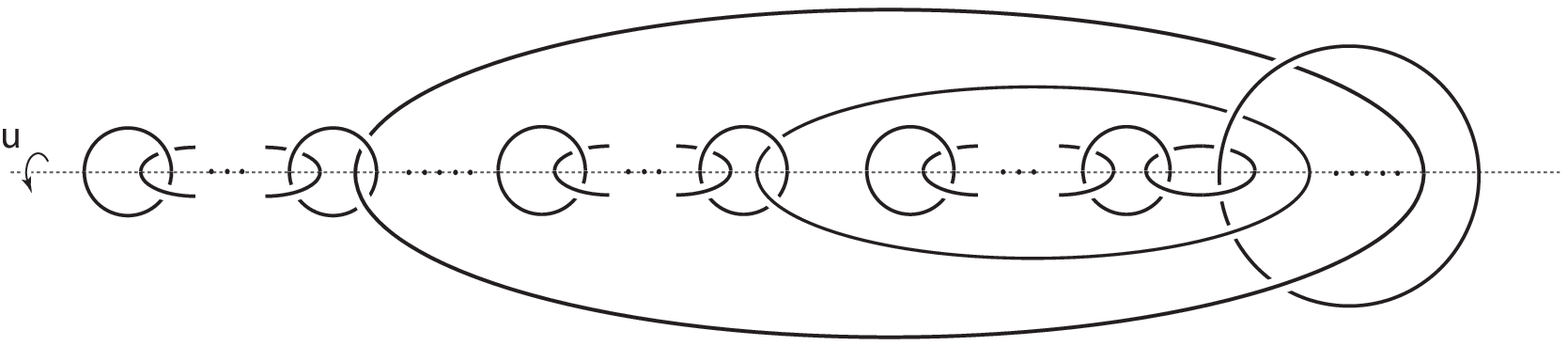}
\hcaption{Kirby diagram of a star-shaped plumbing graph (with arbitrary framings), as a strongly invertible link with respect to the involution $u$.}
\label{f:M_s.inv.}
\end{center}
\end{figure}

The Kirby diagram of a star-shaped plumbing graph $\Gamma$ consists of a link $L$ in $S^3$ which is \textbf{strongly invertible}, i.e.\ there exists a $\pi$-rotation on $S^3$ that induces on each connected component of $L$ an involution with two fixed points, see Figure~\ref{f:M_s.inv.}. Particularizing the statement to $4$-dimensional plumbings, we have the following result due to Montesinos~\cite[Theorem~3]{b:Mo}.

\begin{thm}[Montesinos]\label{t:Mo}
Consider the handle representation $M_\Gamma=H^0\cup nH^2$, where $n$ is the number of vertices in $\Gamma$. If the $n$ $2$-handles are attached along a strongly invertible link in $S^3$, then $M_\Gamma$ is a $2$-fold cyclic covering space of $D^4$ branched over a $2$-manifold. 
\end{thm}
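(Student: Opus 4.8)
The plan is to promote the strong inversion $u$ of the attaching link $L$ (Figure~\ref{f:M_s.inv.}) to a smooth involution of the entire plumbing $M_\Gamma$ whose orbit space is again $D^4$ and whose fixed-point set is a properly embedded surface; the projection $M_\Gamma\to M_\Gamma/u$ is then the required $2$-fold branched covering.

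First I would deal with the single $0$-handle. Identifying $H^0$ with the unit ball $D^4\subset\mathbb{C}^2$, the classification of involutions of $S^3$ with unknotted fixed circle lets me assume that $u|_{S^3}$ is the restriction of $u(z_1,z_2)=(z_1,-z_2)$, so that its fixed axis is $A=\{z_2=0\}\cap S^3$. The same formula extends $u$ over $D^4$; its fixed-point set is the trivially embedded disc $\Delta=\{z_2=0\}$ with $\partial\Delta=A$, and along $\Delta$ the map acts on the normal $z_2$-planes by $-1$, i.e.\ as a rotation by $\pi$. Thus $H^0\to H^0/u=D^4$ is already a $2$-fold cover branched over the unknotted disc $\Delta$.

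Next I would attach the $n$ two-handles equivariantly. Each component $K_i$ of $L$ is $u$-invariant and, by strong invertibility, meets $A$ in exactly two points, so $u|_{K_i}$ is a reflection of the circle. Choosing a $u$-equivariant tubular neighbourhood $\nu(K_i)\cong\partial D^2\times D^2$ that realizes the prescribed framing, $u$ reflects the core and, at the two fixed points, reflects the normal disc; I would extend it over the handle $D^2\times D^2$ by the complex conjugation $(\zeta,w)\mapsto(\overline\zeta,\overline w)$. This is again a rotation by $\pi$ on the plane normal to its fixed-point set $D^1\times D^1$, so it is a branched-cover involution, and the fixed arcs of $u$ on $\nu(K_i)$ are exactly the sub-arcs of $A=\partial\Delta$ through $K_i\cap A$. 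Hence the fixed band glues to $\Delta$, and globally the fixed-point set of $u$ is the disc $\Delta$ with $n$ bands attached, a properly embedded surface $\Sigma$.

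It remains to identify the quotient. The orbit space of the solid torus $\nu(K_i)$ under $u$ is a $3$-ball, so each quotient handle $(D^2\times D^2)/u\cong D^4$ is glued to $H^0/u\cong D^4$ along a $3$-ball in its boundary; since the union of two $4$-balls along a boundary $3$-ball is again a $4$-ball, the orbit space stays $D^4$ after every attachment. Therefore $M_\Gamma/u=D^4$ and $M_\Gamma\to D^4$ is the $2$-fold cyclic cover branched over $\Sigma$, as claimed. I expect the main difficulty to be the equivariant attachment for an \emph{arbitrary} framing: one must check that the normal twisting of $u$ along $K_i$ can be matched, after an equivariant change of neighbourhood, with the constant conjugation model, and it is precisely the presence of the two fixed points of $u|_{K_i}$ that allows the framing band to be chosen symmetric. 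Verifying that these local models glue coherently along the boundary tori, so that $\Sigma$ is embedded and the covering is globally defined, is the delicate point of the argument.
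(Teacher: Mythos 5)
Your argument is correct in outline, but note that the paper does not actually prove this statement: it is quoted verbatim from Montesinos (\cite[Theorem~3]{b:Mo}), and the text only describes the resulting branch surface (the half-diagram with twisted bands glued to a rectangle, Figure~\ref{f:bandas}). What you have written is essentially Montesinos' original argument, and it is the right one. The decomposition into the linear model $(z_1,z_2)\mapsto(z_1,-z_2)$ on $H^0=D^4$ (quotient $D^4$, fixed set a trivial disc $\Delta$), followed by equivariant handle attachment with the involution $(\zeta,w)\mapsto(\overline\zeta,\overline w)$ on each $D^2\times D^2$ (quotient a $4$-ball glued along a $3$-ball, fixed set a band $D^1\times D^1$ glued to $\partial\Delta$ at the two fixed points of $u|_{K_i}$), is exactly the standard proof, and your conclusion that the total fixed set is $\Delta$ with $n$ bands attached matches the paper's description of the branch surface. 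The one point you flag as delicate is indeed the only real content beyond bookkeeping: for an arbitrary framing one must exhibit a $u$-equivariant identification of $\nu(K_i)$ with $\partial D^2\times D^2$ carrying the conjugation involution; this works because the framing twists can be distributed symmetrically about the two fixed points of $u|_{K_i}$ (this is where strong invertibility, as opposed to mere invertibility or periodicity, is used, and it is what produces the half-twists in the bands of the branch surface). Spelling that step out would complete the proof; as written it is a correct and honest sketch of the argument the paper delegates to its reference.
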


The branching set in Theorem~\ref{t:Mo} is constructed as follows. Consider the strongly invertible link which represents the Kirby diagram of $M_\Gamma$, take the half of the Kirby diagram under the symmetry axis, substitute the half-circles with bands with as many half-twists as indicated by the framing of the corresponding circle and glue all these bands to a rectangle as shown in Figure~\ref{f:bandas}. This construction represents the Seifert space $Y_\Gamma=\partial M_\Gamma$ as a double cover of $S^3$ branched over the boundary $\Ml_\Gamma=\partial B_\Gamma$. The link $\Ml_\Gamma$ is by definition a \textbf{Montesinos link} (first defined by Montesinos in \cite{b:Mo2}).

\begin{figure}[h!]
\begin{center}
\frag[n]{s}{$=$}
\frag[n]{f}{$\cong$}
\frag{(a)}{\textbf{(a)}}
\frag{(b)}{\textbf{(b)}}
\frag{1}{$1$}
\frag{2}{$2$}
\frag{p}{$-2$}
\frag{3}{$3$}
\frag{1}{$-1$}
\frag{u}{$u$}
\includegraphics[scale=0.8]{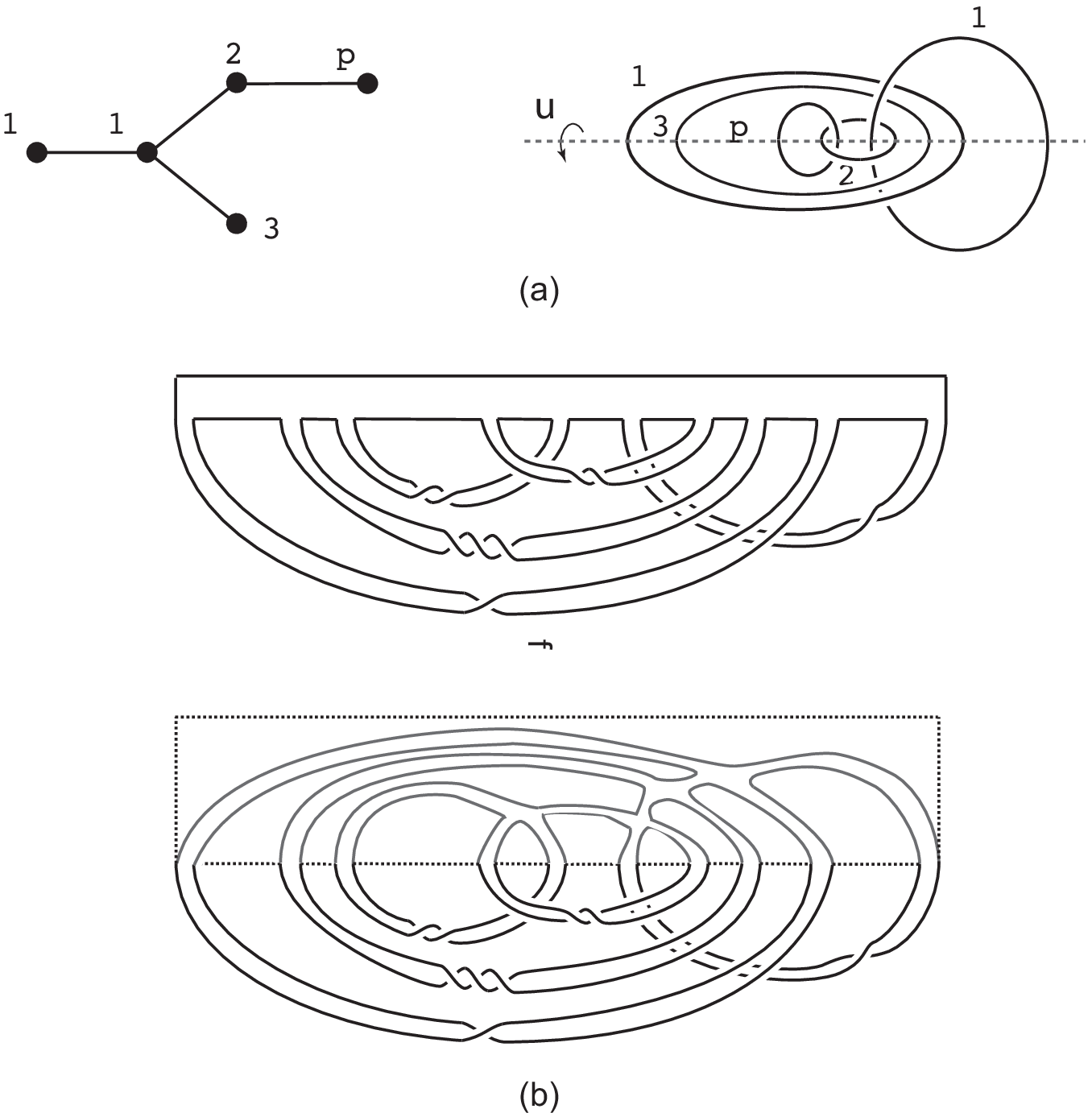}
\hcaption{Part \textbf{(a)} shows a star shaped plumbing graph and its associated Kirby diagram as a strongly invertible link. The bottom picture shows the branch surface in $D^{4}$ which is the image of Fix($u$). This surface is homeomorphic to the result of plumbing bands according to the initial graph: the gray lines retract onto the sides of the rectangle.}
\label{f:bandas}
\end{center}
\end{figure}

\begin{rem}
Note that, the surface $B_\Gamma$ is homeomorphic to the one obtained by plumbing twisted bands according to the plumbing graph $\Gamma$, which is how we introduced Montesinos links in Section~\ref{s:int}.
\end{rem}

The \textbf{$2$-bridge links} are the Montesinos links arising when we consider connected \textbf{linear} plumbings, that is when the plumbing graph $\Gamma$ has no distinguished central vertex because all vertices have valence $\leq 2$. In this case the associated Seifert spaces $Y_\Gamma$ are lens spaces. We will use the classical notation $L(p,q)$ for lens spaces and $K(p,q)$ for the corresponding $2$-bridge links. The numbers $p>q>0$ are coprime integers and in this case  the continued fraction expansion $\frac{p}{q}=[a_1,...,a_\ell]$ gives the string of integers $(a_1,...,a_\ell)$ which are the weights of the vertices of the linear graph with opposite signs.

\vspace{1.5mm}
\noindent\textbf{Definitions and notation concerning plumbing graphs. }
We  consider $\Z^n=\Z\oplus...\oplus\Z$ as an intersection lattice with respect to the product~$\cdot$ given in matrix form by $-\mathrm{Id}$, i.e.
$$ v\cdot w= - \langle v,w \rangle,\s\s \forall v,w\in\Z^n, $$
where $\langle\cdot,\cdot\rangle$ is the standard inner product of $\Z^n$. If we denote by $e_1,...,e_n$ the standard basis of $\Z^n$, we have 
$$e_i\cdot e_j=-\delta_{ij},\s\s \forall i,j=1,...,n. $$

We are interested in three-legged plumbing graphs $P$ whose associated intersection lattice admits an embedding into $\Z^n$, where $n=|P|$ is the number of vertices in the graph. The vertices of $P$, which from now on will be identified with their images in $\Z^n$ and will also be called vectors, are indexed by elements of the set $J:=\{(s,\alpha)|\, s\in\{0,1,...,n_\alpha\},\ \alpha\in\{1,2,3\}\}$. Here, $\alpha$ labels the legs of the graph and $L_\alpha:=\{v_{i,\alpha}\in P|\, i=1,2,...,n_\alpha\}$ is the set of vertices of the $\alpha$-leg. We will write $L_\alpha (P)$ and $n_\alpha (P)$ when we want to point out the graph $P$ to which these objects belong. The \textbf{string} associated to the leg $L_\alpha$ is the $n_\alpha$-tuple of integers $(a_{1,\alpha},...,a_{n_\alpha,\alpha})$, where $a_{i,\alpha}:=-v_{i,\alpha}\cdot v_{i,\alpha}\geq 0$. The three legs are connected to a common central vertex, which we denote indistinctly by $v_0=v_{0,1}=v_{0,2}=v_{0,3}$ (notice that, with our notation, $v_0$ does not belong to any leg). We say that the legs $L_\alpha, L_\beta\subseteq P\subseteq\Z^n$ are \textbf{complementary legs} when their associated strings are related to one another by Riemenschneider's point rule.

The main assumption on $P$ will be that the central vertex $v_0$ satisfies $v_0\cdot v_0\leq -3$, while each other vertex $v_{s,\alpha}$ satisfies $v_{s,\alpha}\cdot v_{s,\alpha}\leq -2$. Throughout the paper we will also deal with disconnected graphs with only one trivalent vertex. We will use the same notation for connected and disconnected graphs. 

Given $P\subseteq\Z^{n}$ we define the set of orthogonal matrices 
$$\Upsilon_{P}:=\{\Omega\in O(n;\Q)|\,\Omega v_{s,\alpha}\in\Z^{n}\ \mathrm{for\ every}\ v_{s,\alpha}\in P\}.$$ 
This set has no group structure, nevertheless since its elements are orthogonal matrices, for every $\Omega\in\Upsilon_{P}$ the intersection graph $\Omega P:=\{\Omega v_{s,\alpha}|\, v_{s,\alpha}\in P\}$ is the same as the intersection graph of $P$ and, by definition of $\Upsilon_P$, the set $\Omega P$ is again a subset of $\Z^n$. Notice that, for every $P$, the group $O(n;\Z)$ is a subset of $\Upsilon_{P}$ and it contains the reflections across each hyperplane orthogonal to an $e_i$, as well as all the transformations determined by the permutations of $\{e_1,...,e_n\}$. The introduction of the set $\Upsilon_P$ is due to the fact that we are interested in whether a plumbing graph admits an embedding, while the embedding itself is less relevant. Therefore, in the future we will usually identify $P$ with $\Omega P$ where $\Omega\in\Upsilon_P$.

The number of connected components of the plumbing graph $P$ will be denoted by $c(P)$ and we shall say that a vector $v_{s,\alpha}\in P$ is \textbf{isolated} [resp.\ \textbf{final}] if it is an isolated vertex [resp.\ a leaf] of the plumbing graph. A vertex that is neither isolated nor final  will be called \textbf{internal}. The only trivalent vertex in the graph is $v_0$. We will assume it is internal and call it the \textbf{central} vertex.

Two vectors $v,w\in\Z^n$ are \textbf{linked} if there exists $e\in\Z^n$ with $e\cdot e=-1$ such that $v\cdot e\neq 0$ and $w\cdot e\neq 0$. A set $P\subseteq\Z^n$ is \textbf{irreducible}\index{subset of $\Z^n$!irreducible} if, given two vectors $v,w\in P$, there exists a finite sequence $v=v_1,v_2,...,v_k=w$ of vectors of $P$ such that, for each $i=1,...,k-1$, $v_i$ and $v_{i+1}$ are linked. A set which is not irreducible is \textbf{reducible}. 


\vspace{1.5mm}
\noindent\textbf{Good and standard subsets of $\Z^n$. }
A  possibly disconnected plumbing graph $P\subseteq\Z^n$ is called \textbf{good}\index{subset of $\Z^n$!good} if it is irreducible and satisfies the following conditions.
\begin{itemize}
\item If it has one vertex of valence three, its  incidence matrix  has the form 
\frag{-aii}{\tiny$-a_{1\!,\!1}$}
\frag{gii}{\tiny$\gamma_{1\!,\!1}$}
\frag{gnni}{\tiny$\gamma_{n_{\!1}\!-\!1\!,\!1}$}
\frag{-ani}{\tiny$-a_{n{\!_1}\!,\!1}$}
\frag{-aim}{\tiny$-a_{1\!,\!2}$}
\frag{gim}{\tiny$\gamma_{1\!,\!2}$}
\frag{gnmi}{\tiny$\gamma_{n_{\!2}\!-\!1\!,\!2}$}
\frag{-anm}{\tiny$-a_{n{\!_2}\!,\!2}$}
\frag{-aip}{\tiny$-a_{1\!,\!3}$}
\frag{gip}{\tiny$\gamma_{1\!,\!3}$}
\frag{gnpi}{\tiny$\gamma_{n_{\!3}\!-\!1\!,\!3}$}
\frag{-anp}{\tiny$-a_{n{\!_3}\!,\!3}$}
\frag{1}{\tiny$1$}
\frag{App}{\tiny$-a_0$}
\begin{equation}\label{e:Q_P}
\begin{split}
Q_P=
\left(
\begin{array}{l|l|l|l}
\includegraphics{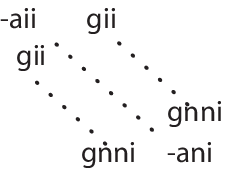}& & & \includegraphics{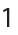} \\ \hline
& \includegraphics{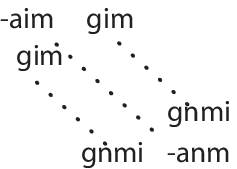} & & \includegraphics{1ver.eps}\\ \hline
& & \includegraphics{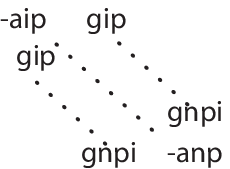} & \includegraphics{1ver.eps} \\ \hline
\includegraphics{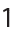} & \includegraphics{1hor.eps} & \includegraphics{1hor.eps} & \includegraphics{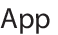}
\end{array}
\right)
\end{split}
\end{equation}
where $\gamma_{s,\alpha}\in\{1,0\}$, $a_{s,\alpha}=-v_{s,\alpha}\cdot v_{s,\alpha}\geq 2$ and $a_0=-v_0\cdot v_0\geq 3$.

\item if all its vertices have at most valence two (and in this case we call $P$ a \textbf{linear set}), its  incidence matrix  has the form
\frag[n]{-aii}{$-a_{1\!,\!1}$}
\frag[n]{gii}{$\gamma_{1\!,\!1}$}
\frag[n]{gnni}{$\gamma_{n\!-\!1\!,\!1}$}
\frag[n]{-ani}{$-a_{n\!,\!1}$} 
\begin{equation}\label{e:Q_P_linear}
\begin{split}
Q_P= \cbra{
\begin{array}{c}
\includegraphics[scale=1.5]{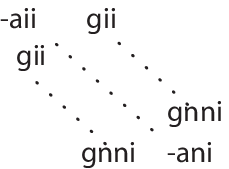}
\end{array}
}
\end{split}
\end{equation}
where again $\gamma_{s,\alpha}\in\{1,0\}$ and $a_{s,\alpha}=-v_{s,\alpha}\cdot v_{s,\alpha}\geq 2$.
\end{itemize}
Furthermore, if $P$ is a connected graph (i.e.\ all the $\gamma_{s,\alpha}$'s in its incidence matrix are equal to $1$) we will say that $P$ is \textbf{standard}. Standard and good linear sets were studied by Lisca in \cite{b:Li}.

\vspace{1.5mm}
\noindent\textbf{The quantity $I(\cdot)$ and the family $\wp$. }
Given a subset $P\subseteq\Z^n$, the key quantity in our discussion will be the number \index{$I(P)$}
$$I(P):=\sum_{(s,\alpha)\in J}(-v_{s,\alpha}\cdot v_{s,\alpha}-3).$$

Consider the family $\wp$ containing all three-legged connected plumbing graphs such that:
\begin{itemize}
\item The central vertex has weight less or equal to $-3$ and every non central vertex has weight less or equal to $-2$.
\item $I(\Gamma)<-1$ for every $\Gamma\in\wp$. 
\end{itemize} 

\begin{rem}\label{r:neg}
Notice that, for $\Gamma\in\wp$ the matrix $Q_\Gamma$ is negative definite, since $\Gamma$ is a canonical negative plumbing graph in the sense of \cite[Theorem~5.2]{b:NR}.
\end{rem}

As explained in Section~\ref{s:int} our first aim is to determine all the graphs $P\in\wp$ whose associated intersection lattice admits an embedding in the standard negative diagonal intersection lattice. In the terminology that we have just introduced our aim is to determine all possible standard subsets $P\subseteq\Z^n$ such that $I(P)<-1$. For the linear case, the answer is known and can be found in Remark~\ref{r:lisca}. For graphs with a trivalent vertex the complete list is given in Theorems~\ref{l:strings1} and \ref{l:2} below. The proof of these theorems is developed in Sections~\ref{s:contractions} to \ref{s:ss}.

\begin{thm}\label{l:strings1} 
Let $P\subseteq\Z^n$ be a standard subset with a trivalent vertex $v_0$, two complementary legs $L_2$ and $L_3$ and $I(P)<-1$. Then $I(P)\in\{-2,-3,-4\}$ and the numbers $\{a_0,a_{1,1},...,a_{n_3,3}\}$ satisfy:
\begin{itemize}
\item The strings associated to the complementary legs, namely $(a_{1,2},...,a_{n_2,2})$ and $(a_{1,3},...,a_{n_3,3}) $, are related to each other by Riemenschneider's point rule.
\item The linear set $L_1\cup\{v_0\}$ has an associated string $(a_{n_1,1},...,a_{1,1},a_0)$ that is obtained from the string associated to a linear standard set with $I\in\{-3,-2,-1\}$ by adding $1$ to the final vertex that plays the role of central vertex in $P$. The complete list of the possible $(a_{n_1,1},...,a_{1,1},a_0)$ is the following.\\
If $I(P)=-4$
\begin{itemize}
\item[]$(b_k,b_{k-1},...,b_1,2,c_1,...,c_{l-1},c_\ell+1),\s \hfill \forall k,\ell\geq 1.\s\s\s\s$
\end{itemize}
If $I(P)=-3$
\begin{itemize}
\item[] $(2^{[t]},3,2+s,2+t,3,2^{[s-1]},3),\s \hfill  \forall  s\geq 1,t\geq 0,\s\s\s\s$
\item[] $(2^{[t]},3,2,2+t,4),\s \hfill  \forall  t\geq 0,\s\s\s\s$
\item[]$(2^{[t]},3+s,2,2+t,3,2^{[s-1]},3),\s \hfill \forall  s\geq 1,t\geq 0,\s\s\s\s$
\item[]$(2^{[s]},3,2+t,2,3+s,2^{[t-1]},3),\s \hfill \forall  t\geq 1,s\geq 0,\s\s\s\s$
\item[]$(2^{[s]},3,2,2,4+s),\s \hfill \forall  s\geq 0,\s\s\s\s$
\item[]$(b_k,b_{k-1},...,b_1+1,2,2,1+c_1,...,c_{l-1},c_\ell+1),\s \hfill \forall  k,\ell\geq 1,\s\s\s\s$
\end{itemize}
where $b_1,...,b_k\geq 2$ are arbitrary integers, and $c_1,...,c_\ell$ are obtained from $b_1,...,b_k$ using Riemenschneider's point rule.\\
If $I(P)=-2$
\begin{itemize}
\item[]$(t+2,s+2,3,2^{[t]},4,2^{[s-1]},3),\s \hfill \forall  s\geq 1,t\geq 0,\s\s\s\s$
\item[]$(t+2,2,3,2^{[t]},5),\s \hfill  \forall  t\geq 0,\s\s\s\s$
\item[]$(2^{[s]},4,2^{[t]},3,s+2,t+3),\s \hfill  \forall  s,t\geq 0,\s\s\s\s$
\item[]$(t+2,2,3+s,2^{[t]},4,2^{[s-1]},3),\s \hfill  \forall  s\geq 1,t\geq 0,\s\s\s\s$
\item[]$(2^{[s]},4,2^{[t]},3+s,2,t+3),\s \hfill \forall  s,t\geq 0,\s\s\s\s$
\item[]$(t+3,2,3+s,3,2^{[t]},3,2^{[s-1]},3),\s\hfill  \forall  s\geq 1,t\geq 0,\s\s\s\s$
\item[]$(t+3,2,3,3,2^{[t]},4),\s \hfill \forall  t\geq 0,\s\s\s\s$
\item[]$(2^{[s]},3,2^{[t]},3,3+s,2,t+4),\s \hfill \forall  t,s\geq 0.\s\s\s\s$
\end{itemize}
\end{itemize}
\end{thm}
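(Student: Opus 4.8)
The plan is to reduce the classification of standard subsets $P\subseteq\Z^n$ with a trivalent vertex and $I(P)<-1$ to the already-understood linear case, then to enumerate the surviving configurations explicitly. First I would establish the bound $I(P)\in\{-4,-3,-2\}$. Since $Q_P$ embeds in $(\Z^n,-\mathrm{Id})$, each basis vector $v_{s,\alpha}$ is an integral vector with $v_{s,\alpha}\cdot v_{s,\alpha}=-a_{s,\alpha}$, and the linking pattern dictates how many coordinate directions $e_i$ the consecutive vectors must share. The key numerical input is that $I(P)=\sum(a_{s,\alpha}-3)$ measures the total "excess squared-length" of the configuration over the baseline where every weight is $-3$; I would show that an embedding into the diagonal lattice cannot accommodate too much excess, because each $e_i$ can be used by the vectors only in a constrained way (reflecting the fact that the central vertex and its three legs compete for the same $n$ coordinate slots). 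Combined with the hypothesis $I(P)<-1$, this pins $I(P)$ down to the three listed values.

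Next I would exploit the \emph{complementary legs} hypothesis. By the definition given in the excerpt, $L_2$ and $L_3$ have strings related by Riemenschneider's point rule, so by Remark~\ref{r:riem} their strings are exactly the pairs generated from $(2),(2)$ by the two string operations. This immediately yields the first bullet of the conclusion and, more importantly, lets me treat the $L_2\cup L_3$ part of the graph as a single Riemenschneider-dual pair whose contribution to $I(P)$ is controlled. The plan is then to show that, once $L_2,L_3$ are complementary, the essential remaining freedom lives in the linear subgraph $L_1\cup\{v_0\}$, whose string is $(a_{n_1,1},\dots,a_{1,1},a_0)$. The point is that an embedding of the whole star-shaped $Q_P$ restricts to an embedding of this linear piece, but with the weight of $v_0$ effectively lowered by $1$ (the central vertex has one extra edge, to the complementary pair, absorbing one unit), so that $L_1\cup\{v_0\}$ with $a_0$ replaced by $a_0-1$ behaves like a \emph{linear standard set} of the type classified by Lisca. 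This is the meaning of the phrase "obtained from the string associated to a linear standard set with $I\in\{-3,-2,-1\}$ by adding $1$ to the final vertex."

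With that reduction in hand, I would invoke the known classification of linear standard sets (Remark~\ref{r:lisca}, i.e.\ Lisca's lists in \cite{b:Li}) for the three values $I\in\{-3,-2,-1\}$ corresponding to $I(P)\in\{-4,-3,-2\}$ respectively. For each admissible linear string I would add $1$ to the terminal entry playing the role of $v_0$ and record the result, being careful to verify that the augmented central weight still satisfies $a_0\ge 3$ (so that the graph genuinely lies in $\wp$) and that reattaching the complementary pair $L_2\cup L_3$ at $v_0$ preserves the embedding into $\Z^n$. This bookkeeping is what produces the explicit families written with the $2^{[t]}$ notation and the Riemenschneider-paired sequences $b_1,\dots,b_k$ and $c_1,\dots,c_\ell$; each entry on the list is one orbit of Lisca's classification pushed forward through the "$+1$ on the central vertex" operation.

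The main obstacle will be the rigidity argument establishing that the embedding of $Q_P$ forces this clean splitting into "complementary pair plus linear remainder with the central weight shifted." A priori the images of the three legs could share coordinate directions $e_i$ in complicated overlapping ways, and ruling out the spurious overlaps — showing that, up to the action of $\Upsilon_P$, the only surviving sharings are the canonical ones that realize Riemenschneider complementarity on $(L_2,L_3)$ and a single shared $e_i$ between $v_0$ and the complementary pair — is where the real work lies. I expect this to require the technical contraction and subtraction lemmas developed in Sections~\ref{s:contractions}--\ref{s:ss} (the analogues of Lisca's structural analysis), used to strip off final vertices of weight $-2$ one at a time and track how the embedding and the value of $I(\cdot)$ evolve, until one reaches a short base configuration that can be checked by hand.
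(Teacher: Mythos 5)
Your proposal follows essentially the same route as the paper: the first bullet comes straight from the definition of complementary legs, the bound $I(P)\in\{-2,-3,-4\}$ comes from the contraction machinery (Theorem~\ref{t:todo}), and the lists are obtained by identifying $L_1\cup\{\tilde v_0\}$ (with $\tilde v_0=v_0\mp e_j$, so $a_{\tilde v_0}=a_0-1$ and $I(S_{n_1+1})=I(P)+1\in\{-3,-2,-1\}$) as one of Lisca's linear standard sets from Remark~\ref{r:lisca} and then adding $1$ back to the terminal weight. The "rigidity" step you flag as the main obstacle is exactly Lemma~\ref{l:cl} in the paper, proved with the contraction lemmas you anticipate.
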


\begin{thm}\label{l:2}
Let $P_n\subseteq\Z^n$ be a standard subset with a trivalent vertex, no complementary legs and such that $I(P_n)<-1$. Then $I(P_n)=-2$ and the graph $P_n$ belongs to the list in Figure~\ref{f:lista}.
\end{thm}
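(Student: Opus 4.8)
The plan is to prove Theorem~\ref{l:2} by the same embedding-obstruction machinery used for Theorem~\ref{l:strings1}, specialized to the case where no two legs are complementary. Starting from a standard subset $P_n\subseteq\Z^n$ with a trivalent central vertex $v_0$ and $I(P_n)<-1$, the central objects are the $n$ vectors $v_{s,\alpha}$ realized inside $(\Z^n,-\mathrm{Id})$, subject to $v_0\cdot v_0\leq -3$, $v_{s,\alpha}\cdot v_{s,\alpha}\leq -2$, and the adjacency conditions encoded in \eqref{e:Q_P}. First I would record the arithmetic constraint that the embedding imposes: since the $v_{s,\alpha}$ are linearly independent in $\Z^n$ (the form $Q_P$ is negative definite by Remark~\ref{r:neg}) and span a full-rank sublattice, each standard basis vector $e_i$ can be expanded in the dual picture, and the quantity $\sum_{(s,\alpha)}(-v_{s,\alpha}\cdot v_{s,\alpha})$ is pinned down by counting how the $n$ coordinate directions are distributed among the $n$ vectors. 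Concretely, writing each $v_{s,\alpha}=\sum_i c_i^{(s,\alpha)}e_i$, the total squared length $\sum_{(s,\alpha)}|v_{s,\alpha}|^2=\sum_{(s,\alpha),i}(c_i^{(s,\alpha)})^2$ admits a lower bound driven by how many coordinates each $e_i$ "sees," and this is what forces $I(P_n)$ into a narrow range.

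The main technical step is to bound $I(P_n)$ from below and pin it to $-2$. I would argue that because there are no complementary legs, the combinatorial slack that allowed $I\in\{-4,-3,-2\}$ in Theorem~\ref{l:strings1} collapses: the Riemenschneider point-rule relation between two legs, when present, effectively "absorbs" one unit of negativity, and its absence removes the configurations giving $I=-4$ and $I=-3$. Formally I would carry out a counting argument on the multiplicities $m_i:=\#\{(s,\alpha): c_i^{(s,\alpha)}\neq 0\}$ with which each basis vector $e_i$ appears across the vectors of $P_n$. Each vector of valence $d$ has $|v|^2\geq d+1$ in the standard embedding (a leaf needs at least two nonzero coordinates to link its neighbor, an internal vector at least three, and the central vector at least four), and summing these local lower bounds against $\sum_i m_i=\sum_{(s,\alpha)}|v_{s,\alpha}|^2$ produces an inequality $I(P_n)\geq -2$. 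Equality analysis then shows the bound is achieved only by a rigid family of configurations, and ruling out $I(P_n)\in\{-1,0,\dots\}$ uses the hypothesis $I(P_n)<-1$ directly.

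Once $I(P_n)=-2$ is established, the remaining task is the explicit enumeration yielding Figure~\ref{f:lista}. Here I would exploit the rigidity forced by the equality case: with $I(P_n)=-2$ fixed, the sum $\sum(a_{s,\alpha}-3)=-2$ together with $a_0\geq 3$, $a_{s,\alpha}\geq 2$ severely restricts the string lengths and weights, so only finitely many "shapes" of the three legs survive. For each surviving shape I would reconstruct the admissible embeddings up to the action of $\Upsilon_{P_n}$ (permutations and sign changes of the $e_i$), using the linking conditions to propagate coordinate overlaps from leaf to central vertex, and discard any candidate in which two legs turn out to satisfy Riemenschneider's rule—such candidates belong to Theorem~\ref{l:strings1}, not here. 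The output is exactly the finite list in Figure~\ref{f:lista}.

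The hard part will be the equality analysis in the second step: showing that $I(P_n)=-2$ rather than merely $I(P_n)\geq -2$, and simultaneously extracting enough structural information about \emph{how} the coordinates are shared among the three legs to drive the final enumeration. The difficulty is that the crude valence-based length bounds are not individually tight—slack at one vertex can be compensated by an overlap elsewhere—so the argument must track the global combinatorics of coordinate sharing (which pairs of vectors share an $e_i$, with which signs) rather than treating vertices independently. I expect this to require the contraction/reduction techniques deferred to Sections~\ref{s:contractions}–\ref{s:ss}, where the subsets $P_n$ are simplified step by step until the base cases become transparent, and the absence of complementary legs is precisely the invariant that must be shown to persist under these reductions.
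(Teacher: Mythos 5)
Your central step --- deriving $I(P_n)\geq -2$ from the pointwise bounds ``a vertex of valence $d$ satisfies $|v|^2\geq d+1$'' --- does not hold, and this is a genuine gap rather than a fixable technicality. An internal vector of square $-2$ can have exactly two nonzero coordinates (e.g.\ $e_2-e_3$ linking $e_1-e_2$ to $e_2+e_3+e_4$), and a central vertex of square $-3$ can have exactly three (e.g.\ $e_2+e_3+e_4$); both occur already in the $\Z^5$ set of Lemma~\ref{l:base}\,$(1)$, which is standard with $I=-4$. More fundamentally, any inequality of the kind you propose would apply verbatim to standard sets \emph{with} complementary legs, for which $I=-4$ and $I=-3$ genuinely occur (Theorem~\ref{l:strings1}); so no valence-based counting can pin $I(P_n)$ to $-2$ without somewhere invoking the absence of complementary legs, and your argument never does. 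The mechanism the paper actually uses is of a different nature: by Proposition~\ref{p:clave}\,$(2)$, a standard set with $I+c<-1$ (which covers $I\in\{-4,-3\}$, $c=1$) must contract down to the $\Z^5$ set of Lemma~\ref{l:base}\,$(1)$, which has complementary legs, and Lemma~\ref{l:cl2} shows that contractions never create complementary legs --- hence the original set must have had them, contradicting the hypothesis. That is how ``no complementary legs'' forces $I(P_n)=-2$.

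Your enumeration step is also off target: the condition $\sum(a_{s,\alpha}-3)=-2$ does not leave ``only finitely many shapes'' --- the families in Figure~\ref{f:lista} are infinite, parametrized by $s,t,r$ and arbitrary strings $b_1,\dots,b_k$. The paper's enumeration instead locates, inside the contraction sequence $P_n\searrow\cdots\searrow P_3$ guaranteed by Theorem~\ref{t:todo}, the step $P_j\searrow P_{j-1}$ at which the three-legged set becomes linear, shows $P_n$ is obtained from $P_j$ by final $(-2)$-vector expansions, and then matches $P_{j-1}$ against Lisca's classification of linear standard sets with $I=-2$ (Remark~\ref{r:lisca}\,(II)), examining which pairs $(\tilde v_0,\tilde v_{t,\beta})$ can arise. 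You correctly anticipate at the end that the contraction machinery is indispensable, but the argument you actually sketch does not reach either conclusion of the theorem.
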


\begin{figure}[h]
\begin{center}
\frag{t}{$t$}
\frag{s}{$s$}
\frag{r}{$r$}
\frag{1}{$-1$}
\frag{t>=0,s>0}{$t\geq 0,s>0$}
\frag{t>0,s>0}{$t>0,s>0$}
\frag{t>0}{$t>0$}
\frag{t>0,s>=0}{$s>0,t\geq 0$}
\frag{t>=0,r>0}{$r\geq 0,t,s>0$}
\frag{s-1}{$s-1$}
\frag{2}{$-2$}
\frag{3}{$-3$}
\frag{q}{$-t$}
\frag{p}{$-s$}
\frag{2-t}{$-\!2\!-\!t$}
\frag{2-r}{$-\!2\!-\!r$}
\frag{s-2}{$-\!2\!-\!s$}
\frag{t-3}{$-\!3\!-\!t$}
\frag{-b_1}{$-b_1$}
\frag{-b_K}{$-b_k$}
\frag{-c_1+1}{$-c_1\!-\!1$}
\frag{-c_1+2}{$-c_1\!-\!2$}
\frag{-c_l}{$-c_\ell$}
\frag{a}{$(a)$}
\frag{b}{$(b)$}
\frag{c}{$(c)$}
\frag{d}{$(d)$}
\frag{e}{$(e)$}
\includegraphics[scale=0.7]{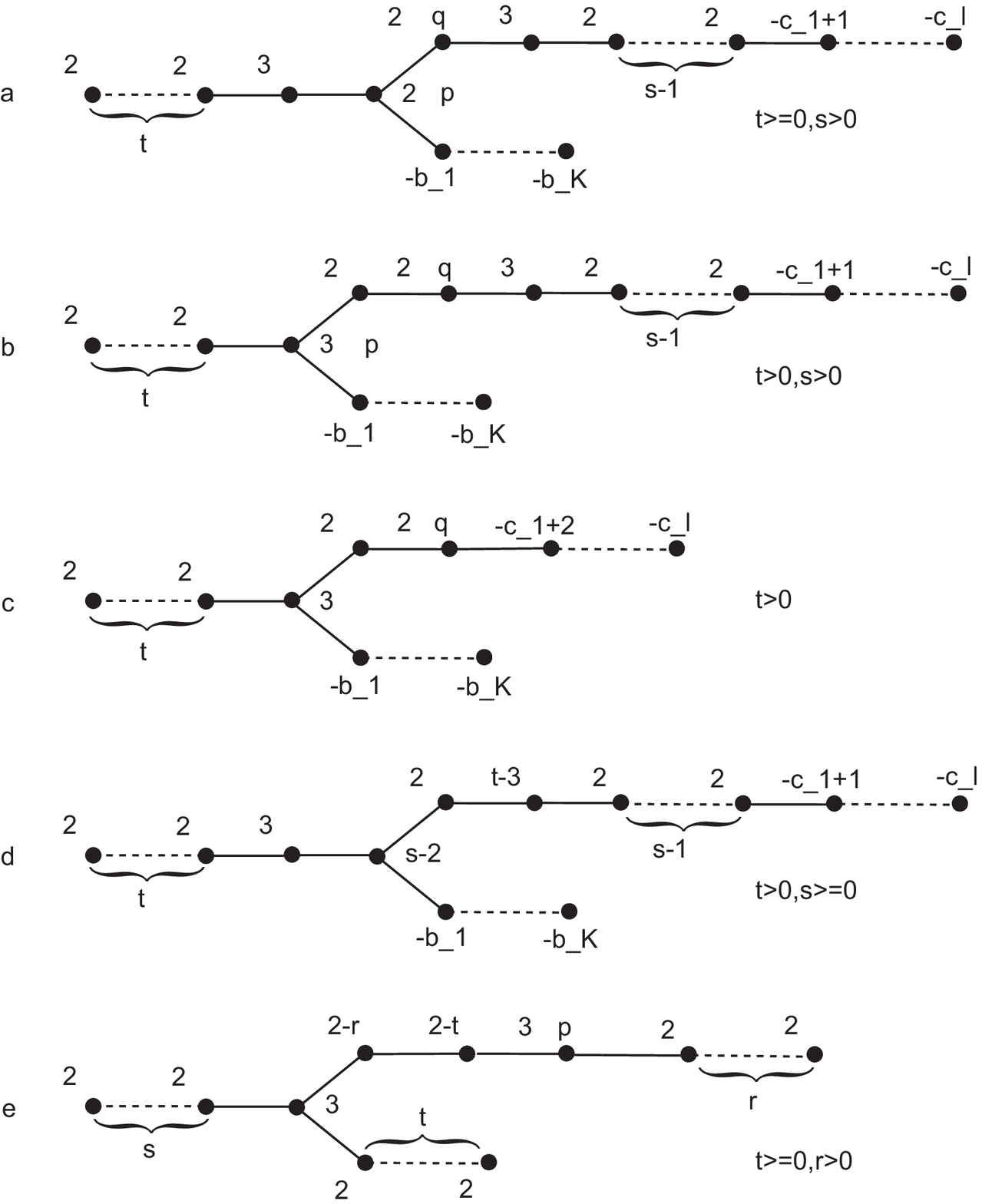}
\hcaption{Families $(a)-(e)$ are all possible graphs of standard subsets of $\Z^n$ with a trivalent vertex and no complementary legs. The integers $b_1,...,b_k\geq 2$ are arbitrary, while $c_1,...,c_\ell$ are obtained from $b_1,...,b_k$ using Riemenschneider's point rule.}
\label{f:lista}
\end{center}
\end{figure}

The proof of Theorems~\ref{l:strings1} and \ref{l:2}, which is carried out in Sections~\ref{s:contractions} to \ref{s:ss}, can be briefly sketched as follows. We start by defining an operation, the contraction, which given a good set $P\subseteq\Z^n$ returns a subset of $\Z^{n-1}$. Then we show that, under certain assumptions, the result of a contraction is again a good set $P'$ and moreover, $I(P')\leq I(P)$. Afterwards, we observe that complementary legs are preserved under contractions and that the contraction of a good set without complementary legs is again a set without complementary legs. Iterating contractions we prove that every good set with complementary legs can be contracted to one of the three good subsets of $\Z^5$ which have $I<-1$. In turn, every good set without complementary legs can be contracted to one of the two good subsets of $\Z^3$ with $I<-1$. Keeping track of the quantity $I$ and of the number of connected components of the sets involved in the sequence of contractions leads to the complete classification of standard subsets.

\section{Existence of Ribbon surfaces}\label{c:bande}
Theorems~\ref{l:strings1} and \ref{l:2} give the complete list of plumbing graphs $P\in\wp$ whose associated intersection lattice admits an embedding in the standard negative diagonal intersection lattice. As explained in the introduction, this is a necessary condition for the corresponding Montesinos \emph{knots}, $\Ml_P$, to bound a slicing disc. In this section we shall find, for each $\Ml_P$ with $P$ as in Theorem~\ref{l:strings1} or Theorem~\ref{l:2}, a surface with boundary $\Sigma$ such that $\chi (\Sigma)=1$ and a ribbon immersion $i:\Sigma\looparrowright S^3$ with $i(\partial\Sigma)=\Ml_P$. In this way we obtain that the slice-ribbon conjecture is true for all Montesinos \emph{knots} $\Ml_P$ with $P\in\wp$. This section concludes with the proof of the main result of this work, Theorem~\ref{t:int}.  

\subsection{Montesinos links with complementary legs}\label{s:Montesinos_compl}

In this section we deal with the Montesinos links associated to the graphs $P$ in Theorem~\ref{l:strings1}. We start by showing that for each $P$ there exists a cobordism between the Seifert space $Y_P$ and $L(p,q)\# (S^1\times S^2)$, where $L(p,q)$ is a lens space bounding a rational homology ball. A theorem by Montesinos, Theorem~\ref{t:Mo}, shows that this cobordism corresponds to a ribbon move (see \cite[p. 211]{b:GS} for the definition) on the link $\Ml_P$. 

Let $\Gamma$ be a connected three-legged plumbing graph with two complementary legs $L_2$ and $L_3$ with associated strings $(b_{1},...,b_{k})$ and $(c_{1},...,c_{l})$ respectively. Let $(a_{1,1},...,a_{n_1,1})$ be the string associated to the leg $L_{1}$ and $-a_{0}$ the weight of the central vertex. We recall that we write $M_\Gamma$ for the associated oriented $4$-manifold and $Y_\Gamma:=\partial M_\Gamma$ for its oriented three dimensional boundary. For these graphs $\Gamma$ we have the following result.

\begin{lem}\label{p:clb}
Attaching to $M_{\Gamma}$ a $4$-dimensional $2$-handle along the framed thick circle in the first diagram of Figure~\ref{f:cl}, we obtain a $4$-manifold whose boundary is $(S^{1}\times S^{2})\# Y_S$, where $S$ is a linear plumbing graph with associated string $(a_{n_1,1},a_{n_1-1,1},...,a_{1,1},a_0-1)$.
\end{lem}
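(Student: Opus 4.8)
The plan is to prove the statement by a Kirby calculus computation on the $4$-manifold $M_\Gamma\cup H$, where $H$ denotes the $2$-handle attached along the framed thick circle of Figure~\ref{f:cl}. First I would record the Kirby diagram of $M_\Gamma$: an unknot of framing $-a_0$ for the central vertex $v_0$, to which the first vertices of the three legs are Hopf-linked, the two complementary legs $L_2$ and $L_3$ appearing as linear chains of framings $(-b_1,\dots,-b_k)$ and $(-c_1,\dots,-c_l)$ with the vertices of weight $-b_1$ and $-c_1$ adjacent to $v_0$. Adding $H$ turns $L_2\cup\{v_0\}\cup L_3$ into a cycle: reading around it one meets $v_0$, then the chain $(-b_1,\dots,-b_k)$, then the $(-1)$-framed handle $H$ bridging the far ends, then the chain $(-c_l,\dots,-c_1)$, and back to $v_0$, while the leg $L_1$ continues to hang from $v_0$.

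The heart of the argument is that the open sub-chain $(-b_1,\dots,-b_k,-1,-c_l,\dots,-c_1)$ obtained by cutting this cycle at $v_0$ is blow-down equivalent to a single unknot of framing $0$. This is exactly the content of Riemenschneider duality: since $(b_1,\dots,b_k)$ and $(c_1,\dots,c_l)$ are complementary, Remark~\ref{r:riem} exhibits them as the result of iterating the operations $(1)$ and $(2)$ starting from the pair $(2),(2)$, and each such operation corresponds to a blow-up of the bridged chain. I would therefore argue by induction on $k+\ell$: blowing down an interior $(-1)$-sphere undoes one operation and shortens the chain, the base case $(2),(2)$ reducing the bridge $(-2,-1,-2)$ first to $(-1,-1)$ and then to a single $0$-framed unknot.

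Because the two ends of this sub-chain are attached to $v_0$, I must carry the central vertex along the cascade. The key bookkeeping is that $v_0$ is affected only when the blown-down $(-1)$-sphere is linked to it, which happens exactly once, in the final step of the reduction; the net effect is to raise its framing from $-a_0$ to $-a_0+1$ and to leave the terminal $0$-framed unknot linked to $v_0$ geometrically twice but with linking number zero, this cancellation being precisely what the framing and clasp of $H$ in Figure~\ref{f:cl} are arranged to produce. Sliding the two strands apart splits this $0$-framed unknot off as an unknotted, unlinked component, which contributes the connected summand $S^1\times S^2$ to the boundary. What survives is the leg $L_1$ together with the central vertex of framing $-(a_0-1)$, i.e.\ the linear plumbing graph $S$ with string $(a_{n_1,1},\dots,a_{1,1},a_0-1)$, so that $\partial(M_\Gamma\cup H)=(S^1\times S^2)\# Y_S$.

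The main obstacle I anticipate is the global bookkeeping of the blow-down cascade for arbitrary complementary pairs: one has to check that throughout the induction the diagram stays in the form of a bridged complementary chain attached to $v_0$, that the central framing is bumped by exactly $+1$ and not more, and that the two strands through the final $0$-framed component genuinely cancel rather than reinforce. Comparing $|H_1(Y_\Gamma)|$ with $|H_1(Y_S)|$ by means of \eqref{e:det} provides a useful numerical check on the computation, and Theorem~\ref{t:Mo} is what will later reinterpret this handle attachment as a ribbon move on $\Ml_\Gamma$ downstairs.
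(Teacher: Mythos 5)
Your argument rests on the same mechanism as the paper's proof --- a cascade of blow-downs through the complementary legs, driven by the Riemenschneider point rule and terminating in a $0$-framed unknot split off from $L_1\cup\{v_0\}$ with central framing $-a_0+1$ --- and your inductive reduction of the bridged chain $(-b_1,...,-b_k,-1,-c_l,...,-c_1)$ to a single $0$-framed unknot via the operations of Remark~\ref{r:riem} is correct. The real difference is where the $-1$-framed circle sits, and this changes the bookkeeping. In Figure~\ref{f:cl} the thick circle is attached at the central vertex, linking $v_0$, $v_{1,2}$ and $v_{1,3}$: the very first blow-down simultaneously changes the central framing to $-a_0+1$, unlinks $v_0$ from both complementary legs (so the diagram splits into two linear chains immediately), and fuses $L_2$ and $L_3$ into one chain joined at the $b_1/c_1$ end; the remaining $k+l-1$ blow-downs then take place entirely inside that chain, using at each stage the dichotomy that exactly one of the two current first entries equals $2$ --- which is why the framings $-b_1+1$, $-c_1+1$, $-b_2+1$, $-c_1+2$ appear in the intermediate diagrams. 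Your circle instead bridges the far ends $-b_k$ and $-c_l$, so the cascade runs inward and $v_0$ is reached only at the last step; that version also works (the reversal of both strings preserves complementarity, so the needed ``one of $b_k,c_l$ equals $2$'' dichotomy holds at that end too), but it concentrates the one delicate point of the computation at the very end, where you must verify that after the final blow-down $v_0$ and the surviving $0$-framed circle are genuinely unlinked and not merely of linking number zero --- a fact you assert (``sliding the two strands apart'') rather than check, and which the paper's placement settles once and for all in the first, explicitly drawn, step. Since the lemma is stated for the specific circle of Figure~\ref{f:cl}, and since the subsequent discussion needs the handle to be attached equivariantly with respect to the involution $u$ in order to invoke Theorem~\ref{t:Mo}, you should either move your bridge to the central vertex or supply the corresponding equivariant picture together with the unclasping argument for your placement.
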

\begin{proof}
The first link of Figure~\ref{f:cl} is the Kirby diagram of the graph $\Gamma$ with an added thick circle with framing $-1$. Blowing down this $(-1)$-circle we obtain a split link which consists of two linear chains. Since $L_{2}$ and $L_{3}$ are complementary legs, the strings $(b_{1},...,b_{k})$ and $(c_{1},...,c_{l})$ are related to one another by Riemenschneider's point rule. Notice that then, whenever $l+k>2$, we necessarily have either $b_{1}=2$ and $c_{1}>2$ or $b_{1}>2$ and $c_{1}=2$. By symmetry, let us suppose that the first case occurs. It is immediate to check, using the Riemenschneider's point diagram, that $(b_{2},...,b_{k})$ and $(c_{1}-1,...,c_{l})$ are again two strings related to one another by Riemenschneider's point rule. Moreover, if $l+k-1>2$ then either $b_{2}=2$ and $c_{1}-1>2$ or $b_{2}>2$ and $c_{1}-1=2$. Therefore,  in the third diagram of Figure~\ref{f:cl} we have $-b_{1}+1=-1$ and blowing down this circle produces a new diagram with a new $(-1)$-circle linked to the first circles of two complementary chains. After $k+l$ blow downs (starting with the first diagram), we arrive to a diagram with two components: an unknot with framing $0$ and the leg $L_{1}$ linked to a circle with framing $-a_{0}+1$. This diagram represents a $4$-manifold whose boundary is the connected sum of $S^2\times S^1$ and the lens space $L(p,q)$ where $\frac{p}{q}=[a_{n_1,1},a_{n_1-1,1},...,a_{1,1},a_0-1]$.
\end{proof}

\begin{figure}
\begin{center}
\frag[ss]{a0+1}{$-\!a_0\!+\!1$}
\frag[ss]{a0}{$-a_0$}
\frag[ss]{a1}{$-\!a_{1\!,\!1}$}
\frag[ss]{a2}{$-\!a_{2\!,\!1}$}
\frag[ss]{an}{$-\!a_{n_{1}\!,\!1}$}
\frag[ss]{b2}{$-b_{2}$}
\frag[ss]{bk}{$-b_{k}$}
\frag[ss]{c1}{$-c_{1}$}
\frag[ss]{c2}{$-c_{2}$}
\frag[ss]{1}{$-1$}
\frag[ss]{2}{$-2$}
\frag[ss]{cl}{$-c_{l}$}
\frag[ss]{isotopy}{isotopy}
\frag[ss]{b1+1}{$-b_{1}\!\!+\!\!1$}
\frag[ss]{b2+1}{$-b_{2}\!\!+\!\!1$}
\frag[ss]{c1+1}{$-c_{1}\!\!+\!\!1$}
\frag[ss]{c1+2}{$-c_{1}\!\!+\!\!2$}
\frag[ss]{0}{$0$}
\frag[ss]{u}{$u$}
\frag[ss]{7}{$\displaystyle\boundary^\partial$}
\frag[ss]{b1}{$-b_{1}$}
\frag[ss]{blabla}{$k+l-3$ blow downs}
\includegraphics[scale=0.7]{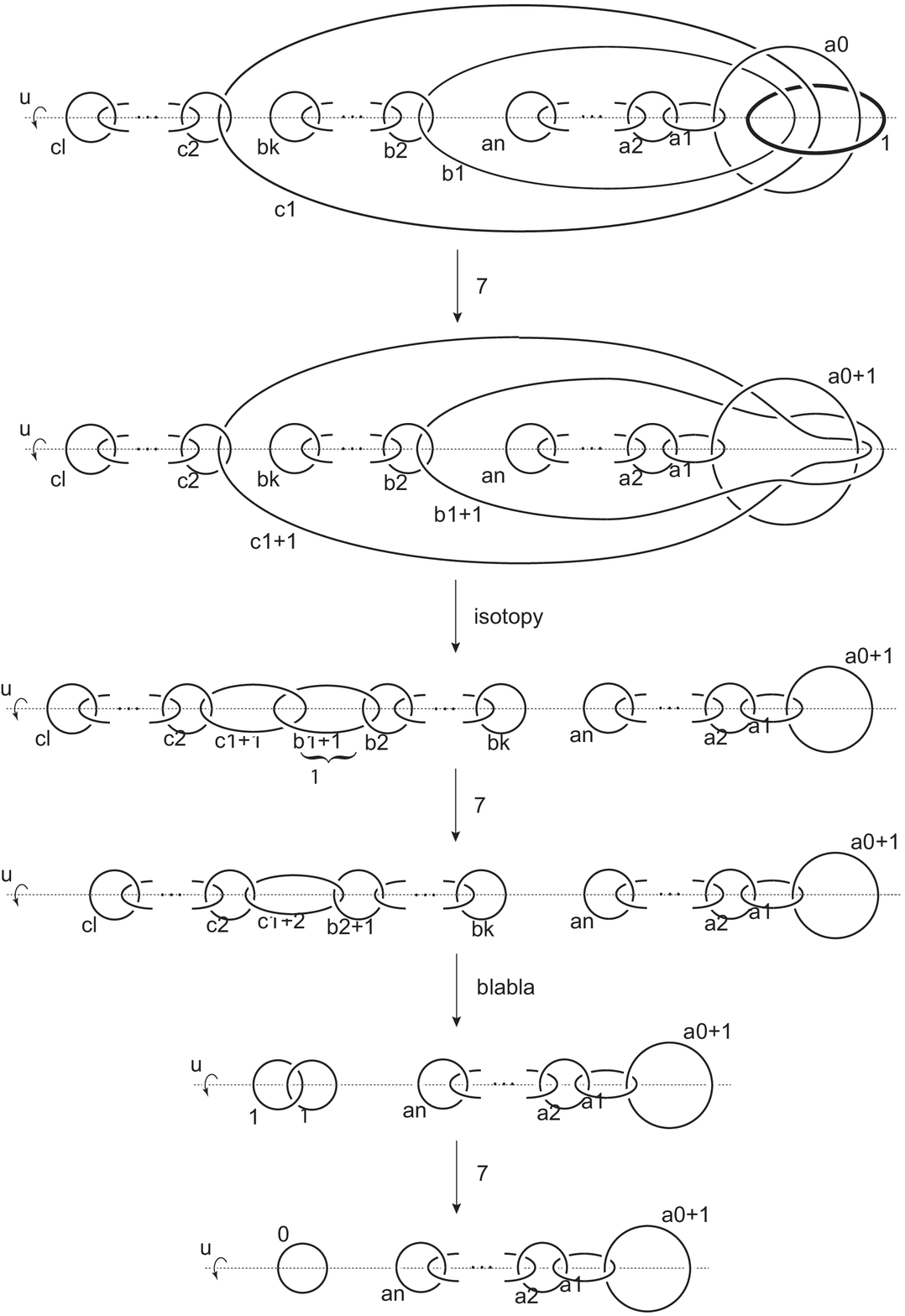}
\hcaption{Kirby diagram of Lemma~\ref{p:clb}. The arrows denote isotopies or a blow down operation, which preserves the boundary of the $4$-manifold. Notice that all these operations are done equivariantly with respect to the involution $u$.}
\label{f:cl}
\end{center}
\end{figure}

\begin{figure}[h!]
\begin{center}
\frag[ss]{a}{$-\!a_{n_{1}\!,\!1}$}
\frag[ss]{b}{$-\!a_{0}\!+\!1$}
\includegraphics[scale=0.85]{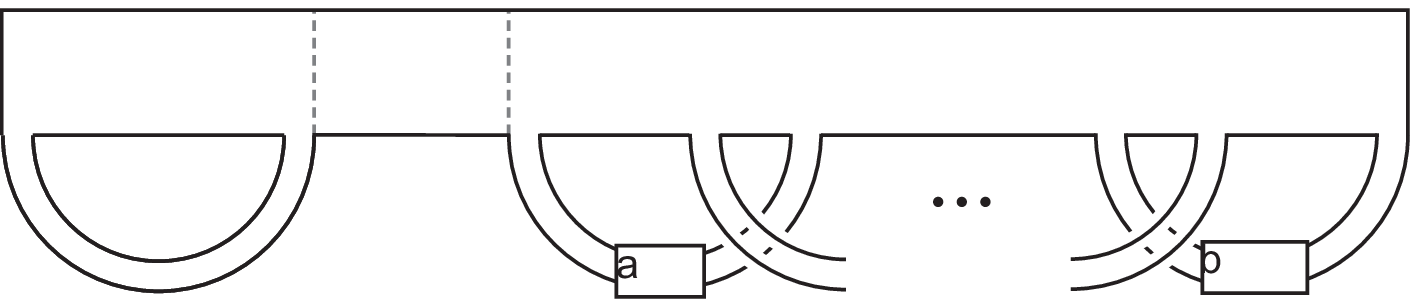}
\hcaption{Branch surface of the involution $u$ on the $4$-manifold defined by the last Kirby diagram in Figure~\ref{f:cl}. The discontinuous lines show where is performed a connected sum between the disjoint union of two unknots and the $2$-bridge link $K(p,q)$, where $\frac{p}{q}=[a_{n_{1},1},...,a_{1,1},a_{0}-1]$, yielding $U\cup K(p,q)$.}
\label{f:lR}
\end{center}
\end{figure}

Observe that the $2$-handle $h$, represented by the thick circle in the top link in Figure~\ref{f:cl}, is added to the Kirby diagram of a three-legged star-shaped plumbing graph $P$, in such a way that we obtain a strongly invertible link in $S^3$ (with respect to $u$). Before adding $h$, the involution $u$ turns $M_P$ into the double cover of $D^4$ branched over a surface $B_P$, which is obtained by plumbing bands according to $P$ and satisfies $\partial B_{P}=\Ml_{P}$. Let us call $M'$ the $4$-manifold in the first Kirby diagram of Figure~\ref{f:cl}, obtained after adding $h$ to $M_P$. By Montesinos' theorem, Theorem~\ref{t:Mo}, the involution $u$ turns $M'$ into the double cover of $D^4$ branched over a surface $B'$, which is obtained by adding a band with a half-twist to the surface $B_P$. The addition of this band is a ribbon move on $\Ml_P$. In order to understand the link obtained after this ribbon move we argue as follows. Given a tubular neighborhood of an unknotted component $C$ of a link in $S^3$, its complement is a solid torus $T$. As explained in \cite[Chap.\ 9 Sect.\ H]{b:Ro}, a blow down operation done along $C$ consists of applying a meridinal twist to $T$ and this alters the rest of the link as shown in Figure~\ref{f:cl}. The important point here is that this operation as well as the isotopies in Figure~\ref{f:cl} are done equivariantly with respect to $u$. Therefore, the surface $B'$ can be thought of as obtained by plumbing twisted bands according to the last diagram in Figure~\ref{f:cl} (instead of according to the first one). The result is ilustrated in Figure~\ref{f:lR}. Since the ribbon move turns $\Ml_P$ into the boundary of $B'$, we conclude that after this move we obtain a split link of the form $U\cup K(p,q)$, where $U$ is the unknot and $K(p,q)$ is the $2$-bridge link given by the fraction $\frac{p}{q}=[a_{n_{1},1},...,a_{1,1},a_{0}-1]$.   

Let $P$ be a plumbing graph as in Theorem~\ref{l:strings1}. Then, a case-by-case check shows that the graph $S\subseteq\Z^{n_1+1}$ in Lemma~\ref{p:clb} is one of the linear graphs listed in Remark~\ref{r:lisca}. 

\begin{rem}\label{r:lisca}
For the reader's convenience, we summarize Lem\-mas~7.1, 7.2 and 7.3 of \cite{b:Li}, which give all linear standard sets $P$ with $I(P)\in\{-3,-2,-1\}$. Lisca's results are expressed in terms of strings of integers $(a_{1,1},...,a_{n_1,1})$ and we will write, for any integer $t\geq 0$
$$(...,2^{[t]},...):=(...,\overbrace{2,...,2}^t,...).$$
Consider a linear standard set $P=\{v_{1,1},...,v_{n_1,1}\}\subseteq\Z^{n_1}$ and write, as usual, $v_{s,1}\cdot v_{s,1}=-a_{s,1}$. Then, 
\begin{enumerate}[(I)]
\item \label{p:-3} If $I(P)=-3$ the string $(a_{1,1},...,a_{n_1,1})$ is obtained from $(2,2,2)$ via a finite sequence of operations of the following types:
\begin{itemize}
\item[$(1)$]$(b_1,...,b_m)\longrightarrow (b_1+1,...,b_m,2)$
\item[$(2)$]$(b_1,...,b_m)\longrightarrow (2,b_1,...,b_m+1)$
\end{itemize}
An alternative description of the string $(a_{1,1},...,a_{n_1,1})$, which is obtained by a straightforward calculation, is given by the string
$$(b_k,b_{k-1},...,b_1,2,c_1,...,c_{l-1},c_\ell),\s\s k,l\geq 1,$$
where the $k$-tuple of integers $b_1,...,b_k\geq 2$ is arbitrary and the numbers $c_1,...,c_\ell\geq 2$ are obtained from $b_1,...,b_k$ using Riemenschneider's point rule \cite{b:Ri} (see also Remark~\ref{r:riem}).
\item\label{p:-2} If $I(P)=-2$ either $(a_{1,1},...,a_{n_1,1})$ or $(a_{n_1,1},...,a_{1,1})$ has one of the following forms\footnote{In \cite[Lemma~7.2]{b:Li} the family $(3)$ is missing, due to overlooking in the statement.}
	\begin{itemize}
	\item[$(1)$]$(2^{[t]},3,2+s,2+t,3,2^{[s]}),\s s,t\geq 0,$
	\item[$(2)$]$(2^{[t]},3+s,2,2+t,3,2^{[s]}),\s s,t\geq 0,$
	\item[$(3)$]$(b_k,b_{k-1},...,b_1+1,2,2,1+c_1,...,c_{l-1},c_\ell)$, for arbitrary integers $b_1,...,b_k\!\geq 2$ and 		for $c_1,...,c_\ell$ obtained from $b_1,...,b_k$ using Riemenschneider's point rule. 
	\end{itemize}
\item\label{p:-1} If $I(P)=-1$ either $(a_{1,1},...,a_{n_1,1})$ or $(a_{n_1,1},...,a_{1,1})$ has one of the following forms:
	\begin{itemize}
	\item[$(1)$]$(t+2,s+2,3,2^{[t]},4,2^{[s]}),\s s,t\geq 0,$
	\item[$(2)$]$(t+2,2,3+s,2^{[t]},4,2^{[s]}),\s s,t\geq 0,$
	\item[$(3)$]$(t+3,2,3+s,3,2^{[t]},3,2^{[s]}),\s s,t\geq 0.$
	\end{itemize}
\end{enumerate}
\end{rem}

The $3$-manifold $Y_S$ is a lens space $L(p,q)$, which determines a $2$-bridge link $K(p,q)$. In \cite[Section~8]{b:Li} we find, for every graph in Remark~\ref{r:lisca}, a ribbon surface $\Sigma_1$ in $S^{3}$  with boundary $\partial\Sigma_{1}=K(p,q)$, homeomorphic to a disc if $K(p,q)$ is a knot, and to the disjoint union of a disc and a M\"obius band if $K(p,q)$ is a $2$-component link. 

Summarizing the discussion of this section we have the following.
\begin{lem}\label{l:sigma_cl}
For a plumbing graph $P$ as in Theorem~\ref{l:strings1}, depending on the number of connected components of the links $\Ml_P$ and $K(p,q)$, we have the following possibilities, where the ribbon move is the attachment of the band corresponding to the $2$-handle added to $M_P$ in Lemma~\ref{p:clb}.
\begin{itemize}
\item If $\Ml_P$ is a knot, there exists a ribbon move that reduces it to a $2$-bridge ribbon knot and an unknot. It follows that $\Ml_P$ is ribbon.
\item If $\Ml_P$ is a $2$-component link, there exists a ribbon move that reduces it either to a $2$-bridge ribbon knot and an unknot or to a $2$-bridge ribbon link and an unknot. In any case it follows that $\Ml_P=\partial\Sigma$, where $\Sigma$ is a ribbon surface in $S^3$, which is the disjoint union of a disc and a M\"obius band.
\item If $\Ml_P$ is a $3$-component link, there exists a ribbon move that reduces it either to a $2$-bridge ribbon knot and an unknot or to a $2$-bridge ribbon link and an unknot. In both cases it follows that $\Ml_P=\partial\Sigma$, where $\Sigma$ is a ribbon surface in $S^3$. In the first case $\Sigma$ is the disjoint union of a disc and an annulus and in the second case $\Sigma$ is the disjoint union of either  two M\"obius bands and a disc or two discs and $K\setminus D^2$, where $K$ stands for the Klein bottle.
\end{itemize}
\end{lem}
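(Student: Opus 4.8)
The plan is to reverse the ribbon move produced above and to keep careful track of the Euler characteristic and of the number of boundary circles. Recall that, by Lemma~\ref{p:clb} and Montesinos' Theorem~\ref{t:Mo}, the $2$-handle attached to $M_P$ realizes a ribbon move taking $\Ml_P$ to the split link $U\cup K(p,q)$ with $\frac{p}{q}=[a_{n_1,1},\dots,a_{1,1},a_0-1]$. By \cite[Section~8]{b:Li} the $2$-bridge link $K(p,q)$ bounds a ribbon surface $\Sigma_1$ which is a disc when $K(p,q)$ is a knot and a disjoint union of a disc and a M\"obius band when $K(p,q)$ has two components; in either case $\chi(\Sigma_1)=1$. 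Hence $U\cup K(p,q)$ bounds the ribbon surface $F:=D^2\sqcup\Sigma_1$, with $\chi(F)=2$, and attaching to $F$ the band $b$ dual to the ribbon move yields a ribbon surface $\Sigma:=F\cup b$ with $\partial\Sigma=\Ml_P$ and $\chi(\Sigma)=\chi(F)-1=1$. That $\Sigma$ is again a ribbon surface is exactly the content of the equivariant discussion preceding the lemma, so it remains only to identify its homeomorphism type.

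This identification is essentially bookkeeping. Write $c$ for the number of components of $\Ml_P$ and $c_K\in\{1,2\}$ for that of $K(p,q)$, so that $F$ has $c'=1+c_K$ boundary circles, one on each of its connected components. Since every component of $F$ carries a single boundary circle, attaching the band $b$ changes the number of boundary circles by $-1$ when the two feet of $b$ lie on distinct components (which are then merged), by $+1$ when both feet lie on one circle and $b$ is orientation-preserving (that circle splits in two), and by $0$ when both feet lie on one circle and $b$ carries a half-twist (a cross-cap is added). Comparing $c$ with $c'$ thus determines which of these three possibilities occurs, and the homeomorphism type of $\Sigma$ is then read off from its components, their Euler characteristics and their orientability. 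Because $\Ml_P$ stems from a three-legged graph it has at most three components, so it suffices to run through $c\in\{1,2,3\}$.

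Carrying this out, I would treat the three values of $c$ in turn. For $c=1$ one is forced to $c'=2$ (so $K(p,q)$ is a knot), with $b$ bridging the two discs of $F$, whence $\Sigma$ is a single disc and $\Ml_P$ is ribbon. For $c=2$ one has either $c'=2$ with $b$ a cross-cap band on one of the two discs, or $c'=3$ with $b$ merging two components of $F$; a short check shows that every merger among a disc, a disc and a M\"obius band again produces a disc together with a M\"obius band, so in both cases $\Sigma$ is the disjoint union of a disc and a M\"obius band. For $c=3$, either $c'=2$, forcing $b$ to split a boundary circle and turn one disc into an annulus, so that $\Sigma$ is a disc and an annulus; or $c'=3$, forcing $b$ to be a cross-cap band, which either converts one of the two discs of $F$ into a M\"obius band, yielding a disc and two M\"obius bands, or adds a cross-cap to the M\"obius component of $F$, yielding two discs together with the Klein bottle minus an open disc (using that a half-twisted band glued to the boundary of a M\"obius band gives a once-punctured Klein bottle, of Euler characteristic $-1$). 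The step I expect to require the most care is precisely this last orientability bookkeeping: both admissible positions of the cross-cap band satisfy $c=c'=3$ and so account for the two alternatives in the statement, whereas all the other cases are rigidly forced by the comparison of $c$ with $c'$.
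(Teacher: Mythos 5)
Your proposal is correct and follows essentially the same route as the paper: both arguments view the ribbon move of Lemma~\ref{p:clb} as a band cobordism between $\Ml_P$ and $U\cup K(p,q)$, rule out the band configurations incompatible with the component count of $U\cup K(p,q)$, and then identify $\Sigma$ from its Euler characteristic, number of boundary circles and orientability via the classification of surfaces. The paper only writes out the $2$-component case in detail (leaving the others as an exercise), whereas your boundary-circle bookkeeping carries all three cases through explicitly and reaches the same conclusions.
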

\begin{proof}
We write the details for the second case, and we leave the remaining two as a straightforward exercise for the reader. The proof is also sketched in Figure~\ref{f:cob}. 

\begin{figure}
\begin{center}
\frag{1}{$1$}
\frag{m1}{$-1$}
\frag{0}{$0$}
\includegraphics[scale=0.7]{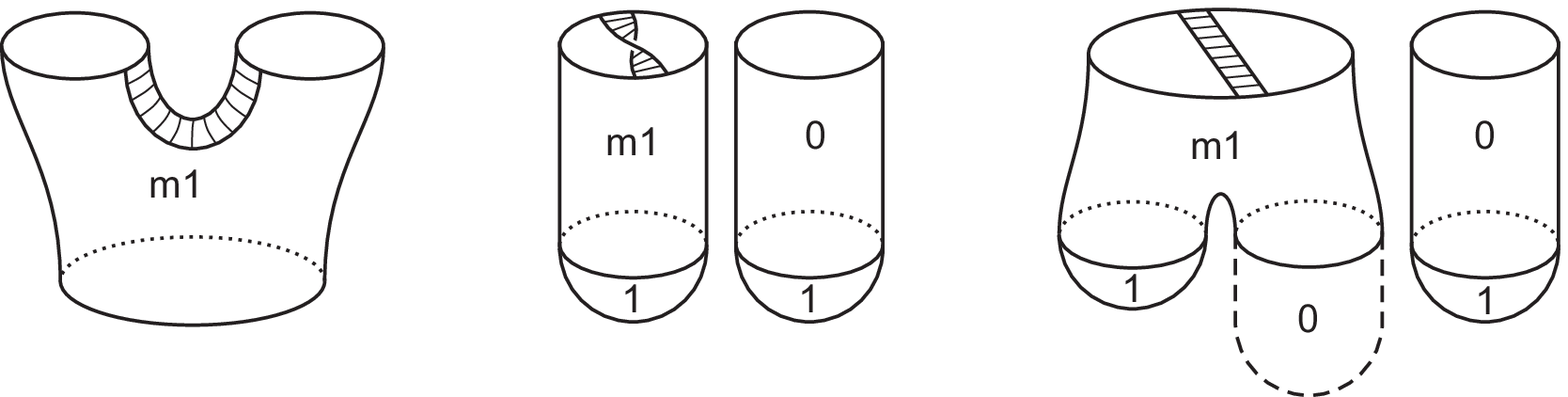}
\hcaption{These three figures represent schematically the possible ribbon moves (the addition of the striped bands) on the $2$-component link $\Ml_{P}$. The numbers are the contribution to the Euler characteristic of the surface between levels of the cobordism. The dashed line in the third figure denotes a M\"obius band.}
\label{f:cob}
\end{center}
\end{figure}

If $\Ml_P$ is a $2$-component link, there are $3$ possibilities for a ribbon move on it: 

\noindent $(i)\ $  If the band joins the two components, it yields a knot; 

\noindent $(ii)$ If the band joins two points of the same component without splitting it up, it yields a $2$-component link; 

\noindent $(iii)$ Finally, if the band joins two points of the same component and splits it up, it yields a $3$-component link. 

By Lemma~\ref{p:clb}, the result of this ribbon move is a split link of the form $U\cup K(p,q)$, where $U$ is the unknot and $K(p,q)$ is a $2$-bridge link. It follows that the first possibility for the ribbon move is excluded. The second possibility describes a cobordism, which is a surface with Euler characteristic $-1$, from $\Ml_P$ to $U\cup K(p,q)$, where $K(p,q)$ is a ribbon knot. Since $U\cup K(p,q)$ is the boundary of two disjoint discs, it follows that $\Ml_P$ is the boundary of the disjoint union of a M\"obius band and a disc. The third possibility describes a cobordism, which is again a surface with Euler characteristic $-1$, from $\Ml_P$ to $U\cup K(p,q)$, where this time $K(p,q)$ is a link, boundary of a ribbon surface consisting of the disjoint union of a M\"obius band and a disc. It follows that $U\cup K(p,q)$ is the boundary of two disjoint discs and a M\"obius band. Therefore, $\Ml_P$ is the boundary of the disjoint union of a M\"obius band and a disc.
\end{proof} 

\subsection{Montesinos links without complementary legs}\label{s:NoCl}
In this section we follow the same strategy as in the preceding one. We start by showing that adding one handle to the Kirby diagrams representing the $4$-manifolds $M_P$ with $P$ as in Theorem~\ref{l:2}, we obtain a $4$-manifold with boundary $S^1\times S^2$. We show that this gives a presentation of $S^1\times S^2$ via a strongly invertible framed link, so Montesinos' theorem, Theorem~\ref{t:Mo}, guarantees that there exists a surface with boundary $\Sigma$,  with $\chi (\Sigma)=1$ and a ribbon immersion $i:\Sigma\looparrowright S^3$ with $i(\partial\Sigma)=\Ml_P$. 

\begin{lem}\label{l:s1xs2}
All the $4$-manifolds represented by the plumbing graphs in Theorem~\ref{l:2} can be changed into a $4$-manifold with boundary $S^1\times S^2$ by adding a $2$-handle along a circle with framing $-1$. 
\end{lem}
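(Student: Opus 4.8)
The plan is to mimic the strategy already carried out in Lemma~\ref{p:clb}, replacing the computation of a lens space summand by the simpler observation that the linear chain produced here collapses completely. Concretely, for each of the five families $(a)$--$(e)$ of Figure~\ref{f:lista} I would start from the Kirby diagram of the star-shaped plumbing graph $P$ representing $M_P$, and attach a $2$-handle along a suitably placed framed $(-1)$-circle. The circle should be chosen, exactly as the thick circle in Figure~\ref{f:cl}, so that the augmented diagram is a strongly invertible link with respect to the involution $u$; this equivariance is what will later let Montesinos' theorem (Theorem~\ref{t:Mo}) translate the handle attachment into a ribbon move on $\Ml_P$. I would then blow down this $(-1)$-circle and proceed, as in the proof of Lemma~\ref{p:clb}, by a sequence of further equivariant blow-downs and isotopies, tracking how each blow-down modifies the remaining framings via the meridinal twist described in \cite[Chap.\ 9 Sect.\ H]{b:Ro}.

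The key arithmetic input is that every graph in Theorem~\ref{l:2} has $I(P_n)=-2$ and \emph{no} complementary legs, so unlike the situation of Lemma~\ref{p:clb} there is no surviving lens space $L(p,q)$: the whole linear chain left after the first blow-down should reduce, under iterated blow-downs, to a single unknotted $0$-framed circle. The heart of the argument is therefore to verify, family by family $(a)$--$(e)$, that the string of framings appearing along the chain is exactly one that contracts to the empty string under blow-downs, i.e.\ that the corresponding continued fraction collapses. Since a $4$-manifold whose Kirby diagram is a single $0$-framed unknot has boundary $S^1\times S^2$, this yields the claimed boundary. I expect this verification to be organized by noting that the placement of the $(-1)$-handle in each family is dictated precisely by the embedding data from the first step of the proof, so that the initial blow-down always creates a $(-2)$ or $(-1)$ weight that starts an avalanche of cancellations; the exponents $t,s,r$ and the Riemenschneider-related blocks $b_i,c_j$ in family $(e)$ should all telescope in the same manner as in Lemma~\ref{p:clb}.

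The main obstacle will be the uniform bookkeeping across the five families: families $(a)$--$(d)$ carry one or two free parameters and can presumably be handled by a single blow-down pattern with an induction on $t,s$, but family $(e)$, which involves the arbitrary block $b_1,\dots,b_k$ together with its Riemenschneider dual $c_1,\dots,c_\ell$, will require the same complementary-string manipulation used in Lemma~\ref{p:clb} (namely that deleting the leading $2$ and decrementing the partner string preserves the point rule) to show that the two dual sub-chains annihilate each other down to the single $0$-framed circle. I would present the detailed chain of Kirby moves for one representative family, most naturally $(e)$ since it subsumes the phenomenon in the others, and indicate that the remaining families follow by the same—indeed simpler—sequence of equivariant blow-downs, leaving the routine diagram chases to the reader as is done for the other cases in Lemma~\ref{l:sigma_cl}.
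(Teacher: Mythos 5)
Your proposal follows essentially the same route as the paper: attach a $(-1)$-framed $2$-handle whose placement is dictated by the embedding data (and is equivariant for the involution $u$), then perform an iterated sequence of blow-downs in which each blow-down creates a new $(-1)$-weighted vertex, collapsing the diagram to a single $0$-framed unknot representing $D^2\times S^2$ with boundary $S^1\times S^2$; the paper likewise reuses the complementary-string collapse of Lemma~\ref{p:clb} to handle the Riemenschneider blocks $b_1,\dots,b_k$ and $c_1,\dots,c_\ell$ in family $(e)$. The only difference is organizational: the paper groups families $(a)_{t>0},(b),(c),(e)$ under one schematic diagram and $(a)_{t=0},(d)$ under another, rather than singling out $(e)$ as the representative case.
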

\begin{proof}
The families $(a)$ with $t>0$, $(b),(c)$ and $(e)$ of Theorem~\ref{l:2}, are represented schematically in Figure~\ref{f:4Fam}, where a black square on a circle represents a possible linear plumbing linked to it. The thicker circle represents the added $2$-handle. In this way, we can perform Kirby calculus on this general figure and then specialize it to the different families, by substituting $x,y,z$ and the black squares with the corresponding framings and linear plumbings respectively. This is done in Figure~\ref{f:4Fam,2}, where $(i)$ [resp.\ $(ii)$ and $(iii)$] represents the last diagram in Figure~\ref{f:4Fam} with the data from family $(a)$ with $t>0$ or from family $(b)$ [resp.\ $(c)$ and $(e)$]. Since the last diagram in Figure~\ref{f:4Fam} is star-shaped, in Figure~\ref{f:4Fam,2} we have used the graph notation to improve clarity. In each family the graph has a vertex with weight $-1$. Blowing down this $-1$ we obtain a new graph with a new vertex with weight $-1$. For every family this blowing down operation can be iterated (the first case in Figure~\ref{f:4Fam,2} is done in full detail; the other two are approached analogously) until we are left with a graph with only one vertex of weight $0$, which represents the $4$-manifold $D^{2}\times S^{2}$ with boundary $S^{1}\times S^{2}$. Since the blow downs do not change the boundary, the statement is proved for these families.

We are left with the families $(a)$ when $t=0$ and $(d)$, which are represented schematically in Figure~\ref{f:2Fam}. The thicker circle represents, as before, the added $2$-handle. Family $(a)$ with $t=0$ has $x=-3$ and $\blacksquare=\emptyset$, while family $(d)$ has $x=-t-3$ and $\blacksquare$ represents a linear plumbing of $t$ circles, each of them with framing $-2$. Figure~\ref{f:2Fam} shows that the addition of the thick handle with framing $-1$ turns the original $4$-manifold into another one with boundary $S^{1}\times S^{2}$. We have done in detail the case $(a)$ when $t=0$, the study of family $(d)$ being analogous.    
\end{proof} 

\begin{figure}
\begin{center}
\frag[ss]{7}{$\displaystyle\boundary^\partial$}
\frag[ss]{t blow downs}{$t$ blow downs}
\frag[ss]{isot}{isotopy}
\frag[ss]{1}{$-1$}
\frag[ss]{t-1}{$t\!-\!1$}
\frag[ss]{-t-2}{$-\!t\!\!-\!\!2$}
\frag[ss]{-t-1}{$-\!t\!\!-\!\!1$}
\frag[ss]{2}{$-2$}
\frag[ss]{x}{$x$}
\frag[ss]{y}{$y$}
\frag[ss]{z}{$z$}
\frag[ss]{x-1}{$x+1$}
\frag[ss]{x-2}{$x+2$}
\frag[ss]{y-1}{$y+1$}
\frag[ss]{z-1}{$z+1$}
\includegraphics[scale=0.8]{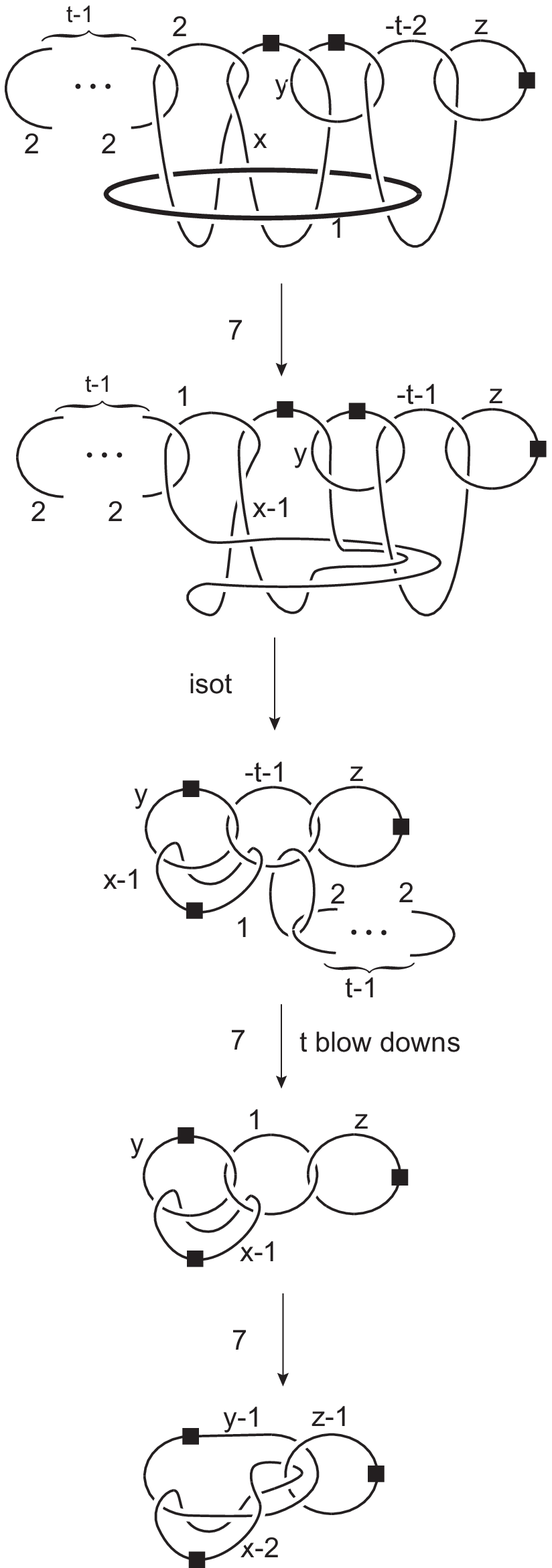}
\hcaption{This diagram shows how to add a $2$-handle with framing $-1$ to the families $(a)$ with $t>0$, $(b),(c)$ and $(e)$ of Theorem~\ref{l:2}, in order to obtain a $4$-manifold with boundary $S^{1}\times S^{2}$. The framings $x,y$ and $z$, and the linear plumbings represented by black squares differ in the four families. The arrows in the diagram represent either blow downs or isotopies.}
\label{f:4Fam}
\end{center}
\end{figure}

\begin{figure}
\begin{center}
\frag[ss]{7}{$\displaystyle\boundary^\partial$}
\frag[ss]{s-1 blow downs}{$s-1$ blow downs}
\frag[ss]{1}{$-1$}
\frag[ss]{s-1}{$s\!-\!1$}
\frag[ss]{-t-1}{$-\!t\!\!-\!\!1$}
\frag[ss]{-s-1}{$-\!s\!-\!1$}
\frag[ss]{-s-2}{$-\!s\!-\!2$}
\frag[ss]{2}{$-2$}
\frag[ss]{s}{$s$}
\frag{a}{$(i)$}
\frag{b}{$(ii)$}
\frag{c}{$(iii)$}
\frag[ss]{t}{$t$}
\frag[ss]{0}{$0$}
\frag[ss]{AAAA}{See Figure 6} 
\frag[ss]{-b_1}{$-b_1$}
\frag[ss]{-b_k}{$-b_k$}
\frag[ss]{-c_1-1}{$-\!c_1\!\!-\!\!1$}
\frag[ss]{-c_2}{$-c_2$}
\frag[ss]{-c_l}{$-c_\ell$}
\frag[ss]{-c_1}{$-c_1$}
\includegraphics[scale=0.8]{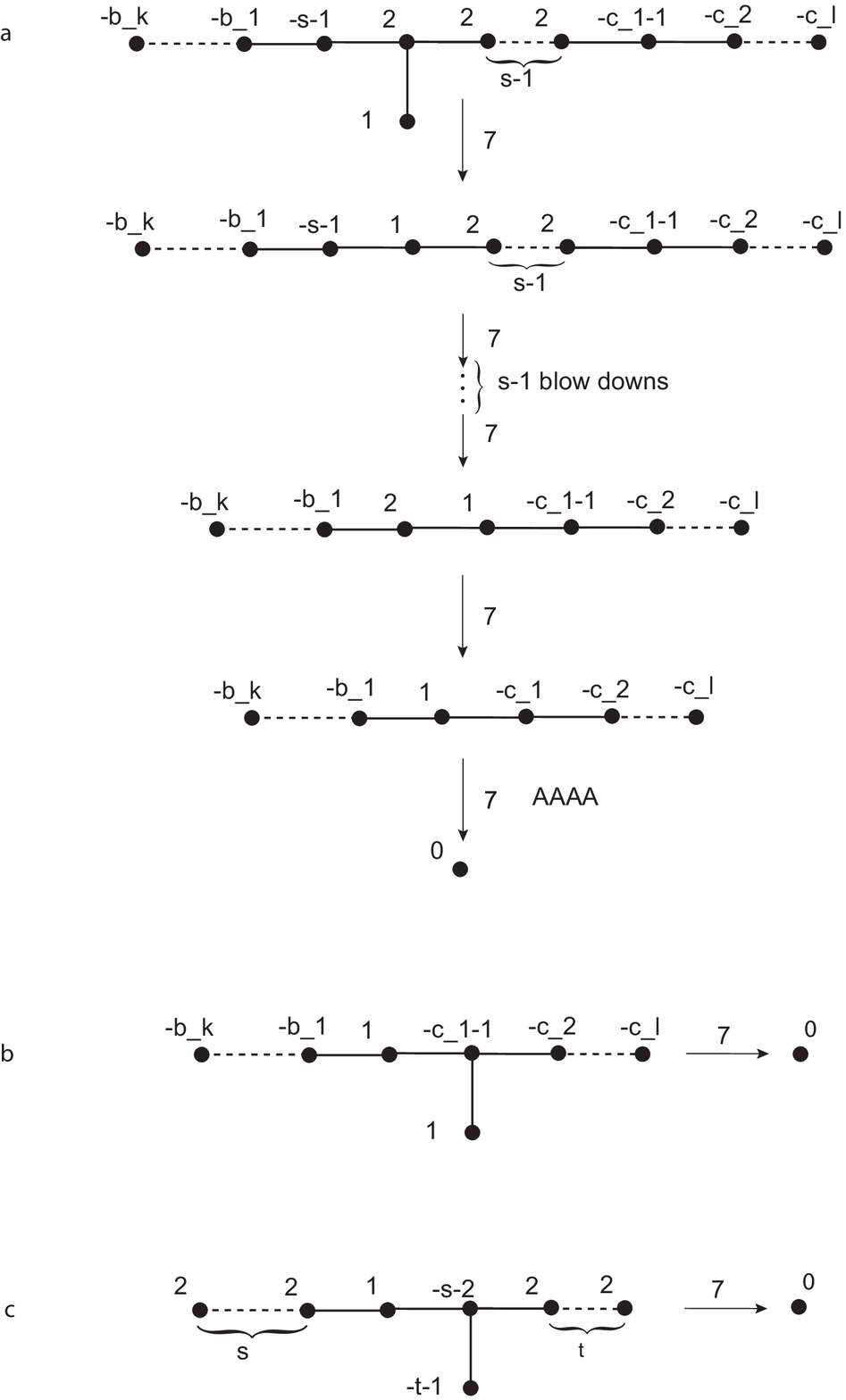}
\hcaption{Graph $(i)$ is obtained from the last diagram of Figure~\ref{f:4Fam} by substituting $x,y,z$ and the black squares with the data from the graph in family $(a)$ with $t>0$ or in family $(b)$. Graphs $(ii)$ and $(iii)$ correspond respectively to considering in the last diagram of Figure~\ref{f:4Fam} the data from families $(c)$ and $(e)$ respectively.} 
\label{f:4Fam,2}
\end{center}
\end{figure}

\begin{figure}
\begin{center}
\frag[ss]{7}{$\displaystyle\boundary^\partial$}
\frag[ss]{1}{$-1$}
\frag[ss]{s-1}{$s\!-\!1$}
\frag[ss]{-s-1}{$-\!s\!-\!1$}
\frag[ss]{-s-2}{$-\!s\!-\!2$}
\frag[ss]{2}{$-2$}
\frag[ss]{3}{$-3$}
\frag[ss]{x}{$x$}
\frag[ss]{x+1}{$x+1$}
\frag[ss]{x}{$x$}
\frag[ss]{isotopy}{isotopy}
\frag[ss]{0}{$0$}
\frag[ss]{blabla}{series of blow downs and Figure 6} 
\frag[ss]{exp}{$x=-3,\blacksquare=\emptyset$}
\frag[ss]{-b_1}{$-b_1$}
\frag[ss]{-b_k}{$-b_k$}
\frag[ss]{-c_1-1}{$-\!c_1\!\!-\!\!1$}
\frag[ss]{-c_2}{$-c_2$}
\frag[ss]{-c_l}{$-c_\ell$}
\frag[ss]{-c_1}{$-c_1$}
\includegraphics[scale=0.8]{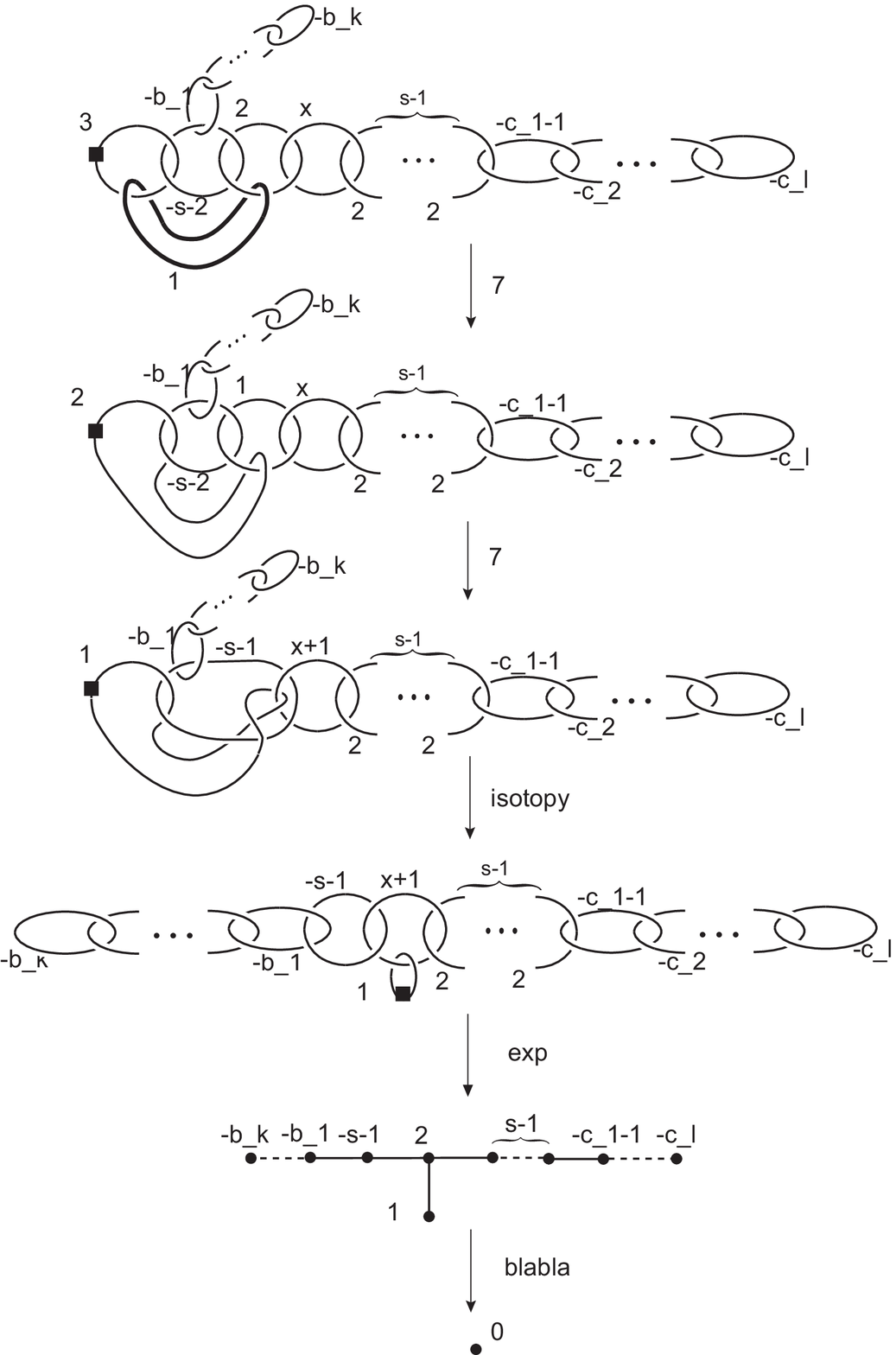}
\hcaption{This diagram shows how to add a $2$-handle with framing $-1$ (the thicker one) to the  families $(a)$ with $t=0$, and $(d)$ of Theorem~\ref{l:2}, in order to obtain a $4$-manifold with boundary $S^{1}\times S^{2}$. Family $(a)$ with $t=0$ satisfies $x=-3$ and $\blacksquare=\emptyset$, while family $(d)$ has $x=-t-3$ and $\blacksquare$ represents a $(-2)$-chain of length $t$. The vertical arrows in the diagram represent either blow downs or isotopies. The last two diagrams have been specialized to family $(a)$ with $t=0$.} 
\label{f:2Fam}
\end{center}
\end{figure}

\begin{rem}
The $2$-handle additions used in Lemmas~\ref{p:clb} and \ref{l:s1xs2} were suggested by the analysis of standard subsets of $\Z^n$ done in order to proof Theorems~\ref{l:strings1} and \ref{l:2}. In fact, consider for example the graph $P$ belonging to the family $(a)$ in Theorem~\ref{l:2} with $t=1, s=2, k=2$, and $b_{1}=b_{2}=2$. One can easily find the following embedding in $\Z^9$.
\begin{center}
\frag[ss]{e1-e2-e6+e7}{$e_1\!-\!e_{2}\!-\!e_{6}\!+\!e_{7}$}
\frag[ss]{e2-e3+e5}{$e_2\!-\!\bm{e_{3}}\!+\!e_{5}$}
\frag[ss]{-e3-e5}{$-\!\bm{e_3}\!-\!e_{5}$}
\frag[ss]{e1+e6}{$e_{1}\!+\!e_{6}$}
\frag[ss]{-e6-e7-e8-e9}{$-\!e_6\!-\!e_{7}\!-\!e_{8}\!-e_{9}$}
\frag[ss]{-e8+e9}{$-\!e_8\!+\!e_{9}$}
\frag[ss]{what}{$-\!e_7\!+\!e_{8}$}
\frag[ss]{-e2-e1+e4}{$-\!e_2\!-\!e_{1}\!+\!e_{4}$}
\frag[ss]{e2+e3+e4}{$e_2+\!\bm{e_{3}}\!+\!e_{4}$}
\frag[ss]{2}{$-2$}
\frag[ss]{3}{$-3$}
\frag[ss]{4}{$-4$}
\includegraphics[scale=0.9]{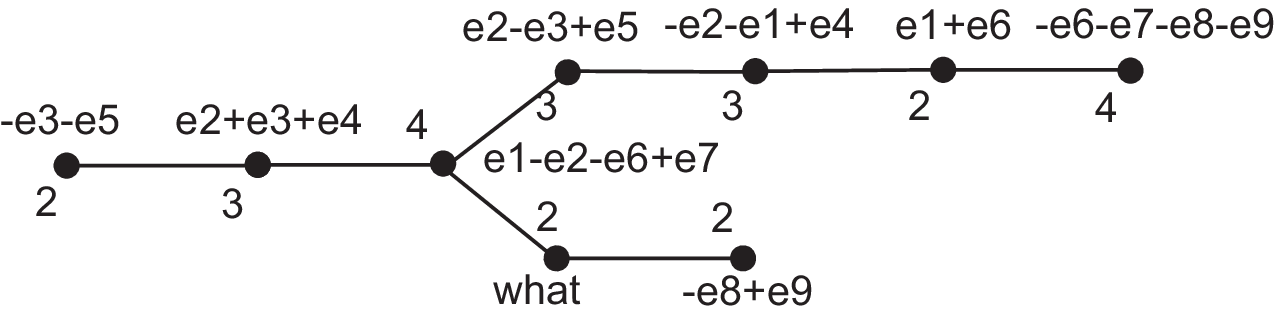}
\end{center}
Notice that, if we erase the basis vector $\bm{e_{3}}$ we are left with a graph having a vertex, the one on the extreme left, with weight $-1$ and with corresponding generator sent to $-e_5$ by the embedding. By erasing this time the $e_{5}$ vector we obtain again a vertex with weight $-1$ in the resulting graph. It is possible to continue in this way until we are left, in this example, with the graph
\begin{center}
\frag{1}{$-1$}
\frag{-e9}{$-e_{9}$}
\frag{a}{$e_{9}$}
\includegraphics[scale=0.9]{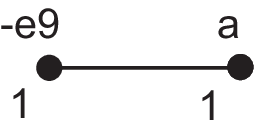}
\end{center}
This \virg{erasing} operation without a formal framework is meaningless, however it suggests a chain of blow downs, which turn the graph into a $4$-manifold with boundary $S^{1}\times S^{2}$. In fact, the thick circle added to the first diagram in Figure~\ref{f:4Fam} intersects the Kirby diagram of the family $(a)$ in Theorem~\ref{l:2} as suggested by the basis vector $e_{3}$ in this example. The same happens with all the plumbing graphs that we have studied.
\end{rem}

Let $M_P$ be the $4$-manifold corresponding to a plumbing graph $P$ as in Theorem~\ref{l:2}. We have seen in Section~\ref{s:prel} that $M_P$ admits a Kirby diagram consisting of a strongly invertible link in $S^3$ with respect to an involution $u$. Let us call $M'$ the $4$-manifold with boundary $S^1\times S^2$ obtained from $M_P$ by adding a $4$-dimensional $2$-handle as in Lemma~\ref{l:s1xs2}. As shown in Figure~\ref{f:sym_ncl}, this $2$-handle can be added in such a way that we obtain again a strongly invertible link with respect to the involution $u$.
\begin{figure}
\begin{center}
\includegraphics[scale=0.7]{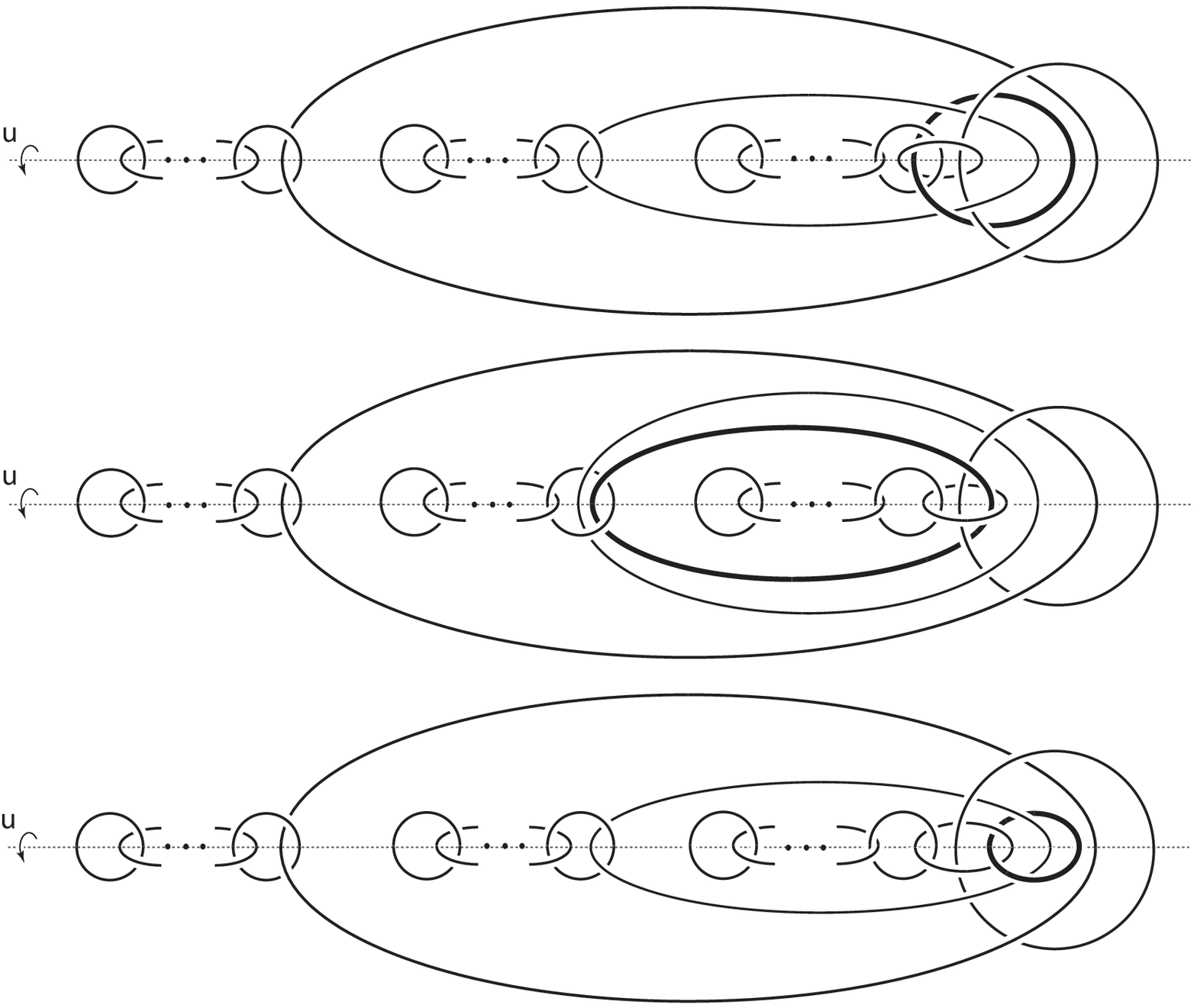}
\hcaption{These three strongly invertible links can be isotoped to the first diagrams in Figures~\ref{f:4Fam} and \ref{f:2Fam} as follows (for sake of clarity we have omitted the framings). The first invertible link can be isotoped to the first diagram in Figure~\ref{f:4Fam} when we consider on it the data from family $(a)$ with $t>0$ of Theorem~\ref{l:2}. The second invertible link is easily isotoped to the first diagram in Figure~\ref{f:4Fam} when we consider on it the data either from family $(b),(c)$ or $(e)$ of Theorem~\ref{l:2}. Finally, the last invertible link is easily isotoped to the first diagram in Figure~\ref{f:2Fam}, when we consider on it the data either from family $(a)$ with $t=0$ or $(d)$ from Theorem~\ref{l:2}.}
\label{f:sym_ncl}
\end{center}
\end{figure}
As in Section~\ref{s:Montesinos_compl}, it follows from Montesinos' theorem, Theorem~\ref{t:Mo}, that $M_P$ is the double cover of $D^4$ branched along a surface $B_P$, which consists of bands plumbed according to the graph $P$. In turn, $M'$ is the double cover of $D^4$ branched over the surface $B'$, which is $B_P$ with an additional band. The addition of this band corresponds to a ribbon move on the Montesinos link $\Ml_P=\partial B_P$. This ribbon move necessarily leads to two unlinked unknots, since $\partial M'=S^1\times S^2$ and, by \cite{b:KT}, whenever $S^1\times S^2$ double branch covers $S^3$, the branch set is the unlink of two unknotted components. The discussion of this section proves the following result.

\begin{lem}\label{l:sigma_ncl}
For a graph $P$ as in Theorem~\ref{l:2}, depending on the number of connected components of $\Ml_{P}$, we have the following possibilities:
\begin{itemize}
\item If $\Ml_{P}$ is a knot, then it bounds a ribbon disc in $S^{3}$. 
\item If $\Ml_{P}$ is a $2$-component link, then it bounds a ribbon surface in $S^3$, which is the disjoint union of a disc and a M\"obius band.
\item If $\Ml_{P}$ is a $3$-component link, then it bounds a ribbon surface in $S^3$, which is the disjoint union of a disc and an annulus. 
\end{itemize}
\end{lem}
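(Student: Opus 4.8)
The plan is to extract the topological consequence of Lemma~\ref{l:s1xs2} exactly as was done in Section~\ref{s:Montesinos_compl}, and then to run the same case analysis by connected components that produced Lemma~\ref{l:sigma_cl}. First I would invoke Montesinos' theorem, Theorem~\ref{t:Mo}: the Kirby diagram of $M_P$ is a strongly invertible link (with respect to the involution $u$), so $M_P$ is the double cover of $D^4$ branched over the surface $B_P$ obtained by plumbing bands according to $P$, and $\partial B_P=\Ml_P$. The essential point, established in the discussion preceding this statement and in Figure~\ref{f:sym_ncl}, is that the extra $2$-handle from Lemma~\ref{l:s1xs2} can be attached \emph{equivariantly}, keeping the link strongly invertible. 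Hence the enlarged manifold $M'$, with $\partial M'=S^1\times S^2$, is again a double branched cover of $D^4$, this time over a surface $B'$ which is $B_P$ with one extra band. The addition of that band is by definition a ribbon move on $\Ml_P=\partial B_P$.

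Next I would identify the result of this ribbon move. Since $\partial M'=S^1\times S^2$ and, by the result of \cite{b:KT}, whenever $S^1\times S^2$ is a double branched cover of $S^3$ the branch set is the two-component unlink $U_2$, the ribbon move must take $\Ml_P$ to $U_2$. This is the crucial structural input: it pins down the target of the cobordism completely, and it is here that the no-complementary-legs hypothesis matters, because for those graphs the handle addition yields $S^1\times S^2$ rather than a connected sum with a nontrivial lens space. The remaining work is bookkeeping on Euler characteristics and component counts, entirely parallel to the proof of Lemma~\ref{l:sigma_cl}.

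Then I would split into the three cases according to $c(\Ml_P)$, reading the possible effects of a single band as in the proof of Lemma~\ref{l:sigma_cl}. A ribbon move either fuses two components, or splits one, or joins one component to itself without splitting; in each case the cobordism surface has $\chi=-1$. If $\Ml_P$ is a knot, the band must join it to itself and split it (to reach the two-component $U_2$), giving a cobordism of $\chi=-1$ from $\Ml_P$ to $U_2$; capping the two unknots with discs ($\chi=2$) yields a disc bounded by $\Ml_P$, so $\Ml_P$ is ribbon. If $\Ml_P$ has two components, the band fuses them into a single unknotted component (it cannot split, since the target $U_2$ has only two unknotted components), and capping gives a disjoint union of a disc and a M\"obius band. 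If $\Ml_P$ has three components, the band fuses two of them, and capping yields the disjoint union of a disc and an annulus. In every case the resulting surface $\Sigma$ in $S^3$ is ribbon with $\chi(\Sigma)=1$ and $\partial\Sigma=\Ml_P$, as claimed.

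I expect the main obstacle to be verifying that the $2$-handle can genuinely be attached equivariantly with respect to $u$ and that the attaching circle meets the branch data correctly, so that Theorem~\ref{t:Mo} applies to $M'$ and produces exactly the ribbon band rather than some more complicated branch surface; this is precisely what Figure~\ref{f:sym_ncl} is designed to certify, and the cleanest route is to exhibit the attaching circle symmetrically as in that figure and check directly that it descends to a single band added to $B_P$. The component-by-component Euler characteristic computation is routine once the target is known to be $U_2$, so the weight of the argument rests on the equivariance of the handle attachment together with the input from \cite{b:KT}.
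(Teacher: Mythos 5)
Your proposal follows the paper's argument essentially verbatim: the paper likewise derives the lemma from the discussion immediately preceding it (equivariant attachment of the $2$-handle from Lemma~\ref{l:s1xs2}, Montesinos' theorem identifying the added handle with a ribbon band on $\Ml_P$, and the result of \cite{b:KT} pinning down the branch set of a double cover $S^1\times S^2\to S^3$ as the two-component unlink), and then dispatches the three cases by the same elementary surface classification used for Lemma~\ref{l:sigma_cl}. One slip in your case analysis: when $\Ml_P$ has two components, the band cannot \emph{fuse} them into a single component, since that would produce a knot rather than the two-component unlink; the only admissible move is a band joining one component to itself without splitting it, and it is this move (a cobordism of Euler characteristic $-1$ from a $2$-component link to the two-component unlink, capped off by two discs) that yields the disjoint union of a disc and a M\"obius band. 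Your stated conclusion is correct, but the move you describe in that case is precisely the excluded one.
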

\begin{proof}
The result follows easily from elementary facts on the classification of surfaces with boundary, as in the proof of Lemma~\ref{l:sigma_cl}.
\end{proof}

Having established the existence of the desired ribbon surfaces, we are now ready to  prove  the main result of this paper, modulo the technical proofs of Theorems~\ref{l:strings1} and \ref{l:2}, which will be carried out in the remaining sections.

\begin{proof}[Proof of Theorem \ref{t:int}]
We start assuming that $Y_\Gamma=\partial W$, where $W$ is a rational homology ball. Since $\Gamma\in\wp$, the intersection form of the $4$-dimensional plumbing $M_{\Gamma}$ is negative definite (see Remark~\ref{r:neg}) and hence, $X_{\Gamma}:=M_{\Gamma}\cup_{\partial}(-W)$ is a closed smooth negative $4$-manifold. By Donaldson's Theorem, the intersection lattice of $X_{\Gamma}$ is isomorphic to $(\Z^{n},-\mathrm{Id})$, where $n=b_2(X_\Gamma)$. Clearly, the group $H_{2}(M_{\Gamma};\Z)$ is isomorphic to $\Z^{n}$ and the intersection lattice $(\Z^{n},Q_{\Gamma})$ has a basis $\{v_{0},v_{1,1},...,v_{n_{1},1},v_{1,2},...,v_{n_{2},2},v_{1,3},...,v_{n_{3},3}\},$ where $n=n_{1}+n_{2}+n_{3}+1$, in which $Q_{\Gamma}$ has the form \eqref{e:Q_P}. Therefore, via the embedding $M_{\Gamma}\subset X_{\Gamma}$ we can view the above basis, and hence $\Gamma$, as a standard subset of $\Z^{n}$. If $\Gamma$ has two complementary legs [resp.\ no complementary legs] then it belongs to the list in Theorem~\ref{l:strings1} [resp.\ Theorem~\ref{l:2}] and the existence of the surface $\Sigma$ and of the ribbon immersion follows from Lemma~\ref{l:sigma_cl} [resp. Lemma~\ref{l:sigma_ncl}].

The arguments we use to prove the \virg{only if} part of the statement coincide with those in \cite[Proof of Theorem~1.2, (2) implies (1)]{b:Li}. For the reader's convenience we include them  here. Assume that there exist a surface $\Sigma$ and a ribbon immersion $\Sigma\looparrowright S^{3}$ such that $\partial\Sigma=\Ml_{\Gamma}$ and $\chi (\Sigma)=1$. Let $\Sigma'\subset D^4$ be a smoothly embedded surface obtained by pushing the interior of $\Sigma$ inside the $4$-ball. The $2$-fold covering $Y_\Gamma\rightarrow S^3$ branched over $\Ml_\Gamma$ extends to a $2$-fold covering $W\rightarrow D^4$ branched over $\Sigma'$ (see \cite[p.\ 277--279]{b:Ka}). We conclude by showing that $W$ is a rational homology ball, note that $Y_\Gamma=\partial W$. By definition of $\Sigma'$, we may assume that the function distance from the origin $D^4\rightarrow [0,1]$ restricted to $\Sigma'$ is a proper Morse function with only index-$0$ and index-$1$ critical points. This implies that $W$ has a handlebody decomposition with only $0$-, $1$- and $2$-handles (see \cite[lemma at p.\ 30--31]{b:CH}. Therefore, from 
$$b_0(W)-b_1(W)+b_2(W)=\chi (W)=2\chi (D^4)-\chi(\Sigma ')=1$$
we deduce $b_1(W)=b_2(W)$. Since $b_1(Y_\Gamma)=0$ and $H_1(W,Y_\Gamma;\Q)\cong H^3(W;\Q)=0$, the exact homology  sequence of the pair $(W,Y_\Gamma)$
$$\cdots\rightarrow\underbrace{H_1(Y_\Gamma;\Q)}_{=0}\rightarrow H_1(W;\Q)\rightarrow\underbrace{H_1(W,Y_\Gamma;\Q)}_{=0}\rightarrow\cdots$$
shows $b_1(W)=0$. It follows that $H_\ast(W;\Q)\cong H_\ast(D^4;\Q)$. 
\end{proof}

\section{Contractions of good sets}\label{s:contractions}
In the rest of the paper, Sections~\ref{s:contractions} to \ref{s:ss}, we carry out the proof of Theorems~\ref{l:strings1} and \ref{l:2}. In the current section we introduce some notation, define contractions of good sets and give some preliminary results.

Recall that, in Section~\ref{s:prel}, we fixed $e_1,...,e_n$, the standard basis of $(\Z^n,-\mathrm{Id})$ and we defined the set $J=\{(s,\alpha)|\, s\in\{0,1,...,n_\alpha\}, \alpha\in\{1,2,3\}\}$ indexing the vertices $v_{s,\alpha}$ of three-legged plumbing graphs $P\subseteq\Z^n$. We will adopt the following notation: for each $i\in\gbra{1,...,n}$, $(s,\alpha)\in J$ and  $S\subseteq P$ we put
\begin{align*}
E_i(P) & :=\{(s,\alpha)\in J\,|\ v_{s,\alpha}\cdot e_i\neq 0\}, \\
V_S & :=\{j\in\{1,...,n\}\,|\ e_j\cdot v_{s,\alpha}\neq 0\mbox{ for some }v_{s,\alpha}\in S\},\\
p_i(P) & :=\left| \gbra{ j\in\{1,...,n\}\,|\ |E_j(P)|=i } \right|.
\end{align*}
Given $e\in\Z^n$ with $e\cdot e=-1$, we denote by $\pi_e:\Z^n\to\Z^n$ the orthogonal projector onto the subspace orthogonal to $e$, i.e.
$$\pi_e(v):=v+(v\cdot e)e\in\Z^n,\s\s\forall v\in\Z^n.$$
Let $P\subseteq \Z^n$ be a good set and suppose that $E_h(P)=\{(s,\alpha),(t,\beta)\}$ for some $h\in\{1,...,n\}$ and $(s,\alpha),(t,\beta)\in J$. Then, we say that the subset $P'\subseteq\Z^{n-1}=\langle e_1,...,e_{h-1},e_{h+1},...,e_n\rangle$ defined by
$$P':=(P\setminus\{v_{s,\alpha},v_{t,\beta}\})\cup\{\pi_{e_h}(v_{t,\beta})\}$$
is obtained from $P$ by a \textbf{contraction}, and we write $P\searrow P'$. Moreover, we say that $P$ is obtained from $P'$ by an \textbf{expansion}, and we write $P'\nearrow P$.

For good subsets $P\subseteq\Z^n$ with two complementary legs, $L_2$ and $L_3$ (their associated strings are related to one another by Riemenschneider's point rule), we extend the definition of contraction to the following operation. Suppose that for some $i\in\{1,...,n\}$ and  for some $(s,1)\in J$, we have $E_i(P)=\{0,(s,1)\}$  and let $v_{t,1}$ be any final vector in $L_1$. Since  the leg $L_2$ is connected to the central vertex, we have $v_{1,2}\cdot v_0=1$, and therefore there exists $k\in\{1,...,n\}$ such that $k\in V_{v_0}\cap V_{v_{1,2}}$ and $v_0=\tilde v_0\pm e_k$. We say that the subset $P'\subseteq\Z^{n-1}=\langle e_1,...,e_{i-1},e_{i+1},...,e_n\rangle$ defined by
$$P':=(P\setminus\{v_0,v_{s,1},v_{t,1}\})\cup\{\pi_{e_i}(v_{s,1})\}\cup\{v_{t,1}\pm e_k\}$$
is obtained from $P$ by a \textbf{contraction}, and we write $P\searrow P'$. Moreover, we say that $P$ is obtained from $P'$ by an \textbf{expansion}, and we write $P'\nearrow P$. 

\begin{exm}\label{e:cl}
The two examples in Figure~\ref{f:ej} are extended contractions of sets with complementary legs. Following the above notation $P$ is the set on the left of the arrow and $P'$ the one on the right. Notice that the first example shows that the extended contraction of a standard set is again a standard set.
\hfill\qed
\end{exm}

\begin{figure}
\begin{flushleft}
\frag[ss]{v31}{$e_{2}\!+\!e_{3}\!+\!\bm{e_{4}}$}
\frag[ss]{8}{$e_{2}\!-\!e_{3}$}
\frag[ss]{v21}{$e_{1}\!-\!e_{2}$}
\frag[ss]{4}{$-\!e_{2}\!-\!e_{1}\!+\!\bm{e_{4}}\!+\!e_{5}$}
\frag[ss]{v12}{$-\!e_5\!-\!e_6$}
\frag[ss]{v13}{$-\!e_5\!+\!e_6$}
\frag[ss]{5}{$\!e_2\!-\!e_3\!+e_{5}$}
\frag[ss]{v32}{$\!e_2\!+\!e_3$}
\frag[ss]{v23}{$\!e_2\!-\!e_3\!+\!\bm{e_{4}}$}
\frag[ss]{A}{$e_1\!+\!e_2\!+e_{3}$}
\frag[ss]{B}{$-\!e_3\!-\!\bm{e_4}\!+e_{5}$}
\includegraphics[scale=0.6]{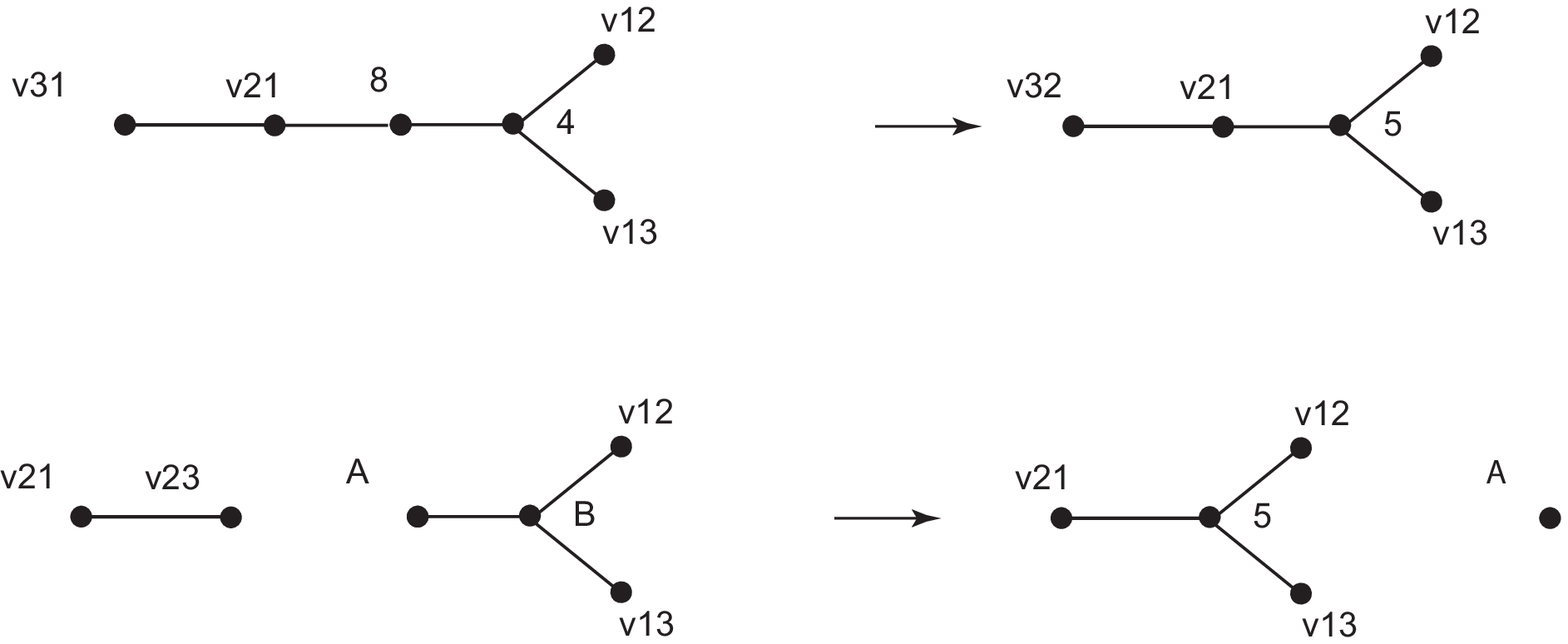}
\ccaption{Contractions of sets with complementary legs, which are the length-one legs.}
\label{f:ej}
\end{flushleft}
\end{figure}

Once that we have settled all the necessary definitions, we introduce here several preliminary results concerning good subsets.

\begin{lem}\label{l:ind}
The elements of a good subset $P\subseteq\Z^n$ are linearly independent over $\Z$. 
\begin{proof}
If $P$ is a linear set, the claim follows from \cite[Remark~2.1]{b:Li}. If, on the contrary $P$ has a trivalent vertex, let us further assume that  all the $\gamma_{s,\alpha}$'s in $Q_P$  are equal to $1$ (the general case is a straightforward extension of this one). Then, $Q_P$ turns out to be the intersection matrix associated to the Seifert space $Y_P=Y(-a_0;(\alpha_1,\beta_1)(\alpha_2,\beta_2)(\alpha_3,\beta_3))$, for some $\alpha_i,\beta_i\in\Z$ such that
$$ \frac{\alpha_i}{\beta_i}=a_{1,i}-\frac{1}{\displaystyle a_{2,i} - \frac{ \bigl. 1}{\displaystyle \ddots\ _{ \displaystyle{a_{n_{i}-1,i}} -\frac {\bigl. 1}{a_{n_i,i}}}} }\ .$$ 
By definition of Seifert invariants we have $\alpha_i>1$; by definition of good subset it holds $a_0\geq 3$ and $a_{k,i}\geq 2$ for every $k\in\{1,...,n_i\}$, which implies $\beta_i/\alpha_i<1$ (see Section~\ref{s:prel}). It follows that the determinant of $Q_P$ is non zero, since by \eqref{e:det} it holds
$$|\det(Q_P)|=\abs{\cbra{-a_0+\frac{\beta_1}{\alpha_1}+\frac{\beta_2}{\alpha_2}+\frac{\beta_3}{\alpha_3}}\alpha_1\alpha_2\alpha_3}\neq 0.$$
Now, consider the $n\times n$ matrix $M$ having as rows  the coordinates of the elements of $P$ with respect to the standard basis of $\Z^n$. This matrix satisfies $Q_P=-MM^t$ and therefore $|\det M|=|\det Q_P|^{1/2}\neq 0$, which readily gives the claim. 
\end{proof}
\end{lem}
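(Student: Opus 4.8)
The plan is to show that the elements of a good subset $P\subseteq\Z^n$ are linearly independent over $\Z$ by reducing the question to a determinant computation. The governing observation is that if I form the $n\times n$ integer matrix $M$ whose rows are the coordinate vectors of the elements of $P$ in the standard basis, then the intersection form relation $v\cdot w=-\langle v,w\rangle$ gives $Q_P=-MM^t$; hence $|\det Q_P|=|\det M|^2$, and the $\Z$-linear independence of the rows of $M$ is equivalent to $\det M\neq 0$, which in turn is equivalent to $\det Q_P\neq 0$. So everything comes down to verifying that the intersection matrix $Q_P$ of a good set is nonsingular.

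To compute $\det Q_P$ I would split into the two cases permitted by the definition of a good set. If $P$ is linear, the nonvanishing of $\det Q_P$ is already recorded in Lisca's work, so I would simply cite \cite[Remark~2.1]{b:Li}. If $P$ has a trivalent vertex, I would first reduce to the connected (standard) case where all the $\gamma_{s,\alpha}$ equal $1$, remarking that the general good (possibly disconnected) case is a routine extension since a disconnected graph's matrix is block-diagonal and each block is again of the treated form. In the standard case the matrix $Q_P$ is exactly the linking/intersection matrix of the Seifert plumbing, so $Y_P=Y(-a_0;(\alpha_1,\beta_1)(\alpha_2,\beta_2)(\alpha_3,\beta_3))$ with each $\alpha_i/\beta_i$ read off from the continued fraction of the $i$-th leg, and I can invoke the Seifert-manifold order formula \eqref{e:det} to get
$$|\det(Q_P)|=\abs{\cbra{-a_0+\tfrac{\beta_1}{\alpha_1}+\tfrac{\beta_2}{\alpha_2}+\tfrac{\beta_3}{\alpha_3}}\alpha_1\alpha_2\alpha_3}.$$

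It then remains to check this quantity is strictly positive. Here I would use the defining inequalities of a good set: the central weight satisfies $a_0\geq 3$ and every leg weight satisfies $a_{k,i}\geq 2$. The condition $a_{k,i}\geq 2$ forces, by the continued-fraction discussion in Section~\ref{s:prel}, that $0<\beta_i/\alpha_i<1$ for each $i$, so the three fractions sum to something strictly less than $3\leq a_0$; combined with $\alpha_i>1$ (from the definition of Seifert invariants) the displayed absolute value is a product of nonzero factors and hence nonzero. From $\det Q_P\neq 0$ I conclude $|\det M|=|\det Q_P|^{1/2}\neq 0$, giving the linear independence. The only mildly delicate point—and the step I would treat most carefully—is the reduction from the general good set to the standard connected case and the verification that the strict inequality $-a_0+\sum_i \beta_i/\alpha_i\neq 0$ genuinely follows from $a_0\geq 3$ together with $\beta_i/\alpha_i<1$; everything else is bookkeeping.
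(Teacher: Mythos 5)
Your proposal is correct and follows essentially the same route as the paper: reduce to $\det Q_P\neq 0$ via $Q_P=-MM^t$, cite Lisca for the linear case, and in the trivalent case invoke the Seifert order formula \eqref{e:det} together with $a_0\geq 3$ and $0<\beta_i/\alpha_i<1$ to see that $-a_0+\sum_i\beta_i/\alpha_i<0$. The block-diagonal remark you make for disconnected sets is exactly how the paper handles the general case (cf.\ Remark~\ref{r:li}).
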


\begin{rem}\label{r:li}
Notice that Lemma \ref{l:ind} remains valid for reducible sets $P\subseteq\Z^n$ whose incidence matrix $Q_P$ has the form \eqref{e:Q_P} or \eqref{e:Q_P_linear}. In fact, in this case the matrix $M$ (see the  proof of Lemma \ref{l:ind}) is a diagonal block matrix, i.e.
$$
M
=
\cbra{
\begin{array}{cccc}
M_1\\
& M_2\\
&&\ddots\\
&&&M_k
\end{array}
}.
$$ 
Let us denote by $P_i\subseteq P$ the irreducible subset having incidence matrix $Q_{P_i}=-M_iM_i^t$. Then, by  Lemma \ref{l:ind}, the matrices $M_i$ are non singular, hence $M$ and $Q_P=-MM^t$ are nonsingular as well. 
\end{rem}

For the reader's convenience we now include two Lemmas from \cite{b:Li}. More precisely, Lemma~\ref{l:lisca_1} corresponds to Lemma 2.5 in \cite{b:Li}, which gives important information on $p_1(P)$ and $p_2(P)$ coming from the assumption $I(P)<0$. On the other hand, Lemma \ref{l:n.e} points out the most relevant properties of  good linear sets with $p_1(P)>0$. Its proof follows from Section $3$ in \cite{b:Li}, although in Lemma \ref{l:n.e} we have dropped the assumption $I(P)<0$.  

\begin{lem}[Lisca]\label{l:lisca_1} 
Let $P\subseteq\Z^n$ be a subset of cardinality $n$ with $I(P)<0$. Then,
\begin{align}\label{e:p_i}
2p_1(P)+p_2(P)>\sum_{j=4}^n(j-3)p_j(P). 
\end{align}
\end{lem}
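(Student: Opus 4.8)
The plan is a double-counting argument that balances the size $I(P)$ against the number of nonzero coordinates used by the vectors of $P$. First I would record the elementary estimate: for every $v\in P$ the self-intersection $-v\cdot v=\langle v,v\rangle$ is a sum of squares of integers, hence is at least the number $N_v:=|\{j\mid v\cdot e_j\neq 0\}|$ of nonzero coordinates of $v$. Counting the pairs $(v,j)$ with $v\cdot e_j\neq 0$ in two ways — first by $v$, then by $j$ — and using the definitions of $E_j(P)$ and $p_i(P)$ gives
$$\sum_{v\in P}N_v=\sum_{j=1}^n|E_j(P)|=\sum_{i\geq 0}i\,p_i(P).$$
Together with the definition $I(P)=\sum_{v\in P}(-v\cdot v)-3n$ and the estimate $-v\cdot v\geq N_v$, this yields $I(P)\geq\sum_{i\geq 0}i\,p_i(P)-3n$, so the hypothesis $I(P)<0$ forces $\sum_{i\geq 0}i\,p_i(P)<3n$.

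The second ingredient is the normalization coming from the fact that there are exactly $n$ coordinate directions, each falling into a unique class: $\sum_{i\geq 0}p_i(P)=n$. Assuming $p_0(P)=0$ (see below), I would rewrite $3n=3\sum_{i\geq 1}p_i(P)$ and combine it with the previous bound to obtain $\sum_{i\geq 1}(i-3)p_i(P)<0$. Splitting off the terms $i=1,2,3$, whose coefficients $i-3$ are $-2,-1,0$, and transposing gives exactly
$$\sum_{j=4}^n(j-3)p_j(P)<2p_1(P)+p_2(P),$$
as desired.

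The step I expect to be the crux is the vanishing $p_0(P)=0$: without it the inequality $\sum_{i\geq 0}(i-3)p_i(P)<0$ leaves a spurious $+3p_0(P)$ on the right-hand side, and the clean conclusion is lost. This vanishing is precisely the assertion that every coordinate direction is hit by some vector of $P$, i.e.\ that the $n$ vectors span $\Q^n$; it holds whenever the elements of $P$ are linearly independent, which is the case in all the applications of this lemma by Lemma~\ref{l:ind}. I would therefore either build linear independence into the standing hypotheses or invoke it from the ambient embedding setup, after which the two displayed steps complete the argument.
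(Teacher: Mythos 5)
Your argument is correct and is exactly the counting proof of Lisca's Lemma~2.5, which this paper cites without reproving: bound $-v\cdot v$ below by the number of nonzero coordinates of $v$, double-count incidences, and use $\sum_i p_i(P)=n$. Your identification of $p_0(P)=0$ as the crux is also well taken — without it the count only yields $2p_1(P)+p_2(P)+3p_0(P)>\sum_{j\geq 4}(j-3)p_j(P)$, and the stated inequality can genuinely fail (e.g.\ $\{e_1,\ -e_1+e_2,\ e_1+e_2,\ 2e_1+e_2\}\subseteq\Z^4$ has $I=-2$ but $p_1=p_2=0$, $p_4=1$); in every application here the needed vanishing follows from the linear independence of good sets (Lemma~\ref{l:ind}), which forces every coordinate direction to be hit.
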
 

\begin{lem}[Lisca]\label{l:n.e}
Let $n\geq 3$ and $P_n\subseteq\Z^n$ be a good linear set with $p_1(P_n)>0$, i.e.\ there exist $i\in\{1,...,n\}$ and $(s,1)\in J$ such that $E_i(P_n)=\{(s,1)\}$. Then $P_n$ is standard and there exist $j\in\{1,...,n\}$ and $\lambda\in\Z$ such that
\begin{itemize}
\item[$(1)$] $v_{s,1}=\lambda e_i\pm e_j$,
\item[$(2)$] $I(P)=\lambda^2-4$,
\item[$(3)$] If $n\geq 4$ there exist $h\in\{1,...,n\}$ and $(t,1),(r,1)\in\{(1,1),(n,1)\}$ such that $E_h(P)=\{(1,1),(n,1)\}$, $a_{t,1}=2$ and $a_{r,1}>2$,
\item[$(4)$] $P_n$ can be obtained by final $(-2)$-vector expansions from the set $P_3:=\{w_{1,1},w_{2,1},w_{3,1}\}\subseteq\Z^3$ where, up to replacing $P_3$  with $\Omega P_3$, $\Omega\in\Upsilon_{P_{3}}$, we have  $(w_{1,1},w_{2,1},w_{3,1})=(e_1+e_2,\lambda e_3-e_2,e_2-e_1)$.
\end{itemize}
\end{lem}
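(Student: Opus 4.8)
I am asked to prove Lemma~\ref{l:n.e} (Lisca's lemma): for a good linear set $P_n\subseteq\Z^n$ with $p_1(P_n)>0$, so that some basis vector $e_i$ meets exactly one vertex $v_{s,1}$, the four conclusions hold — standardness, the shape $v_{s,1}=\lambda e_i\pm e_j$, the identity $I(P)=\lambda^2-4$, the existence of a final $(-2)$-vector at one end, and the reconstruction of $P_n$ from a canonical $P_3$ by final $(-2)$-expansions.

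Let me think carefully about how I would prove each piece.

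**Piece (1): the shape of the "free-end" vector.**

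The hypothesis is $E_i(P_n)=\{(s,1)\}$, meaning $e_i$ appears (with nonzero coefficient) in exactly one of the vectors of $P$, namely $v_{s,1}$. Write $v_{s,1}=\sum_k c_k e_k$ with $c_i\neq 0$. Now $v_{s,1}\cdot v_{s,1}=-a_{s,1}$, and in a good linear set $a_{s,1}\in\{2,3,\dots\}$, so $-v_{s,1}\cdot v_{s,1}=\sum_k c_k^2 = a_{s,1}$.

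Here's the key observation. Consider the neighbors of $v_{s,1}$ in the linear graph. A linear (good) graph has each vertex adjacent to at most two others. The adjacency $v_{s,1}\cdot v_{t,1}=1$ means $\sum_k c_k c'_k = -1$ (recall the product is $-\langle\cdot,\cdot\rangle$). Since $e_i$ appears only in $v_{s,1}$, the coefficient $c_i$ plays no role in any dot product of $v_{s,1}$ with another vertex. So if I decompose $v_{s,1} = c_i e_i + w$ where $w$ is supported away from $e_i$, then all adjacency information of $v_{s,1}$ with the rest of $P$ is carried by $w$, and $w\cdot w = -(a_{s,1}-c_i^2)$.

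The structure of a good linear set is rigid: I expect that the "internal" part $w$ of a vertex that has a private basis vector $e_i$ must be extremely simple. The cleanest argument: since $v_{s,1}$ is a vertex of a linear set, it has either one neighbor (if it's an endpoint) or two neighbors. Because $e_i$ is private to $v_{s,1}$, I want to show $w$ is, up to sign, a single basis vector $\pm e_j$, forcing $v_{s,1} = c_i e_i \pm e_j$ with $\lambda := c_i$ (possibly after absorbing signs via an element of $\Upsilon_{P_n}$, i.e.\ a coordinate reflection, to make the $e_i$-coefficient positive and the other $\pm$).

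Let me reconstruct why $w=\pm e_j$. The total family $P$ is linearly independent (Lemma~\ref{l:ind}) and embeds in $(\Z^n,-\mathrm{Id})$. The overlap structure of the vectors is governed by the $p_i$ statistics. I think the real engine is Lemma~\ref{l:lisca_1} combined with a minimal counting: the number of basis vectors $n$ equals the number of vertices, and $\sum_j |E_j(P)| = \sum_{s,\alpha} |\{k : c_k\neq 0 \text{ in } v_{s,\alpha}\}| = \sum_{s,\alpha} a_{s,\alpha}$ (since each vector's squared norm is its number of nonzero $\pm 1$ coordinates — wait, I need the coordinates to be $\pm 1$, which is exactly what I'm trying to establish). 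So there's a bootstrapping flavor here. Let me instead follow the referenced source: the proof is stated to follow from Section 3 of \cite{b:Li}, so I am allowed to lean on the structural analysis there.

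**Piece (2): $I(P)=\lambda^2-4$.**

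Given (1) and (4), this should be a clean computation. If $P_3$ has vectors $(e_1+e_2,\lambda e_3-e_2,e_2-e_1)$, then the three squared norms are $-2,-(\lambda^2+1),-2$, so $a_{1,1}=2$, $a_{2,1}=\lambda^2+1$, $a_{3,1}=2$, giving $I(P_3)=(2-3)+(\lambda^2+1-3)+(2-3)=\lambda^2-4$. Final $(-2)$-vector expansions add vertices of weight $-2$, each contributing $(2-3)=-1$ to $I$ — but each expansion also increases $n$. I need to check expansions preserve $I$; this requires knowing that an expansion replaces the old free-end configuration in a way that keeps $\sum(a-3)$ constant. The natural check: a final $(-2)$-vector expansion, being the inverse of a contraction, and contractions are designed (per the sketch at the end of Section~\ref{s:prel}) to satisfy $I(P')\le I(P)$; for the specific $(-2)$ final expansions one shows equality. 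So (2) reduces to (4) plus an invariance computation.

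**Piece (4): reconstruction — the main obstacle.**

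This is where the work is. I would argue by induction on $n$. Base case $n=3$: directly classify good linear sets in $\Z^3$ with $p_1>0$ and show the only possibility (up to $\Upsilon_{P_3}$) is the stated triple — a finite computation using linear independence and the dot-product constraints. Inductive step: given $P_n$ with $n\ge 4$ and $p_1(P_n)>0$, perform one contraction to get $P_{n-1}$ which is again good and still has $p_1>0$, apply induction, then show $P_n$ arises from $P_{n-1}$ by a final $(-2)$-vector expansion. The delicate point is (3): identifying, at the opposite end of the string from the private vector, a \emph{final} vector that is a $(-2)$-vector sharing a basis element $e_h$ with the other extreme vertex — this is precisely the hook that tells me which contraction/expansion to perform and guarantees the $(-2)$-vector structure propagates.

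\textbf{Main obstacle.} The hard part will be establishing (1) and (3) — the rigidity that forces $v_{s,1}=\lambda e_i\pm e_j$ and the existence of a final $(-2)$-vector at the far end — because these are exactly the combinatorial facts that pin down the embedding geometry, and they rest on the detailed case analysis of Section 3 of \cite{b:Li} rather than on a one-line argument. My plan is therefore to cite that structural analysis for (1) and (3), derive (4) by induction using contractions (whose good-set-preservation and $I$-monotonicity I take from Section~\ref{s:contractions}), and finally obtain (2) as the explicit computation $I(P_3)=\lambda^2-4$ together with the $I$-invariance of final $(-2)$-expansions. I do not expect surprises once (1) and (3) are in hand; the genuine difficulty is concentrated in the embedding rigidity at the free end of the string.
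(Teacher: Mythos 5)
Your overall strategy --- lean on Section~3 of \cite{b:Li} for the rigidity statements $(1)$ and $(3)$, reconstruct $P_n$ by contracting down to a model $P_3$ and inducting, and read off $(2)$ from the model together with the $I$-invariance of final $(-2)$-vector expansions --- is essentially the paper's, and your computation $I(P_3)=\lambda^2-4$ combined with the observation that a final $(-2)$-vector expansion changes $I$ by $(2-3)+1=0$ is a perfectly acceptable substitute for the paper's bookkeeping $I(P_n)=I(P_{n-1})-2+a_{s,1}=-3-2+\lambda^2+1$.

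The gap is that you propose to cite the structural results of \cite[Section~3]{b:Li} as black boxes, but Lemma~3.2, Proposition~3.3 and Corollary~3.5 there are all proved under the hypothesis $I<0$, and that is precisely the hypothesis this lemma drops; indeed conclusion $(2)$ itself shows $I(P_n)=\lambda^2-4\geq 0$ whenever $|\lambda|\geq 2$, so the hypothesis genuinely fails in the cases of interest and the citation is not legitimate as stated. The actual content of the paper's proof is an audit of which steps of Lisca's arguments use $I<0$: statement $(1)$ and the proof of \cite[Proposition~3.3(3)]{b:Li} (hence $(3)$) do not, but the uniqueness of the terminal set of the contraction sequence $P_{n-1}\searrow\cdots\searrow P_3$ does, and the paper replaces that use of $I<0$ by observing that the construction of the sequence (the first contraction leaves a private basis vector $e_j$ with $|e_j\cdot v_{r,1}|=1$, and every subsequently contracted vector is a $(-2)$-vector) forces $|v\cdot e_k|\leq 1$ for every $v$ in the terminal set, which pins it down uniquely up to $\Omega\in\Upsilon$. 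Your plan never identifies this issue; relatedly, your base case for $(4)$ (``directly classify good linear sets in $\Z^3$ with $p_1>0$; a finite computation'') is not finite as stated, since without $I<0$ the coordinates of the vectors are not a priori bounded, and one needs a separate argument --- the paper inherits it from the $\lambda$-dependent base case of \cite[Corollary~3.5]{b:Li} --- to exclude larger coefficients in the two non-private vectors.
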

\begin{proof}
Throughout this proof we assume that the reader is familiar with the work done in \cite{b:Li}. Statement $(1)$ corresponds to Lemma 3.2(2) in \cite{b:Li}. If $n>3$, by \cite[Lemma 3.2(2) and (3)]{b:Li} we have that, for some $(t,1),(r,1)\in\{(s-1,1),(s+1,1)\}$, the set 
$$P_{n-1}:=(P_n\setminus\{v_{s,1},v_{t,1}\})\cup\{\pi_{e_j}(v_{t,1})\}$$ 
is good, $E_j(P_{n-1})=\{(r,1)\}$, $|e_j\cdot v_{r,1}|=1$ and $I(P_n)=I(P_{n-1})-2+a_{s,1}$. Combining the proofs of \cite[Lemma 3.2 and Proposition 3.3]{b:Li} we conclude that $I(P_{n-1})=-3$. In fact, the set $P_{n-1}$ satisfies the hypothesis of \cite[Lemma~3.2]{b:Li} and hence there exists a contraction $P_{n-1}\searrow P_{n-2}$. Applying the lemma $n-4$ times we obtain a sequence $P_{n-1},...,P_3$ of good sets with $p_1(P_{n-1}),...,p_1(P_3)>0$. Moreover, since $|e_j\cdot v_{r,1}|=1$ then $a_{r,1}=2$ and so $I(P_{n-1})=...=I(P_3)$. In \cite[Proposition~3.3]{b:Li} the claim follows from the fact that, since $I(P)<0$, there is only one possibility for $P_3$ (up to replacing $P_3$  with $\Omega P_3$, $\Omega\in\Upsilon_{P_{3}}$). However, in our case, by the construction of the sequence $P_{n-1},...,P_3$ and since $|e_j\cdot v_{r,1}|=1$, we have that $|v\cdot e_k|\leq 1$, for every $v\in P_3$ and every $k\in \{1,...,n\}$. This implies, just like the assumption $I(P)<0$, that there is only one possibility for $P_3$, again up to replacing $P_3$  with $\Omega P_3$, $\Omega\in\Upsilon_{P_{3}}$. This set $P_3$ satisfies $I(P_3)=-3$, therefore $I(P_{n-1})=-3$, and hence $I(P_n)=-3-2+\lambda^2+1=\lambda^2-4$, so $(2)$ holds.

In the proof of \cite[Proposition~3.3(3)]{b:Li} the assumption $I(P)<0$ is not used. Hence, we obtain $(3)$. The proof of \cite[Corollary~3.5]{b:Li} goes through taking into account that the base case in the induction ($n=3$) depends on the number $\lambda$, and it is immediate to check that the possible cases are the sets considered in $(4)$.
\end{proof}

\section{Bad Components}\label{s:bc}

In the forthcoming sections, in order to determine all possible standard subsets $P\subseteq\Z^n$ with $I(P)<-1$, we shall study acutely contractions of good sets. The idea is to choose these contractions in a suitable way, to obtain again good sets and this will be possible as long as the good sets have no \virg{bad components}. In this section we introduce this concept and establish some properties of bad components under set contractions.

Let $n\geq 3$ and $\widetilde{C}=\{v_{s-1,\alpha},v_{s,\alpha},v_{s+1,\alpha}\}\subseteq \Z^n$ be a connected graph such that $a_{s-1,\alpha}=a_{s+1,\alpha}=2$, $a_{s,\alpha}>2$ and $E_j(\widetilde{C})=\{(s-1,\alpha),(s,\alpha),(s+1,\alpha)\}$ for some $j\in\{1,,,.n\}$. Up to changing $e_j$ with $-e_j$, the graph $\widetilde C\subseteq\Z^n$ is of the form 
\begin{center}
{
\frag{-2}{$-2$}
\frag{-asa}{$-a_{s,\alpha}$}
\frag{ei-ej}{$e_i-e_j$}
\frag{ej+}{$e_j+...$}
\frag{eimej}{$-e_i-e_j$}
\includegraphics[scale=0.7]{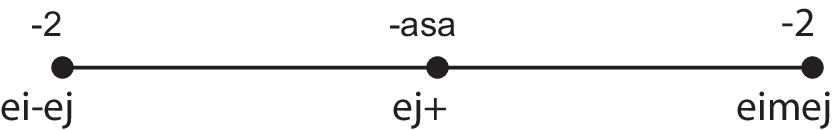}
}
\end{center}
We perform on $\widetilde C$, an arbitrary number of times and in any order, the following two expansions:
\begin{itemize} 
\item \textbf{Right expansion with final ($-2$)-vector}. This expansion, sketched below, can be performed on any connected linear set $C\subseteq\Z^n$, whenever there exists $i\in \{1,...,n\}$ such that $E_i(C)$ consists of the two final vertices in $C$. 
 \begin{flushleft}
\frag{ej+}{\color{grigio}{$e_i+...$}}
\frag{e+}{\color{grigio}{$-e_i+...$}}
\frag{ei+}{$e_k{\color{grigio}-e_i+...}$}
\frag{-eh}{$-e_i-e_k$}
\includegraphics[scale=0.7]{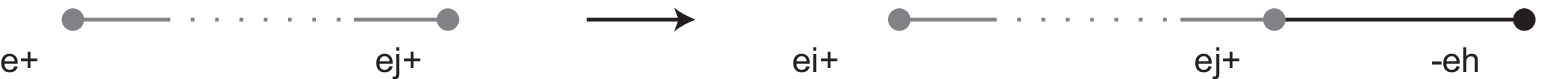}
\end{flushleft}
\item Analogously, we can perform a \textbf{left expansion with final ($-2$)-vector}, sketched below.
\begin{flushleft}
\frag{ej+}{\color{grigio}{$e_i+...$}}
\frag{e+}{\color{grigio}{$-e_i+...$}}
\frag{ei+}{$e_k{\color{grigio}+e_i+...}$}
\frag{-eh}{$e_i-e_k$}
\includegraphics[scale=0.7]{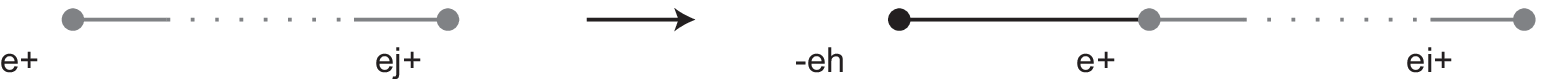}
\end{flushleft}
\end{itemize} 
A connected component $C\subseteq\Z^n$ obtained from $\widetilde C$ in this way will be called a \textbf{linear bad component}, and we will denote by $v_{\ast}$ the vector $v_{s,\alpha}\in C$. The number of linear bad components in a set $P$ will be denoted by $b(P)$.

\begin{exm}
A linear bad component after the sequence \virg{right expansion, left expansion, right expansion} is the following:
\begin{center}
\frag{-2}{$-2$}
\frag{-3}{$-3$}
\frag{-asa}{$-a_{s,\alpha}$}
\frag{AA}{$-e_h+e_k-e_\ell$}
\frag{BBB}{$e_i-e_j+e_h$}
\frag{CCC}{$\underbrace{e_j+...}_{\mbox{$=v_\ast$}}
$}
\frag{DD}{$-e_i-e_j$}
\frag{EEE}{$e_i-e_h-e_k$}
\frag{fff}{$e_k+e_\ell$}
\noindent\includegraphics[scale=0.6]{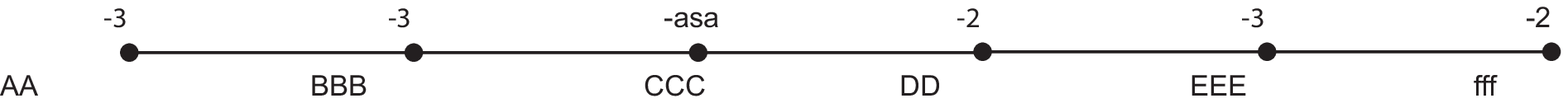}
\end{center}
\hfill\qed
\end{exm} 

In a similar fashion, we define bad components with a trivalent vertex. We start with an arbitrary linear bad component $C=\{v_{1,\alpha},...,v_{k,\alpha}\}\subseteq\Z^n$ and we attach two $(-2)$-vectors to $v_{1,\alpha}$ (or analogously to $v_{k,\alpha}$), as shown in the following graph: 
\begin{equation}\label{e:bad6}
\begin{split}
\frag{vka}{$\color{grigio}v_{k,\alpha}$}
\frag{v1a}{${\color{grigio}v_{1,\alpha}}+e_k$}
\frag{ek+}{$-e_k+e_h$}
\frag{ek-}{$-e_k-e_h$}
\includegraphics[scale=0.7]{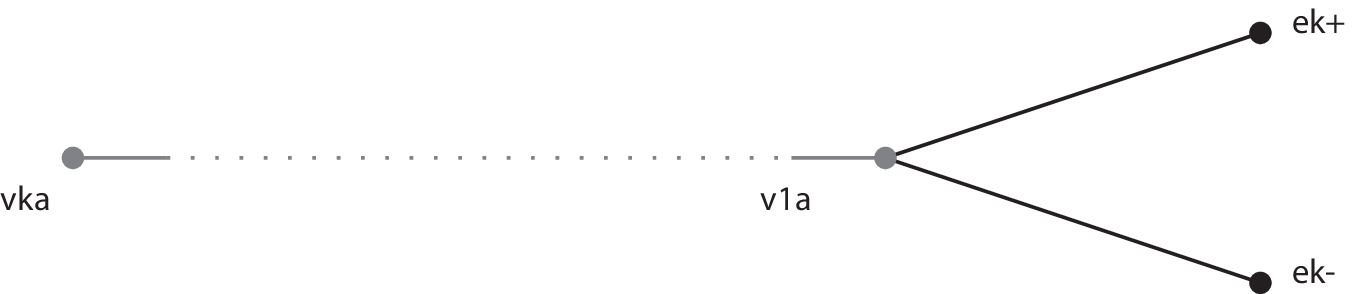}
\end{split}
\end{equation}
As before, we now perform an arbitrary number of final ($-2$)-vector expansions on the length-one legs of the graph in~\eqref{e:bad6}. We call any connected component $D\subseteq\Z^n$ obtained in this way a \textbf{three-legged bad component} and, as before, we denote by $v_{\ast}$ the vector $v_{s,\alpha}\in D$. Notice that every three-legged bad component can be thought of as obtained from the following  set $\widetilde D$,
\begin{equation}\label{e:bad5}
\begin{split}
\frag{-2}{$-2$}
\frag{-asa}{$-a_{\ast}$}
\frag{HHH}{$e_i-e_j$}
\frag{ZZZ}{$-e_k+e_h$}
\frag{WWW}{$-e_k-e_h$}
\frag{KKK}{$-e_i-e_j+e_k$}
\frag{m3}{$-3$}
\frag{m2}{$-2$}
\frag{ej+}{$\underbrace{e_j+...}_{\mbox{$=v_\ast$}}$}
\includegraphics[scale=0.7]{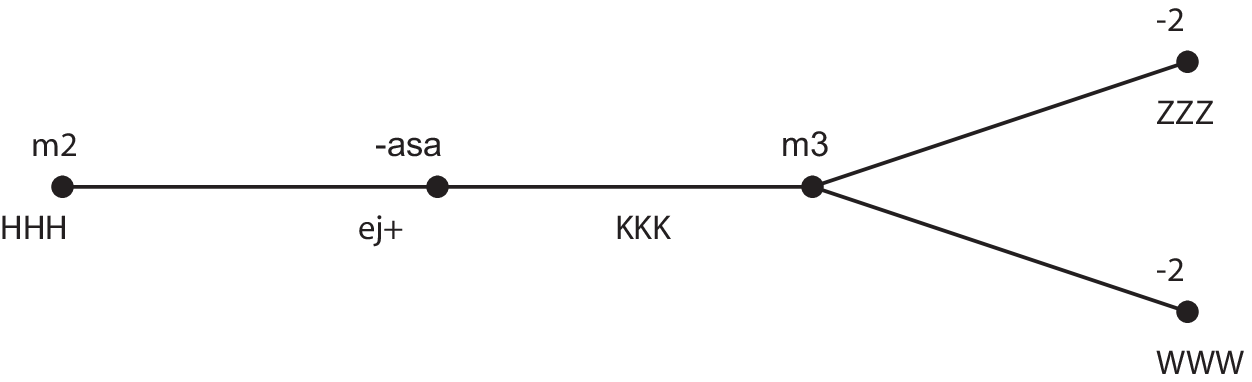}
\end{split}
\end{equation}
through a combination of the above described expansions: we erase $e_k$ from the central vertex and expand the horizontal leg in the graph $\widetilde D$ as if it was a linear bad component; afterwards, we add $+e_k$ to one of the final vectors and we proceed with the expansions of the two length-one legs.

We focus now our attention on the study of bad components. More precisely, in Lemma \ref{l:st.sin.bad} we prove that standard sets have no bad components of any type and in Lemmas \ref{l:b_3} and \ref{l:bad} we establish some properties of bad components under set contractions. 

\begin{lem}\label{l:st.sin.bad}
Every three-legged standard subset $P\subset\Z^n$ with $n\geq 5$ and $I(P)<-1$, has no bad components of any type.
\end{lem}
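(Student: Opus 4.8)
The plan is to exploit the fact that a standard set is connected and carries the trivalent vertex $v_0$. A linear bad component has, by definition, no vertex of valence three, so $P$ cannot coincide with a linear bad component; since $P$ is connected, the only case to rule out is that $P$ is itself a three-legged bad component. I would therefore suppose, towards a contradiction, that $P$ is a three-legged bad component $D$ and then contradict Lemma~\ref{l:n.e} using the rigid shape of the central vertex.

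First I would record the structure of that central vertex. By construction a three-legged bad component is built from a linear bad component $C$ by attaching two $(-2)$-vectors to one of its endpoints $w$, which thereby becomes the central vertex $v_0=w+e_k$ (cf.\ \eqref{e:bad6}). Every endpoint of a linear bad component is a final $(-2)$-vector, so $w=\pm e_a\pm e_b$ has exactly two nonzero coordinates; hence $v_0=w+e_k$ has exactly three nonzero coordinates, all equal to $\pm1$, where $e_k$ is a fresh basis vector. Moreover, inside $P$ the vector $e_k$ occurs only in $v_0$ and in the two first vectors $-e_k\pm e_h$ of the short legs; in particular it occurs in no vector of the leg $L_1$ that carries $v_\ast$, and this persists under the final $(-2)$-vector expansions used to enlarge $D$, since those introduce only new basis vectors distinct from $e_k$.

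Next I would isolate the linear subset $L:=L_1\cup\{v_0\}\subseteq\Z^n$, i.e.\ the leg carrying $v_\ast$ together with the central vertex, viewed as a linear chain. Because $P$ is standard, $L$ is connected and hence irreducible, and each of its vertices has weight $\geq 2$ (the central one has weight $\geq 3$), so $L$ is a good linear set in the sense of Section~\ref{s:prel}. The leg of $D$ inherited from $C$ has length at least two, so $n_1\geq 2$ and $|L|=n_1+1\geq 3$, placing $L$ in the range of Lemma~\ref{l:n.e}; this is also consistent with the hypothesis $n\geq 5$, as every three-legged bad component has at least five vertices. By the previous paragraph, inside $L$ the basis vector $e_k$ is hit only by $v_0$, so $E_k(L)=\{v_0\}$ and in particular $p_1(L)>0$.

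Finally I would apply Lemma~\ref{l:n.e}$(1)$ to the good linear set $L$: the unique vertex hit by $e_k$ must have the form $\lambda e_k\pm e_j$, i.e.\ exactly two nonzero coordinates. This contradicts the fact, established above, that $v_0$ has three nonzero coordinates. Hence $P$ is not a three-legged bad component, and together with the observation on linear bad components this proves the lemma. The main obstacle, and the point demanding the most care, is the verification that $L$ genuinely qualifies for Lemma~\ref{l:n.e} — its irreducibility and weight bounds, the size estimate $|L|\geq 3$ coming from $n_1\geq 2$, and the invariance of the ``$e_k$ appears only in $v_0$'' property under every expansion used to construct the bad component.
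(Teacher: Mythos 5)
Your reduction to the case where $P$ is itself a three-legged bad component is correct and matches the paper, but the contradiction you derive from it does not hold up, for two reasons. First, Lemma~\ref{l:n.e} (like all the rank-counting machinery behind it) applies to a good linear set $P_m\subseteq\Z^m$ whose cardinality equals the rank of the ambient lattice. Your set $L=L_1\cup\{v_0\}$ has $|V_L|=|L|+1$: since the whole standard set $P$ has $|P|=|V_P|=n$ and the two short legs satisfy $|V_{L_2\cup L_3}|=n_2+n_3$ while sharing the single coordinate $e_k$ with $v_0$, the remaining $|L|$ vectors necessarily occupy $|L|+1$ coordinates. So the conclusion $v_0=\lambda e_k\pm e_j$ of Lemma~\ref{l:n.e}\,$(1)$ is not available for $L$, and the contradiction evaporates. (A smaller inaccuracy: after expansions only one endpoint of a linear bad component is a $(-2)$-vector with two nonzero coordinates; the other endpoint accumulates weight and extra coordinates. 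This would not by itself break your argument, since ``at least three coordinates'' would do, but the exact claim is false.)

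Second, and decisively, your argument never uses the hypothesis $I(P)<-1$, yet the statement is false without it: the five-vertex set $\widetilde D$ of \eqref{e:bad5} with $v_\ast=e_j+2e_\ell$, namely $\{e_i-e_j,\ e_j+2e_\ell,\ -e_i-e_j+e_k,\ -e_k+e_h,\ -e_k-e_h\}\subset\Z^5$, is a connected standard subset \emph{and} a three-legged bad component, with $I=-1$. Any purely structural contradiction of the kind you propose would rule this example out as well, so it cannot be correct. The paper instead contracts $P$ to $\widetilde D\subset\Z^5$ (where cardinality equals rank, so the embedding forces $v_\ast=e_j+\lambda e_\ell$ with $|\lambda|>1$ because $a_\ast>2$) and only then concludes $I(P)=I(\widetilde D)=\lambda^2-5\geq -1$, contradicting $I(P)<-1$. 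The inequality on $I$ is where the hypothesis enters, and it is the step your proof is missing.
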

\begin{proof}
We will argue by contradiction. Suppose that $P$ has a bad component, then, since the graph $P$ is standard and it has a valence three vertex, its only connected component must be a three-legged bad component. By definition, $P$ is obtained from the set $\widetilde D$ in \eqref{e:bad5}
by ($-2$)-vector expansions and it is immediate to check that $I(P)=I(\widetilde D)$. The set $P$, being standard, is good and then $\widetilde D\subset\Z^5$ is good too (it is irreducible since $c(P)=c(\widetilde D)=1$ and, by construction, the incidence matrix $Q_{\widetilde D}$ is of the form \eqref{e:Q_P}). Therefore, with the notation of \eqref{e:bad5},
there must exist $e_\ell\in\Z^5$ and $\lambda\in\Z$ such that $v_\ast=e_j+\lambda e_\ell$. Since, by definition, $a_\ast>2$ we have $\lambda>1$ and hence we obtain $I(\widetilde D)\geq -1$, a contradiction.
\end{proof}

In the following result we study under which conditions a good set with no bad components can develop a bad component after a contraction. 

\begin{lem}\label{l:b_3}
Let $P\subseteq\Z^n$ be a good set with no bad components of any type and suppose that there is a contraction $P\searrow P'$, where $P'$ is a good set with bad components given by
$$P':=(P\setminus\{v_{s,\alpha},v_{t,\beta}\})\cup\{\pi_{e_h}(v_{t,\beta})\}$$
for some $h\in\{1,...,n\}$ and $(s,\alpha),(t,\beta)\in J$. Then,
\begin{itemize}
\item[$(1)$] $|V_{v_{s,\alpha}}|>2$ and $|V_{v_{t,\beta}}|>2$,
\item[$(2)$] $P'$ has either one linear bad component or one three-legged bad component, but not both.
\item[$(3)$] Consider the restricted projection $\pi_{e_h}:P\setminus v_{s,\alpha}\rightarrow P'$, let $D'\subseteq P'$ be the bad component in $P'$ and let $D:=\pi_{e_h}^{-1}(D')$. Then, $v_{t,\beta}\in D$ and $v_{s,\alpha}$ is not orthogonal to $D$.
\end{itemize}
\end{lem}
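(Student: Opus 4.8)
The plan is to hang everything on one structural observation about the contraction. Write $u:=\pi_{e_h}(v_{t,\beta})$ and consider the restricted projection $\phi:=\pi_{e_h}|_{P\setminus v_{s,\alpha}}\colon P\setminus v_{s,\alpha}\to P'$. Since $E_h(P)=\{(s,\alpha),(t,\beta)\}$, every $w\in P\setminus\{v_{s,\alpha},v_{t,\beta}\}$ has $w\cdot e_h=0$, so $\phi$ fixes all such $w$ and sends $v_{t,\beta}$ to $u$; hence $\phi$ is a bijection. A direct computation gives $\phi(w)\cdot\phi(w')=w\cdot w'$ for all distinct $w,w'\in P\setminus v_{s,\alpha}$, while $u\cdot u=v_{t,\beta}\cdot v_{t,\beta}+(v_{t,\beta}\cdot e_h)^2$. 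Thus $\phi$ is an isomorphism of weighted incidence graphs that preserves every mutual product and only \emph{lowers} the single weight $a_{t,\beta}$, by $(v_{t,\beta}\cdot e_h)^2\geq 1$. A companion bookkeeping remark is that for each $m\neq h$ one has $E_m(P')=E_m(P)\setminus\{(s,\alpha)\}$ (identifying $(t,\beta)$ with $u$), so multiplicities never rise under the contraction: the coordinate responsible for the badness of $D'$ already has $|E_m(P)|\in\{3,4\}$ in $P$. These two facts are what let me transport the bad component $D'$ back into $P$.

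I would prove $(1)$ first by support counting. Since $v_{t,\beta}\cdot e_h\neq0$ we have $h\in V_{v_{t,\beta}}$, so $|V_{v_{t,\beta}}|\leq2$ would force $u$ to be supported on at most one basis vector: either $u=0$, or $u\cdot u=-1$, or $u=c\,e_p$ with $|c|\geq2$. The first two are impossible in the good set $P'$ (weights are $\geq2$), and for the last every product $u\cdot w=c\,(e_p\cdot w)$ must lie in $\{0,1\}$, which forces $u$ to be isolated and hence unable to lie in any bad component, contradicting $(3)$. Therefore $|V_{v_{t,\beta}}|>2$. For $v_{s,\alpha}$, a support of size $\leq2$ means $v_{s,\alpha}=\pm e_h\pm e_q$ (or $c\,e_h$); I will rule this out once the local picture of $(3)$ is in place, by checking that such a light, to-be-deleted vector cannot supply the deletion that reveals the distinguished coordinate $e_m$ of a bad component without $P$ already containing one.

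For $(3)$ the key claim is $v_{t,\beta}\in D$, i.e.\ $u\in D'$. If $u\notin D'$, then every vector of $D'$ sits unchanged in $P\setminus\{v_{s,\alpha},v_{t,\beta}\}$, and by the product‑preserving property $D'$ carries exactly the same incidences and weights as a subset of $P$; moreover $w\cdot v_{t,\beta}=w\cdot u=0$ for all $w\in D'$, so $D'$ does not meet $v_{t,\beta}$ in $P$. Thus if in addition $v_{s,\alpha}\perp D'$, the set $D'$ is already a bad component of $P$, contradicting the hypothesis. Ruling out the remaining possibility $v_{s,\alpha}\not\perp D'$ is the crux: here $v_{s,\alpha}$ attaches to the bad-shaped $D'$ while also being linked (via $e_h$) to $v_{t,\beta}$, so in $P$ the component is $(\cdots)\,\text{--}\,v_{t,\beta}\,\text{--}\,v_{s,\alpha}\,\text{--}\,D'$. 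I would exclude this by combining the tree structure of good sets (each component is a path or a single‑trivalent‑vertex graph, of bounded valence) with the rigid coordinate signature of $D'$ (a distinguished $e_m$, $m\neq h$, with $|E_m(D')|=3$ and a vector $v_\ast$ of weight $a_\ast\geq3$) and the inequality of Lemma~\ref{l:lisca_1}: attaching $v_{s,\alpha}$ either violates the valence/tree constraint, or extends a leg in a way that keeps the enlarged component a genuine bad component of $P$, a contradiction in either case. This forces $u\in D'$, i.e.\ $v_{t,\beta}\in D$. Finally, if $v_{s,\alpha}\perp D$ while $v_{t,\beta}\in D$, then restoring the higher weight $a_{t,\beta}$ leaves $D$ a genuine bad component of $P$ (only the weight of some vertex rises), again contradicting the hypothesis; hence $v_{s,\alpha}\not\perp D$.

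Part $(2)$ then follows almost formally. Let $C'$ be any bad component of $P'$. If $u\notin C'$, the argument of $(3)$ shows $C'$ pulls back through $\phi$ to a bad component of $P$, which is impossible; hence every bad component of $P'$ contains $u$. Since $u$ lies in a unique connected component of $P'$, there is exactly one bad component, and being a single connected component it is either linear or three‑legged, but not both. The main obstacle throughout is precisely the bookkeeping around the deleted vector $v_{s,\alpha}$ in the case $u\notin D'$: showing that $v_{s,\alpha}$ cannot quietly attach to a would‑be bad component of $P$ and hide it. I expect this to require the tree structure of good sets together with the exact coordinate signature of bad components and Lemma~\ref{l:lisca_1}, rather than any single clean estimate.
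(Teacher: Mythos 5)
Your skeleton matches the paper's: a four-way case analysis on whether $v_{t,\beta}\in D$ and whether $v_{s,\alpha}$ is orthogonal to $D$, with $(3)$ obtained by eliminating three cases, $(2)$ deduced from the forced configuration, and $|V_{v_{t,\beta}}|>2$ read off from $\pi_{e_h}(v_{t,\beta})$ lying in the bad component. But the two steps you yourself flag as the crux are exactly where the proposal has genuine gaps. First, in the case $v_{t,\beta}\notin D$ with $v_{s,\alpha}$ linked to $D$, your dichotomy (``violates the valence/tree constraint, or extends a leg so that the enlarged component is a bad component of $P$'') does not hold: $v_{s,\alpha}$ has $e_h$ in its support, so it is also linked to $v_{t,\beta}$, and the connected component of $P$ containing $D$ then also contains $v_{t,\beta}$ and everything attached to it; there is no reason for this enlarged component to be a bad component and no valence obstruction in general, and Lemma~\ref{l:lisca_1} plays no role here. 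The paper instead exploits the rigid coordinate signature of bad components to get $|V_{v_{s,\alpha}}\cap V_{D'}|\geq 2$, replaces $v_{s,\alpha}$ by $v_{s,\alpha}':=-\sum_{j\in V_{D'}}(v_{s,\alpha}\cdot e_j)e_j$ (which has square $\leq -2$), and derives a contradiction because $(D'\setminus\{v_\ast\})\cup\{v_{s,\alpha}'\}$ is a good set of $|D'|$ linearly independent vectors (Lemma~\ref{l:ind}) lying in the span of $|D'|-1$ basis vectors. Second, your dismissal of the case $v_{s,\alpha}\perp D$, $v_{t,\beta}\in D$ --- ``restoring the higher weight leaves $D$ a genuine bad component of $P$, only a weight rises'' --- is only valid when $\pi_{e_h}(v_{t,\beta})=v_\ast$. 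A bad component is defined by an explicit embedding in which every vertex other than $v_\ast$ has all coefficients in $\{0,\pm1\}$ with prescribed support, so adding $\pm e_h$ to any other vertex destroys the pattern and $D$ need not be a bad component of $P$; that sub-case requires the same rank-counting argument, not the observation that a weight increases.

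Finally, the assertion $|V_{v_{s,\alpha}}|>2$ is left as a promissory note. In the paper this is a substantial argument in its own right, split according to $v_{s,\alpha}\cdot v_{t,\beta}\in\{0,1\}$: when the product is $1$ one first rules out $|V_{v_{s,\alpha}}|=2$ with $a_{s,\alpha}>2$ by a rank count using Remark~\ref{r:li}, and then follows the chain of $(-2)$-vectors emanating from $v_{s,\alpha}$ to its first vertex of square less than $-2$, contracting it away to reach another independence contradiction; when the product is $0$ one shows that $v_{s,\alpha}=\pm e_j\pm e_h$ would force $D\cup\{v_{s,\alpha}\}$ to be a bad component of $P$, against the hypothesis. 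None of this appears in the proposal, so as written the half of $(1)$ concerning $v_{s,\alpha}$ is unproved.
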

\begin{proof}
In $P$ there are only four possible configurations for $D$, $v_{s,\alpha}$ and $v_{t,\beta}$, which we analyze separately:

\textit{The vector $v_{s,\alpha}$ is orthogonal to $D$, i.e.\ for all $v\in D$, $v_{s,\alpha}\cdot v=0$, and $v_{t,\beta}\not\in D$}. In this case $D$ is a connected component and moreover $D=D'$, which contradicts the assumption that $P$ has no bad components of any type.

\textit{The vector $v_{s,\alpha}$ is orthogonal to $D$ and $v_{t,\beta}\in D$}. If $v_{\ast}=\pi_{e_h} (v_{t,\beta})$ (see the definition of linear and three-legged bad components above) then we would have that $D\subseteq P$ is a bad component and this contradicts the assumption of the lemma. Observe that $v_{s,\alpha}\cdot v_{t,\beta}=0$ implies that $|V_{v_{s,\alpha}}\cap V_{D'}|\geq 2$. In fact, since $v_{s,\alpha}$ and $v_{t,\beta}$ are orthogonal and $h\in V_{v_{s,\alpha}}\cap V_{v_{t,\beta}}$ there must be another index $k\neq h$ in the intersection $V_{v_{s,\alpha}}\cap V_{v_{t,\beta}}$. Since $v_{\ast}\neq\pi_{e_h} (v_{t,\beta})$, by  definition of bad component (either linear or three-legged), we have $\{(t,\beta)\}\varsubsetneq E_k(D')$. Hence, there exists $(r,\gamma)\in J$, $(r,\gamma)\neq (t,\beta),$ with $(r,\gamma)\in E_k(D')$. Note also that $v_{s,\alpha}\cdot v_{r,\gamma}=0$ implies $|V_{v_{s,\alpha}}\cap V_{D'}|\geq 2$. Let us now consider the vector 
$$v_{s,\alpha}':=-\sum_{j\in V_{D'}}(v_{s,\alpha}\cdot e_j)e_j,$$ 
which by the above discussion has square $v_{s,\alpha}'\cdot v_{s,\alpha}'\leq -2$. It follows that the set $(D'\setminus\{v_\ast\} )\cup\{v_{s,\alpha}'\}\subset\Z^{|D'|-1}$ is a good set and therefore, by Lemma~\ref{l:ind}, its $|D'|$ vectors are linearly independent, but this is not possible since they belong to the span of $|D'|-1$ vectors.

\textit{The vector $v_{s,\alpha}$ is not orthogonal to $D$ and $v_{t,\beta}\not\in D$}. We use the same argument of the preceding case. The vector
$$v_{s,\alpha}':=-\sum_{j\in V_{D'}}(v_{s,\alpha}\cdot e_j)e_j$$ 
has square $v_{s,\alpha}'\cdot v_{s,\alpha}'\leq -2$. In fact, since there exists $v\in D$ such that $v\cdot v_{s,\alpha}=1$, there must exist $j\in V_{v_{s,\alpha}}\cap V_v$ thus, taking into account that $v_\ast$ is internal in $D$ (so $v\neq v_\ast$), the claim follows from the definition of bad component (either linear or three-legged). Now,  considering as in the preceding case the good set $(D'\setminus\{v_\ast\})\cup\{v_{s,\alpha}'\}\subset\Z^{|D'|-1}$, we obtain $|D'|$ linearly independent vectors in the span of $|D'|-1$ vectors.

\textit{The vector $v_{s,\alpha}$ is not orthogonal to $D$ and $v_{t,\beta}\in D$}. Notice that this case is the only possibility left and that the three other cases lead to contradiction, implying $(3)$. As a consequence we obtain that a single contraction $P\searrow P'$ cannot produce neither two linear bad components, nor together a three-legged bad component and a linear bad component. Therefore, $(2)$ holds.

Since $v_{t,\beta}\in D$, the projection $\pi_{e_h}(v_{t,\beta})$ belongs to the bad component $D'$. By definition of bad component (linear or three-legged) we know that $|V_{\pi_{e_h}(v_{t,\beta})}|\geq 2$ and then $|V_{v_{t,\beta}}|>2$.
It only remains to prove that $|V_{v_{s,\alpha}}|>2$ and to this aim we further consider two subcases:

$(i)$ Suppose first that $v_{t,\beta}\cdot v_{s,\alpha}=1$ and let us put $t=s-1$ (the case $t=s+1$, possible only if $D'$ is a linear bad component, can be handled analogously). If $|V_{v_{s,\alpha}}|=2$ and $a_{s,\alpha}>2$ then $v_{s,\alpha}=\pm e_h+\lambda e_j$ where $j\in\{1,...,n\}$ and $\lambda\in\Z$, $|\lambda|>1$. Thus, replacing the vectors $v_{s,\alpha}$ and $v_{t,\beta}$ respectively with $\pi_{e_h}(v_{s,\alpha})$ and $\pi_{e_h}(v_{t,\beta})$, we obtain a good set of $n$ vectors whose associated incidence matrix is of the form \eqref{e:Q_P} or \eqref{e:Q_P_linear}. By Remark~\ref{r:li}, this $n$ vectors are linearly independent, but at the same time they belong to the span of the $n-1$ vectors $\{e_1,...,\hat{e}_h,...,e_n\}$. This contradiction implies that if $|V_{v_{s,\alpha}}|=2$, then necessarily $a_{s,\alpha}=2$. Let us consider the biggest $\ell\geq 0$ such that the set $S:=\{v_{s,\alpha},v_{s+1,\alpha},...,v_{s+\ell,\alpha}\}$ is connected. If $a_{s,\alpha}=a_{s+1,\alpha}=...=a_{s+\ell,\alpha}=2$ then the set $(P\setminus S)\cup\{\pi_{e_h}(v_{t,\beta})\}$ is a good set of $n-(l+1)$ linearly independent vectors (see Lemma \ref{l:ind}) in the span of $n-(l+2)$ vectors (notice that a length $d$ chain of ($-2$)-vectors, which is connected to some other vector of square smaller that $-2$, is contained in the span of $d+1$ basis vectors). This contradiction shows that there exists a smallest $r\in\{1,...,\ell\}$ such that $a_{s+r}>2$ and it is easy to check that for some $k\in\{1,...,n\}$ 
$$V_{v_{s+r-1,\alpha}}\cap V_{v_{s+r,\alpha}}=\{e_k\}\s\mbox{and}\s |v_{s+r,\alpha}\cdot e_k|=1.$$
Since $|\bigcup_{i=0}^{r-1}V_{v_{s+i,\alpha}}|=r+1$, it follows that the set $(P\setminus\{v_{s,\alpha},...,v_{s+r-1,\alpha}\})\cup\{\pi_{e_h}(v_{t,\beta})\}\cup\{\pi_{e_k}(v_{s+r,\alpha})\}$ is a good set of $n-r$ linearly independent vectors in the span of $n-(r+1)$ basis vectors. This contradiction yields $|V_{v_{s,\alpha}}|>2$ and therefore, if $v_{s,\alpha}\cdot v_{t,\beta}=1$, then $(1)$ holds.

$(ii)$ Suppose now that $v_{s,\alpha}\cdot v_{t,\beta}=0$. If $V_{v_{s,\alpha}}=\{h,j\}$ then, since $v_{s,\alpha}$ is not orthogonal to $D$, it holds $j\in V_{D'}$ and we necessarily have that $v_{s,\alpha}=\pm e_j\pm e_h$. Thus, $a_{s,\alpha}=2$ and $V_{v_{s,\alpha}}\cap V_{v_{t,\beta}}=\{h,j\}$. Now we have to distinguish if $D'$ is a three-legged or a linear bad component. Let us begin assuming that $D'$ is a three-legged bad component. Then, by its definition and since $a_{s,\alpha}=2$, there are only two possibilities: the vector $v_{s,\alpha}$ is attached to the final vector of one of the legs of $D'$ which grow by final $(-2)$-vector expansions and $v_{t,\beta}$ is the final vector in the other one, or the vector $v_{s,\alpha}$ is attached to the leg containing $\pi_{e_h}^{-1}(v_\ast)$ and $v_{t,\beta}$ is the central vertex. In both cases we have that $(D\cup \{v_{s,\alpha}\})\subseteq P$ is a three-legged bad component, contradicting the assumption of the lemma. In case $D'$ is a linear bad component, by its definition and since $a_{s,\alpha}=2$, there is only one possibility: the vector $v_{s,\alpha}$ is attached to a final vector of $D'$ and $v_{t,\beta}$ is the other final vector. Then, the set $(D\cup \{v_{s,\alpha}\})\subseteq P$ is a linear bad component, contradicting again the assumption of the lemma. Therefore, if $v_{s,\alpha}\cdot v_{t,\beta}=0$, we have that $|V_{v_{s,\alpha}}|>2$ as claimed.
\end{proof}

The last result in this section shows how to overcome the difficulties with the bad components: if after a contraction a good set with no bad components develops a bad component, there is always another possible contraction that yields a good set with no bad components. It is a key result to prove Lemma~\ref{l:red}.

\begin{lem}\label{l:bad}
Let $P\subseteq\Z^n$ be a good subset with no bad components of any type and $I(P)<-1$. Assume that, for some $i\in\{1,...,n\}$ and $(s,\alpha),(t,\beta)\in J$, the set obtained by the contraction
$$P':=(P\setminus\{v_{s,\alpha},v_{t,\beta}\})\cup\{\pi_{e_i}(v_{t,\beta})\}\subseteq\Z^{n-1}$$
is good with a bad component $D'\subseteq P'$, linear or three-legged. Then, interchanging the roles of $v_{s,\alpha}$ and $v_{t,\beta}$, the set
$$P'':=(P\setminus\{v_{t,\beta},v_{s,\alpha}\})\cup\{\pi_{e_i}(v_{s,\alpha})\}\subseteq\Z^{n-1}$$
is good with no bad components of any type.
\end{lem}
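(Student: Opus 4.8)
The plan is to feed the contraction $P\searrow P'$ into Lemma~\ref{l:b_3} and read off the rigid local picture, then track what the \emph{interchanged} contraction $P''$ does to it. Let $D'\subseteq P'$ be the unique bad component guaranteed by Lemma~\ref{l:b_3}$(2)$, and put $D:=\pi_{e_i}^{-1}(D')\subseteq P$. By Lemma~\ref{l:b_3}$(3)$ we have $v_{t,\beta}\in D$, $v_{s,\alpha}\notin D$ and $v_{s,\alpha}$ not orthogonal to $D$, while $(1)$ gives $|V_{v_{s,\alpha}}|>2$ and $|V_{v_{t,\beta}}|>2$. The first fact I would record is that every vector of $D\setminus\{v_{t,\beta}\}$ is orthogonal to $e_i$ (since $E_i(P)=\{(s,\alpha),(t,\beta)\}$), so $\pi_{e_i}$ leaves these vectors and all their mutual products unchanged; hence $D$ and $D'$ carry the \emph{same} underlying graph and differ only in the self-product of $v_{t,\beta}$, which passes from $v_{t,\beta}\cdot v_{t,\beta}$ to $v_{t,\beta}\cdot v_{t,\beta}+(v_{t,\beta}\cdot e_i)^2$. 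Because $P$ has no bad components we must have $\pi_{e_i}(v_{t,\beta})\neq v_\ast$ (otherwise $D$ itself would be a bad component, exactly as in the second case of the proof of Lemma~\ref{l:b_3}); thus $\pi_{e_i}(v_{t,\beta})$ is one of the square-$(-2)$ vertices of $D'$, which forces $v_{t,\beta}\cdot v_{t,\beta}=-2-(v_{t,\beta}\cdot e_i)^2\le -3$. Consequently $D$ has \emph{exactly two} vertices of square strictly below $-2$, namely $v_\ast$ and $v_{t,\beta}$.

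Now I turn to $P''$, obtained by deleting $v_{t,\beta}$ and replacing $v_{s,\alpha}$ with $\pi_{e_i}(v_{s,\alpha})$. Since $P$ carried no bad component away from the affected region, a bad component of $P''$ can only sit among the components meeting $D\setminus\{v_{t,\beta}\}$ or equal to $\pi_{e_i}(v_{s,\alpha})$. The decisive quantity is the square of $\pi_{e_i}(v_{s,\alpha})$, equal to $v_{s,\alpha}\cdot v_{s,\alpha}+(v_{s,\alpha}\cdot e_i)^2$. If this is $<-2$ and $\pi_{e_i}(v_{s,\alpha})$ lies in the component of $v_\ast$, that component has two vectors of square below $-2$, which no bad component (linear or three-legged) admits, so we are done. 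The remaining possibilities are where $\pi_{e_i}(v_{s,\alpha})$ has square $=-2$, or where it is separated from $v_\ast$. In the square-$(-2)$ case, using $|V_{v_{s,\alpha}}|>2$ together with $a_{s,\alpha}\ge 3$, the vector $v_{s,\alpha}$ is supported on exactly three basis directions, so $\pi_{e_i}(v_{s,\alpha})$ meets $D\setminus\{v_{t,\beta}\}$ in two distinct directions; when $v_{s,\alpha}\cdot v_{t,\beta}=1$ this makes $\pi_{e_i}(v_{s,\alpha})$ a non-central valence-three square-$(-2)$ vertex, incompatible with a bad component (whose only admissible trivalent vertex is the central one, of square $-3$). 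When instead $v_{s,\alpha}\cdot v_{t,\beta}=0$, I would argue exactly as in subcase $(ii)$ of the proof of Lemma~\ref{l:b_3}: a hypothetical bad shape on the $\pi_{e_i}(v_{s,\alpha})$-side would lift to $D\cup\{v_{s,\alpha}\}$ being a bad component already in $P$, a contradiction.

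It remains to check that $P''$ is \emph{good}. Because $\pi_{e_i}(v_{s,\alpha})$ has the same products with $P\setminus\{v_{s,\alpha},v_{t,\beta}\}$ as $v_{s,\alpha}$, deleting the square-$(-2)$ vertex $v_{t,\beta}$ of $D$ and reinserting the projected vector keeps the graph a disjoint union of a three-legged star and linear strings, with all weights $\le -2$ and the central weight $\le -3$; this is precisely the form \eqref{e:Q_P} or \eqref{e:Q_P_linear}. The required square-$(-2)$-or-lower bound on $\pi_{e_i}(v_{s,\alpha})$ is the computation already performed above, and the linear independence needed for the non-degeneracy of the incidence matrix is supplied by Lemma~\ref{l:ind} and Remark~\ref{r:li}. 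Thus goodness and absence of bad components come out of the same analysis.

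I expect the main obstacle to be the square-$(-2)$ subcase, where $C''$ \emph{a priori} has a single low-square vertex and could masquerade as a bad component: excluding it requires the precise bookkeeping of the valence and of the two attachment directions of $v_{s,\alpha}$, distinguishing whether $D'$ is linear or three-legged and where $v_{s,\alpha}$ meets $D$, together with the connectivity analysis of what happens when $v_{t,\beta}$ is an internal rather than a final $(-2)$-vertex of $D$. It is here, and in the orthogonal subcase, that the hypotheses $|V_{v_{s,\alpha}}|>2$ and the no-bad-component assumption on $P$ are used in full.
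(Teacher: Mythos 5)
Your proposal follows the paper's opening move (feed $P\searrow P'$ into Lemma~\ref{l:b_3} and exploit $(1)$--$(3)$), but the core of your argument rests on two claims that are false, and it omits a case the paper has to rule out explicitly.

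First, your justification that $\pi_{e_i}(v_{t,\beta})\neq v_\ast$ --- ``otherwise $D$ itself would be a bad component'' --- does not work here. A bad component is by definition a \emph{connected component} of the ambient set, and Lemma~\ref{l:b_3}\,$(3)$ tells you precisely that $v_{s,\alpha}$ is \emph{not} orthogonal to $D$; hence $D$ is linked to $v_{s,\alpha}\in P\setminus D$ and is not a connected component of $P$, so no contradiction with ``$P$ has no bad components'' arises. (The orthogonal situation, where that argument is legitimate, is exactly the case that Lemma~\ref{l:b_3} has already excluded.) The paper instead proves $\pi_{e_i}(v_{t,\beta})\neq v_\ast$ by the rank-counting device: the vector $v_{s,\alpha}':=-\sum_{j\in V_{D'}}(v_{s,\alpha}\cdot e_j)e_j$ has square $\leq -2$, and $(D'\setminus\{v_\ast\})\cup\{v_{s,\alpha}'\}$ would be $|D'|$ independent vectors in a span of $|D'|-1$ basis vectors. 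Second, your structural deduction that $\pi_{e_i}(v_{t,\beta})$ must be a square-$(-2)$ vertex of $D'$, hence that $D$ has \emph{exactly two} vertices of square below $-2$, is false: the final-$(-2)$-vector expansions defining bad components lower the square of the vertex being expanded, so a bad component generically has many vertices of square $-3$ or less besides $v_\ast$ (the paper's own example has weights $-3,-3,-a_{s,\alpha},-2,-3,-2$). Consequently your main exclusion mechanism --- ``a component with two vectors of square below $-2$ cannot be bad'' --- is based on a false premise, and the whole case analysis built on it collapses. The paper's actual exclusion of a bad component $D''\subseteq P''$ again goes through the rank-counting argument, applied to a suitable final vector $w$ of $D''$ and the vector $\tilde w$ obtained by restricting $w$ to $V_{D'}$. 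Finally, you never rule out $v_{s,\alpha}=v_0$; this is needed both to know that $Q_{P''}$ has the form \eqref{e:Q_P} or \eqref{e:Q_P_linear} and to avoid the central-vertex weight constraint, and the paper disposes of it separately by showing that otherwise $P'$ would be a linear good set with $I(P')+b(P')<0$ and $c(P')\geq 3$, contradicting \cite[Proposition~4.10]{b:Li2}. Irreducibility of $P''$, which the paper deduces from $\pi_{e_i}(v_{t,\beta})\neq v_\ast$, is also asserted rather than proved in your write-up.
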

\begin{proof}
We start by showing that $v_{s,\alpha}\in P$ is not the central vertex. In fact, if $v_{s,\alpha}=v_0$, then $P'$ would be a linear good set with $b(P')=1$  (see Lemma~\ref{l:b_3}) and $c(P')\geq 3$. However, since $I(P)<-1$ and, by Lemma~\ref{l:b_3}, $a_{s,\alpha}>2$, we have $I(P')+b(P')<0$. Therefore, by \cite[Proposition~4.10]{b:Li2}, we obtain the contradiction $c(P')\leq 2$. This forces $v_{s,\alpha}\neq v_0$ and hence, the incidence matrix $Q_{P''}$ has the form \eqref{e:Q_P} or \eqref{e:Q_P_linear} since, by Lemma~\ref{l:b_3}, $|V_{v_{s,\alpha}}|\geq 3$.

Next, since the contraction $P\searrow P'$ produces a bad component $D'$, we know by Lemma~\ref{l:b_3} that $\pi_{e_i}(v_{t,\beta})\in D'$. We claim that $\pi_{e_i}(v_{t,\beta})\neq v_\ast\in D'$ (see definition of bad component above). In fact, by Lemma~\ref{l:b_3}\,$(3)$, $v_{s,\alpha}\cdot v=1$, for some $v\in D$, where $D:=\pi_{e_i}^{-1}(D')$, and hence there exists $j\in\{1,...,n\}$ such that $j\in V_{v_{s,\alpha}}\cap V_v$. If $\pi_{e_i}(v_{t,\beta})=v_\ast$, then $v_{t,\beta}$ is internal in $D$ and we conclude $j\neq i$. By definition of bad component (linear or three-legged) we have that $|E_j(D')|\geq 2$, and so there exists $w\in D'$, $w\neq v,v_\ast$, such that $j\in V_w$ and $w\cdot v_{s,\alpha}=0$. Thus, it follows that the vector
$$v_{s,\alpha}':=-\sum_{j\in V_{D'}}(v_{s,\alpha}\cdot e_j)e_j$$ 
has square $v_{s,\alpha}'\cdot v_{s,\alpha}'\leq -2$. Then, the set $(D'\setminus\{v_\ast\})\cup \{v_{s,\alpha}'\}\subseteq\Z^{|D'|-1}$ has an associated incidence matrix ot the form \eqref{e:Q_P} or \eqref{e:Q_P_linear}, and by Remark~\ref{r:li} its $|D'|$ vectors are linearly independent, contradicting the fact that they belong to the span of $|D'|-1$ vectors. Therefore $\pi_{e_i}(v_{t,\beta})\neq v_\ast$, and consequently, by definiton of bad component, every vector $v\in P$ linked to $v_{t,\beta}$ is also linked to $v_{t-1,\beta}$ or to $v_{t+1,\beta}$. Thus, $P''$ is irreducible.

Once we have shown that $P''\subseteq\Z^{n-1}$ is good, it remains to prove that it has no bad components of any type. Suppose by contradiction that there exists a bad component $D''\subseteq P''$, and consider first the case $v_{s,\alpha}\cdot v_{t,\beta}=0$. Applying Lemma~\ref{l:b_3} to the contraction $P\searrow P''$ we obtain that $\pi_{e_i}(v_{s,\alpha})\in D''$ and that $v_{t,\beta}$ is not orthogonal to $\pi_{e_i}^{-1}(D'')$; hence, $D'\cap D''\neq\emptyset$. Notice that since $\pi_{e_i}(v_{s,\alpha})\in D''$ and $v_{s,\alpha}\not\in D'$ it holds that $D''\setminus D'\neq\emptyset$. Since in $P$ there is only one trivalent vertex, at least one between $D'$ and $D''$ is a linear bad component, say it is $D''$. In $D''$ there are two final vectors: we denote by $v$ be the one belonging to $D''\cap D'$ and by $w$ be the one in $D''\setminus D'$. By definition of linear bad component, we know that there are two elements $j,h\in V_v\cap V_w$ and so the vector 
$$\tilde w:=-\sum_{j\in V_{D'}}(w\cdot e_j)e_j$$
has square $\tilde w\cdot\tilde w\leq -2$. Consider the vector $v_\ast\in D'$ and observe that, by definition of bad component (either linear or three-legged), we have that the set $(D'\setminus\{v_\ast\})\cup\{\tilde w\}\subseteq\Z^{|D'|-1}$ is good. Thus, by Lemma~\ref{l:ind}, its $|D'|$ vectors are linearly independent, which contradicts the fact that they belong to the span of $|D'|-1$ basis vectors.

We are left with the case $v_{s,\alpha}\cdot v_{t,\beta}=1$ and let us suppose, without loss of generality as explained before, that $D''$ is a linear bad component. If $\{i\}=V_{v_{s,\alpha}}\cap V_{v_{t,\beta}}$, then replacing $v_{s,\alpha}$ and $v_{t,\beta}$ respectively with $\pi_{e_i}(v_{s,\alpha})$ and $\pi_{e_i}(v_{t,\beta})$, we would obtain a set of $n$ linearly independent vectors in the span of $n-1$ basis vectors (recall that by Lemma~\ref{l:b_3}, we have $|V_{v_{s,\alpha}}|>2$ and $|V_{v_{t,\beta}}|>2$). Therefore $\{i\}\varsubsetneq V_{v_{s,\alpha}}\cap V_{v_{t,\beta}}$. Since the contraction $P\searrow P'$ produces a bad component $D'$, we have, by Lemma~\ref{l:b_3}, $\pi_{e_i}(v_{t,\beta})\in D'$. Analogously, applying Lemma~\ref{l:b_3} to the contraction $P\searrow P''$ we obtain $\pi_{e_i}(v_{s,\alpha})\in D''$. Then, the definition of bad component guarantees that for every $j\in\{1,...,n\}$ we have $|e_j\cdot v_{s,\alpha}|,|e_j\cdot v_{t,\beta}|\leq 1$. Since $v_{s,\alpha}\cdot v_{t,\beta}=1$, it follows that  $| V_{v_{s,\alpha}}\cap V_{v_{t,\beta}}|\geq 3$ and then, since $\pi_{e_i}(v_{t,\beta})\in D'$ we have that $|V_{D'}\cap V_{v_{s,\alpha}}|\geq 2$. Applying Lemma~\ref{l:b_3} to the contraction $P\searrow P''$ we obtain that $\pi_{e_i}(v_{s,\alpha})\in D''$ is a final vector (since $v_{s,\alpha}\cdot v_{t,\beta}=1$). Therefore, in this case the vector $v_{s,\alpha}$ has the same properties as the vector $v$ of the preceding case. Calling $w$ the other final vector in $D''$ we can repeat, word for word, the argument of the preceding case arriving to a contradiction. Therefore, the good set $P''$ has no bad components of any type and the lemma is proved.
\end{proof}

\begin{rem}
The author of the present paper discovered that, although the statement of \cite[Proposition~5.3]{b:Li} is correct, the arguments used to prove the claim \virg{$S'$ has no bad components} (p.\ 450, line 8) are incorrect. Lemma~\ref{l:bad} can be used to prove such claim.
\end{rem}

\section{Determination of good sets}\label{s:clas}
The aim of this section is to determine all good sets $P$ with $I(P)<-1$. We start by reformulating the definition of complementary legs and by showing how to construct from a linear good set a three-legged good set with two complementary legs. Note that, by \eqref{e:p_i}, good sets $P$ with $I(P)<-1$ satisfy that $p_1(P)$ or $p_2(P)$ is greater than zero. We consider in the first subsection the easier case of good sets satisfying $p_1(P)>0$, while in the second one we analyze good sets with $p_1(P)=0$. Finally in the last part we consider the general case and conclude with the main result of this section, which is Proposition~\ref{p:clave}. 

In Section~\ref{s:prel}, the definition of complementary legs is given in terms of their associated strings. For our purposes, it is now more convenient to bear in mind that $L_\alpha,L_\beta\subseteq P\subseteq\Z^n$ are complementary legs if they can be obtained as a sequence of final ($-2$)-vector expansions of the length-one legs $\widetilde L_\alpha:=\{v_{1,\alpha}=-e_k+e_h\}$ and $\widetilde L_\beta:=\{v_{1,\beta}={-e_k-e_h}\}$, where $\widetilde L_\alpha$ and $\widetilde L_\beta$ are defined up to the action of an element of $O(n;\Z)$. Notice that $\widetilde L_\alpha$ and $\widetilde L_\beta$ have associated strings $(2)$ and $(2)$ respectively. The final ($-2$)-vector expansions change the strings $(2)$ and $(2)$ as the operations described in Remark~\ref{r:riem}. Therefore, the strings associated to the legs $L_\alpha$ and $L_\beta$ are related to one another by Riemenschneider's point rule. Thus, they are complementary legs according to the definition given in Section~\ref{s:prel}. Notice that, by definition, in every three-legged bad component there are two complementary legs. Moreover, the expansion of the set \eqref{e:bad5} along its horizontal leg corresponds to the expansion of a set with complementary legs defined in Section~\ref{s:prel}.

Given a good linear  set $P'$, we can construct a good set $P$ with a trivalent vertex by adding  two complementary legs to $P'$. This construction produces a large family of good sets with a central vertex and, if we further require that $P$ has no bad components of any type and that $I(P)<-1$, we have a complete description of this family, as shown in the following lemma.

\begin{lem}\label{l:cl}
Let $n=n_1+n_2+n_3+1$ and $P_n\subseteq\Z^n$ be a good set without bad components of any type, with two complementary legs, $L_2$ and $L_3$, and such that $I(P_n)<-1$. Then,
\begin{itemize}
\item[$(1)$] There exists $j\in\{1,...,n\}$ such that the central vertex $v_0$ is equal to $\tilde v_0\pm e_j$, where $j\in V_{L_2\cup L_3}$ and $V_{\tilde v_0}\subseteq V_{L_1}$. Moreover, $V_{L_1}\cap V_{L_2\cup L_3}=\emptyset$.
\item[$(2)$] The set $S_{n_1+1}:=L_1\cup\{\tilde v_0\}$ is a linear good subset of $\Z^{n_1+1}$, $I(S_{n_1+1})<0$, $n_1\geq 2$ and there exists a sequence of contractions $S_{n_1+1}\searrow S_{n_1}\searrow\cdots\searrow S_3$ such that, for each $k=3,...,n_1$, the set $S_k$ is good without bad components.
\item[$(3)$] For every $i\in\{1,...,n\}$ and every $(s,\alpha)\in J$, we have $|v_{s,\alpha}\cdot e_i|\leq 1$.
\item[$(4)$] There exists a sequence of good sets $P_n, P_{n-1},...,P_5$ such that, for every $i=5,...,n$, $P_i$ is a good set without bad components of any type, and moreover we have either $(I(P_{i+1}),c(P_{i+1}))=(I(P_i),c(P_i))$ or
$$I(P_{i+1})-1\geq I(P_i)\ \ \mbox{and}\ \ c(P_{i+1})+1\geq c(P_i).$$
\item[$(5)$] If $P_n$ is a standard set, then every set in the sequence $P_n, P_{n-1},...,P_5$, built in $(4)$ is standard too.
\end{itemize} 
\end{lem}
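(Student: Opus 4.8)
The plan is to prove the five statements in the order listed, using the structure of good sets with complementary legs together with the machinery on bad components (Lemmas~\ref{l:b_3} and \ref{l:bad}) and the linear-set results imported from \cite{b:Li} (Lemmas~\ref{l:lisca_1}, \ref{l:n.e}, and Remark~\ref{r:lisca}). The guiding idea is that a good set $P_n$ with two complementary legs $L_2,L_3$ is, by the reformulation at the start of Section~\ref{s:clas}, obtained from its linear ``spine'' $S_{n_1+1}:=L_1\cup\{\tilde v_0\}$ by attaching a pair of complementary legs at the central vertex. So most structural claims about $P_n$ should reduce to claims about the linear good set $S_{n_1+1}$, for which Lisca's analysis applies.

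First I would establish $(1)$. Since $L_2,L_3$ are complementary, they arise by final $(-2)$-expansions from $\widetilde L_2=\{-e_k+e_h\}$, $\widetilde L_3=\{-e_k-e_h\}$, so the two legs share the basis vector $e_k$ through which they attach to the central vertex; writing $v_0=\tilde v_0\pm e_k$ isolates the part of $v_0$ meeting $L_2\cup L_3$ from the part $\tilde v_0$ meeting $L_1$. The orthogonality $V_{L_1}\cap V_{L_2\cup L_3}=\emptyset$ and $V_{\tilde v_0}\subseteq V_{L_1}$ I would get by examining which $e_i$ can appear: a basis vector linked to both a leg-$1$ vertex and a leg-$2$ or leg-$3$ vertex would create a link destroying the star shape or forcing a bad component. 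For $(2)$, once $v_0=\tilde v_0\pm e_k$ with $\tilde v_0$ supported on $V_{L_1}$, the set $L_1\cup\{\tilde v_0\}$ is linear, and because $a_0\ge 3$ in $P_n$, deleting the $e_k$-summand drops $\tilde v_0\cdot\tilde v_0$ by $1$ so that $I(S_{n_1+1})=I(P_n)-(\text{contribution of }L_2,L_3)<0$; here I would use that complementary legs contribute a controlled amount to $I$. The contraction sequence $S_{n_1+1}\searrow\cdots\searrow S_3$ and the absence of bad components along the way is exactly the content of Lemma~\ref{l:n.e}(4) applied to the linear good set $S_{n_1+1}$ (noting $p_1(S_{n_1+1})>0$, which is forced by $I<0$ via Lemma~\ref{l:lisca_1}), giving $n_1\ge 2$ as well.

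Statement $(3)$, the uniform bound $|v_{s,\alpha}\cdot e_i|\le 1$, I would split into the two legs and the spine. On $L_2\cup L_3$ the bound is immediate from the $(-2)$-expansion description (each expansion introduces a fresh $\pm e_i$ with coefficient $1$). On $S_{n_1+1}$ the bound comes from the contraction sequence of $(2)$ together with the argument in the proof of Lemma~\ref{l:n.e}: the contractions are along indices $k$ with $|e_k\cdot v_{r,1}|=1$, which propagates the coefficient bound to all of $P_3$ and hence, by expansion, back up to all of $S_{n_1+1}$. The central vertex is handled by $v_0=\tilde v_0\pm e_k$ with both pieces satisfying the bound on disjoint supports.

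The core of the lemma is $(4)$, and I expect it to be the main obstacle. The plan is to build the sequence $P_n,P_{n-1},\dots,P_5$ by choosing, at each stage, a contraction that reduces the rank by one while keeping the set good and bad-component-free. The natural contractions are: an extended contraction ``through'' the complementary legs (as in Example~\ref{e:cl}, reducing $L_1$ while shortening a leg), or an ordinary contraction inside the spine coming from $(2)$. The delicate point is controlling $(I,c)$: an ordinary contraction along $E_h=\{(s,\alpha),(t,\beta)\}$ with $a_{s,\alpha}=2$ preserves both $I$ and $c$, giving the first alternative, whereas splitting off a $(-1)$-vector or contracting where $a_{s,\alpha}>2$ can raise $I$ by at most $1$ and raise $c$ by at most $1$, giving the inequality alternative. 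The real hazard is that a well-chosen contraction may create a bad component; this is precisely where Lemma~\ref{l:bad} rescues the argument, letting me interchange the roles of $v_{s,\alpha}$ and $v_{t,\beta}$ to land on a $P_i$ with no bad components. I would need to verify the hypotheses of Lemma~\ref{l:bad}, in particular $I(P_i)<-1$, propagate down the sequence, which follows from the $(I,c)$ bookkeeping plus the standing assumption $I(P_n)<-1$ and $n\ge 5$ stopping rule. Finally, $(5)$ is a bookkeeping corollary: standardness means all $\gamma_{s,\alpha}=1$, i.e.\ connectedness ($c=1$), and the two admissible moves in $(4)$ either preserve $c=1$ outright or, in the inequality branch, the $c(P_{i+1})+1\ge c(P_i)$ estimate combined with $c\ge 1$ forces the contraction chosen on a connected set to remain connected; Example~\ref{e:cl} already records that the extended contraction of a standard set is standard, and the spine contractions are standard by Lemma~\ref{l:n.e}, so connectedness is maintained throughout.
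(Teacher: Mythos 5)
Your overall architecture (reduce everything to the linear spine $S_{n_1+1}=L_1\cup\{\tilde v_0\}$ and import Lisca's linear-set results) is the same as the paper's, but there is a genuine gap at the point where you justify the spine analysis. You claim that $I(S_{n_1+1})<0$ forces $p_1(S_{n_1+1})>0$ via Lemma~\ref{l:lisca_1}, and you then build parts $(2)$, $(3)$ and the spine portion of $(4)$ on Lemma~\ref{l:n.e}, whose standing hypothesis is $p_1>0$. But Lemma~\ref{l:lisca_1} only gives $2p_1+p_2>\sum_{j\geq4}(j-3)p_j\geq 0$, i.e.\ $p_1>0$ \emph{or} $p_2>0$; a good linear set with $I<0$ can perfectly well have $p_1=0$ and $p_2>0$ (this is exactly the case treated separately in Section~4 of \cite{b:Li}, and several families in Remark~\ref{r:lisca} arise that way). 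For such a spine, Lemma~\ref{l:n.e} does not apply, and your derivations of the contraction sequence $S_{n_1+1}\searrow\cdots\searrow S_3$, of the coefficient bound $|v\cdot e_i|\leq 1$ on the spine, and of $n_1\geq 2$ all lose their foundation. The paper closes this by invoking \cite[Corollary~5.4]{b:Li} for the contraction sequence and \cite[Proposition~5.2]{b:Li} for the coefficient bound, both of which apply to good linear sets without bad components and $I<0$ irrespective of $p_1$; you would need to substitute these (or an equivalent argument covering the $p_1=0$ case).

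Two smaller points. In $(1)$, your justification that a basis vector shared between $L_1$ and $L_2\cup L_3$ ``would create a link destroying the star shape'' is not sound as stated: two vectors can share support and still be orthogonal, so no forbidden edge need appear. The correct mechanism (and the paper's) is that after contracting the complementary legs to $\widetilde L_2=\{e_j+e_k\}$, $\widetilde L_3=\{e_j-e_k\}$, a vector of $L_1$ meeting $\{e_j,e_k\}$ cannot be orthogonal to both simultaneously. In $(4)$, the appeal to Lemma~\ref{l:bad} as a rescue device is both unnecessary and not obviously applicable: that lemma is stated for ordinary contractions, not for the extended contractions used on sets with complementary legs, and in the paper's proof no bad components ever threaten to appear, because the leg contractions preserve $(I,c)$ exactly and the spine contractions come with the no-bad-component guarantee already built into \cite[Corollary~5.4]{b:Li} and \cite[Theorem~6.4]{b:Li}. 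The essential mechanism you should make explicit is the transplantation step: when a spine contraction discards $\tilde v_0$, one adds $\pm e_j$ to a final vector of the contracted spine so that it becomes the new central vertex carrying the two complementary legs, as in Example~\ref{e:cl}.
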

\begin{proof}
Since $L_2$ and $L_3$ are complementary legs, there exists a sequence of contractions to the legs $\widetilde L_2=\{v_{1,2}=e_j+e_k\}$ and $\widetilde L_3=\{v_{1,3}=e_j-e_k\}$ for some $j,k\in\{1,...,n\}$. Since $v_0\cdot v_{1,2}=v_0\cdot v_{1,3}=1$, then $j\in V_{v_0}$, $|e_j\cdot v_0|=1$ and $k\not\in V_{v_0}$. If $v\in L_1$ was such that $V_v\cap\{e_j,e_k\}\neq\emptyset$, then we could not have $v\cdot v_{1,2}=0$ and $v\cdot v_{1,3}=0$, therefore $V_{L_1}\cap V_{L_2\cup L_3}=\emptyset$. Since $v_{1,1}\cdot v_0=1$, we have $V_{v_0}\cap V_{L_1}\neq\emptyset$ and hence $v_0=\tilde v_0\pm e_j$ with $V_{\tilde v_0}\subseteq v_{L_1}$ and $(1)$ holds.

By definiton of complementary legs, a simple calculation yields $|V_{L_2\cup L_3}|=n_2+n_3$ and $I(L_2\cup L_3)=-2$. From the first equality it follows that $S_{n_1+1}=L_1\cup\{\tilde v_0\}\subseteq\Z^{n_1+1}$ is a good set. Since $P_n=L_1\cup \{v_0\}\cup L_2\cup L_3$ and $I(\cdot)$ is additive under set union, we have $I(P_n)=I(L_1\cup \{v_0\})-2<-1$ and a straightforward computation gives then $I(S_{n_1+1})<0$. Since $P_n$ is a good set, we have $n_1\geq 1$. If $n_1=1$, then $S_2$ should be a standard set of $\Z^2$ satisfying $I(S_2)<0$, which is easily seen to be impossible. Since $P_n$ has no bad components of any type, the set $S_{n_1+1}$ has no bad components and hence it fulfills the hypothesis of \cite[Corollary~5.4]{b:Li} and $(2)$ holds. 

Applying \cite[Proposition~5.2]{b:Li}, we obtain that $|v\cdot e_i|\leq 1$, for every $v\in S_{n_1+1}$ and for every $i\in\{1,...,n\}$. We have shown before that $|v_0\cdot e_j|=1$ and,  by definition of complementary legs, for every $v\in L_2\cup L_3$ and for every $i\in\{1,...,n\}$ we have $|v\cdot e_i|\leq 1$. This proves $(3)$.

Assertions $(4)$ and $(5)$ follow from the above arguments. In fact, the sets $P_n,...,P_{n_1+3}$ are the contractions of the two complementary legs, which satisfy $(I(P_{i+1}),c(P_{i+1}))=(I(P_i),c(P_i))$ for $i\in\{n_1+3,...,n-1\}$, and the set $P_{n_1+3}$ is $L_1\cup \{v_0\}\cup\widetilde L_2\cup\widetilde L_3$. All these sets are good without bad components. The rest of the sets in the sequence, namely $P_{n_1-1+3},...,P_5$, are obtained using:
\begin{itemize} 
\item \cite[Theorem~6.4]{b:Li} applied to $S_{n_1+1}$, if this set is standard, which implies that $P_n$ is standard and $(5)$ follows;
\item \cite[Corollary~5.4]{b:Li} applied to $S_{n_1+1}$, if this set is only good, and in this case $(4)$ follows.
\end{itemize}
In both cases we must take into account the following consideration: if in the contraction given by \cite[Theorem~6.4]{b:Li} or by \cite[Corollary~5.4]{b:Li} we discard the vector $\tilde v_0$, then we must add $\pm e_j=v_0-\tilde v_0$ to any final vector in the contracted set, that will then play the role of central vertex with two complementary legs attached (see Example~\ref{e:cl}). In this way we obtain the desired sequence of good sets, with no bad components of any type. 
\end{proof}

\subsection{Good sets with $p_1>0$}\label{s:p1>0}
Along this subsection we will say that a subset $P\subseteq\Z^n$ satisfies the \textbf{working assumptions} when
\begin{itemize}\index{assumptions!\textbf{(W1)}--\textbf{(W4)}} 
\item[\textbf{(w1)}] $n=n_1+n_2+n_3+1\geq 5$,
\item[\textbf{(w2)}] $P=\{v_0,v_{1,1},...,v_{n_3,3}\}$ is a good set with a trivalent vertex and no bad components of any type,
\item[\textbf{(w3)}] $I(P)<-1$, 
\item[\textbf{(w4)}] $p_1(P)>0$. 
\end{itemize}

The aim of this subsection is to prove Propositions \ref{p:set_5} and \ref{p:p1>0_final}, which imply that a set $P$ fulfilling the working assumptions is necessarily standard, and can be obtained from the standard subset of $\Z^5$ given in Proposition \ref{p:set_5} by a finite sequence of expansions. 
\begin{prop}\label{p:set_5}
Let $n=5$ and  $P\subseteq\Z^n$ be a good subset with $p_1(P)\geq 1$ and $I(P)<-1$. Then, $I(P)=-4$ and, up to replacing $P$ with $\Omega P$ where $\Omega\in\Upsilon_{P}$, the plumbing graph $P$ with its embedding in the standard diagonal lattice is:
\begin{center}
\frag[s]{a-b}{$e_2-e_3$}
\frag[s]{c-a}{$e_1-e_2$}
\frag[s]{a+c-d}{$e_2+e_3+e_4$}
\frag[s]{d+e}{$-e_4+e_5$}
\frag[s]{d-e}{$-e_4-e_5$}
\includegraphics[scale=0.7]{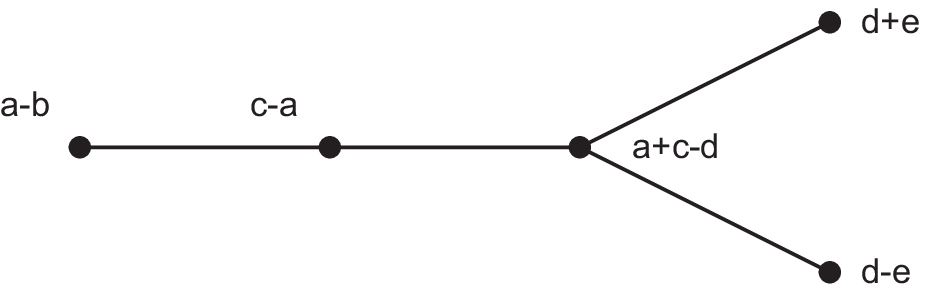}$\s\s\s$
\end{center}
\end{prop}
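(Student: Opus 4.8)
The plan is to fix the combinatorial type of the graph first, then to use the hypothesis $p_1(P)\geq 1$ together with Lisca's counting inequality \eqref{e:p_i} and the linear independence of good sets (Lemma~\ref{l:ind}) to force all weights to be minimal, and finally to solve the resulting system of adjacency equations explicitly. Throughout I take $P$ to carry its trivalent vertex $v_0$, as in the working assumptions of this subsection (a linear $P$ would give $I(P)=\lambda^2-4\geq -3$ by Lemma~\ref{l:n.e}, inconsistent with the claimed conclusion).

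First I would determine the leg structure. Since $P$ has the unique trivalent vertex $v_0$ and $|P|=5$, the four non-central vectors are distributed among the three legs, so the leg lengths must be $(2,1,1)$; write $L_1=\{v_{1,1},v_{2,1}\}$ for the length-two leg and $L_2=\{v_{1,2}\}$, $L_3=\{v_{1,3}\}$ for the two length-one legs. By Lemma~\ref{l:ind} the five vectors are linearly independent, hence span $\Q^5$, so every $e_i$ occurs and $\sum_{j\geq 1}p_j(P)=5$. Moreover the number of nonzero entries of $v_{s,\alpha}$ is at most $a_{s,\alpha}=-v_{s,\alpha}\cdot v_{s,\alpha}$, with equality exactly when all its entries are $\pm 1$; summing over the vertices gives $\sum_j j\,p_j(P)=\sum_{(s,\alpha)}|V_{v_{s,\alpha}}|\leq \sum_{(s,\alpha)}a_{s,\alpha}=I(P)+15$.

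Next, since $a_0\geq 3$ and every non-central weight is $\geq 2$, the bound $I(P)<-1$ leaves only $I(P)\in\{-2,-3,-4\}$. I would rule out $I(P)\in\{-2,-3\}$ under the hypothesis $p_1(P)\geq 1$: combining $\sum_j p_j=5$, the inequality $\sum_j j\,p_j\leq I(P)+15$, and Lisca's inequality $2p_1+p_2>p_4+2p_5$ from \eqref{e:p_i} restricts the admissible tuples $(p_1,\dots,p_5)$ to a short list, and for each surviving numerical type one checks that the orthogonality relations $v\cdot w=0$ and the adjacency relations $v\cdot w=1$ of the $(2,1,1)$-tree admit no integral solution carrying a coordinate private to a single vertex; the obstruction is produced by Lemma~\ref{l:ind}, since a vector forced to close up such a configuration would have to lie in a proper coordinate subspace, contradicting independence. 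This leaves $I(P)=-4$. For this value all weights are minimal, so $a_0=3$ and every non-central weight is $2$; then each non-central vector is of the form $\pm e_p\pm e_q$ and $v_0$ is of the form $\pm e_p\pm e_q\pm e_r$, and all entries are automatically $\pm 1$. Using an element of $\Upsilon_P$ to permute the $e_i$ and adjust signs, a direct computation shows that the mutually orthogonal length-one legs meeting $v_0$ must share the coordinate of $v_0$ together with a second coordinate of opposite sign, forcing $v_{1,2}=-e_4+e_5$, $v_{1,3}=-e_4-e_5$ and $v_0=e_2+e_3+e_4$; the orthogonality and adjacency constraints then make the $(-2,-2)$-chain $L_1$ rigid, equal to $v_{1,1}=e_1-e_2$, $v_{2,1}=e_2-e_3$. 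One verifies $E_1(P)=\{(1,1)\}$, so $p_1(P)=1$ as required, and the graph is exactly the displayed one.

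The main obstacle is the exclusion of the cases $I(P)\in\{-2,-3\}$ in the third step: Lisca's inequality \eqref{e:p_i} does not eliminate them numerically (for instance the type $(p_1,\dots,p_5)=(1,1,3,0,0)$ is compatible with $I(P)=-3$), so the real content is the geometric rigidity argument showing that no good embedding of a non-minimal weight distribution on the $(2,1,1)$-tree can possess a private coordinate. By contrast, once $I(P)=-4$ is established the positive direction is genuinely rigid and the embedding is unique up to $\Upsilon_P$.
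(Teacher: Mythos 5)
Your overall architecture (fix the $(2,1,1)$ tree, bound $I(P)$, then solve the adjacency equations) is reasonable, and your endgame for $I(P)=-4$ is essentially right, but there is a genuine gap exactly where you flag ``the main obstacle'': the exclusion of $I(P)\in\{-2,-3\}$ is asserted, not proved. This exclusion cannot be soft, because good subsets of $\Z^5$ with a trivalent vertex and $I=-3$ or $I=-2$ do exist --- they are items $(2)$ and $(3)$ of Lemma~\ref{l:base} --- so whatever argument you run must use $p_1(P)\geq 1$ in an essential structural way, not merely numerically. Your sketch (``one checks that the orthogonality relations admit no integral solution carrying a private coordinate, the obstruction being produced by Lemma~\ref{l:ind}'') never explains how the private coordinate actually produces a good set of $k$ vectors inside the span of $k-1$ basis vectors, which is what Lemma~\ref{l:ind} needs in order to bite; and the numerical screen you set up ($\sum_j p_j=5$, $\sum_j jp_j\leq I(P)+15$, inequality \eqref{e:p_i}) admits, as you yourself note, surviving types for $I=-3$, so the entire burden falls on this unexecuted check.

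The paper's proof avoids the case split on $I(P)$ altogether. If $E_i(P)=\{(s,\alpha)\}$, Lemma~\ref{l:p1>0_1} (whose proof is precisely the projection-plus-rank-count you gesture at: if $|V_{v_{s,\alpha}}|>2$, replace $v_{s,\alpha}$ by $\pi_{e_i}(v_{s,\alpha})$, which still has square $\leq-2$, to obtain $5$ independent vectors in a rank-$4$ sublattice) gives that $v_{s,\alpha}$ is internal, not central, and $v_{s,\alpha}=\lambda e_i\pm e_j$. Hence $a_{s,\alpha}=\lambda^2+1\in\{2,5,10,\dots\}$, and the bound $\sum a\leq 13$ kills $|\lambda|\geq 2$ with no discussion of the value of $I$. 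Internality and non-centrality then force $s=1$, $n_1=2$, $n_2=n_3=1$; Lemma~\ref{l:p1>0_2} forces $a_{2,1}=2$ (since $a_0\geq 3$); and the remaining vectors are pinned down by a short direct computation, with $I(P)=-4$ emerging at the end as a consequence rather than being established up front. If you want to salvage your route, replace your exclusion step by exactly this: derive $|V_{v_{s,\alpha}}|=2$ from the private coordinate via the rank argument, and let the weight bound do the rest.
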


\begin{prop}\label{p:p1>0_final}
Let $P_n$ be a set satisfying the working assumptions. Then, $P_n$ is standard and there is a sequence of contractions
$$P_n\searrow\ P_{n-1}\searrow\cdots\searrow\ P_6\searrow P_5$$
such that $P_k$ is standard and $I(P_k)=-4$, for every $k=5,...,n$.
\end{prop}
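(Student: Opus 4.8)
The plan is to prove Proposition~\ref{p:p1>0_final} by descending induction on $n$, using Proposition~\ref{p:set_5} as the base case $n=5$. The central mechanism is a single well-chosen contraction that decreases $n$ by one while preserving every property in the working assumptions, together with the invariance of $I$. First I would exploit the hypothesis $p_1(P_n)>0$: by definition there exist $i\in\{1,\dots,n\}$ and $(s,\alpha)\in J$ with $E_i(P_n)=\{(s,\alpha)\}$, so some vector $v_{s,\alpha}$ is the unique one non-orthogonal to $e_i$. The structural results on good linear sets with $p_1>0$, namely Lemma~\ref{l:n.e}, will tell me that near such a vector the local picture is rigid: one finds a $(-2)$-vector attached to $v_{s,\alpha}$ along a second basis index, which is exactly the configuration that lets me perform a contraction $P_n\searrow P_{n-1}$ that peels off a final $(-2)$-vector without disturbing the trivalent structure or the complementary-leg/no-complementary-leg dichotomy.

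The key steps, in order, are as follows. I would first show that the contraction can be chosen so that $P_{n-1}$ is again good: this is where Lemma~\ref{l:b_3} and Lemma~\ref{l:bad} enter. If the naive contraction produces a bad component, Lemma~\ref{l:bad} provides the alternative contraction (swapping the roles of the two vectors involved) yielding a good set with no bad components of any type, so the inductive hypothesis \textbf{(w2)} is restored. Next I must verify that $I$ is preserved, i.e.\ $I(P_{n-1})=I(P_n)$. Since $p_1>0$ forces the contracted vector to be a final $(-2)$-vector (by the analysis underlying Lemma~\ref{l:n.e}, part~$(3)$, combined with the square computation $a_{s,\alpha}=2$), the quantity $I(\cdot)=\sum(-v\cdot v-3)$ changes by $-(2-3)=+1$ from removing a $(-2)$-vector and by $-1$ from the projection lowering a neighboring square, so the two contributions cancel and $I$ is unchanged; this keeps \textbf{(w3)} in the strong form $I(P_{n-1})=I(P_n)$. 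Then I would check $p_1(P_{n-1})>0$, which follows because the projection $\pi_{e_i}$ creates a new vector non-orthogonal to exactly one surviving basis element (this is precisely the phenomenon recorded in Lemma~\ref{l:n.e}$(3)$: removing a final $(-2)$-vector exposes a new valence-one basis index). Finally, \textbf{(w1)} holds as long as $n-1\geq 5$, so the induction runs down to $n=5$, where Proposition~\ref{p:set_5} forces $I=-4$ and hence $I(P_k)=-4$ for every $k$. The fact that each $P_k$ is standard, rather than merely good, would be tracked along the way: Lemma~\ref{l:n.e} asserts that a good set with $p_1>0$ is automatically standard, so standardness is not an extra hypothesis but a consequence at every stage.

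The main obstacle I anticipate is the bad-component bookkeeping: guaranteeing at each step that the contraction producing a good set can be taken to produce a good set \emph{with no bad components}, so that the full strength of \textbf{(w2)} is inherited and the induction does not stall. This is exactly the gap that Lemma~\ref{l:bad} is designed to fill, and the delicate point is checking that its hypotheses apply — in particular that $P_n$ itself has no bad components (given) and that the vector being projected satisfies $|V_{v_{s,\alpha}}|>2$ and $|V_{v_{t,\beta}}|>2$ as required by Lemma~\ref{l:b_3}$(1)$. A secondary subtlety is confirming that the contraction respects the trivalent vertex: one must ensure the central vertex $v_0$ is never the vector removed, which I would handle by always choosing to contract along an index $i$ with $E_i(P_n)=\{(s,\alpha)\}$ where $(s,\alpha)$ lies on a leg, never at the center (the argument opening the proof of Lemma~\ref{l:bad}, that $v_{s,\alpha}\neq v_0$ via the component-counting bound of \cite{b:Li2}, is the template). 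Once these two checks are in place, the descending induction closes cleanly and the value $I(P_k)=-4$ propagates from the base case upward.
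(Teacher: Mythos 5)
There is a genuine gap, and it sits at the very heart of your mechanism. You propose to peel off, at each step, a \emph{final} $(-2)$-vector located at the $p_1$-witness $v_{s,\alpha}$ (the vector with $E_i(P_n)=\{(s,\alpha)\}$), and you compute the invariance of $I$ from that picture. But the paper's Lemma~\ref{l:p1>0_1} proves precisely the opposite: under the working assumptions the vector $v_{s,\alpha}$ is \emph{internal} and not central (if it were final, a rank count on the chain of $(-2)$-vectors adjacent to it produces too many linearly independent vectors in too small a span). Consequently the contraction that actually works is the one of Lemma~\ref{l:p1>0_2}: it deletes $v_{s,\alpha}$ together with its two neighbours $v_{p,\alpha}$ (of square $-2$) and $v_{q,\alpha}$ (of square $<-2$) and reinserts two modified vectors, and it is only available while $n_\alpha>2$. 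Your single uniform move therefore cannot run the induction all the way down; once $L_\alpha$ has been shortened to length $2$ one must switch to contracting the \emph{other} two legs, and for that one first has to recognize them as complementary legs. That recognition is the content of Lemma~\ref{l:p1>0_3} and Remark~\ref{r:p1>0_coef}: one forms the auxiliary linear set $\{\hat v_0'\}\cup L_2\cup L_3$ and applies Lemma~\ref{l:n.e} to it, which is a different use of that lemma from the local-rigidity role you assign it. Your proposal contains no counterpart to this second type of contraction, so the descent stalls exactly when the leg carrying the $p_1$-witness is exhausted.

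Two secondary points. First, the bad-component bookkeeping via Lemmas~\ref{l:b_3} and \ref{l:bad}, which you flag as the main obstacle, is not where the difficulty lies in the $p_1>0$ case: the contractions of Lemmas~\ref{l:p1>0_2} and \ref{l:p1>0_3} are shown directly to preserve the working assumptions, and Lemma~\ref{l:bad} is really needed only in the $p_1=0$ analysis. Second, standardness of each $P_k$ does not follow from Lemma~\ref{l:n.e}, which concerns good \emph{linear} sets; for the three-legged sets here the paper obtains standardness by observing that $c(P_k)$ is constant along the whole sequence and that $c(P_5)=1$ by Proposition~\ref{p:set_5}, so connectedness propagates back up. Your conclusion $I(P_k)=-4$ is correct and the base case is handled as in the paper, but the inductive step needs to be rebuilt around the internal position of $v_{s,\alpha}$ and the two-stage contraction scheme.
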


Throughout the section we will use the following notation: let $P\subseteq\Z^n$ be a subset satisfying the working assumptions. Then, by \textbf{(w4)}, there exist $i\in\gbra{1,...,n}$ and $(s,\alpha)\in J$ such that $E_i(P)=\{(s,\alpha)\}$.

\begin{lem}\label{l:p1>0_1} 
Consider $P\subseteq\Z^n$ satisfying the working assumptions. Then,
\begin{itemize}
\item[$(1)$] The vertex $v_{s,\alpha}$ is internal and not central.
\item[$(2)$] There exists $j\in\{1,...,n\}$ such that 
\begin{align*}
&V_{v_{s,\alpha}}=\{i,j\},\\
&E_j(P)=\{(s-1,\alpha),(s,\alpha),(s+1,\alpha)\},\\ 
&|v_{s-1,\alpha}\cdot e_j|=|v_{s,\alpha}\cdot e_j|=|v_{s+1,\alpha}\cdot e_j|=1.
\end{align*}
\end{itemize}
\end{lem}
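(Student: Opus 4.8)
The plan is to work entirely locally around the vector $v:=v_{s,\alpha}$. Write $v=\sum_k c_k e_k$ in the standard basis, so that $v\cdot e_k=-c_k$, $V_v=\{k:c_k\neq 0\}\ni i$, and $a_{s,\alpha}=-v\cdot v=\sum_k c_k^2$. Two facts will be used throughout. First, since $P$ is good, every pair of distinct vectors of $P$ has product in $\{0,1\}$; hence for any neighbour $w$ of $v$ one has $\sum_{k\in V_v\setminus\{i\}}c_k w_k=-1$, the coordinate $i$ dropping out because $w_i=0$. Second, by Lemma~\ref{l:ind} the coordinate matrix $M$ of $P$ is nonsingular. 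Because $E_i(P)=\{(s,\alpha)\}$, the $e_i$-column of $M$ has a single nonzero entry, in the row of $v$; Laplace expansion along it shows that $P\setminus\{v\}$ is a $\Q$-basis of $\langle e_k:k\neq i\rangle$. Running the same expansion with a hypothetical second private coordinate of $v$ would confine the $n-1$ independent vectors of $P\setminus\{v\}$ to an $(n-2)$-dimensional space, which is impossible; so $i$ is the unique private coordinate of $v$, and every coordinate in $V_v\setminus\{i\}$ is genuinely shared.

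First I would prove the structural heart of the statement, namely $|V_v|=2$ together with $|v\cdot e_j|=1$, where $\{i,j\}=V_v$; this is the three-legged analogue of Lemma~\ref{l:n.e}(1). The plan is to import that argument through the over-determination technique already used in Lemmas~\ref{l:b_3} and~\ref{l:bad}: assuming $|V_v|\geq 3$, or $|V_v|=2$ with $|v\cdot e_j|\geq 2$, I would project $v$ onto the span of a suitable set of its shared coordinates, i.e.\ replace it by $v':=-\sum_{k\in V_{D}}(v\cdot e_k)e_k$ for the relevant neighbouring component $D$ of $P\setminus\{v\}$, and verify that the resulting subset has incidence matrix of the block form~\eqref{e:Q_P} or~\eqref{e:Q_P_linear}. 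By Remark~\ref{r:li} its vectors would then be linearly independent, yet they would lie in the span of strictly fewer basis vectors, a contradiction. Lisca's inequality~\eqref{e:p_i} together with $I(P)<-1$ is what keeps the counts $p_j(P)$ for $j\geq 4$ small enough for this projection to remain inside a good/block configuration. The conclusion is $v=c_i e_i\pm e_j$ with $c_i\neq 0$ and $|c_j|=1$.

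With $V_v=\{i,j\}$ and $|v\cdot e_j|=1$ in hand, the remaining assertions follow from the product constraint. Any $w\neq v$ with $w_j\neq 0$ satisfies $v\cdot w=-c_j w_j$, which lies in $\{0,1\}$ only if $w_j=-c_j=\mp 1$ and $v\cdot w=1$; thus every vector meeting $e_j$ is a neighbour of $v$ carrying $e_j$ with coefficient $\pm1$, and conversely every neighbour of $v$ meets $e_j$ since $i$ is private. To finish I would exclude the degenerate positions of $v$. If $v=v_0$ were central, then $a_0\geq 3$ forces $|c_i|\geq 2$, and $v_0$ would acquire three pairwise non-adjacent neighbours all carrying the value $-c_j$ on $e_j$; tracking the further coordinates each pair is then forced to share (to keep their mutual products in $\{0,1\}$) over-determines the local configuration, and combined with the absence of bad components~(w2) this is impossible, so $v$ is not central. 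A parallel but shorter argument, using $n\geq 5$, $I(P)<-1$ and~(w2), lets me take the witnessing private coordinate on an internal vertex rather than on a leaf (note that a length-one leg tip can also carry a private coordinate, so the internal witness must be selected). Once $v$ is internal with exactly two neighbours $v_{s-1,\alpha},v_{s+1,\alpha}$, the previous observation yields $E_j(P)=\{(s-1,\alpha),(s,\alpha),(s+1,\alpha)\}$ with unit coefficients, which is assertion~(2), while~(1) records that $v$ is internal and not central.

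The main obstacle is the middle step, that is, the rigorous exclusion of $|V_v|\geq 3$ and of the central position. Both reduce to exhibiting a block-form set with more independent vectors than the dimension of its span, but the delicate point is checking that the projected or replaced set genuinely has incidence matrix~\eqref{e:Q_P} or~\eqref{e:Q_P_linear} (in particular contains no illegal vector of square $-1$), and this is exactly where the hypotheses $I(P)<-1$, the inequality~\eqref{e:p_i}, and ``no bad components'' have to be combined. The bookkeeping of which coordinate is shared by which vector, and whether the shared coordinates of $v$ all fall in a single component of $P\setminus\{v\}$, is the part most likely to require a careful case split.
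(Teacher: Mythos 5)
Your treatment of the structural half of the lemma is essentially the paper's: project away the private coordinate $e_i$ (your $v'$ is exactly $\pi_{e_i}(v_{s,\alpha})$), check the resulting incidence matrix still has the form \eqref{e:Q_P} or \eqref{e:Q_P_linear}, and contradict Lemma~\ref{l:ind} by a rank count; the deduction of $E_j(P)$ and the unit coefficients from $v=c_ie_i\pm e_j$ is also the same. (One small point: the inequality \eqref{e:p_i} plays no role in this step and cannot be what ``keeps the projection inside a good configuration''; what matters is only that $\pi_{e_i}(v_{s,\alpha})$ has square $\leq -2$, which is automatic when $|V_{v_{s,\alpha}}|\geq 3$ or $|v_{s,\alpha}\cdot e_j|\geq 2$.)

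The genuine gap is in assertion $(1)$, i.e.\ the exclusion of the central and final positions. For the central case your plan is purely local: you note that the three neighbours of $v_0$ all carry $e_j$ with the same unit coefficient and claim the pairwise orthogonality relations ``over-determine the local configuration.'' This is not an argument, and it cannot be completed locally: whether such a configuration exists depends on global data (for instance, the obstruction disappears if one drops $I(P)<-1$, which your sketch never invokes at this step). The paper instead removes $v_0$ to obtain a good \emph{linear} set $\widetilde P$ with at least three connected components and $I(\widetilde P)<-1$, proves that $\widetilde P$ inherits no linear bad components from \textbf{(w2)} (this requires tracking the index $j$ through a putative bad component), and then contradicts the external bound $c(\widetilde P)\leq 2$ of \cite[Lemma~4.9]{b:Li2}. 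Some such global input is indispensable and is missing from your proposal. The final case is likewise only asserted (``a parallel but shorter argument''); the paper's proof here is not short: one follows the maximal chain of $(-2)$-vectors adjacent to the putative final witness until reaching a vertex of square $<-2$, deletes the whole chain, projects that vertex, and runs the rank count on $n-h$ vectors in an $(n-h-1)$-dimensional span. Finally, your parenthetical ``a length-one leg tip can also carry a private coordinate, so the internal witness must be selected'' misreads the statement: the lemma asserts that \emph{every} index $i$ with $|E_i(P)|=1$ sits on an internal, non-central vertex (and this is what Lemma~\ref{l:p1>0_2} later needs), not merely that some witness can be chosen internal; under the working assumptions a leaf, even on a length-one leg, cannot carry a private coordinate.
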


\begin{proof}
The set $P$, being good, is irreducible and therefore we have $|V_{v_{s,\alpha}}|\geq 2$. We claim that $|V_{v_{s,\alpha}}|= 2$. In fact, assume by contradiction that  $|V_{v_{s,\alpha}}|>2$, and consider the set $P'$ obtained from $P$ by replacing $v_{s,\alpha}$ with $\pi_{e_i}(v_{s,\alpha})$. Thus, $P'$ is contained in the span of the $n-1$ vectors $e_1,...,e_{i-1},e_{i+1},...,e_n$. However, since $\pi_{e_i}(v_{s,\alpha})\cdot\pi_{e_i}(v_{s,\alpha})\leq-2$, the matrix $Q_{P'}$  still has the form \eqref{e:Q_P} and, by  Remark~\ref{r:li}, $P'$  consists of $n$ linearly independent vectors, which gives a contradiction. 

Next, by assumption $i\in V_{v_{s,\alpha}}$, and therefore there exists $j\neq i$ such that  $V_{v_{s,\alpha}}=\{i,j\}$. Notice that $|v_{s,\alpha}\cdot e_j|= 1$. In fact, we have $|v_{s,\alpha}\cdot e_j|\geq 1$, and if the last inequality is strict, replacing $v_{s,\alpha}$ with $\pi_{e_i}(v_{s,\alpha})$ in $P$ we obtain the same contradiction as before.

On the other hand, since $P$ is irreducible and $E_i(P)=\{(s,\alpha)\}$, the vector $v_{s,\alpha}$ is not isolated. Let us assume that $v_{s,\alpha}$ is final, and in particular  $v_{s,\alpha}\cdot v_{s+1,\alpha}=0$ (the case $v_{s-1,\alpha}\cdot v_{s,\alpha}=0$ can be handled analogously). Let $\ell\in\gbra{1,...,s}$ be the largest number such that the set $S:=\{v_{s-1,\alpha},...,v_{s-\ell,\alpha}\}$ has a connected intersection graph. Then, there exists some $h\in\gbra{1,..., l}$  such that $a_{s-h,\alpha}>2$. In fact: 
\begin{itemize}
\item if $\ell=s$, we can take $h=s=\ell$ since, by \eqref{e:Q_P}, we know $a_0=a_{s-h,\alpha}\geq 3$;  
\item if $\ell<s$ and there is no $h\in\gbra{1,...,\ell}$  such that $a_{s-h,\alpha}>2$, we would have $a_{s-1,\alpha}=...=a_{s-\ell,\alpha}=2$. Then, being $P$  irreducible, there would  exist an element of $V_S$ also belonging to $V_{P\setminus S}$. Since the graph associated to $S$ is a chain of ($-2$)-vectors disconnected from the rest of $P$, we actually have $V_S\subseteq V_{P\setminus S}$, and hence $V_{v_{s,\alpha}}\subseteq V_{P\setminus S}$ too, contradicting $|E_i(P)|=1$. 
\end{itemize}
Now, fix the smallest $h\in\gbra{1,...,\ell}$ such that $a_{s-h,\alpha}>2$. It is easy to check that, for some $k\in\{1,...,n\}$,
$$V_{v_{s-h+1,\alpha}}\cap V_{v_{s-h,\alpha}}=\{e_k\}\s\mbox{and}\s |v_{s-h,\alpha}\cdot e_k|=1.$$
Since $|\bigcup_{i=0}^{h-1}V_{v_{s-i,\alpha}}|=h+1$,  by eliminating the vectors $v_{s,\alpha},v_{s-1,\alpha},...,v_{s-h+1,\alpha}$ and replacing $v_{s-h,\alpha}$ with $\pi_{e_k}(v_{s-h,\alpha})$, we obtain a set of $n-h$ linearly independent vectors contained in the span of $n-(h+1)$ vectors. This contradiction shows that $v_{s,\alpha}$ cannot be final, that is, it must be internal.

The fact that $v_{s,\alpha}$ is not central is guaranteed by the inequality $I(P)<0$, as explained in the following. Indeed, if $v_{s,\alpha}$ is the central vertex, then $\widetilde{P}:=P\setminus \{v_{s,\alpha}\}$ is a set with at least three connected components and moreover, it is a good linear set with $I(\widetilde{P})<-1$, (recall that, by \textbf{(w3)}, $I(P)<-1$ and that the central vertex has square at least $-3$). We claim that $\widetilde{P}$ has no linear bad components. In fact, if $\widetilde{P}$ has a linear bad component $C=\{v_{c_1},...,v_{c_k}\}$, then, since, by \textbf{(w2)}, $P$ has no bad components, there  must be an element of $C$ connected to the central vertex $v_0\in P$, let it be $v_{c_1}$. Notice that $j$ is the only possible element in the intersection $V_{v_{c_1}}\cap V_{v_0}$. By the definition of $C$, the index $j$ belongs to  $V_{v_{c_k}}$ too, and we obtain the contradiction $v_0\cdot v_{c_k}\neq 0$. The fact that $\widetilde{P}$ has no linear bad components, in turn, contradicts the fact that $c(\widetilde{P})\geq 3$. In fact, the hypothesis of \cite[Lemma~4.9]{b:Li2} are fulfilled and we obtain that $c(\widetilde{P})\leq 2$. This proves that $v_{s,\alpha}$ is not central and implies that $j$ belongs to $V_{v_{s-1,\alpha}}\cap V_{v_{s+1,\alpha}}$ and, since $|v_{s,\alpha}\cdot e_j|=1$, we conclude  $|v_{s-1,\alpha}\cdot e_j|=|v_{s+1,\alpha}\cdot e_j|=1$. 
\end{proof}

Given a set $P_n\subseteq\Z^n$ satisfying the working assumptions we show in Lemma \ref{l:p1>0_2} how to contract the leg $L_\alpha$, which contains the vector $v_{s,\alpha}$ satisfying $E_{i}(P)=\{(s,\alpha)\}$, in order to obtain a new good set, $P_{n-1}\subseteq\Z^{n-1}$, that still satisfies the working assumptions. The notation used in Lemma \ref{l:p1>0_2} is the same as the one of Lemma \ref{l:p1>0_1}.

\begin{lem}\label{l:p1>0_2} 
Consider a subset $P_{n}\subseteq\Z^n$ satisfying the working assumptions and put $\lambda := |e_i\cdot v_{s,\alpha}|$. Then we have
$$
\left\{
\begin{array}{c}
a_{s-1,\alpha}=2,\\
a_{s+1,\alpha}>2,
\end{array}
\right.
\s\s\mbox{or}\s\s
\left\{
\begin{array}{c}
a_{s-1,\alpha}>2,\\ 
a_{s+1,\alpha}=2
\end{array}
\right.
$$
Let us assign
$$
v_{p,\alpha}
:=
\left\{
\begin{array}{cl}
v_{s-1,\alpha},& \mbox{ if }a_{s-1,\alpha}=2,\\
v_{s+1,\alpha},& \mbox{ if }a_{s+1,\alpha}=2,
\end{array}
\right.
\s\s
v_{q,\alpha}
:=
\left\{
\begin{array}{cl}
v_{s-1,\alpha},& \mbox{ if }a_{s-1,\alpha}>2,\\
v_{s+1,\alpha},& \mbox{ if }a_{s+1,\alpha}>2.
\end{array}
\right.
$$
Then, if $n_\alpha>2$, the subset of $\Z^{n-1}=\langle e_1,...,e_{i-1},e_{i+1},...,e_n\rangle$
$$P_{n-1}:= \cbra{P_n\setminus\{v_{p,\alpha},v_{s,\alpha},v_{q,\alpha}\}}\cup\{\pi_{e_j}(v_{q,\alpha})\}\cup \{v_{p,\alpha}+(\lambda-v_{s,\alpha}\cdot e_j)e_j\}\subseteq\Z^{n-1},$$ 
where $j$ is the index given by Lemma~\ref{l:p1>0_1}\,$(2)$, satisfies the working assumptions.
Moreover,  we have $$I(P_n)=I(P_{n-1}),\s\s V_{L_\alpha}\cap V_{L_\beta\cup L_\gamma}=\varnothing,\s\s |V_{L_\alpha}|=n_\alpha +1,$$
where $\gbra{\alpha,\beta,\gamma}=\gbra{1,2,3}$.
\end{lem}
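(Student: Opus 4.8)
The plan is to combine the structural information furnished by Lemma~\ref{l:p1>0_1} with the independence statements of Lemma~\ref{l:ind} and Remark~\ref{r:li}, and with the bad-component machinery of Section~\ref{s:bc}. By Lemma~\ref{l:p1>0_1}, after replacing $P_n$ by $\Omega P_n$ for a suitable $\Omega\in\Upsilon_{P_n}$, I may assume $v_{s,\alpha}=\lambda e_i+e_j$ with $\lambda\geq 1$, that $V_{v_{s,\alpha}}=\{i,j\}$, and (using $v_{s,\alpha}\cdot v_{s\pm1,\alpha}=1$ together with $E_i(P_n)=\{(s,\alpha)\}$) that both $v_{s-1,\alpha}$ and $v_{s+1,\alpha}$ have $e_j$-coefficient $-1$. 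Since $v_{s-1,\alpha}\cdot v_{s+1,\alpha}=0$ whereas their common $e_j$-coefficient already contributes $-1$ to this product, the two neighbours must share at least one further basis vector.

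First I would establish the dichotomy for $a_{s-1,\alpha}$ and $a_{s+1,\alpha}$. To rule out the case where both squares are $<-2$, note that $\pi_{e_i}(v_{s,\alpha})=e_j$, so projecting each neighbour off $e_j$ reduces its square by one; if $a_{s-1,\alpha}>2$ and $a_{s+1,\alpha}>2$, the resulting $n-1$ vectors still have squares $\leq -2$, the leg simply closes up across the slot of $v_{s,\alpha}$, and the incidence matrix keeps the form \eqref{e:Q_P} or \eqref{e:Q_P_linear}. This is a good set of $n-1$ vectors lying in the span of the $n-2$ basis vectors $\{e_1,\dots,e_n\}\setminus\{e_i,e_j\}$, contradicting Lemma~\ref{l:ind} and Remark~\ref{r:li}. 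To rule out the case where both neighbours are $(-2)$-vectors, the orthogonality computation forces $v_{s\pm1,\alpha}=-e_j\pm e_k$ for a common $k$, so that $\{v_{s-1,\alpha},v_{s,\alpha},v_{s+1,\alpha}\}$ reproduces the seed $\widetilde C$ of a linear bad component; propagating the forced incidences along $L_\alpha$ and into the central vertex, as in the proofs of Lemmas~\ref{l:p1>0_1} and \ref{l:st.sin.bad}, exhibits a genuine bad component of $P_n$, contradicting \textbf{(w2)}. Exactly one neighbour then has square $-2$, which is the first assertion.

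Next I would analyse the contraction. With $v_{p,\alpha}=-e_j+e_k$ the $(-2)$-neighbour and $v_{q,\alpha}$ the other one, a direct computation gives $v_{p,\alpha}+(\lambda-v_{s,\alpha}\cdot e_j)e_j=\lambda e_j+e_k$, a vector of the same square as $v_{s,\alpha}$, while $\pi_{e_j}(v_{q,\alpha})$ has square $a_{q,\alpha}-1\geq 2$; both are admissible. The hypothesis $n_\alpha>2$ forces $n\geq 6$, hence $n-1\geq 5$, so \textbf{(w1)} survives and $P_{n-1}\subseteq\Z^{n-1}$ is again three-legged. I would then verify that $P_{n-1}$ is good: it is irreducible because $\lambda e_j+e_k$ reconnects the leg across the slot of $v_{s,\alpha}$, its incidence matrix has the form \eqref{e:Q_P}, and it lives in the span of $\{e_1,\dots,e_n\}\setminus\{e_i\}$ since $e_i$ is now untouched. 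The identity $I(P_n)=I(P_{n-1})$ follows by bookkeeping, the net change being $(a_{s,\alpha}-3)+(a_{q,\alpha}-4)-(2-3)-(a_{s,\alpha}-3)-(a_{q,\alpha}-3)=0$. Moreover $\lambda e_j+e_k$ becomes the unique vector of $P_{n-1}$ meeting $e_j$, so $E_j(P_{n-1})=\{\lambda e_j+e_k\}$ and \textbf{(w4)} holds, while \textbf{(w3)} is immediate. That $P_{n-1}$ has no bad components is where Lemma~\ref{l:bad} enters: the choice of which neighbour to absorb is dictated by the dichotomy, and is precisely the admissible one in the sense of Lemmas~\ref{l:b_3} and \ref{l:bad}.

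Finally, the identities $V_{L_\alpha}\cap V_{L_\beta\cup L_\gamma}=\varnothing$ and $|V_{L_\alpha}|=n_\alpha+1$ assert that $L_\alpha$ occupies its own block of coordinates; I would prove them exactly as the analogous statements in Lemma~\ref{l:cl}\,(1),(3), using that $e_i$ is private to $v_{s,\alpha}$ and that any coordinate shared between $L_\alpha$ and another leg would, by irreducibility and the chain structure, either force a forbidden extra incidence or produce a linear dependence detected by Remark~\ref{r:li}. The hard part will be the second half of the dichotomy: turning the local $\widetilde C$-configuration into a bona fide bad component of $P_n$ requires carefully tracking the forced incidences all the way down the leg and across $v_0$. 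The most delicate point in all of this is the borderline situation in which the $(>2)$-neighbour $v_{q,\alpha}$ is the central vertex with $a_0=3$, the one case where $\pi_{e_j}(v_{q,\alpha})$ threatens to become a trivalent vector of square $-2$; this configuration must be excluded before the contraction can be declared good, and I expect to do so by invoking \textbf{(w2)} together with $I(P_n)<-1$.
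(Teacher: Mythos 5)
Your overall line of attack is the paper's: rank-counting (Lemma~\ref{l:ind} and Remark~\ref{r:li}) to kill the case $a_{s-1,\alpha},a_{s+1,\alpha}>2$, the explicit identification of the two replacement vectors $\lambda e_j+e_k$ and $\pi_{e_j}(v_{q,\alpha})$, the $I$-bookkeeping, and the observation that $E_j(P_{n-1})$ becomes a singleton so that \textbf{(w4)} survives. Two steps, however, contain genuine soft spots. First, in the sub-case where both neighbours are $(-2)$-vectors, your mechanism is wrong: since $E_i(P_n)=\{(s,\alpha)\}$, $E_j(P_n)=\{(s-1,\alpha),(s,\alpha),(s+1,\alpha)\}$ and $v_{s\pm1,\alpha}=-e_j\pm e_k$, no further vector can meet $e_k$ without being linked, with nonzero product, to \emph{both} $v_{s-1,\alpha}$ and $v_{s+1,\alpha}$, which the tree structure forbids; hence the triple is already an entire connected component and there is nothing to ``propagate along $L_\alpha$ and into the central vertex.'' The contradiction is simply that $P_n$ is reducible, not that it contains a bad component — indeed when $\lambda=1$ the middle vertex has square $-2$ and the triple is \emph{not} of the form $\widetilde C$, so your bad-component contradiction evaporates in exactly that case. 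The paper's one-line argument ($V_{v_{s-1,\alpha}}=V_{v_{s+1,\alpha}}$ forces reducibility for $n>3$) covers all $\lambda$ at once.

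Second, the ``delicate point'' you flag at the end — $v_{q,\alpha}=v_0$ with $a_0=3$, where $\pi_{e_j}(v_0)$ would be a trivalent vertex of square $-2$ — is a legitimate worry (the paper silently asserts that $Q_{P_{n-1}}$ has the form \eqref{e:Q_P}), but you leave it unresolved and propose the wrong tools: neither \textbf{(w2)} nor $I(P_n)<-1$ excludes it. What does exclude it is the hypothesis $n_\alpha>2$: if $a_0=3$ then $v_0$ has exactly three coordinates with coefficient $\pm1$, one of which is $e_j$; orthogonality of the $(-2)$-vector $v_{p,\alpha}=v_{2,\alpha}$ to $v_0$ then pins $V_{v_{2,\alpha}}$ inside $V_{v_0}\cup\{j\}$, and one checks that no $v_{3,\alpha}$ can be attached without violating orthogonality to $v_0$ or to $v_{1,\alpha}$, so this configuration forces $n_\alpha=2$ and never meets the contraction. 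Finally, for the identities $V_{L_\alpha}\cap V_{L_\beta\cup L_\gamma}=\varnothing$ and $|V_{L_\alpha}|=n_\alpha+1$ the paper does not argue as in Lemma~\ref{l:cl} (which is not yet available here); it iterates the contraction until $L_\alpha$ has length two, where the claim is read off directly, and observes that every discarded coordinate belonged only to $L_\alpha$ — you should make this induction explicit rather than gesture at ``forbidden incidences.''
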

\begin{proof}
To begin with, notice that precisely one between $a_{s-1,\alpha}$ and $a_{s+1,\alpha}$ is equal to $2$ (so that the other one is strictly greater than $2$). In fact, if both  $a_{s-1,\alpha}$ and $a_{s+1,\alpha}$ are equal to $2$,  then we have, since $v_{s-1,\alpha}\cdot v_{s+1,\alpha}=0$, that $V_{v_{s-1,\alpha}}=V_{v_{s+1,\alpha}}$ and therefore that $P_n$ is  reducible for $n>3$, contradicting our working assumptions. On the other hand, if $a_{s-1,\alpha},a_{s+1,\alpha}>2$,  erasing  $v_{s,\alpha}$ from $P$ and replacing $v_{s-1,\alpha}$ and $v_{s+1,\alpha}$ respectively with $\pi_{e_j}(v_{s-1,\alpha})$ and $\pi_{e_j}(v_{s+1,\alpha})$, we get a set whose associated incidence matrix is of the form \eqref{e:Q_P}. Therefore, by Remark~\ref{r:li}, its $n-1$ elements are linearly independent, but at the same time they should belong to the span of  $n-2$ vectors, which gives a contradiction. 

Now, notice that the set
$$P_{n-1}:=(P_n\setminus\{v_{p,\alpha},v_{s,\alpha},v_{q,\alpha}\})\cup\{\pi_{e_j}(v_{q,\alpha})\}\cup \{v_{p,\alpha}+(\lambda-v_{s,\alpha}\cdot e_j)e_j\}$$
has an associated intersection matrix $Q_{P_{n-1}}$ of the form \eqref{e:Q_P}. Since $E_j(P_n)=\{(p,\alpha),(s,\alpha),(q,\alpha)\},$ we have that $E_j(P_{n-1})=\{(p,\alpha)\}$ and, since $\pi_{e_j}(v_{q,\alpha})$ and $v_{p,\alpha}+(\lambda-v_{s,\alpha}\cdot e_j)e_j$ are linked to each other, the set $P_{n-1}$ is irreducible. The verification that  $I(P_n)=I(P_{n-1})$ is straightforward and therefore, $P_{n-1}$ satisfies the working assumptions. 

We now show that $V_{L_\alpha}\cap V_{L_\beta\cup L_\gamma}=\varnothing$. Since $P_{n-1}$ still satisfies the working assumptions, by the above procedure we can construct $P_{n-2}$. We iterate this process until we obtain a set $\widetilde{P}$ whose leg $L_\alpha (\widetilde{P})$ has length  $2$. At this point $|V_{L_\alpha (\widetilde{P})}|=3$, and in fact, up to replacing $\widetilde P$ with $\Omega\widetilde P$ where $\Omega\in\Upsilon_{\widetilde P}$, we have $L_\alpha(\widetilde{P})=\{v_{2,\alpha}=e_k+e_j, v_{1,\alpha}=\lambda e_i-e_j\}$. Since $E_i(\widetilde{P})=\{(1,
\alpha)\}$ it is clear that the three elements in $V_{L_\alpha (\widetilde{P})}$ do not belong to the other two legs. In order to conclude, we just need to remark that, along the reduction process, we have always discarded vectors that only appeared in the leg $L_\alpha$. The last assertion, namely $|V_{L_\alpha (P_n)}|=n_\alpha (P_n)+1$ is also immediate from the reduction procedure: at each stage we have $n_\alpha (P_n)=n_\alpha (P_{n-1})+1$ and $|V_{L_\alpha}(P_n)|=|V_{L_\alpha}(P_{n-1})|+1$. Since $|V_{L_\alpha (\widetilde{P})}|=3$ and $n_\alpha (\widetilde{P})=2$ the claim follows.
\end{proof}

In the previous result we have shown how to contract the leg $L_\alpha$ with the vector $e_i$. The following one, in turn, explains how to contract the other two legs of a set satisfying the working assumptions. 

\begin{lem}\label{l:p1>0_3}
Let $P_n\subseteq\Z^n$ be a subset that satisfies the working assumptions and suppose, without loss of generality,  $\alpha=1$. If $n_2+n_3>2$ then,  for some $k\in\{1,...,n\}$ and  $i\in\gbra{2,3}$, the  set $P_{n-1}:=(P_n\setminus\{v_{n_2,2},v_{n_3,3}\})\cup\pi_{e_k}(v_{n_i,i})$
fulfills the working assumptions. Moreover, $I(P_n)=I(P_{n-1})$.
\begin{proof}
Consider the vectors 
$$\hat{v}_0:=-\sum_{j\in V_{L_2\cup L_3}}(v_0\cdot e_j)e_j,\s\s\s \tilde v_0:=v_0-\hat{v}_0.$$
We claim that $|V_{\hat{v}_0}|=1$. In fact, since the legs $L_2$ and $L_3$ are connected to the center of the graph we have $v_0\cdot v_{1,2}=v_0\cdot v_{1,3}=1$ and therefore $|V_{\hat{v}_0}|\geq 1$. If the last inequality were strict, the linear set $S:=L_2\cup L_3\cup\{\hat v_0\}$  would be good and it would consist of $n_2+n_3+1$ (linearly independent) vectors lying in the span of $n_2+n_3$ vectors\footnote{Recall that by Lemma~\ref{l:p1>0_2} we know $V_{L_1}\cap (V_{L_2\cup L_3})=\emptyset$ and $|V_{L_1}|=n_1 +1$.}, which gives a contradiction. Denoting by $j$ the only element in $V_{\hat{v}_0}$, since $\hat{v}_0\cdot v_{1,2}=1$ we have $|\hat{v}_0\cdot e_j|=1$.

The set $S$ defined above fails to be good because $a_{\hat v_0}=1$, and so we consider the following change: we take an auxiliary vector $e_{\aux}$ with $e_{\aux}\cdot e_{\aux}=-1$, we put $\hat{v}_0':=\hat{v}_0+e_{\aux}$ and define the set $S':=(S\setminus\hat{v}_0)\cup\{\hat{v}_0'\}$. This set is a connected good linear subset of $\Z^{n_2+n_3+1}$ and by construction $p_1(S')\geq 1$. Therefore, Lemma~\ref{l:n.e} applies and we obtain $k\in\{1,...,n\}$ and $i\in\{2,3\}$ such that $E_k(S')=\{(n_2,2),(n_3,3)\}$ and the 
set $S'':=(S'\setminus\{v_{n_2,2},v_{n_3,3}\})\cup\pi_{e_k}(v_{n_i,i})$, obtained from $S'$ by contraction,
is a connected good subset of $\Z^{n_2+n_3}$, with $I(S'')=I(S')$.

In order to conclude we just have to notice that the contraction performed on the set $S'$ can be done on $P_n$, obtaining the set $P_{n-1}$ in the statement that fulfills the working assumptions: we have not changed the leg $L_\alpha (P_n)$ and therefore $p_1(P_{n-1})\geq 1$ and $I(P_{n-1})=I(P_n)<-1$.
\end{proof}
\end{lem}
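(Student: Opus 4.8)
The plan is to reduce the simultaneous contraction of the two legs $L_2$ and $L_3$ to a purely linear problem, so that Lemma~\ref{l:n.e} can be applied off the shelf. First I would split the central vertex into the part that interacts with these two legs and the part that interacts with $L_1$, setting $\hat v_0:=-\sum_{j\in V_{L_2\cup L_3}}(v_0\cdot e_j)e_j$ and $\tilde v_0:=v_0-\hat v_0$. The decomposition is meaningful precisely because Lemma~\ref{l:p1>0_2} guarantees $V_{L_1}\cap V_{L_2\cup L_3}=\varnothing$ and $|V_{L_1}|=n_1+1$, so the chain $L_2\cup L_3\cup\{\hat v_0\}$ is supported away from the coordinates of $L_1$ that carry the distinguished vector.

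The first genuine step is to prove $|V_{\hat v_0}|=1$. Since both legs are attached to the centre we have $v_0\cdot v_{1,2}=v_0\cdot v_{1,3}=1$, which forces $|V_{\hat v_0}|\geq 1$. For the reverse inequality I would argue by contradiction: if $|V_{\hat v_0}|\geq 2$, then $\hat v_0\cdot\hat v_0\leq -2$, so $S:=L_2\cup L_3\cup\{\hat v_0\}$ is a connected good linear set (the chain $v_{n_2,2}-\cdots-v_{1,2}-\hat v_0-v_{1,3}-\cdots-v_{n_3,3}$), and by Lemma~\ref{l:ind} its $n_2+n_3+1$ vectors are linearly independent; but they all lie in the span of the $|V_{L_2\cup L_3}|=n_2+n_3$ basis vectors, a contradiction. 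Hence $\hat v_0=\pm e_j$ for a single index $j$, and in particular $\hat v_0\cdot\hat v_0=-1$.

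Because $\hat v_0$ has square $-1$, the set $S$ is not good, so the next step is the standard device of adjoining a fresh $(-1)$-coordinate: put $\hat v_0':=\hat v_0+e_{\aux}$ and $S':=(S\setminus\{\hat v_0\})\cup\{\hat v_0'\}$. Then $S'\subseteq\Z^{n_2+n_3+1}$ is a connected good linear set with $p_1(S')\geq 1$, witnessed by $E_{\aux}(S')=\{\hat v_0'\}$. Now Lemma~\ref{l:n.e} applies to $S'$; its part $(3)$ produces an index (call it $k$) with $E_k(S')=\{(n_2,2),(n_3,3)\}$, one of these final vectors having square $-2$ and the other square strictly below $-2$. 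Projecting the latter and discarding the former yields a contraction $S'':=(S'\setminus\{v_{n_2,2},v_{n_3,3}\})\cup\pi_{e_k}(v_{n_i,i})$, which is again good and satisfies $I(S'')=I(S')$.

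To finish I would observe that this contraction touches only the final vectors of $L_2$ and $L_3$, none of which involve the coordinates of $L_1$, so the very same projection $\pi_{e_k}$ can be performed verbatim inside $P_n$ to produce the set $P_{n-1}$ of the statement. Since $L_1$ — and with it the distinguished vector $v_{s,1}$ with $E_i(P)=\{(s,1)\}$ — is left untouched, we retain $p_1(P_{n-1})\geq 1$, the trivalent structure, and the absence of bad components, while additivity of $I(\cdot)$ gives $I(P_{n-1})=I(P_n)<-1$. The main obstacle I expect is the bookkeeping needed to transport the conclusion of Lemma~\ref{l:n.e} from the doctored linear set $S'$, living in the auxiliary coordinate, faithfully back to $P_n$: one must check that the index $k$ and the projected vector genuinely lie in the original coordinates (in particular that $k$ does not involve the auxiliary coordinate), so that $\pi_{e_k}$ does not disturb the central vertex or the third leg. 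Securing $|V_{\hat v_0}|=1$ is the linchpin, since the entire reduction rests on $\hat v_0$ being a single basis vector.
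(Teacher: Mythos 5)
Your proposal follows the paper's own proof essentially step for step: the same splitting $v_0=\tilde v_0+\hat v_0$, the same rank-counting argument for $|V_{\hat v_0}|=1$ using Lemma~\ref{l:p1>0_2} and Lemma~\ref{l:ind}, the same auxiliary vector $e_{\aux}$ to make $S'$ good with $p_1(S')>0$, and the same application of Lemma~\ref{l:n.e} followed by transporting the contraction back to $P_n$. The extra check you flag at the end (that $k$ lies in the original coordinates, which holds since $k\in V_{v_{n_2,2}}\cap V_{v_{n_3,3}}$) is sound and the argument is correct.
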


\begin{rem}\label{r:p1>0_coef.}
Notice that, the set $S'$ in the proof of Lemma~\ref{l:p1>0_3} is obtained (see also Lemma~\ref{l:n.e}\,$(4)$) by final $(-2)$-vector expansions. It follows that the legs $L_2$ and $L_3$ of a set $P\subset\Z^n$ satisfying the working assumptions are complementary legs and therefore, Lemma~\ref{l:cl} holds. In particular we know that, 
\begin{itemize}
\item[$(1)$]for every $i\in\{1,...,n\}$ and every $(t,\beta)\in J$, we have $|v_{t,\beta}\cdot e_i|\leq 1$; 
\item[$(2)$]the set $S_{n_1+1}:=L_1\cup\{\tilde v_0\}\subseteq\Z^{n_1+1}$ is a good linear set. Furthermore, by \textbf{(w4)} we have that $p_1(S_{n_1+1})>0$ and hence, $S_{n_1+1}$ fulfills the hypothesis of Lemma~\ref{l:n.e}.
\end{itemize}
\end{rem}

We now have all the elements that we need to prove Propositions \ref{p:set_5} and \ref{p:p1>0_final}.

\begin{proof}[Proof of Proposition~\ref{p:set_5}.]
Condition $p_1(P)\geq 1$ implies that there exist $i\in\gbra{1,...,5}$ and $(s,\alpha)\in J$ such that  $E_i(P)=\{(s,\alpha)\}$. By Lemma \ref{l:p1>0_1},  $v_{s,\alpha}$ is internal not central, and $V_{v_{s,\alpha}}=\{i,j\}$. More precisely, $v_{s,\alpha}=\lambda e_i\pm e_j$ where $\lambda\in\Z$.

Let us fix, without loss of generality, $\alpha=1$. From $I(P)<-1$ we obtain that $\sum_i a_i\leq 13$ and therefore $a_{s,1}=2$, forcing $\lambda=\pm 1$ (recall that $a_0\geq 3$). Then, up to replacing $P$ with $\Omega P$ where $\Omega\in\Upsilon_{P}$, we can assume $v_{s,1}=e_1-e_2$.

Since $n=5$ and $v_{s,1}$ is internal and different from $v_0$ we deduce $s=1$, $n_1=2$ and $n_2=n_3=1$. From Lemma \ref{l:p1>0_2} we have that $v_{2,1}$ or $v_0$ has square $-2$ and, since $v_0\cdot v_0\leq -3$, we conclude $v_{2,1}=e_2-e_3$ (up to replacing $P$ with $\Omega P$ where $\Omega\in\Upsilon_{P}$).

It is now straightforward to verify that, up to replacing $P$ with $\Omega P$ where $\Omega\in\Upsilon_{P}$,  $v_0=e_2+e_3-e_4$ $v_{1,2}=-e_4+e_5$ and $v_{1,3}=-e_4-e_5$. A direct computation gives $I(P)=-4$.
\end{proof}

\begin{proof}[Proof of Proposition~\ref{p:p1>0_final}.]
If $n=5$ the proposition follows from Proposition \ref{p:set_5}. Now, let us consider the case $n=n_1+n_2+n_3+1>5$.
By Remark~\ref{r:p1>0_coef.} we know that in $P_n$ there are two complementary legs, let them be $L_2$ and $L_3$. By Lemma~\ref{l:cl}\,$(1)$, there exists $j\in\{1,...,n\}$ such that the central vertex satisfies $v_0=\tilde v_0\pm e_j$, where $j\in V_{L_{2}\cup L_{3}}$ and $V_{\tilde v_{0}}\subseteq V_{L_{1}}$. If $n_2+n_3>2$, by Lemma~\ref{l:p1>0_3} there exists a subset $P_{n-1}$, which fulfills the working assumptions and $I(P_{n-1})=I(P_n)$. Observe that, as long as $n_2(P_{n-1})+n_3(P_{n-1})>2$, we can apply Lemma~\ref{l:p1>0_3}. Thus, applying Lemma~\ref{l:p1>0_3} $n_2(P_n)+n_3(P_n)-2$ times we obtain a sequence of contractions $P_n\searrow\cdots\searrow\ P_{n_1+3}$, where each set satisfies the working assumptions and $I(P_n)=\cdots=I(P_{n_1+3})$. Notice that, by construction, it also holds $c(P_n)=\cdots=c(P_{n_1+3})$.

On the other hand, by Remark~\ref{r:p1>0_coef.}\,$(2)$, the set $S_{n_1+1}:=L_1\cup\{\tilde v_0\}\subseteq\Z^{n_1+1}$ fulfills the hypothesis of Lemma~\ref{l:n.e}. We label the vectors in $S_{n_1+1}$ with the same indexes that they already have as elements of $P$. If $n_1>2$ there exists, by Lemma~\ref{l:n.e}\,$(3)$, and index  $h\in\{1,...,n\}$ and $(t,1),(r,1)\in\{0,(n_1,1)\}$ such that $E_h(S_{n_1+1})=\{0,(n_1,1)\}$, $a_{t,1}=2$ and $a_{r,1}>2$. If $r=0$ the set
$P_{n_1+2}:=(P_{n_1+3}\setminus\{v_0,v_{n_1,1}\})\cup\{\pi_{e_h}(v_0)\}\subseteq\Z^{n_1+2},$ 
obtained by contraction, satisfies the working assumptions and $(I(P_{n_1+2}),c(P_{n_1+2}))=(I(P_{n_1+3}),c(P_{n_1+3}))$. If $(r,1)=(n_1,1)$ then the set
$$P_{n_1+2}:=(P_{n_1+3}\setminus\{v_0,v_{n_1,1},v_{1,1}\})\cup\{\pi_{e_h}(v_{n_1,1})\}\cup\{v_{1,1}\pm e_j\}$$
is obtained by a contraction (recall that for a set with complementary legs the notion of contraction was extended to this operation). Again $(I(P_{n_1+2}),c(P_{n_1+2}))=(I(P_{n_1+3}),c(P_{n_1+3}))$ and the set $P_{n_1+2}$ satisfies the working assumptions. Starting with $P_{n_1+3}$ we can perform these contractions $n_1-2$, times obtaining the rest of the desired sequence $P_{n_1+3}\searrow\cdots\searrow\ P_5$, in which every set satisfies the working assumptions and it holds $I(P_{n_1+3})=\cdots=I(P_5)$ and $c(P_n)=\cdots=c(P_{n_1+3})$. Since by Proposition~\ref{p:set_5} $(I(P_5),c(P_5))=(-4,1)$ then, for every $k\in\{5,...,n\}$, $I(P_k)=-4$ and the set $P_k$ is standard.
\end{proof}

Notice that, if we impose that the vector $v_{s,\alpha}$ is not the central vertex, then we can relax  assumption \textbf{(w3)} and the non existence of bad components in \textbf{(w2)}. Indeed, the proofs of Lemmas \ref{l:p1>0_2} and \ref{l:p1>0_3} do not use these hypotheses, while in  Lemma \ref{l:p1>0_1} they are used only to guarantee that $v_{s,\alpha}\neq v_0$. Therefore, we readily obtain the following statement.

\begin{lem}\label{l:p1>0_conclusion}
Let $n\geq 5$ and $P_n\subseteq\Z^n$ be a good set with a trivalent vertex such that there exist $i\in\{1,...,n\}$ and $(s,\alpha)\in J$ that satisfy $E_i(P_n)=\{(s,\alpha)\}$. If $v_{s,\alpha}$ is not the central vertex, then
\begin{itemize}
\item $P_n$ has two complementary legs, $L_\beta$,$L_\gamma\neq L_\alpha$, and $V_{L_\beta\cup L_\gamma}\cap V_{L_\alpha}=\emptyset$.
\item There exists $j\in\{1,...,n\}$, $j\neq i$, such that the central vertex $v_0$ can be written $v_0=\tilde v_0+e_j$, where $e_j\in V_{L_\beta\cup L_\gamma}$ and $V_{\tilde v_0}\subseteq V_{L_\alpha}$.
\item The set $L_\alpha\cup\tilde v_0$ is a good linear set that satisfies the assumptions of Lemma~\ref{l:n.e}. 
\hfill\qed
\end{itemize}
\end{lem}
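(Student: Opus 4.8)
The plan is to observe that the hypotheses already furnish working assumptions \textbf{(w1)} and \textbf{(w4)} verbatim, together with the requirement in \textbf{(w2)} that $P_n$ be a good set with a trivalent vertex; what is missing is only the absence of bad components in \textbf{(w2)} and the inequality \textbf{(w3)}. As noted in the preceding remark, throughout Lemmas~\ref{l:p1>0_1}--\ref{l:p1>0_3} and Remark~\ref{r:p1>0_coef.} these two hypotheses intervene solely inside the proof of Lemma~\ref{l:p1>0_1}, and there only to establish that $v_{s,\alpha}\neq v_0$. Since we now assume this outright, the entire chain of arguments goes through unchanged, and the three assertions follow by reassembling its conclusions.

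Concretely, I would first re-run Lemma~\ref{l:p1>0_1}: irreducibility of the good set gives $|V_{v_{s,\alpha}}|\geq 2$, the linear-independence obstruction coming from Remark~\ref{r:li} forces $|V_{v_{s,\alpha}}|=2$, say $V_{v_{s,\alpha}}=\{i,j\}$ with $|v_{s,\alpha}\cdot e_j|=1$, and the ``final vertex'' analysis---which invokes only Lemma~\ref{l:ind}---shows that $v_{s,\alpha}$ is internal. Combining this with the standing assumption $v_{s,\alpha}\neq v_0$ yields $E_j(P_n)=\{(s-1,\alpha),(s,\alpha),(s+1,\alpha)\}$ with all three coefficients equal to $\pm 1$; in particular both neighbours of $v_{s,\alpha}$ in the leg exist, so $n_\alpha\geq 2$.

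Next I would apply Lemma~\ref{l:p1>0_2}, whose proof uses neither \textbf{(w3)} nor the no-bad-component hypothesis: iterating its contraction collapses $L_\alpha$ to length two while leaving the two remaining legs untouched, which delivers $V_{L_\alpha}\cap V_{L_\beta\cup L_\gamma}=\varnothing$ and $|V_{L_\alpha}|=n_\alpha+1$. Running the construction of Lemma~\ref{l:p1>0_3} and Remark~\ref{r:p1>0_coef.} then produces the vectors $\hat v_0=\pm e_j$ and $\tilde v_0=v_0\mp e_j$, together with the connected good linear set $S'$ obtained from $L_\beta\cup L_\gamma$ by adjoining an auxiliary $(-1)$-vector; since $p_1(S')\geq 1$, Lemma~\ref{l:n.e}\,$(4)$ exhibits $S'$ as a sequence of final $(-2)$-vector expansions of a length-one leg, whence $L_\beta$ and $L_\gamma$ are complementary, $v_0=\tilde v_0+e_j$ with $e_j\in V_{L_\beta\cup L_\gamma}$ and $V_{\tilde v_0}\subseteq V_{L_\alpha}$. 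This settles the first two assertions. Finally, because $V_{L_\beta\cup L_\gamma}$ is disjoint from $V_{L_\alpha}$ and $V_{\tilde v_0}\subseteq V_{L_\alpha}$, the set $L_\alpha\cup\{\tilde v_0\}\subseteq\Z^{n_\alpha+1}$ is a good linear set; since $i\neq j$ and $v_0\cdot e_i=0$ give $\tilde v_0\cdot e_i=0$, we still have $E_i(L_\alpha\cup\{\tilde v_0\})=\{(s,\alpha)\}$, so $p_1(L_\alpha\cup\{\tilde v_0\})>0$ and (using $n_\alpha\geq 2$) Lemma~\ref{l:n.e} applies, giving the third assertion.

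The only delicate point---and hence the main obstacle---is the bookkeeping: one must check at each invocation that the specific conclusion borrowed from Lemmas~\ref{l:p1>0_1}--\ref{l:p1>0_3} is proved without recourse to \textbf{(w3)} or to the absence of bad components. The single genuinely load-bearing use of those hypotheses is the exclusion of $v_{s,\alpha}=v_0$ in Lemma~\ref{l:p1>0_1}, and the whole relaxation hinges on our having assumed that conclusion directly.
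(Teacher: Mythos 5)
Your proposal is correct and is essentially the paper's own argument: the lemma is stated with an immediate \qed, justified only by the preceding observation that \textbf{(w3)} and the absence of bad components enter the proofs of Lemmas~\ref{l:p1>0_1}--\ref{l:p1>0_3} solely to rule out $v_{s,\alpha}=v_0$, which is now assumed. Your more detailed bookkeeping of where each hypothesis is used matches what the paper intends.
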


\subsection{Good sets with $p_1=0$}\label{s:p1=0,p2>0}
It follows from Lemma~\ref{l:lisca_1} that if a subset $P\subseteq\Z^n$ of cardinality $n$ satisfies $I(P)<0$ and $p_1(P)=0$, then necessarily $p_2(P)>0$. Having already dealt with the case $p_1(P)>0$, $I(P)<-1$, now we approach the more difficult case of a good subset with $p_1(P)=0$, $p_2(P)>0$ and $I(P)<-1$. The main result of this section is Lemma~\ref{l:p.ind}.

If a good subset $P=\{v_0,v_{1,1},...,v_{n_3,3}\}\subseteq\Z^n$ satisfies $p_2(P)>0$ then, for some $i\in\{1,...,n\}$ and $(s,\alpha),(t,\beta)\in J$, we must have $E_i(P)=\{(s,\alpha),(t,\beta)\}$. There are two possibilities: either $a_{s,\alpha}$ and $a_{t,\beta}$ are both greater than $2$, or at least one of them is equal to $2$. The next lemma deals with the latter possibility (assuming $P$ has neither linear bad components nor three-legged bad components), while the former possibility is considered in Lemma \ref{l:red}.

\begin{lem}\label{l:cases}
Suppose that $n=n_1+n_2+n_3+1\geq 5$, the subset of $\Z^n$, $P=\{v_0,v_{1,1}...,v_{n_3,3}\}$ is good, has neither linear bad components nor three-legged bad components, $p_1(P)=0$, $I(P)<-1$ and there exist $i\in\{1,...,n\}$ and $(s,\alpha),(t,\beta)\in J$ such that $E_i(P)=\{(s,\alpha),(t,\beta)\}$ and $a_{s,\alpha}=2$. Then, one of the following holds:
\begin{itemize}
\item[$(1)$] $v_{s,\alpha}\cdot v_{t,\beta}=0$, $a_{t,\beta}=2$, $\alpha\neq\beta$, $n_\alpha=n_\beta=1$ and $L_\alpha$ and $L_\beta$ are complementary legs. 
\item[$(2)$] $v_{s,\alpha}\cdot v_{t,\beta}=0$, $v_{s,\alpha}$ is internal and $a_{t,\beta}>2$.
\item[$(3)$] $v_{s,\alpha}\cdot v_{t,\beta}=0$, $v_{s,\alpha}$ is not internal, $|V_{v_{t,\beta}}|>2$, the set
$$P':=(P\setminus\{v_{s,\alpha},v_{t,\beta}\})\cup\{\pi_{e_i}(v_{t,\beta})\}\subseteq\Z^{n-1}$$ 
is a good set with no bad components of any type and $I(P')\leq I(P)$.
\item[$(4)$] $v_{s,\alpha}\cdot v_{t,\beta}=1$, $a_{t,\beta}>2$ and the set $P'$ defined in $(3)$ is again good with no bad components of any type and $I(P')\leq I(P)$. Moreover, for some $h\in\{1,...,n\}$ we have $|v_{t,\beta}\cdot e_h|>1$.
\end{itemize}
\end{lem}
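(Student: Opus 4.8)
The plan is to extract the local shape of $v_{s,\alpha}$ from the hypothesis $a_{s,\alpha}=2$ and then to split the argument according to the value of the intersection product $v_{s,\alpha}\cdot v_{t,\beta}$, which by \eqref{e:Q_P} (whose off-diagonal entries lie in $\{0,1\}$) can only be $0$ or $1$. First I would note that $a_{s,\alpha}=2$ together with $i\in V_{v_{s,\alpha}}$ forces $v_{s,\alpha}=\pm e_i\pm e_j$ for a unique $j\neq i$, so that $V_{v_{s,\alpha}}=\{i,j\}$ and $|v_{s,\alpha}\cdot e_i|=|v_{s,\alpha}\cdot e_j|=1$; since $p_1(P)=0$ and $(s,\alpha)\in E_j(P)$, the index $j$ is used by at least one further vector. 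The recurring device will be the following estimate: if a vector $w$ shares an index $k$ with a vector $v$ for which $|v\cdot e_k|\geq 2$, then $|w\cdot v|\geq 2$ unless $w$ and $v$ share a second index along which the contributions cancel; because the off-diagonal entries of $Q_P$ lie in $\{0,1\}$ and, by Lemma~\ref{l:ind} and Remark~\ref{r:li}, good sets (and reducible sets whose incidence matrix has the form \eqref{e:Q_P} or \eqref{e:Q_P_linear}) are linearly independent, this is what will bound the coefficients $v_{t,\beta}\cdot e_k$ and forbid the degenerate projections below.

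For the product $v_{s,\alpha}\cdot v_{t,\beta}=0$ I would rewrite the orthogonality relation as $\epsilon_1(v_{t,\beta}\cdot e_i)=-\epsilon_2(v_{t,\beta}\cdot e_j)$, where $v_{s,\alpha}=\epsilon_1 e_i+\epsilon_2 e_j$; in particular $|v_{t,\beta}\cdot e_i|=|v_{t,\beta}\cdot e_j|=:\lambda$, so $e_j\in V_{v_{t,\beta}}$. The estimate rules out $\lambda\geq 2$, leaving $\lambda=1$, whence $a_{t,\beta}=2+\sum_{k\neq i,j}(v_{t,\beta}\cdot e_k)^2$. If $a_{t,\beta}=2$ then $V_{v_{t,\beta}}=\{i,j\}=V_{v_{s,\alpha}}$ and the two vectors are, up to sign, $e_i\pm e_j$; recognizing $\{v_{s,\alpha},v_{t,\beta}\}$ as the pair $\{-e_k+e_h,\,-e_k-e_h\}$ from the definition of complementary legs, and using that the only trivalent vertex has square $\leq -3$, I would identify them as two length-one complementary legs, which is case $(1)$. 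If $a_{t,\beta}>2$ then $|V_{v_{t,\beta}}|\geq 3$; here $v_{s,\alpha}$ internal is case $(2)$, while if $v_{s,\alpha}$ is final I would perform the contraction $P'=(P\setminus\{v_{s,\alpha},v_{t,\beta}\})\cup\{\pi_{e_i}(v_{t,\beta})\}$, check via Lemmas~\ref{l:b_3} and~\ref{l:bad} that $P'$ is good with no bad components, and compute $I(P')-I(P)=1-(v_{t,\beta}\cdot e_i)^2\leq 0$ using $\pi_{e_i}(v_{t,\beta})\cdot\pi_{e_i}(v_{t,\beta})=v_{t,\beta}\cdot v_{t,\beta}+(v_{t,\beta}\cdot e_i)^2$ and $a_{s,\alpha}=2$, which gives case $(3)$.

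For the product $v_{s,\alpha}\cdot v_{t,\beta}=1$ the target is case $(4)$. The first and hardest point is $a_{t,\beta}>2$: if $a_{t,\beta}=2$ then $v_{s,\alpha}$ and $v_{t,\beta}$ would be two adjacent $(-2)$-vectors sharing exactly $e_i$, and I would derive a contradiction from $p_1(P)=0$ together with the absence of bad components, tracking the forced neighbours of the two vectors and applying the linear-independence bookkeeping of Lemma~\ref{l:ind} and Remark~\ref{r:li} to the associated projection. Granting $a_{t,\beta}>2$, the contraction $P'$ above is again good with no bad components by Lemmas~\ref{l:b_3} and~\ref{l:bad}, and the same count gives $I(P')-I(P)=1-(v_{t,\beta}\cdot e_i)^2\leq 0$. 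Finally, to produce $h$ with $|v_{t,\beta}\cdot e_h|>1$ I would argue that if every coefficient of $v_{t,\beta}$ were $\pm 1$, then the product relation at the shared index, combined with $p_1(P)=0$ and the no-bad-component hypothesis, would force $P'$ to acquire a bad component, contradicting the previous step.

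The main obstacle I anticipate is precisely the product-$1$ analysis: establishing $a_{t,\beta}>2$ and the coefficient claim $|v_{t,\beta}\cdot e_h|>1$ requires a delicate use of $p_1(P)=0$ and of the hypothesis that $P$ has no bad components, since a single edge joining two $(-2)$-vectors is locally harmless and only the interaction with these global constraints excludes it. The secondary difficulty, shared with case $(3)$, is the bookkeeping needed to guarantee that each contraction keeps the incidence matrix in the form \eqref{e:Q_P} or \eqref{e:Q_P_linear} and creates no bad component, for which Lemmas~\ref{l:b_3} and~\ref{l:bad} are the essential tools.
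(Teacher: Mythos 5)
Your case division (on $v_{s,\alpha}\cdot v_{t,\beta}\in\{0,1\}$ and on $a_{t,\beta}$) matches the paper's, and the Euler-count $I(P')-I(P)=1-(v_{t,\beta}\cdot e_i)^2\leq 0$ is the right computation for $(3)$ and $(4)$. But there are concrete gaps at exactly the points where the hypotheses $p_1(P)=0$ and \virg{no bad components} have to do real work. First, in the subcase leading to $(1)$: once $v_{s,\alpha}$ and $v_{t,\beta}$ are two orthogonal $(-2)$-vectors with $V_{v_{s,\alpha}}=V_{v_{t,\beta}}=\{i,j\}$, you still must show they are both \emph{length-one legs attached to the central vertex}. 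Irreducibility produces a third vector $v_{r,\gamma}$ with $j\in V_{v_{r,\gamma}}$ and $|V_{v_{r,\gamma}}|>2$, and the configuration where $v_{r,\gamma}$ is an internal vertex of a leg with the two $(-2)$-vectors as its neighbours is precisely the seed $\widetilde C$ of a linear bad component; only the no-bad-components hypothesis forces $(r,\gamma)=(0,\gamma)$. Your stated reason (\virg{the only trivalent vertex has square $\leq -3$}) does not exclude that configuration. Second, in the subcase leading to $(3)$ you treat only \virg{$v_{s,\alpha}$ final}, but $(3)$ covers \virg{not internal}, which includes $v_{s,\alpha}$ isolated, and above all you never address the possibility $v_{t,\beta}=v_0$. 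That possibility cannot be dispatched by Lemmas~\ref{l:b_3} and~\ref{l:bad}: contracting away the central vertex can destroy the form \eqref{e:Q_P} and produce a reducible linear set with too many components. The paper spends roughly a third of its proof on this (showing via a component-count bound and the three-legged bad component structure that either $a_0>3$ and the contraction still works, or the configuration is impossible).

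Third, in case $(4)$ your mechanism for producing $h$ with $|v_{t,\beta}\cdot e_h|>1$ is not the right one. That inequality is a purely arithmetic consequence of the fact that $v_{t,\beta}$ meets \emph{both} indices of $v_{s,\alpha}=\pm e_i\pm e_j$: if every coefficient of $v_{t,\beta}$ on $\{e_i,e_j\}$ were $\pm1$ and both were nonzero, the product $v_{s,\alpha}\cdot v_{t,\beta}$ would be even, contradicting $v_{s,\alpha}\cdot v_{t,\beta}=1$. The genuinely hard step — which you omit — is proving $j\in V_{v_{t,\beta}}$ in the first place (via $p_1(P)=0$, which forces $v_{s,\alpha}$ to be internal, and a rank-counting argument on the adjacent chain of $(-2)$-vectors); if only $e_i$ were shared, the product relation would force $|v_{t,\beta}\cdot e_i|=1$ and the coefficient claim would simply be false. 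Deriving the claim instead from \virg{$P'$ would acquire a bad component} is not a workable substitute. The same rank-counting technique is what kills the subcase $v_{s,\alpha}\cdot v_{t,\beta}=1$, $a_{t,\beta}=2$, where your sketch is plausible but would need to be carried out. So the skeleton is right, but the proof as proposed does not close.
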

\begin{proof}
Since $a_{s,\alpha}=2$, we have $V_{v_{s,\alpha}}=\{i,j\}$ for some $i,j\in\{1,...,n\}$.

\textbf{First case:} \emph{$v_{s,\alpha}\cdot v_{t,\beta}=0$ and $a_{t,\beta}=2$}. In this case $V_{v_{t,\beta}}=\{i,j\}$. Since $P$ is a good set, it is irreducible and being $n>3$ there must exist $v_{r,\gamma}\in P$ with $(r,\gamma)\not\in\{(s,\alpha),(t,\beta)\}$ linked to either $v_{s,\alpha}$ or $v_{t,\beta}$. Since $E_i(P)=\{(s,\alpha),(t,\beta)\}$, necessarily $j\in V_{v_{r,\gamma}}$; hence, $v_{s,\alpha}\cdot v_{r,\gamma}=v_{t,\beta}\cdot v_{r,\gamma}=1$ and we have $|V_{v_{r,\gamma}}|\geq 2$. If $|V_{v_{r,\gamma}}|=2$ it follows that $P$ is reducible. Therefore, $|V_{v_{r,\gamma}}|>2$ and, since $P$ has no bad components, we must have $(r,\gamma)=(0,\gamma)$; hence, $(1)$ holds.

\textbf{Second case:} \emph{$v_{s,\alpha}\cdot v_{t,\beta}=0$ and $a_{t,\beta}>2$}. We have $\{i,j\}\varsubsetneq V_{v_{t,\beta}}$, since $|V_{v_{t,\beta}}|=2$ contradicts the irreducibility of $P$. Thus $|V_{v_{t,\beta}}|\geq 3$ and if this inequality were strict, then the set $P_1:=(P\setminus\{v_{s,\alpha},v_{t,\beta}\})\cup\{\pi_{e_i}\circ\pi_{e_j}(v_{t,\beta})\}\subseteq\Z^{n-1}$
would be a good set consisting of $n-1$ linearly independent vectors in the span of the $n-2$ vectors $\langle e_1,...,\hat e_i,...,\hat e_j,...,e_n\rangle$. Therefore, there exists $h\in\{1,...,n\}$, $h\neq i,j$, such that $V_{v_{t,\beta}}=\{i,j,h\}$. 

If $v_{s,\alpha}$ is internal in $P$, then $(2)$ holds. Suppose now that $v_{s,\alpha}$ is isolated and notice that in this case we have $v_{t,\beta}\neq v_0$. In fact, suppose by contradiction that $v_{t,\beta}=v_0$. Then, since $E_i(P)=E_j(P)=\{(s,\alpha),0\}$, the set $P_2:=P\setminus\{v_{s,\alpha}, v_0\}\subseteq\Z^{n-2}$
is a good linear set: its incidence matrix obviously satisfies \eqref{e:Q_P_linear} and it is irreducible  because $|V_{v_0}|=3$. Hence, every $v\in P$ different from $v_{s,\alpha}$ and linked to $v_0=v_{t,\beta}$ is also linked to $v_{1,\delta}$ for each $\delta\in\{1,2,3\}$. Moreover, the set $P_2$ has no linear bad components, since $b(P)=0$ and $P$ has no three-legged bad components. Moreover, it satisfies $I(P_2)< 0$, but at the same time, it must have at least three connected components, which contradicts \cite[Lemma~4.9]{b:Li2}. Now, since $v_{t,\beta}\neq v_0$ and $|V_{v_{t,\beta}}|=3$ we can define
$P':=(P\setminus\{v_{s,\alpha},v_{t,\beta}\})\cup\{\pi_{e_i}(v_{t,\beta})\}$
whose incidence matrix $Q_{P'}$ is of the form \eqref{e:Q_P} or \eqref{e:Q_P_linear}. Since every $v\in P$ linked to $v_{s,\alpha}$ must satisfy $v\cdot e_j\neq 0$, $P'$ is irreducible. A straightforward computation gives $I(P')\leq I(P)$ and, since $a_{s,\alpha}=2$, by Lemma \ref{l:b_3}, $P'$ has no bad components of any type and therefore $(3)$ holds.

Now, we  analyze the case in which $v_{s,\alpha}$ is a final vector and let us suppose $v_{s-1,\alpha}\cdot v_{s,\alpha}=1$ and $v_{s,\alpha}\cdot v_{s+1,\alpha}=0$ (the other case is analogous). To begin with, let us further assume that $v_{t,\beta}\neq v_0$. In this case we can define the set
$P':=(P\setminus\{v_{s,\alpha},v_{t,\beta}\})\cup\{\pi_{e_i}(v_{t,\beta})\}$
that has all the desired properties, just like when we considered $v_{s,\alpha}$ isolated, and so $(3)$ holds. We are now left with the study under the assumption $v_{t,\beta}=v_0$. This time, since $v_0\cdot v_{1,\beta}=1$ for some $\beta\neq\alpha$ and $V_{v_0}=\{i,j,h\}$, we have $|v_0\cdot e_h|=1$. We analyze separately the following two possibilities:

\vspace{1.5mm}
\noindent $(i)$ $s=2$ and $V_{v_{2,\alpha}}\cap V_{v_{1,\alpha}}\cap V_{v_0}=j$. If furthermore $a_0>3$, we necessarily have that $|v_0\cdot e_i|=|v_0\cdot e_j|\geq 2$, and then the set
$P':=(P\setminus\{v_{2,\alpha},v_0\})\cup\{\pi_{e_i}(v_0)\}$
has, just like before, all the desired properties and so $(3)$ holds. Suppose now that $a_0=3$, then $P$ has a three-legged bad component, as explained in what follows. If $V_{v_{1,\alpha}}=\{j,k\}$ then $E_k(P)=\{(1,\alpha)\}$ and hence $p_1(P)>0$, which contradicts our assumptions. Therefore, $|V_{v_{1,\alpha}}|>2$ and so $a_{1,\alpha}>2$.  Consider the set
$P_3:=(P\setminus\{v_{2,\alpha},v_{1,\alpha},v_0\})\cup\{\pi_{e_i}(v_0)\}\subseteq\Z^{n-1}.$ The associated incidence matrix $Q_{P_3}$ is of the form \eqref{e:Q_P_linear} ($P_3$ is a linear set). Thus, $p_1(P_3)=1$ (since $E_j(P_3)=\{(\pi_{e_i}(v_0))\}$) and a direct calculation gives $I(P_3)<-1$. On the one hand, if $P_3$ is an irreducible set it is good, and therefore Lemma \ref{l:n.e} applies and $(4)$ gives that $P_3$ is obtained by final $(-2)$-vector expansions and therefore $P$ has a three-legged bad component. On the other hand, if $P_3$ is reducible, there exist $P_3^1$ and $P_3^2$ such that $P_3=P_3^1\cup P_3^2$ with $V_{P_3^1}\cap V_{P_3^2}=\emptyset$. Let us suppose, without loss of generality, that $\pi_{e_i}(v_0)\in P_3^1$. Since $P$ is irreducible we have that $P_3^1$ is a linear irreducible set, and therefore it is good. As before, we apply Lemma \ref{l:n.e} to the set $P_3^1$ and by $(4)$ we get that it is obtained by final $(-2)$-vector expansions. Hence, we conclude again that $P$ has a three-legged bad component, contradicting the assumption of the lemma.

\vspace{1.5mm}
\noindent $(ii)$ $E_j(P)=\{(s-1,\alpha),(s,\alpha),0\}$ and $v_{s-1,\alpha}\cdot v_0=0$. In this case we have that $|V_{v_{s-1,\alpha}}|>2$. In fact, since $v_{s-1,\alpha}\cdot v_0=0$ and $j\in V_{v_{s-1,\alpha}}\cap V_{v_0}$, we need $h\in V_{v_{s-1,\alpha}}$. Moreover, since for some $\beta\neq\alpha$ we have $h\in V_{v_{1,\beta}}$ and $v_{s-1,\alpha}\cdot v_{1,\beta}=0$, there must be some $k\in\{1,...,n\}$, $k\neq j,h$ such that $k\in V_{v_{s-1,\alpha}}$. Therefore, $|V_{v_{s-1,\alpha}}|\geq 3$ and the set
$P_4:=(P\setminus\{v_{s,\alpha},v_{s-1,\alpha},v_0\})\cup\{\pi_{e_j}(v_{s-1,\alpha})\}$
is a linear good set with $I(P_4)<-1$ and at least three connected components. Let us check that the set $P_4$ has no linear bad components. Indeed, arguing as in the proof of Lemma \ref{l:p1>0_1}, if $P_4$ has a linear bad component $C=\{v_{c_1},...,v_{c_k}\}$, then, since $P$ has no bad components, there must be an element of $C$ connected to the central vertex $v_0\in P$, let it be $v_{c_1}$. Notice that $h$ is the only possible element in the intersection $V_{v_{c_1}}\cap V_{v_0}$. By the definition of $C$, $h$ belongs to  $V_{v_{c_k}}$ and we obtain the contradiction $v_0\cdot v_{c_k}\neq 0$. Thus, the assumptions of \cite[Lemma~4.9]{b:Li2} are fulfilled and and it follows $c(P_4)\leq 2$, a contradiction.

 \textbf{Third case:} \emph{$v_{s,\alpha}\cdot v_{t,\beta}=1$ and $a_{t,\beta}=2$}. In this case $\beta=\alpha$ and by symmetry we can assume $t=s+1$ and so $V_{v_{s+1,\alpha}}=\{i,k\}$ for some $k\neq j$. Notice that, since $p_1(P)=0$, the vector $v_{s,\alpha}$ must be internal. Then, arguing as in the proof of Lemma \ref{l:p1>0_1}, one achieves a contradiction using the fact that $E_i(P)=\{(s,\alpha),(s+1,\alpha)\}$ by considering the largest $\ell,m\geq 1$ such that the set $\{v_{s-m,\alpha},...,v_{s+\ell,\alpha}\}$ has a connected plumbing graph. As in Lemma \ref{l:p1>0_1} it is easy to check that there exists a smallest $r\in\{s-m,...,s-1\}$ such that $a_{r,\alpha}>2$ or a smallest $q\in\{s+2,...,s+\ell\}$ such that $a_{q,\alpha}>2$. Suppose that only the latter happens (the other cases can be handled similarly). Then, for some $t\in\{1,...,n\}$, it holds that $V_{v_{q,\alpha}}\cap V_{v_{q-1,\alpha}}=\{e_t\}$ and $|v_{q,\alpha}\cdot e_t|=1.$
Since $$\left|\bigcup_{i=s-m}^{q-1}V_{v_{i,\alpha}}\right|=q+m-s+1,$$  by eliminating all the vectors $v_{s-m,\alpha},v_{s-m+1,\alpha},...,v_{q-1,\alpha}$ and replacing $v_{q,\alpha}$ with $\pi_{e_t}(v_{q,\alpha})$, we obtain the same contradiction, via rank counting, as in the proof of Lemma \ref{l:p1>0_1}. 

\textbf{Fourth case:} \emph{$v_{s,\alpha}\cdot v_{t,\beta}=1$ and $a_{t,\beta}>2$}. Again in this case we have 
$\beta=\alpha$ and by symmetry we may assume $t=s-1$. Since $p_1(P)=0$, the vector $v_{s,\alpha}$ is not final and this implies $j\in V_{v_{s-1,\alpha}}$. In fact, if $j\not\in V_{v_{s-1,\alpha}}$ we get a contradiction as in the previous case by considering the biggest $l\geq 0$ such that $\{v_{s,\alpha},...,v_{s+l,\alpha}\}$ has connected plumbing graph.

If $V_{v_{s-1,\alpha}}=\{i,j\}$ then, since $P$ is irreducible, $v_{s,\alpha}$ is not final. Therefore, $E_j(P)=\{(s-1,\alpha),(s,\alpha),(s+1,\alpha)\}$ and since $v_{s-1,\alpha}\cdot v_{s+1,\alpha}=0$ there exists some $k\in\{1,...,n\}$, $k\neq i,j$, such that $k\in V_{v_{s-1,\alpha}}\cap V_{v_{s+1,\alpha}}$, which contradicts the assumption $V_{v_{s-1,\alpha}}=\{i,j\}$.

Therefore we conclude that $\{i,j\}\varsubsetneq V_{v_{s-1,\alpha}}$. Observe that, since $V_{v_{s-1,\alpha}}\cap V_{v_{s,\alpha}}=\{i,j\}$, $a_{s,\alpha}=2$ and $v_{s-1,\alpha}\cdot v_{s,\alpha}=1$, we necessarily have $|v_{s-1,\alpha}\cdot e_j|>1$ or $|v_{s-1,\alpha}\cdot e_i|>1$. Therefore, the $h\in\{1,...,n\}$ in case $(4)$ in the statement of the lemma is $i$ or $j$. Hence, the set
$P':=(P\setminus\{v_{s,\alpha},v_{s-1,\alpha}\})\cup \{\pi_{e_i}(v_{s-1,\alpha})\}$
is a good set, because its incidence matrix has the form \eqref{e:Q_P} or \eqref{e:Q_P_linear} and it is irreducible (observe that if $v\in P$ is linked to $v_{s,\alpha}$ then $j\in V_v$). It is clear that $I(P')\leq I(P)$ and, since $a_{s,\alpha}=2$, by Lemma \ref{l:b_3} we know it has no bad components of any type and therefore $(4)$ holds.

Notice that in this case we do not need to worry about $v_{s-1,\alpha}$ being the central vertex. In fact, if this was the case, then $v_{s,\alpha}$ would be $v_{1,\alpha}$ and hence $P'$ would be a linear good set, for which $\pi_{e_i}(v_{s-1,\alpha})\cdot\pi_{e_i}(v_{s-1,\alpha})\leq -2$ is guaranteed from the above discussion.
\end{proof}

\begin{lem}\label{l:red}
Suppose that $n=n_1+n_2+n_3+1\geq 5$, the subset of $\Z^n$, $P=\{v_0,...,v_{n_3,3}\}$ is good, has neither linear nor three-legged bad components, $I(P)<-1$ and there exist $i\in\{1,...,n\}$ and $(s,\alpha),(t,\beta)\in J$ such that $E_i(P)=\{(s,\alpha),(t,\beta)\}$ with $a_{s,\alpha}, a_{t,\beta}>2$. Then, one of the following holds:
\begin{itemize}
\item[$(1)$] Up to interchanging the roles of $v_{s,\alpha}$ and $v_{t,\beta}$ in the following definition, the set 
$P':=(P\setminus\{v_{s,\alpha},v_{t,\beta}\})\cup\{\pi_{e_i}(v_{t,\beta})\}\subseteq\Z^{n-1},$
satisfies $I(P')\leq I(P)-1$, $c(P')\leq c(P)+1$ and it is a good set with no bad components of any type.
\item[$(2)$] There exist $k\neq i$ and $s'\in\{s-1,s+1\}$ such that
\begin{itemize}
\item[$(a)$] $E_k(P)=\{(s,\alpha),(s',\alpha)\}$
\item[$(b)$] $v_{s',\alpha}\cdot v_{s,\alpha}=1$
\item[$(c)$] $a_{s',\alpha}=2$
\end{itemize}
\item[$(3)$] The vector $v_{s,\alpha}$ is the central vector, i.e.\ $v_{s,\alpha}=v_0$, and in $P$ there are two complementary legs.
\end{itemize}
\end{lem}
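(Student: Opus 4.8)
The plan is to attempt the contraction $P\searrow P'$ along $e_i$ and to show that it produces the configuration of case $(1)$ unless one of two explicit local obstructions occurs, forcing $(2)$ or $(3)$. First I would record the arithmetic of the contraction. Since the off-diagonal entries of $Q_P$ lie in $\{0,1\}$ we have $v_{s,\alpha}\cdot v_{t,\beta}\in\{0,1\}$. Projecting $v_{t,\beta}$, the identity $\pi_{e_i}(v)\cdot\pi_{e_i}(v)=v\cdot v+(v\cdot e_i)^2$ gives
\[
\pi_{e_i}(v_{t,\beta})\cdot\pi_{e_i}(v_{t,\beta})=-a_{t,\beta}+(v_{t,\beta}\cdot e_i)^2,
\]
so that $I(P')-I(P)=3-a_{s,\alpha}-(v_{t,\beta}\cdot e_i)^2\le-1$, using $a_{s,\alpha}\ge 3$ and $(v_{t,\beta}\cdot e_i)^2\ge 1$. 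Hence whenever the projected vector has square $\le-2$ --- in particular whenever $|v_{t,\beta}\cdot e_i|=1$ --- the estimate $I(P')\le I(P)-1$ of $(1)$ holds, and symmetrically after interchanging the two vectors.

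Second I would control the combinatorics of $P'$. As soon as the projected vector has square $\le-2$, the incidence matrix of $P'$ has the form \eqref{e:Q_P} or \eqref{e:Q_P_linear}; irreducibility and the bound $c(P')\le c(P)+1$ then follow from a local analysis based on the fact that every non-central vertex has valence at most two, so that deleting $v_{s,\alpha}$ and severing the $e_i$-link of $v_{t,\beta}$ can break a single plumbing component into at most two pieces. To rule out bad components I would invoke Lemma~\ref{l:bad}: if projecting $v_{t,\beta}$ produces a good set carrying a bad component, then projecting $v_{s,\alpha}$ instead produces a good set with none --- which is exactly why the conclusion of $(1)$ is phrased \emph{up to interchanging the roles of $v_{s,\alpha}$ and $v_{t,\beta}$}. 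Throughout, the rank-counting principle of Lemma~\ref{l:ind} and Remark~\ref{r:li} (a good set of $m$ vectors cannot lie in the span of $m-1$ basis vectors) is the main device for excluding degenerate arrangements, precisely as in Lemmas~\ref{l:p1>0_1} and \ref{l:cases}.

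Third I would isolate the two genuine obstructions to being in case $(1)$. The clean contraction can fail only when both projections would raise the square to $\ge-1$, or when the deletion is forced to increase the number of plumbing components by two. Tracing the first possibility through the rank count, the leftover structure is pinned down to a $(-2)$-neighbour $v_{s',\alpha}$ of $v_{s,\alpha}$, with $s'\in\{s-1,s+1\}$, sharing a basis vector $e_k$ ($k\ne i$) with no other vertex; this is exactly case $(2)$, and it is the seed that would otherwise create a linear bad component in $P'$ (compare the vector $v_\ast$ of \eqref{e:bad5} and Lemma~\ref{l:b_3}). The second possibility can occur only when the deleted vertex is the trivalent one, i.e.\ $v_{s,\alpha}=v_0$. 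In that case I would delete $v_0$ and study the resulting linear subset, which has at least three plumbing components; applying the component bound \cite[Lemma~4.9]{b:Li2} together with the Riemenschneider description of complementary legs recalled at the start of Section~\ref{s:clas}, I would conclude that two legs of $P$ must be complementary --- they form a linked pair contributing $I=-2$, which is precisely what allows three components to coexist without contradicting \cite{b:Li2} --- thereby giving case $(3)$.

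The point I expect to be hardest is the interface between the bad-component bookkeeping and the component count: certifying $c(P')\le c(P)+1$ while simultaneously guaranteeing, after the possible role-swap, that no bad component survives requires an essentially exhaustive inspection of the local pictures of $v_{s,\alpha}$, $v_{t,\beta}$ and their neighbours, distinguishing $v_{s,\alpha}\cdot v_{t,\beta}=0$ from $v_{s,\alpha}\cdot v_{t,\beta}=1$ and internal from final vertices, much as in the four-case split of Lemma~\ref{l:cases}. The delicate step is to verify that when neither projection is admissible the remaining data are forced into the single $(-2)$-neighbour pattern of $(2)$, and that the central-vertex analysis yielding $(3)$ genuinely mirrors --- with the opposite outcome --- the contradiction reached for the central vertex in the proof of Lemma~\ref{l:p1>0_1}.
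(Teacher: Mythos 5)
Your first paragraph is fine: the computation $I(P')-I(P)=3-a_{s,\alpha}-(v_{t,\beta}\cdot e_i)^2\le -1$ is correct, and the use of Lemma~\ref{l:bad} to swap the roles of $v_{s,\alpha}$ and $v_{t,\beta}$ when the good contraction produces a bad component is exactly what the paper does. But there is a genuine gap at the heart of your plan: you dispose of the irreducibility of $P'$ with \virg{a local analysis based on the fact that every non-central vertex has valence at most two}. Irreducibility is a statement about \emph{linkage} through basis vectors, not about adjacency in the plumbing graph, and no valence count controls it: $P'$ can perfectly well have an incidence matrix of the form \eqref{e:Q_P} or \eqref{e:Q_P_linear} and still split as $P'_1\cup P'_2$ with $V_{P'_1}\cap V_{P'_2}=\emptyset$. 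This reducible case is where essentially all of the work lies in the paper's proof (the decomposition $P'=P'_1\cup P'_2$, the preimages $P_1,P_2$, the rank count on $|V_{v_{s,\alpha}}\cap V_{P_1}|$ forcing $V_{v_{s,\alpha}}\cap V_{P_1}=\{k\}$, and the auxiliary sets $\widetilde P=P_1\cup\{v'_{s,\alpha}\}$ analysed via Lemma~\ref{l:n.e} and Lemma~\ref{l:p1>0_conclusion}), and it is precisely this case that produces conclusions $(2)$ and $(3)$.

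Your dichotomy of obstructions is consequently misaligned with what actually happens. The first obstruction you list --- both projections raising the square to $\ge -1$ --- never occurs: if $\pi_{e_i}(v_{t,\beta})^2=-1$ then $v_{t,\beta}=\lambda e_i\pm e_j$ with $|\lambda|>1$ and $j\in V_{v_{s,\alpha}}$, so $\pi_{e_i}(v_{s,\alpha})^2=-1$ would force $|v_{s,\alpha}\cdot v_{t,\beta}|\ge 3$; the swap therefore always succeeds there and one lands back in case $(1)$, not in case $(2)$. Case $(2)$ instead arises from reducibility, when $v_{s,\alpha}$ is final (or central with $|E_k(P)|=2$) and the unique index $k$ with $V_{v_{s,\alpha}}\cap V_{P_1}=\{k\}$ is shared with a necessarily $(-2)$-neighbour $v_{s',\alpha}$. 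Similarly, case $(3)$ comes out of the reducible analysis in two distinct ways (a three-legged bad component appearing after the swap, or $|E_k(P)|=3$ with the resulting linear set $\widetilde P$ forced by Lemma~\ref{l:n.e} to be built from $(-2)$-expansions, exhibiting two complementary legs); your sketch captures only a rough version of the second. Without engaging the reducible case head-on, the proof does not close.
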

\begin{proof}
If the set $P'$ of $(1)$ is good, since $a_{s,\alpha}>2$ it follows that $I(P')\leq I(P)-1$. By Lemma~\ref{l:b_3} we know that $P'$ can have at most one linear bad component, or one three-legged bad component. If $P'$ had a bad component then, by Lemma~\ref{l:bad}, interchanging the roles of $v_{s,\alpha}$ and $v_{t,\beta}$ in the definition of $P'$ we obtain a good set without bad components. By definition of contraction, the inequality $c(P')\leq c(P)+1$ holds if $v_{s,\alpha}$ is not the central vertex. In fact, $v_{s,\alpha}$ can never be the central vertex, for $v_{s,\alpha}=v_0$ would imply that $c(P')\geq 3$, which contradicts \cite[Lemma~4.9]{b:Li2}. Hence, $(1)$ holds.

Now suppose that the set 
$P'((s,\alpha),(t,\beta)):=(P\setminus\{v_{s,\alpha},v_{t,\beta}\})\cup\{\pi_{e_i}(v_{t,\beta})\}$
is not good because $\pi_{e_i}(v_{t,\beta})^2=-1$. In this case 
$v_{t,\beta}=\lambda e_i\pm e_j$ with $|\lambda|>1$ and, since $i\in V_{v_{s,\alpha}}$ and $v_{s,\alpha}\cdot v_{t,\beta}\in\{0,1\}$, we have $j\in V_{v_{s,\alpha}}$. If $\pi_{e_i}(v_{s,\alpha})^2=-1$ then $v_{s,\alpha}=\mu e_i\pm e_j$ with $|\mu|>1$, but this is impossible since it implies $|v_{s,\alpha}\cdot v_{t,\beta}|=|-\lambda\mu\pm 1|\geq 3$. Therefore, $\pi_{e_i}(v_{s,\alpha})^2\leq -2$.  If this last inequality is strict the incidence matrix of $P'((t,\beta),(s,\alpha))$ is of the form \eqref{e:Q_P} or \eqref{e:Q_P_linear}. The same holds if $\pi_{e_i}(v_{s,\alpha})^2= -2$ and $v_{s,\alpha}$ is not the central vertex. In the remaining case, namely $\pi_{e_i}(v_{s,\alpha})^2=-2$ and $v_{s,\alpha}=v_0$, since $E_i(P)=\{(s,\alpha),(t,\beta)\}$, we have necessarily $v_{s,\alpha}\cdot v_{t,\beta}=1$ and hence $P'((t,\beta),(s,\alpha))$ is a linear good set whose incidence matrix satisfies \eqref{e:Q_P_linear}. In all three cases, $P'((t,\beta),(s,\alpha))$ is irreducible because, since $\{i,j\}\subseteq V_{v_{s,\alpha}}$, there is no vector linked to $v_{t,\beta}$ but unlinked from $v_{s,\alpha}$. Moreover, $P'((t,\beta),(s,\alpha))$ has no bad components, since $|V_{v_{t,\beta}}|=2$ and Lemma~\ref{l:b_3}\,$(1)$. Therefore, after replacing $((s,\alpha),(t,\beta))$ with $((t,\beta),(s,\alpha))$, assertion $(1)$ holds.

We deal now with the last possibility, namely either the set $P'((s,\alpha),(t,\beta))$ or the set $P'((t,\beta),(s,\alpha))$ has an incidence matrix of the form \eqref{e:Q_P} or \eqref{e:Q_P_linear} but it is not good because it is reducible. We assume without loss of generality that $P'((s,\alpha),(t,\beta))$ has this property. In this case we can write $P'((s,\alpha),(t,\beta))=P_1'\cup P'_2$, where $P'_2$ is a maximal irreducible subset of $P'((s,\alpha),(t,\beta))$ that contains $\pi_{e_i}(v_{t,\beta})$ and $P'_1=P'((s,\alpha),(t,\beta))\setminus P'_2$. Define $P_i\subseteq P\setminus\{v_{s,\alpha}\}$, $i=1,2$, to be the preimage of $P'_i$ under the surjective map $\pi_{e_i}:P\setminus\{v_{s,\alpha}\}\rightarrow P'((s,\alpha),(t,\beta))$. The decomposition $P\setminus\{v_{s,\alpha}\}=P_1\cup P_2$ shows that $P\setminus\{v_{s,\alpha}\}$ is reducible since clearly $V_{P_1}\cap V_{P_2}=\emptyset$. Since $P$ is irreducible while $P\setminus\{v_{s,\alpha}\}$ is reducible, there exists a vector $v_{r,\gamma}\in P_1$ that is linked to $v_{s,\alpha}$, therefore $|V_{v_{s,\alpha}}\cap V_{P_1}|\geq 1$.
 
Now, if $|V_{v_{s,\alpha}}\cap V_{P_1}|>1$ and $v_{s,\alpha}=v_0$ then $\{i\}\varsubsetneq V_{v_0}\cap V_{P_2}$. In fact, since $V_{P_1}\cap V_{P_2}=\emptyset$, the equality $\{i\}=V_{v_0}\cap V_{P_2}$ would imply that the linear set $P'':=(P\setminus\{v_{s,\alpha},v_{t,\beta}\})\cup\pi_{e_i}(v_{s,\alpha})\cup \pi_{e_i}(v_{t,\beta})$ has an associated intersection matrix of the form \eqref{e:Q_P_linear}. Therefore, by Remark \ref{r:li}, $P''$ should be a set of $n$ linearly independent vectors, but they are contained in the span of $\{e_1,...,e_n\}\setminus\{e_i\}$. Hence, we have $|V_{v_0}\cap V_{P_2}|\geq 2$, which implies $|V_{v_{s,\alpha}}|\geq 4$. It follows that in this case also $P'((t,\beta),(s,\alpha))$ has an incidence matrix of the form \eqref{e:Q_P} or \eqref{e:Q_P_linear} and moreover, since $|V_{v_0}\cap V_{P_2}|\geq 2$, we have that it is irreducible. Therefore, if  $P'((t,\beta),(s,\alpha))$ has no bad components, then $(1)$ holds. In turn, if it has a bad component $C$, then $(3)$ holds. In fact, by Lemma~\ref{l:b_3}\,$(3)$, we know that $\pi_{e_i}(v_{s,\alpha})\in C$ and therefore $C$ is a three legged bad component, which, by definition, has two complementary legs.

On the other hand, if $|V_{v_{s,\alpha}}\cap V_{P_1}|>2$ or $|V_{v_{s,\alpha}}\cap V_{P_1}|>1$ and $v_{s,\alpha}\neq v_0$, then we could replace $v_{s,\alpha}$ with
$$\tilde v_{s,\alpha}:=-\sum_{h\in V_{v_{s,\alpha}}\cap V_{P_1}}(v_{s,\alpha}\cdot e_h)e_h$$
and $v_{t,\beta}$ with $\pi_{e_i}(v_{t,\beta})$. The $n$ vectors resulting from these replacements have an associated incidence matrix of the form \eqref{e:Q_P} or \eqref{e:Q_P_linear}. In fact, for every $v\in P_1$ we have $v\cdot\tilde v_{s,\alpha}=v\cdot v_{s,\alpha}\in\{0,1\}$ and for every $v\in P_2$, since $V_{P_1}\cap V_{P_2}=\emptyset$, we have $v\cdot\tilde v_{s,\alpha}=0$. Therefore, by Remark \ref{r:li} the $n$ vectors are linearly independent, but they are contained in the span of $\{e_1,...,e_n\}\setminus\{e_i\}$, giving a contradiction. Thus, there exists $k\in\{1,...,n\}$ such that $V_{v_{s,\alpha}}\cap V_{P_1}=\{k\}$.  

If $v_{r,\gamma}\in P_1$ is any vector linked to $v_{s,\alpha}$, then $V_{v_{s,\alpha}}\cap V_{v_{r,\gamma}}=\{k\}$ and so $v_{r,\gamma}\cdot v_{s,\alpha}=1$. This implies that $\{(s-1,\alpha),(s,\alpha)(s+1,\alpha)\}\supseteq E_k(P)$, if $s\neq 0$, or that $\{0,(1,1),(1,2),(1,3)\}\supseteq E_k(P)$, if $s=0$ (i.e.\ if $v_{s,\alpha}$ is the central vertex). We will analyze separately the following three cases.

\noindent $(i)$ \textit{If $v_{s,\alpha}$ is final}, for any $v_{r,\gamma}\in P_1$ as above, we have $(r,\gamma)\in\{(s-1,\alpha),(s+1,\alpha)\}$ and then $E_k(P)=\{(s-1,\alpha),(s,\alpha)\}$ or $E_k(P)=\{(s,\alpha),(s+1,\alpha)\}$. By symmetry, we can assume that the first case occurs. If $a_{s-1,\alpha}>2$ we can eliminate $v_{s,\alpha}$, replace $v_{s-1,\alpha}$ with $\pi_{e_k}(v_{s-1,\alpha})$ and $v_{t,\beta}$ with $\pi_{e_i}(v_{t,\beta})$. Notice that, since $|e_k\cdot v_{s-1,\alpha}|=1$, we have $|\pi_{e_k}(v_{s-1,\alpha})|^2\geq 2$  and it also holds that, if $s-1=0$, the set obtained from $P$ by these replacements is a linear set. Thus, we have obtained a set of $n-1$ vectors whose associated incidence matrix has the form \eqref{e:Q_P} or \eqref{e:Q_P_linear} and applying Remark~\ref{r:li} once again we get a contradiction because these vectors belong to the span of the $n-2$ vectors $\{e_1,...,e_n\}\setminus\{e_i,e_k\}$. Therefore, we conclude that $a_{s-1,\alpha}=2$ and hence $(2)$ holds.

\noindent $(ii)$ \textit{If $v_{s,\alpha}$ is internal and it is not the central vertex}, we have $v_{s-1,\alpha}\cdot v_{s,\alpha}=v_{s,\alpha}\cdot v_{s+1,\alpha}=1$. Hence $V_{v_{s,\alpha}}\cap V_{P_1}=\{k\}$ and so $E_k(P)=\{(s-1,\alpha),(s,\alpha),(s+1,\alpha)\}$ and $V_{v_{s-1,\alpha}}\cap V_{v_{s,\alpha}}=V_{v_{s,\alpha}}\cap V_{v_{s+1,\alpha}}=\{k\}$. Let us define the vector $v_{s,\alpha}':=-(v_{s,\alpha}\cdot e_k)e_k+e_i$ and consider the set $\widetilde P:=P_1\cup \{v_{s,\alpha}'\}$ which satisfies $p_1(\widetilde P)>0$ (recall that $E_i(\widetilde P)=\{(s,\alpha)\}$). Moreover, $\widetilde P$ is good: since $P=P_1\cup P_2\cup\{v_{s,\alpha}\}$ is irreducible and $V_{P_1}\cap V_{P_2}=\emptyset$, then $\widetilde P$ is irreducible  and, since $v_{s,\alpha}'$ is not the central vertex in $\widetilde P$, $Q_{\widetilde P}$ is of the form \eqref{e:Q_P} or \eqref{e:Q_P_linear}. On the one hand, if $\widetilde P$ is a linear set, since $p_1(\widetilde P)>0$, by Lemma \ref{l:n.e} it is connected and it is obtained by final $(-2)$-vector expansions; therefore $(P_1\cup \{v_{s,\alpha}\})\subseteq P$ is a linear bad component (recall the assumption $a_{s,\alpha}>2$), contradicting the hypothesis of the lemma. On the other hand, if $\widetilde P$ is a set with a trivalent vertex different from $v_{s,\alpha}'$ then, since $p_1(\widetilde P)>0$, we know, by Lemma \ref{l:p1>0_conclusion}, that $\widetilde P$ is connected with two complementary legs. Thus $P$ has a three-legged bad component which again contradicts the assumption of the lemma.

\noindent $(iii)$ \textit{If $v_{s,\alpha}$ is the central vertex $v_0$}, then $E_k(P)\subseteq\{0,(1,1),(1,2),(1,3)\}$. If $|E_k(P)|=2$ then we argue as in $(i)$. In fact, let us suppose without loss of generality that $E_k(P)=\{0,(1,1)\}$. First observe that, if $a_{1,1}>2$ we can eliminate $v_0$, replace $v_{1,1}$ with $\pi_{e_k}(v_{1,1})$ and $v_{t,\beta}$ with $\pi_{e_i}(v_{t,\beta})$, obtaining $n-1$ linearly independent vectors in the span of $n-2$ basis vectors. This contradiction forces $a_{1,1}=2$ and hence $(2)$ holds. On the other hand, if $|E_k(P)|=3$, we argue as in $(ii)$. Indeed, let us consider without loss of generality $E_k(P)=\{0,(1,1),(1,2)\}$. We can again define the vector $v'_0:=-(v_0\cdot e_k)e_k+e_i$ and the good set $\widetilde P:=P_1\cup \{v'_0\}$. Since $k\not\in V_{v_{1,3}}$, we have $v_{1,3}\in P_2$ and therefore $\widetilde P$ is a linear set. Since $p_1(\widetilde P)>0$, by Lemma \ref{l:n.e} it is connected and it is obtained by final $(-2)$-vector expansions. Therefore $L_1$ and $L_2$ are complementary legs in $P$ and $(3)$ holds. Finally we deal with $E_k(P)=\{0,(1,1),(1,2),(1,3)\}$. In this case, the set $P'((s,\alpha),(t,\beta))$ has no bad components. Indeed, by Lemma~ \ref{l:b_3}\,$(3)$ the only possible bad component would be one of the connected components that appear when we erase $v_0$, but we know $E_k(P)=\{0,(1,1),(1,2),(1,3)\}$ and this contradicts the definition of bad component. The good linear set $P'_1$ has at least three connected components and $b(P'_1)=0$. Since $I(P)<-1$, a straightforward computation gives 
$$I(P'_1)+I(P'_2)=I(P)-|e_i\cdot v_{t,\beta}|^2-a_0+3<2-|e_i\cdot v_{t,\beta}|^2-a_0\leq -2.$$ 
If $I(P'_1)<0$ we get a contradiction with \cite[Lemma~4.9]{b:Li2} and therefore $I(P'_2)\leq -3$. The set $P'_2$ is a linear good set with no bad components and therefore, by \cite[Corollary~5.4]{b:Li}, it holds $I(P'_2)=-3$. This forces $|e_i\cdot v_{t,\beta}|=1$ and $a_0=3$. In order to conclude we will show that $P_2\subseteq P$ is a linear bad component, contradicting the assumption of the lemma. In fact, we write $V_{v_0}=\{i,k,\ell\}$ and observe that, since $k\in V_{P'_1}$ and $V_{P'_1}\cap V_{P'_2}=\emptyset$, we have $|E_\ell(P'_2)|=1$. Moreover, by \cite[Corollary~3.5]{b:Li} we know that $P'_2$ is connected and it is obtained from the set in \cite[Lemma~2.4\,$(1)$]{b:Li} by final $(-2)$-vector expansions. The sets $P_2$ and $P'_2$ only differ in one vector, namely $v_{t,\beta}\in P_2$ which becomes the vector $\pi_{e_i}(v_{t,\beta})\in P'_2$. Therefore, $P_2$ is, as claimed, a linear bad component.
\end{proof}

\begin{rem}
The argument used in $(ii)$ in the proof of Lemma~\ref{l:red} can be used to fix a wrong claim in the proof of  \cite[Lemma~4.3]{b:Li}. More precisely, the vector $v_{s+1}$, considered in \cite[last line, p.\ 445, Lemma~4.3]{b:Li} is internal in $S''_l$, contrary to what is claimed in \cite{b:Li}, and hence the claimed contradiction is not achieved. The argument used in $(ii)$ provides the desired contradiction, since in the case \virg{$v_s$ is not final} \cite[line 20, p.\ 445, Lemma~4.3]{b:Li}, it implies that the set $S$ of the statement of \cite[Lemma~4.3]{b:Li} has a bad component, contrary to the assumptions.
\end{rem}

\begin{lem}\label{l:p.ind}
Suppose that $n=n_1+n_2+n_3+1\geq 5$, the subset of $\Z^n$, $P=\{v_0,...,v_{n_3,3}\}$ is good, has neither linear bad components nor three-legged bad components, $I(P)<-1$, $p_1(P)=0$ and $p_2(P)>0$. Furthermore, suppose that $P$ has no complementary legs. Then, there exist $i\in\{1,...,n\}$ and $(s,\alpha),(t,\beta)\in J$ such that the set
$P':=P\setminus\{v_{s,\alpha},v_{t,\beta}\}\cup\{\pi_{e_i}(v_{s,\alpha})\}\subseteq\Z^{n-1}$
is good, $I(P')\leq I(P)$ and $P'$ has no bad components of any type.
\end{lem}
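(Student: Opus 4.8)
The plan is to exploit the hypothesis $p_2(P)>0$ to produce an index $i$ together with a pair $(s,\alpha),(t,\beta)\in J$ with $E_i(P)=\{(s,\alpha),(t,\beta)\}$, and then to feed this configuration into the two classification lemmas, Lemma~\ref{l:cases} and Lemma~\ref{l:red}, which between them cover every possibility for the squares $a_{s,\alpha}$ and $a_{t,\beta}$. The whole argument is a dispatch over the cases listed there, in which the hypothesis that $P$ has no complementary legs is used precisely to discard the cases that would manufacture complementary legs. Throughout I keep in mind that the conclusion only asks for the kept (projected) vector to have square $\le -2$ after projection, so whenever a case hands me a good contraction obtained by projecting a vector of square $\le -3$, I relabel that vector as the ``$v_{s,\alpha}$'' of the statement.

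First I would treat the case in which both $a_{s,\alpha}>2$ and $a_{t,\beta}>2$, applying Lemma~\ref{l:red}. Its conclusion $(1)$ is exactly the desired good contraction, with $I(P')\le I(P)-1\le I(P)$ and no bad components; conclusion $(3)$ produces two complementary legs and is therefore excluded by hypothesis; and conclusion $(2)$ yields a neighbour $v_{s',\alpha}$ with $a_{s',\alpha}=2$, $v_{s',\alpha}\cdot v_{s,\alpha}=1$ and $E_k(P)=\{(s,\alpha),(s',\alpha)\}$, which is precisely configuration $(4)$ of Lemma~\ref{l:cases} for the index $k$ and hence again delivers a good contraction. Next I would treat the case in which one of the squares equals $2$, say $a_{s,\alpha}=2$, applying Lemma~\ref{l:cases}: conclusions $(3)$ and $(4)$ are, after relabelling, the sought contraction (good, no bad components, $I(P')\le I(P)$), while conclusion $(1)$ makes $L_\alpha$ and $L_\beta$ complementary and is excluded. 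What remains in every branch is conclusion $(2)$ of Lemma~\ref{l:cases}, and this is where the work concentrates.

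The hard part will be conclusion $(2)$ of Lemma~\ref{l:cases}: here $v_{s,\alpha}$ is an internal $(-2)$-vector with $V_{v_{s,\alpha}}=\{i,j\}$, while $V_{v_{t,\beta}}=\{i,j,h\}$ and $v_{s,\alpha}\cdot v_{t,\beta}=0$, so that $|E_j(P)|\ge 4$. The naive contraction $\pi_{e_i}(v_{t,\beta})$ is useless here: since every $v\in P\setminus\{v_{s,\alpha},v_{t,\beta}\}$ satisfies $v\cdot e_i=0$, one has $\pi_{e_i}(v_{t,\beta})\cdot v=v_{t,\beta}\cdot v$, so the projected set carries exactly the adjacencies of $P$ with the internal vertex $v_{s,\alpha}$ deleted; as $v_{t,\beta}\cdot v_{s\pm1,\alpha}=0$, deleting $v_{s,\alpha}$ severs $L_\alpha$ and the contracted set is reducible. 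The remedy I would pursue is not to contract along $e_i$ but to locate a better pair: the inequality $|E_j(P)|\ge 4$ together with Lisca's inequality (Lemma~\ref{l:lisca_1}) and $p_1(P)=0$ forces $p_2(P)\ge 2$, and $p_1(P)=0$ applied to the leaf of the sub-chain of $L_\alpha$ cut off by $v_{s,\alpha}$ shows that this leaf carries no private index, so its free index furnishes a second $p_2$ pair sitting at the periphery of the graph. Running Lemma~\ref{l:cases} or Lemma~\ref{l:red} on this peripheral pair should land in one of the already-settled conclusions $(3)$/$(4)$ or Lemma~\ref{l:red}$(1)$, with the absence of bad components in $P'$ coming for free from Lemma~\ref{l:b_3}$(1)$, since the $(-2)$-vector involved has $|V|=2$. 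The genuine obstacle is the bookkeeping that guarantees such a peripheral $p_2$ pair always exists and always avoids both configuration $(2)$ of Lemma~\ref{l:cases} and the complementary-leg configurations; this is exactly where the star-shaped constraint that distinct legs meet only at $v_0$, and the no-complementary-legs hypothesis, must be used with the greatest care.
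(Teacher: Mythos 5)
Your dispatch over Lemma~\ref{l:red} and Lemma~\ref{l:cases} is exactly the paper's, and the first half of your argument (both squares $>2$: Lemma~\ref{l:red}$(1)$ is the desired contraction, $(3)$ is excluded by the no-complementary-legs hypothesis, $(2)$ is fed into Lemma~\ref{l:cases}$(4)$) matches the published proof. The gap is where you say it is: you never actually rule out conclusion $(2)$ of Lemma~\ref{l:cases}, and your proposed remedy --- locate a second, ``peripheral'' $p_2$-pair and run the same case analysis on it --- is left as unfinished bookkeeping that you yourself flag as the genuine obstacle. Producing a second pair via Lemma~\ref{l:lisca_1} does not help by itself, because nothing prevents that pair from again being of type $(2)$ (an internal $(-2)$-vector orthogonal to its partner), and the heuristic about the leaf of the severed sub-chain carrying ``no private index'' is not an argument.

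The paper closes exactly this hole with a single external input: Lemma~4.4 of \cite{b:Li} (which, as the paper notes in a footnote, is stated there for linear good sets but whose proof applies verbatim to sets with a trivalent vertex). Under the hypotheses $p_1(P)=0$, $p_2(P)>0$, $I(P)<0$, and assuming every $p_2$-pair contains a $(-2)$-vector, that lemma asserts there is \emph{at least one choice} of $i$, $(s,\alpha)$, $(t,\beta)$ with $E_i(P)=\{(s,\alpha),(t,\beta)\}$, $a_{s,\alpha}=2$, and either $v_{s,\alpha}$ not internal or $v_{s,\alpha}\cdot v_{t,\beta}=1$. That is precisely the statement that configuration $(2)$ of Lemma~\ref{l:cases} can always be avoided by a suitable choice of pair, and it is the counting argument your proposal is missing. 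Once you have it, Lemma~\ref{l:cases} applied to that choice yields conclusion $(3)$ or $(4)$ (with $(1)$ excluded by the absence of complementary legs), and the lemma follows.
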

\begin{proof}
Since $p_2(P)>0$, there exist $i\in\{1,...,n\}$ and $(s,\alpha),(t,\beta)\in J$ such that $E_i(P)=\{(s,\alpha)(t,\beta)\}$. If $a_{s,\alpha}>2$ and $a_{t,\beta}>2$, the hypotheses of Lemma \ref{l:red} are satisfied. Therefore, since has no complementary legs, the conclusions of Lemma \ref{l:red}\,$(1)$ or Lemma \ref{l:red}\,$(2)$ hold. In the first case the lemma follows immediately, in the second case, Lemma \ref{l:cases} applies and $(4)$ holds giving the desired result.

From now on we assume that for each $i\in\{1,...,n\}$ and each $(s,\alpha),(t,\beta)\in J$ such that $E_i(P)=\{(s,\alpha)(t,\beta)\}$, we have either $a_{s,\alpha}=2$ or $a_{t,\beta}=2$. Since $p_1(P)=0$, $p_2(P)>0$ and $I(P)<-1$, by\footnote{In \cite{b:Li} the statement of the Lemma refers to linear good sets, nevertheless the proof does not use the linearity of the set and it holds word for word in our case.} \cite[Lemma~4.4]{b:Li}, we have $E_i(P)=\{(s,\alpha),(t,\beta)\}$ and either $v_{s,\alpha}$ is not internal or $v_{s,\alpha}\cdot v_{t,\beta}=1$, for at least one choice of $i,(s,\alpha),(t,\beta)$. Now, since we are assuming that $P$ has no bad components of any type, Lemma \ref{l:cases} applies and, since $P$ has no complementary legs, either the conclusion of Lemma \ref{l:cases}\,$(3)$ or the conclusion of Lemma \ref{l:cases}\,$(4)$ holds. In both cases the lemma is proved.
\end{proof}

\subsection{The general case}\label{s:gc}
In this part we use all the work done in Sections \ref{s:p1>0} and \ref{s:p1=0,p2>0} in order to prove that any good set $P$ with no bad components of any type and $I(P)<-1$ has $I(P)\in\{-2,-3,-4\}$ and is obtained by a sequence of expansions from a subset of $\Z^k$, where $k\in\{3,5\}$. The main result is Proposition~\ref{p:clave}, however, a considerable part of its proof is developed  before in Proposition~\ref{p:coef.}. The proof of Proposition~\ref{p:clave} works by induction and the initial case is studied in the following lemma.

\begin{lem}\label{l:base}
Let $P\subseteq\Z^5=\langle e_1,e_2,e_3,e_4,e_5\rangle$ be a good set with a trivalent vertex and  $I(P)<-1$. Then, $P$ is, up to replacing $P$ with $\Omega P$ where $\Omega\in\Upsilon_{P}$, one of the following graphs:
\begin{flushleft}
\frag[n]{ii}{$(1)$}
\frag[s]{a-b}{$e_2-e_3$}
\frag[s]{c-a}{$e_1-e_2$}
\frag[s]{a+c-d}{$e_2+e_3+e_4$}
\frag[s]{d+e}{$-e_4+e_5$}
\frag[s]{d-e}{$-e_4-e_5$}
\includegraphics[scale=0.7]{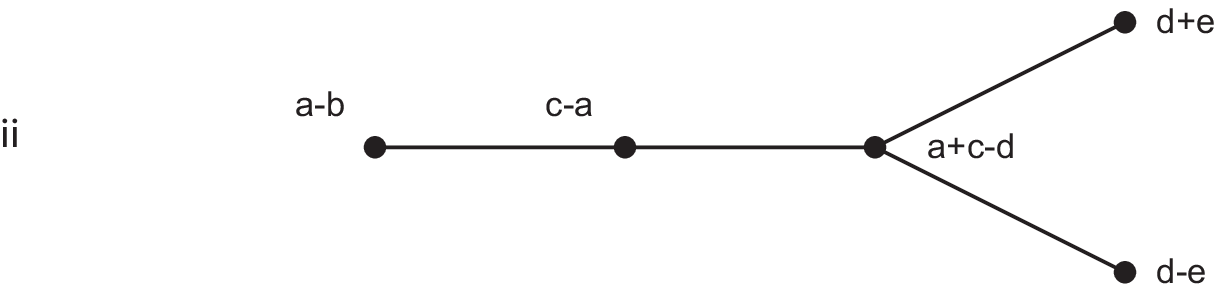}$\s\s\s$
\end{flushleft}
\begin{flushleft}
\frag[n]{ii}{$(2)$}
\frag[s]{a-b}{$e_2+e_1-e_3$}
\frag[s]{c-a}{$-e_2+e_1$}
\frag[s]{a+c-d}{$e_2+e_3+e_4$}
\frag[s]{d+e}{$-e_4+e_5$}
\frag[s]{d-e}{$-e_4-e_5$}
\includegraphics[scale=0.7]{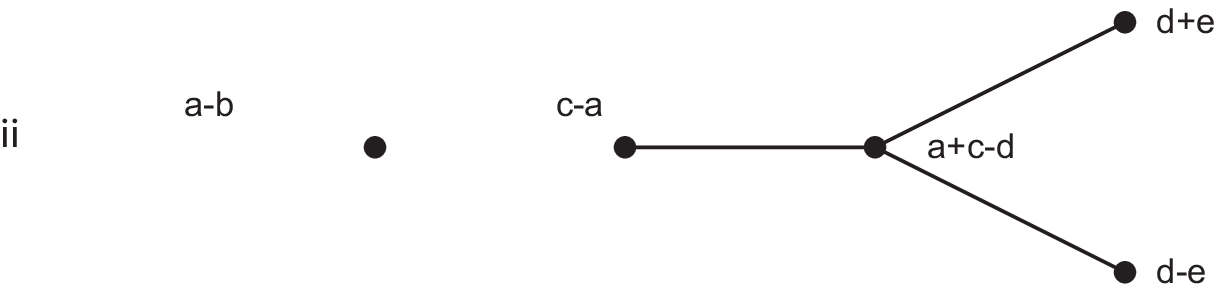}$\s\s\s$
\end{flushleft}
\begin{flushleft}
\frag[n]{ii}{$(3)$}
\frag[s]{a-b}{$e_1+e_2$}
\frag[s]{c-a}{$e_1-e_2+e_3$}
\frag[s]{a+c-d}{$-e_1+e_2+e_3+e_4$}
\frag[s]{d+e}{$-e_4+e_5$}
\frag[s]{d-e}{$-e_4-e_5$}\includegraphics[scale=0.7]{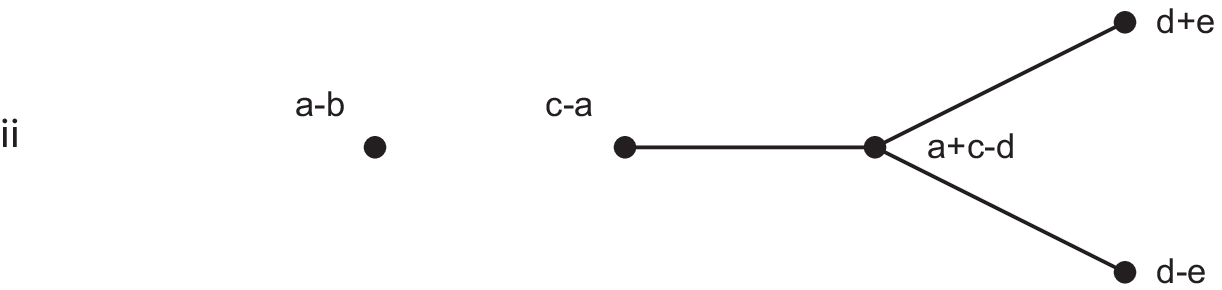}$\s\s\s$
\end{flushleft}
Moreover, $I(P)\in\{-4,-3,-2\}$.
\end{lem}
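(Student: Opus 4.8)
The plan is to separate the analysis according to the value of $p_1(P)$ and, in both cases, to reduce the classification to Lisca's description of small linear sets. I would begin with the arithmetic underlying the ``Moreover'' clause, which also frames the whole argument. A good set with a trivalent vertex in $\Z^5$ has exactly five vertices: the central vertex $v_0$, with $v_0\cdot v_0\le -3$, and four non-central vertices of square $\le -2$ (the three leg-lengths forming a permutation of $(2,1,1)$). Writing $a_{s,\alpha}=-v_{s,\alpha}\cdot v_{s,\alpha}$ and separating the central term, we get
\[
I(P)=(a_0-3)+\sum_{\alpha=1}^{3}\sum_{s=1}^{n_\alpha}(a_{s,\alpha}-3)\ \ge\ 0+4\cdot(-1)=-4 ,
\]
so the hypothesis $I(P)<-1$ gives $I(P)\in\{-4,-3,-2\}$. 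Since $I(P)<0$, Lemma~\ref{l:lisca_1} yields $2p_1(P)+p_2(P)>0$, hence either $p_1(P)\ge 1$ or $p_1(P)=0<p_2(P)$.

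If $p_1(P)\ge 1$, then $P$ satisfies the hypotheses of Proposition~\ref{p:set_5} verbatim; this forces $I(P)=-4$ and identifies $P$, up to the action of $\Upsilon_P$, with graph $(1)$, so this case contributes only graph $(1)$.

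The substantive case is $p_1(P)=0$, $p_2(P)>0$, and the plan is first to show that $P$ has two complementary legs and then to reduce via Lemma~\ref{l:cl}. For the first point I would argue by contradiction: if $P$ had no complementary legs, Lemma~\ref{l:p.ind} would produce a good trivalent set $P'\subseteq\Z^4$ with $I(P')\le I(P)<-1$ and still no complementary legs; but a trivalent good set in $\Z^4$ has leg-lengths $(1,1,1)$, and $(a_0-3)+\sum_{i=1}^{3}(a_{1,i}-3)<-1$ then forces at least two of the three single-vertex legs to be $(2)$-legs, which are automatically complementary --- a contradiction. Granting two complementary legs $L_2,L_3$ (so $n_2=n_3=1$, $n_1=2$), Lemma~\ref{l:cl}$(2)$ produces a good linear set $S:=L_1\cup\{\tilde v_0\}\subseteq\Z^{3}$ with $I(S)=I(P)+1$; the equality $p_1(P)=0$ excludes $I(P)=-4$ (which is graph $(1)$, where $p_1=1$), so $I(S)\in\{-2,-1\}$. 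A direct enumeration of the good linear sets in $\Z^3$ leaves, up to $\Upsilon_S$, only $S=\{e_1-e_2,\,e_1+e_2-e_3,\,e_2+e_3\}$ when $I(S)=-2$ and $S=\{e_1-e_2+e_3,\,e_1+e_2,\,-e_1+e_2+e_3\}$ when $I(S)=-1$. Re-attaching the complementary legs $\{-e_4+e_5\}$ and $\{-e_4-e_5\}$ at the vertex $v_0=\tilde v_0+e_4$, as in the expansion construction introduced before Lemma~\ref{l:cl}, reconstructs precisely graphs $(2)$ (with $I(P)=-3$) and $(3)$ (with $I(P)=-2$).

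The hard part will be the step in the case $p_1(P)=0$ that rules out the absence of complementary legs: this requires combining the structural alternatives of Lemmas~\ref{l:cases} and \ref{l:red} with Lemma~\ref{l:p.ind} so as to guarantee that the contracted set $P'$ is genuinely good, trivalent, and of rank four, which is exactly where the elementary leg-length count above can be applied. Once $P$ is transported to the three-element linear set $S\subseteq\Z^3$, the remaining identifications with graphs $(2)$ and $(3)$ are a finite and routine verification --- comparing squares and incidences and checking linear independence via Lemma~\ref{l:ind} --- with each embedding pinned down only up to $\Upsilon_P$, as claimed.
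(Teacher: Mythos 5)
Your reduction of the case $p_1(P)=0$ to a set in $\Z^4$ is where the argument breaks. You need the contracted set $P'$ produced by Lemma~\ref{l:p.ind} to be \emph{trivalent} in order to run the leg-length count, but Lemma~\ref{l:p.ind} gives no such guarantee: the vertex $v_{t,\beta}$ that is genuinely deleted may be the unique vertex of a length-one leg (or the contraction may otherwise destroy the trivalent vertex), in which case $P'$ is a \emph{linear} good set of four vectors in $\Z^4$ with $I(P')\le I(P)<-1$ --- and such sets exist in abundance by Lisca's classification (e.g.\ $I=-3$ linear sets in Remark~\ref{r:lisca}), so no contradiction arises. You flag this yourself as ``the hard part'', but it is precisely the content of the step, not a technicality. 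There is also a circularity lurking in the same step: to say that $P'$ ``still has no complementary legs'' you need Lemma~\ref{l:cl2}, whose proof invokes Proposition~\ref{p:coef.}\,$(3)$, and Proposition~\ref{p:coef.} is proved by induction with the present Lemma~\ref{l:base} as its base case. The paper avoids all of this by proving the lemma with a direct, self-contained enumeration according to $a_0\in\{3,4,\ge 5\}$ (using only $p_1(P)=0$, irreducibility and rank counting in $\Z^5$), which is what makes it usable as an induction base later.

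A secondary point: the lemma is about \emph{good}, not standard, sets, and indeed graphs $(2)$ and $(3)$ are disconnected (the leaf of the length-two leg is an isolated vertex of the incidence graph, linked but not adjacent to the rest). Your framing of $P$ as a connected star with leg-lengths $(2,1,1)$, and of $S=L_1\cup\{\tilde v_0\}$ as a length-three chain, therefore needs care; your two candidate subsets of $\Z^3$ are in fact the correct \emph{disconnected} good linear sets, but the enumeration producing them is asserted rather than carried out, and the claim that a trivalent good set in $\Z^4$ must have two single-vertex $(2)$-legs should likewise be checked against disconnected configurations. The parts of your plan that do match the paper are the first half (the bound $I(P)\ge -4$ from the vertex count, and the appeal to Proposition~\ref{p:set_5} when $p_1(P)>0$); the second half needs either the missing trivalence argument supplied without circular references, or simply the paper's direct case analysis.
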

\begin{proof}
If $p_1(P)>0$, Proposition \ref{p:set_5} applies and $(1)$ holds. Therefore, from now on we assume that $p_1(P)=0$. Since $P\subseteq\Z^5$ is good with a trivalent vertex, by definition we have that $a_0\geq 3$. Let us consider the following possibilities.
\textit{First, $a_0=3$}. In this case, up to replacing $P$ with $\Omega P$ where $\Omega\in\Upsilon_{P}$, we have $v_0=e_2+e_3+e_4$. Since $I(P)<-1$, then $a_{s,\alpha}\leq 4$ for every $v_{s,\alpha}\in P$. Let us suppose first that there exists $v_{s,\alpha}\in P$ such that $a_{s,\alpha}=4$. Then, since $I(P)<-1$, we have $a_{t,\beta}=2$ for every $(t,\beta)\neq (s,\alpha),(0,\beta)$. On the one hand, if $v_{s,\alpha}\cdot v_0=1$, then $|V_{v_{s,\alpha}}\cap V_{v_0}|=3$ and, up to replacing $P$ with $\Omega P$ where $\Omega\in\Upsilon_{P}$, we have $v_{s,\alpha}=e_2-e_3-e_4+e_5$. Let us write, without loss of generality, $v_{s,\alpha}=v_{1,1}$. The other two vectors attached to $v_0$, namely $v_{1,2}$ and $v_{1,3}$, have square $-2$ and this implies $|V_{v_{1,2}}\cap V_{v_0}|=|V_{v_{1,3}}\cap V_{v_0}|=1$. Taking into account that $v_{1,1}\cdot v_{1,2}=v_{1,1}\cdot v_{1,3}=v_{1,2}\cdot v_{1,3}=0$, it is immediate to check that this configuration is impossible. On the other hand, if $v_{s,\alpha}\cdot v_0=0$, then $|V_{v_{s,\alpha}}\cap V_{v_0}|=2$ and, up to replacing $P$ with $\Omega P$ where $\Omega\in\Upsilon_{P}$, we have $v_{s,\alpha}=e_2-e_3+e_1+e_5$. This time we write $v_{s,\alpha}=v_{2,1}$. Then, the ($-2$)-vector $v_{1,1}$ satisfies $v_{1,1}\cdot v_{2,1}=0$. In fact, since $v_{1,1}\cdot v_0=1$, then $|V_{v_{1,1}}\cap V_{v_0}|=1$ and we cannot have at the same time $|V_{v_{1,1}}\cap V_{v_{2,1}}|=1$. Therefore, the three ($-2$)-vectors $v_{1,\beta}$, where $\beta=1,2,3$, must satisfy $|V_{v_{1,\beta}}\cap V_{v_0}|=1$ and $|V_{v_{1,\beta}}\cap V_{v_{2,1}}|=2$, which is not possible. Therefore, we have proved that if $a_0=3$, then $a_{s,\alpha}\leq 3$ for every $v_{s,\alpha}\in P$.

Next, since $I(P)<-1$ there are at least two vectors in $P$ with square equal to $-2$, and necessarily one of them is attached to the central vertex. Hence, we may write, without loss of generality and up to replacing $P$ with $\Omega P$ where $\Omega\in\Upsilon_{P}$, $v_{1,2}=-e_4+e_5$. Let us call $v_{2,\alpha}$ the only vector in $P$ that satisfies $v_0\cdot v_{2,\alpha}=0$. It is not possible to have $a_{2,\alpha}=2$. In fact, if $a_{2,\alpha}=2$, we have either $|V_{v_{2,\alpha}}\cap V_{v_0}|=2$ or $|V_{v_{2,\alpha}}\cap V_{v_0}|=0$. The last possibility yields, up to replacing $P$ with $\Omega P$ where $\Omega\in\Upsilon_{P}$, $v_{2,\alpha}=e_1-e_5$ and $\alpha=2$. Since $p_1(P)=0$, then $1\in V_{v_{1,1}}$ or $1\in V_{v_{1,3}}$. Let us assume that the first case occurs (the other possibility is analogous). Then, up to replacing $P$ with $\Omega P$ where $\Omega\in\Upsilon_{P}$, we have $v_{1,1}=-e_4-e_5-e_1$. Again since $p_1(P)=0$, we must have $\{2,3\}\subseteq V_{v_{1,3}}$, which contradicts $v_{1,3}\cdot v_0=1$. It remains to analyze the case $|V_{v_{2,\alpha}}\cap V_{v_0}|=2$. This time, if $4\in V_{v_{2,\alpha}}$, then, up to replacing $P$ with $\Omega P$ where $\Omega\in\Upsilon_{P}$, we have $v_{2,\alpha}=e_4-e_3$, $\alpha=2$ and $v_{1,2}\cdot v_{2,2}=1$. Since $p_1(P)=0$ we get $5\in V_{v_{1,1}}$ or $5\in V_{v_{1,3}}$. Observe that none of these possibilities gives a good subset $P\subseteq\Z^5$. On the other hand, if $4\not\in V_{v_{2,\alpha}}$, then, up to replacing $P$ with $\Omega P$ where $\Omega\in\Upsilon_{P}$, we have $v_{2,\alpha}=e_2-e_3$. Since $p_1(P)=0$, then $1\in V_{v_{1,1}}\cap V_{v_{1,3}}$ and we do not obtain a good set $P\subseteq\Z^5$. Therefore we conclude $a_{2,\alpha}=3$ and then, since $v_{2,\alpha}\cdot v_0=0$, we have $|V_{v_{2,\alpha}}\cap V_{v_0}|=2$. Hence, up to replacing $P$ with $\Omega P$ where $\Omega\in\Upsilon_{P}$, we have $v_{2,\alpha}=e_2+e_1-e_3$. Now, using the fact that $p_1(P)=0$ we are left with only one possibility, up to replacing $P$ with $\Omega P$ where $\Omega\in\Upsilon_{P}$, for the two remaining vectors. Namely  $v_{1,1}=-e_2+e_1$ and $v_{1,3}=-e_4-e_5$ and hence $(2)$ holds.

\textit{Second, $a_0=4$}. Since the arguments in this case follow closely the ones used in the previous one, we will omit most of the details. Up to replacing $P$ with $\Omega P$ where $\Omega\in\Upsilon_{P}$, we have $v_0=-e_1+e_2+e_3+e_4$. Since $I(P)<-1$ we have $a_{s,\alpha}\leq 3$ for every $(s,\alpha)\neq (0,\alpha)$. If $a_{1,\alpha}=2$ for every $\alpha=1,2,3$, then we must have $|V_{v_{1,\alpha}}\cap V_{v_0}|=1$ for every $\alpha=1,2,3$. Observe that this fact is incompatible with $v_{1,1}\cdot v_{1,2}=v_{1,1}\cdot v_{1,3}=v_{1,2}\cdot v_{1,3}=0$. Therefore, without loss of generality and up to replacing $P$ with $\Omega P$ where $\Omega\in\Upsilon_{P}$, we can write $v_{1,1}=e_1-e_2+e_3$. Since $I(P)<-1$, the three remaining vectors must have square equal to $-2$, and there is only one possibility, up to replacing $P$ with $\Omega P$ where $\Omega\in\Upsilon_{P}$, that yields a good set $P\subseteq\Z^5$. Namely, $v_{1,2}=-e_4+e_5$, $v_{1,3}=-e_4-e_5$ and $v_{2,1}={e_1+e_2}$. Hence, $(3)$ holds.

\textit{Third, $a_0\geq 5$}. Since $I(P)<-1$ and $n=5$, we have $a_0\leq 5$ and therefore we need only to consider the case where $a_0=5$ and all the other vectors have square equal to $-2$. These conditions are incompatible with the fact that $P\subseteq\Z^5$ is a good set. In fact, in this case for every $v\in P$ it holds $|v\cdot v_0|\neq 1$. 
\end{proof}

\begin{prop}\label{p:coef.}
Let $n\geq 5$ and $P\subseteq\Z^n$ be a good set with a trivalent vertex, no bad components of any type and such that $I(P)<-1$. Then, the following hold:
\begin{itemize}
\item[$(1)$] There exists a sequence of contractions $P_n:=P\searrow P_{n-1}\searrow\cdots\searrow P_k$, where $k\in\{3,5\}$, such that for all $i\in\{k,k+1,...,n\}$ the set $P_i\subseteq\Z^i$ is good with no bad components of any type and, moreover, $I(P_{i+1})\geq I(P_i)$ for $i=k,k+1,...,n-1$.
\item[$(2)$] $I(P)\in\{-4,-3,-2\}$.
\item[$(3)$] For every $j\in\{1,...,n\}$ and every $(s,\alpha)\in J$, we have $|v_{s,\alpha}\cdot e_j|\leq 1$. 
\end{itemize}
\end{prop}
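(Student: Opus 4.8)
The plan is to argue by induction on $n$, taking Lemma~\ref{l:base} as the base case $n=5$: there $P$ is one of three explicit graphs, each with all coordinates equal to $\pm 1$ and with $I(P)\in\{-4,-3,-2\}$, so $(1)$ holds with the one-term sequence $P_5=P$ (here $k=5$), while $(2)$ and $(3)$ are read off directly. For $n\ge 6$ the goal is to produce a \emph{single} contraction $P_n\searrow P_{n-1}$ whose output is good, has no bad components of any type, and satisfies $I(P_{n-1})\le I(P_n)$, and then to invoke the inductive hypothesis on $P_{n-1}$. The entry point for the case analysis is Lemma~\ref{l:lisca_1}: since $I(P)<0$ we have $2p_1(P)+p_2(P)>0$, so either $p_1(P)>0$, or $p_1(P)=0$ and $p_2(P)>0$.

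First, if $p_1(P)>0$ then $P$ satisfies the working assumptions \textbf{(w1)}--\textbf{(w4)}, so Proposition~\ref{p:p1>0_final} produces the \emph{entire} sequence $P_n\searrow\cdots\searrow P_5$ at once, with every $P_k$ standard and $I(P_k)=-4$; Remark~\ref{r:p1>0_coef.}\,$(1)$ then yields $(3)$, and $(2)$ is immediate. If instead $p_1(P)=0$ (so $p_2(P)>0$), I split according to complementary legs. When $P$ has two complementary legs, Lemma~\ref{l:cl} supplies the whole chain down to $\Z^5$: part $(4)$ gives the sequence with $I(P_{i+1})\ge I(P_i)$ and all sets good without bad components, part $(3)$ gives $(3)$, and the endpoint $P_5$ is governed by Lemma~\ref{l:base}. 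When $P$ has no complementary legs, Lemma~\ref{l:p.ind} delivers exactly one good contraction $P_n\searrow P_{n-1}$ with $I(P_{n-1})\le I(P_n)$ and no bad components, after which I apply the inductive hypothesis to $P_{n-1}$ and prepend the contraction.

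Part $(2)$ then follows by telescoping the inequalities of $(1)$: $I(P)=I(P_n)\ge I(P_{n-1})\ge\cdots\ge I(P_k)$. The terminal set $P_k$ lies in $\Z^5$ in the first two cases (whence $I(P_k)\ge -4$ by Lemma~\ref{l:base}) and in $\Z^3$ in the no-complementary-legs case, where the two admissible endpoints again satisfy $I(P_k)\ge -4$. Since $I(\cdot)$ is an integer and $I(P)<-1$ forces $I(P)\le -2$, we conclude $-4\le I(P)\le -2$, which is $(2)$. Throughout the induction I also track the number of connected components $c(\cdot)$, since the reduction lemmas—and \cite[Lemma~4.9]{b:Li2}, used to exclude three or more components—are phrased in terms of it.

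The main obstacles are twofold. The first is keeping the output of each contraction good \emph{and} free of bad components: by Lemma~\ref{l:b_3} a single contraction of a bad-component-free good set may create one bad component, and it is precisely Lemma~\ref{l:bad} that lets me reroute the contraction—interchanging the two contracted vectors—to restore a bad-component-free good set, so that the inductive hypothesis becomes applicable. The second, more delicate, is establishing $(3)$ in the no-complementary-legs recursion, since Lemma~\ref{l:cases}\,$(4)$ and Lemma~\ref{l:red} nominally permit a contracted vector with $|v\cdot e_h|>1$. Here I would feed back the inductive hypothesis itself: the removed vector has square $-2$, hence unit coordinates, while the projected vector survives in $P_{n-1}$, which already satisfies $(3)$; a coordinate of absolute value $\ge 2$ in $P_n$ would therefore either persist under the projection and contradict $(3)$ for $P_{n-1}$, or force $|v\cdot e_i|\ge 2$ on the vector being projected, which is incompatible with the configurations allowed by Lemma~\ref{l:cases} and Lemma~\ref{l:red}. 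A related point to treat carefully is that a contraction can destroy the trivalent vertex; when this happens $P_{n-1}$ is linear, and the chain bottoms out in $\Z^3$ through Lisca's classification of good linear sets (Remark~\ref{r:lisca}), which is exactly why the exponent $k$ may equal $3$ rather than $5$.
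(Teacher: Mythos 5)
Your architecture for parts $(1)$ and $(2)$ coincides with the paper's: induction on $n$ with Lemma~\ref{l:base} as base case, the trichotomy $p_1(P)>0$ (Proposition~\ref{p:p1>0_final}) / complementary legs (Lemma~\ref{l:cl}) / neither (one step of Lemma~\ref{l:p.ind} plus induction), and then telescoping $I$ down to the terminal set in $\Z^3$ or $\Z^5$. The genuine gap is in your argument for part $(3)$. You assert that the vector removed outright by the contraction has square $-2$ and hence unit coordinates; this fails exactly in the branch of Lemma~\ref{l:p.ind} coming from Lemma~\ref{l:red}\,$(1)$, where both $a_{s,\alpha}>2$ and $a_{t,\beta}>2$, so the dropped vector has weight $>2$ and, on the strength of the local lemmas alone, no a priori bound on its coordinates. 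You also claim that a coordinate of absolute value $\geq 2$ on the projected vector is incompatible with the configurations allowed by Lemmas~\ref{l:cases} and \ref{l:red}; but Lemma~\ref{l:cases}\,$(4)$ explicitly permits (indeed asserts) $|v_{t,\beta}\cdot e_h|>1$ for the projected vector, so those lemmas do not exclude large coordinates by themselves.

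The missing idea is the global numerical budget that the paper extracts from part $(2)$ before proving $(3)$. Since every step of the sequence satisfies $I(P_{i+1})\geq I(P_i)$, the terminal set satisfies $I(P_k)\geq -4$, and $I(P_n)\leq -2$, the total decrease $\sum_{i=k}^{n-1}\bigl(I(P_{i+1})-I(P_i)\bigr)=I(P_n)-I(P_k)$ is at most $2$, hence \emph{each individual} contraction drops $I$ by at most $2$. For the first contraction this reads $(a_{\mathrm{dropped}}-3)+|v_{\mathrm{proj}}\cdot e_h|^2\leq 2$, where $e_h$ is the contraction direction; since $|v_{\mathrm{proj}}\cdot e_h|\geq 1$ and $a_{\mathrm{dropped}}\geq 2$, this forces $|v_{\mathrm{proj}}\cdot e_h|=1$ and $a_{\mathrm{dropped}}\leq 4$, and $a_{\mathrm{dropped}}\leq 4$ together with $|V_{v}|\geq 2$ rules out any coordinate of absolute value $\geq 2$ on the dropped vector. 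Combined with the inductive hypothesis applied to $P_{n-1}$ (which controls all surviving coordinates, including those of the projected vector away from $e_h$), this is what yields $(3)$. Without that budget argument your local reasoning cannot bound either the weight of the dropped vector or the projection coordinate of the projected one.
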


\begin{proof}
We argue by induction on $n\geq 5$. If $n=5$ the whole statement is a straightforward consequence of Lemma~\ref{l:base}. Therefore, from now on we  assume $n>5$. Since $I(P)<-1$, by Lemma~\ref{l:lisca_1} inequality~\eqref{e:p_i} holds and therefore, either $p_1(P)>0$ or $p_2 (P)>0$. If $p_1(P)>0$ then $P$ satisfies the working assumptions of Section~\ref{s:p1>0} and then, $(1)$ and $(2)$ follow from Proposition~\ref{p:p1>0_final} (recall that by Lemma~\ref{l:st.sin.bad} a standard set has no bad components), while $(3)$ holds by Remark~\ref{r:p1>0_coef.}\,$(1)$. Thus, we can assume that $p_1(P)=0$ and $p_2(P)>0$ and so Lemma~\ref{l:cases} or Lemma~\ref{l:red} apply. If either Lemma~\ref{l:cases}\,$(1)$ or Lemma~\ref{l:red}\,$(3)$ holds, then in $P$ there are two complementary legs, the assumptions of Lemma~\ref{l:cl} are fulfilled and $(1)$ and $(3)$ follow. Suppose now that in $P$ there are no complementary legs. If such is the case, by Lemma~\ref{l:p.ind} there is a contraction of $P$ that gives a good set $P'\subset\Z^{n-1}$ with no bad components of any type and such that $-1>I(P)\geq I(P')$.

Next, starting with $P_n:=P$ we shall define a decreasing sequence of contractions of good sets without bad components of any type $P_n\searrow P_{n-1}\searrow\cdots\searrow P_k$ where $P_i\subseteq\Z^i$ for every $i\in\{k,k+1,...,n\}$, $k\in\{3,5\}$ and $I(P_{i+1})\geq I(P_i)$ for every $i=k,k+1,...,n-1$. In this way, we shall obtain $(1)$. We define $P_{n-1}:=P'$ and continue the sequence as follows:

If $P_{n-1}$ is a linear set and $n-1\geq 3$, then the assumptions of \cite[Corollary~5.4]{b:Li} are satisfied and we get the contractions $P_{n-1}\searrow P_{n-2}\searrow\cdots\searrow P_3$. All the sets involved are good with no bad components and moreover, it holds $I(P_{i+1})\geq I(P_i)$ for $i=3,4,...,n-1$.

If $P_{n-1}\subseteq\Z^{n-1}$ has a trivalent vertex then, in order to define $P_{n-2}$ and the rest of the sequence we must take into account:

\noindent $(i)$ If $n-1=5$ we stop and the sequence finishes with $P_{n-1}$, which by Lemma~\ref{l:base} satisfies $I(P_{n-1})\in\{-4,-3,-2\}$. 

\noindent $(ii)$ If $n-1\geq 6$ and $p_1(P_{n-1})>0$ then, by Proposition~\ref{p:p1>0_final}, $P_{n-1}$ is a standard set, $I(P_{n-1})=-4$ and we can finish the sequence with the contractions of standard sets $P_{n-1}\searrow P_{n-2}\searrow\cdots\searrow P_5$. Thus, we have a sequence of good sets with no bad components of any type (recall that by Lemma~\ref{l:st.sin.bad} a standard set has no bad components) and such that $I(P_{i+1})\geq I(P_i)$ for $i=5,6,...,n-1$.

\noindent $(iii)$ If $n-1\geq 6$ and $p_1(P_{n-1})=0$ then, by Lemma~\ref{l:lisca_1}, which applies because $I(P_{n-1})<-1<0$, we have $p_2(P_{n-1})>0$. Now, we distinguish two situations. 
	\begin{itemize}
	\item If $P_{n-1}$ has two complementary legs then, since it has no bad components of any type, by
	Lemma~\ref{l:cl}\,$(4)$ we can finish the sequence with the contractions $P_{n-1}\searrow
	P_{n-2}\searrow\cdots\searrow P_5$. In this way we have built the desired sequence of contractions of good sets
	without bad components of any type and such that $I(P_{i+1})\geq I(P_i)$ for $i=n-1,n-2,...,5$.
	\item If $P_{n-1}$ has no complementary legs then, since it has no bad components of any type, by
	Lemma~\ref{l:p.ind} there is a good subset with no bad components $P_{n-2}\subseteq\Z^{n-2}$ such
	that $I(P_{n-1})\geq I(P_{n-2})$. In order to define the set $P_{n-3}\subseteq\Z^{n-3}$ and the rest of the
	sequence we make $P_{n-2}$ play the role of $P_{n-1}$ in the above argument.
	\end{itemize}

Observe that, in the sequence of good sets without bad components $P_n\searrow P_{n-1}\searrow\cdots\searrow P_k$ there are two possibilities for $k$. If all the sets $P_i$ have a trivalent vertex, then $k=5$ and thus, by Lemma~\ref{l:base}, $I(P_5)\geq -4$. If, on the contrary, there is a contraction $P_{i+1}\searrow P_i$ such that $P_i$ is a linear set, then  $k=3$ and thus, by \cite[Corollary~5.4]{b:Li}, $I(P_3)\geq -3$. In either case, the following inequality holds,
\begin{align}\label{e:1}
\sum_{i=k}^{n-1}(I(P_{i+1})-I(P_i))=-I(P_k)+I(P_n)\leq 4-2=2.
\end{align}
Notice that, by construction, the sets in the sequence satisfy $I(P_{i+1})\geq I(P_i)$ for each $i=k,k+1,...,n-1$. Therefore, on the one hand, since by assumption $I(P=P_n)<-1$ and we have shown that $I(P_k)\geq -4$, we have $(2)$, i.e.\  $I(P)\in\{-4,-3,-2\}$. On the other hand, $I(P_{i+1})-I(P_i)\geq 0$ for every $i$, and inequality \eqref{e:1} implies $0\leq I(P_{i+1})-I(P_i)\leq 2$ for every $i$, and in particular $I(P_n)-I(P_{n-1})\leq 2$. 

Recall that the set $P_{n-1}$ was built from $P_n=P$ using Lemma~\ref{l:p.ind} which gives a good set
$P'=P_{n-1}=(P\setminus\{v_{s,\alpha},v_{t,\beta}\})\cup\{\pi_{e_h}(v_{s,\alpha})\}\subseteq\Z^{n-1},$
for some $h\in\{1,...,n\}$ and $(s,\alpha),(t,\beta)\in J$. Therefore, by a simple calculation one easily sees that the inequality $I(P)-I(P')\leq 2$ is equivalent to
$$a_{s,\alpha}+|v_{t,\beta}\cdot e_h|^2\leq 5.$$
Since the set $P$ is good, we have $a_{s,\alpha}\geq 2$ and then $|v_{t,\beta}\cdot e_h|^2=1$. Therefore, $a_{s,\alpha}\leq 4$ and since $|V_{v_{s,\alpha}}|\geq 2$, we conclude that $|v_{s,\alpha}\cdot e_j|\leq 1$ for every $j\in\{1,...,n\}$, which readily implies $(3)$.
\end{proof}

The following Lemma~\ref{l:cl2} is an important consequence of Proposition~\ref{p:coef.}\,$(2)$. It allows us to divide three-legged good subsets $P\subseteq\Z^n$ with $I(P)<-1$ into two subclasses closed under contraction, namely those with complementary legs and those without them. In fact, the following lemma guarantees that complementary legs are \virg{invariant} under contractions: we already know that they can be contracted to length-one complementary legs and we are about to prove that if in a set $P$ there are no complementary legs they will not appear after a contraction. 

\begin{lem}\label{l:cl2}
Let $P\subseteq\Z^n$ be a good subset with no bad components of any type and $I(P)<-1$. Furthermore, suppose that $P$ has no complementary legs. Then, for every contraction $P\searrow P'$ with $P'$ good with no bad components of any type, we have that $P'$ has no complementary legs.
\end{lem}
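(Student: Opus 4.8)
The plan is to argue by contradiction: assuming the contraction $P\searrow P'$ produces a set $P'$ with two complementary legs, I will show that $P$ must itself either possess two complementary legs or contain a three-legged bad component, in either case contradicting the standing hypotheses on $P$.

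First I would pin down the shape of the contraction. Since two complementary legs are exactly what is required to perform an \emph{extended} contraction, and $P$ has none, the contraction $P\searrow P'$ is necessarily an ordinary one: there is an index $h$ with $E_h(P)=\{(s,\alpha),(t,\beta)\}$ and $P'=(P\setminus\{v_{s,\alpha},v_{t,\beta}\})\cup\{\pi_{e_h}(v_{t,\beta})\}$. If $P'$ is linear the statement is vacuous, so I may assume $P'$ has a trivalent vertex. Applying Proposition~\ref{p:coef.}\,$(3)$ to both $P$ and $P'$ gives $|v\cdot e_i|\le 1$ throughout; in particular, since $(s,\alpha),(t,\beta)\in E_h(P)$, we get $|v_{s,\alpha}\cdot e_h|=|v_{t,\beta}\cdot e_h|=1$, so $c:=\pi_{e_h}(v_{t,\beta})$ is simply $v_{t,\beta}$ with its (unit) $e_h$-coordinate deleted.

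Next I would exploit the rigidity of complementary legs. Supposing $P'$ has complementary legs $L_\beta,L_\gamma$ (with $L_1$ the remaining leg), Lemma~\ref{l:cl} applies to $P'$ and yields $V_{L_1}\cap V_{L_\beta\cup L_\gamma}=\varnothing$, $|V_{L_\beta\cup L_\gamma}|=n_\beta+n_\gamma$, and $v_0'=\tilde v_0'\pm e_j$ with $e_j\in V_{L_\beta\cup L_\gamma}$ and $V_{\tilde v_0'}\subseteq V_{L_1}$. Together with the reformulation of complementarity as a sequence of final $(-2)$-vector expansions of the length-one legs (the operations of Remark~\ref{r:riem}), this gives complete control of which basis vectors occur in $L_\beta\cup L_\gamma$ and in the coupling coordinate $e_j$. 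I would then split into two cases according to where the contraction acts. If $\{v_{s,\alpha},v_{t,\beta}\}$ and $c$ all lie outside $L_\beta\cup L_\gamma$, then $L_\beta$ and $L_\gamma$ are made of vectors already present in $P$, with unchanged self-intersections and mutual products; since $e_h\ne e_j$ in this case and the leg coordinates are untouched, one checks the coupling of these legs to the trivalent vertex survives, so they remain two legs of $P$ related by Riemenschneider's point rule and $P$ has complementary legs — a contradiction.

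The remaining case, where the contraction meets one of the complementary legs (say $c\in L_\beta$, equivalently $v_{t,\beta}$ lies on $L_\beta$), is the main obstacle and is where the analysis becomes delicate. Here reinstating the $e_h$-coordinate of $c$ increments the corresponding entry of the string of $L_\beta$ (since $b\cdot b=c\cdot c-1$), while inserting $v_{s,\alpha}$, which is linked to $b$ only through $e_h$, plays the role of an appended final $(-2)$-vector. The plan is to show, using once more that $|v\cdot e_i|\le 1$ and that $e_h$ occurs only in $v_{s,\alpha}$ and $v_{t,\beta}$, that this modification must either reproduce a Riemenschneider-related pair of legs in $P$, or assemble exactly the configuration \eqref{e:bad5} of a three-legged bad component inside the irreducible set $P$. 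Since $P$ has neither complementary legs nor bad components of any type, both outcomes are impossible, and the contradiction is reached. Hence $P'$ has no complementary legs.
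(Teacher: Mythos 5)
There is a genuine gap: the case you yourself identify as ``the main obstacle'' is never actually proved. You write that ``the plan is to show \dots that this modification must either reproduce a Riemenschneider-related pair of legs in $P$, or assemble exactly the configuration \eqref{e:bad5} of a three-legged bad component,'' but you give no argument for why these are the only two possible outcomes, and it is not at all clear that they are. The paper's proof at this point does not proceed by exhibiting a bad component in $P$; it splits into three sub-cases according to the position of $v_{s,\alpha}$ ($v_{s,\alpha}$ final in $L_2$ with $v_{t,\beta}\notin L_2\cup L_3$; $v_{s,\alpha}$ final in $L_2$ with $v_{t,\beta}\in L_2\cup L_3$; $v_{s,\alpha}\in L_1$), and in each one builds an auxiliary \emph{linear} set out of the two legs together with a vector $\bar v_0=\pm e_j+e_{\mathrm{aux}}$ (the auxiliary coordinate being essential to make the set good with $p_1>0$). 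The contradiction then comes either from Lemma~\ref{l:ind} (too many linearly independent vectors in too small a span), or from Lemma~\ref{l:n.e}\,$(4)$ (the set is forced to be obtained by final $(-2)$-vector expansions, so $L_2,L_3$ were already complementary in $P$), or from the connectedness forced by Lemma~\ref{l:n.e} clashing with an isolated vertex. None of this machinery appears in your proposal, and without it the ``delicate'' case is an assertion, not a proof.

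A secondary problem is that your case split is not exhaustive as stated. You dichotomize between ``$v_{s,\alpha},v_{t,\beta},c$ all lie outside $L_\beta\cup L_\gamma$'' and ``$v_{t,\beta}$ lies on $L_\beta$.'' But the contraction may delete $v_{s,\alpha}$ from a leg of $P$ whose \emph{truncation} becomes the complementary leg $L_\beta$ of $P'$, while $v_{t,\beta}$ sits elsewhere. In that configuration the hypothesis of your second case fails, yet the conclusion of your first case (``the legs remain two legs of $P$ related by Riemenschneider's point rule'') also fails, because the corresponding leg of $P$ has one more vertex than $L_\beta$. This is precisely the first sub-case the paper treats separately, and it needs its own argument.
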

\begin{proof}
First of all, note that, in any contraction $P\searrow P'$, the vertices of $P'$ inherit naturally the indexes from $P$. Now, suppose by contradiction that the good set
$$P'=(P\setminus\{v_{s,\alpha},v_{t,\beta}\})\cup\{\pi_{e_i}(v_{t,\beta})\}\subseteq\Z^{n-1}$$ 
obtained by a contraction has two complementary legs $L'_2$ and $L'_3$. Let us call $L_2$ and $L_3$ the corresponding legs in $P$ and analogously, $L_1$ and $L'_1$ the third leg respectively in $P$ and in $P'$. Throughout the proof the reader must keep in mind that, by Proposition~\ref{p:coef.}\,$(3)$, for every $k\in\{1,...,n\}$ and every $(r,\gamma)\in J(P)$, we have $|v_{r,\gamma}\cdot e_k|\leq 1$ and that, by Lemma~\ref{l:cl}\,$(1)$, there exists $j\in\{1,...,n\}$ such that the central vertex $v_0\in P'$ is equal to $\tilde v_0\pm e_j$, where $j\in V_{L'_2\cup L'_3}$ and $V_{\tilde v_0}\subseteq V_{L'_1}$. Moreover, it holds $V_{L'_1}\cap V_{L'_2\cup L'_3}=\emptyset$ and by definition of complementary legs we have $|V_{L'_2\cup L'_3}|=|L'_2\cup L'_3|$. 

If $v_{s,\alpha}=v_0$, then $P'$ is a linear set, and the statement follows trivially. In case the vector $v_{s,\alpha}$ is isolated, we will consider, without loss of generality, that it belongs to $L_1$. By definition of complementary legs, $L'_2$ and $L'_3$ are connected and therefore, if $v_{s,\alpha}\not\in L_1$, it must be final in $L_2$ or $L_3$. By symmetry we may suppose that the first case occurs.

We start dealing with the possibility $v_{s,\alpha}$ final in $L_2$ and $v_{t,\beta}\not\in L_2\cup L_3$. It follows that there exists a vector $v\in L_2$ such that $v\cdot v_{s,\alpha}=1$ and therefore $|V_{v_{s,\alpha}}\cap V_{L'_2}|\geq 1$. By definition of complementary legs and since $v_{t,\beta}\not\in L_2\cup L_3$, it holds that $|V_{v_{s,\alpha}}\cap V_{L'_2\cup L'_3}|\geq 2$. Consider now an auxiliary vector $e_{\aux}$ with $e_{\aux}\cdot e_{\aux}=-1$ and define 
$$\bar v_{s,\alpha}:=-\sum_{k\in V_{L'_2\cup L'_3}}(v_{s,\alpha}\cdot e_k)e_k\s\s\mbox{and}\s\s\bar v_0:=\pm e_j+e_{\aux}.$$
Notice that the set $S:=\{\bar v_0\}\cup\{\bar v_{s,\alpha}\}\cup L'_2\cup L'_3$ is a standard linear set which by construction consists of $|L'_2\cup L'_3|+2$ vectors in the span of $|L'_2\cup L'_3|+1$ basis vectors, contradicting Lemma~\ref{l:ind}.

We deal now with the case $v_{s,\alpha}$ final in $L_2$ and $v_{t,\beta}\in L_2\cup L_3$. This time we have that $L_1\cup\{\tilde v_0\}=L'_1\cup\{\tilde v_0\}$ is a linear good set (see Lemma~\ref{l:cl}\,$(2)$) which implies that $V_{v_{s,\alpha}}\cap V_{L_1}=\emptyset$. In fact, if $|V_{v_{s,\alpha}}\cap V_{L_1}|\geq 1$ then, since $v_{s,\alpha}$ is orthogonal to $L_1$, we must have $|V_{v_{s,\alpha}}\cap V_{L_1}|\geq 2$. If this were the case, we could define the vector
$$\hat v_{s,\alpha}:=-\sum_{k\in V_{L_1}}(v_{s,\alpha}\cdot e_k)e_k$$
and consider the good set $L_1\cup\{\tilde v_0\}\cup\{\hat v_{s,\alpha}\}$ whose $|L_1|+2$ vectors belong to the span of $|L_1|+1$ basis vectors, contradicting Lemma~\ref{l:ind}. Hence, $V_{v_{s,\alpha}}\cap V_{L_1}=\emptyset$. It follows that $\widetilde S:=\{\bar v_0\}\cup L_2\cup L_3\subseteq\Z^{n_2+n_3+1}$ is a linear standard set, where $\bar v_0$ is defined as in the preceding case. The set $\widetilde S$ satisfies by construction $p_1(\widetilde S)>0$ and therefore, by Lemma~\ref{l:n.e}\,$(4)$, it is obtained by final $(-2)$-vector expansions. Thus,  in $P$ the legs $L_2$ and $L_3$ are complementary, contradicting the assumption of the lemma.

We are left with the last possibility, namely $v_{s,\alpha}\in L_1$. This implies, since in $P$ there are no complementary legs while in $P'$ there are, that $v_{t,\beta}\in L_2\cup L_3$. From the equality $v_{s,\alpha}\cdot v_{t,\beta}=0$, we deduce that $|V_{v_{s,\alpha}}\cap V_{L_2\cup L_3}|\geq 2$. Let us consider the vector 
$$\tilde v_{s,\alpha}:=-\sum_{k\in V_{L_2\cup L_3}}(v_{s,\alpha}\cdot e_k)e_k$$ 
and the set $\bar S:=\{\bar v_0\}\cup L_2\cup L_3\cup \{\tilde v_{s,\alpha}\}$, where $\bar v_0$ is defined above. Note that in $\bar S$ the vertex  $\tilde v_{s,\alpha}$ is isolated. The set $\bar S$ is a good subset of $\Z^{n_2+n_3+2}$. In fact, its incidence matrix has the form \eqref{e:Q_P_linear} and it is irreducible, since $\{\bar v_0\}\cup L_2\cup L_3$ is connected and $\tilde v_{s,\alpha}$ is linked to $v_{t,\beta}\in L_2$. By construction, it holds $p_1(\bar S)>0$ and therefore by Lemma~\ref{l:n.e}, the set $\bar S$ is standard and thus connected. This contradiction finishes the proof.
\end{proof}

The following Proposition~\ref{p:clave} shows that good subsets with no bad components of any type, possibly disconnected intersection graphs and sufficiently negative quantity $I(P)$ can be contracted to subsets having the same properties. This is the main result of the section and will be used in the proof of Theorem~\ref{t:todo}.

\begin{prop}\label{p:clave}
Suppose that $n\geq 5$, and let $P_n\subseteq\Z^n$ be a good set with a trivalent vertex, no bad components of any type and such that $I(P)<-1$. Then, there exists a sequence of contractions
$P_n\searrow P_{n-1}\searrow\cdots\searrow P_k,$
where either $k=3$ or $k=5$, such that, for each $i=k,k+1,...,n$, the set $P_i$ is good, has no bad components of any type and either
$(I(P_{i+1}),c(P_{i+1}))=(I(P_i),c(P_i))$
or
$I(P_{i+1})-1\geq I(P_i)\ \ \mbox{and}\ \ c(P_{i+1})+1\geq c(P_i).$
Moreover,
\begin{itemize}
\item[$(1)$]If $p_1(P_n)>0$ then $I(P_n)=-4$, $k=5$, $P_n$ is standard and one can choose the above sequence in such a way that $I(P_i)=-4$ and $P_i$ is standard for every $i=5,...,n-1$.
\item[$(2)$]If $I(P_n)+c(P_n)\leq -1$ and $k=5$, then $P_5$ is given, up to replacing $P_5$ with $\Omega P_5$ where $\Omega\in\Upsilon_{P_5}$, by either $(1)$ or $(2)$ in Lemma~\ref{l:base}. Furthermore, if $I(P_n)+c(P_n)<-1$ then necessarily $k=5$ and $P_5$ is given by Lemma~\ref{l:base}\,$(1)$.
\end{itemize}
\end{prop}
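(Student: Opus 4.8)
The plan is to upgrade Proposition~\ref{p:coef.} by keeping track of the number of connected components $c(\cdot)$ alongside the quantity $I(\cdot)$. Concretely, I would take the contraction sequence $P_n\searrow P_{n-1}\searrow\cdots\searrow P_k$ produced in Proposition~\ref{p:coef.}, together with the bound $|v_{s,\alpha}\cdot e_j|\leq 1$ from Proposition~\ref{p:coef.}(3), and verify the stated $(I,c)$-dichotomy one step at a time according to which of the three regimes the set $P_{i+1}$ falls into. The argument proceeds by induction on $n$, with the base case $n=5$ furnished by Lemma~\ref{l:base}, and the role of $c$ being meaningful precisely because a good set is merely irreducible and so its plumbing graph may be disconnected.

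First I would dispose of the regimes. If $p_1(P_{i+1})>0$, then by Proposition~\ref{p:p1>0_final} the set is standard with $I=-4$, and the contraction keeps it standard, hence connected ($c=1$), with $I=-4$; this is the first alternative $(I(P_{i+1}),c(P_{i+1}))=(I(P_i),c(P_i))$, and it simultaneously proves part (1). If $p_1=0$ and $P_{i+1}$ has two complementary legs (the situation of Lemma~\ref{l:cases}(1) or Lemma~\ref{l:red}(3)), the dichotomy is exactly the content of Lemma~\ref{l:cl}(4). If $p_1=0$ and there are no complementary legs, I would invoke Lemma~\ref{l:p.ind} to produce the contraction and then read off the alternative from which case of Lemma~\ref{l:red} or Lemma~\ref{l:cases} occurred: Lemma~\ref{l:red}(1) gives $I(P_i)\leq I(P_{i+1})-1$ and $c(P_i)\leq c(P_{i+1})+1$, i.e.\ the second alternative; in the sub-cases where a $(-2)$-vector is contracted one uses $|v_{t,\beta}\cdot e_i|=1$ (Proposition~\ref{p:coef.}(3)) to see that $I$ is preserved, and a direct inspection of the effect of the projection on the plumbing graph determines which alternative holds. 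Proposition~\ref{p:coef.}(3) is also what rules out the offending case Lemma~\ref{l:cases}(4), whose conclusion $|v_{t,\beta}\cdot e_h|>1$ is now impossible. Lemma~\ref{l:cl2} guarantees that the no-complementary-legs property persists, so this regime is stable until the graph becomes linear and the sequence descends to $k=3$, whereas with complementary legs one lands at $k=5$.

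For part (2) I would introduce the potential $\Phi(P):=I(P)+c(P)$, which the dichotomy makes non-increasing along the sequence: in the first alternative $\Phi$ is constant, and in the second $\Phi(P_i)\leq(I(P_{i+1})-1)+(c(P_{i+1})+1)=\Phi(P_{i+1})$. Hence $\Phi(P_k)\leq\Phi(P_n)$. The crux is then to compute $\Phi$ on the terminal sets. A careful look at the three graphs of Lemma~\ref{l:base} shows that types (2) and (3) are in fact \emph{disconnected} as plumbing graphs (for instance the vertex $e_1+e_2$ in graph (3) is adjacent to no other vertex, although it is still linked to the rest), giving $\Phi=-3,-1,0$ for types (1),(2),(3) respectively, while the linear $\Z^3$ terminals satisfy $\Phi\geq -1$. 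Comparing with $\Phi(P_n)\leq -1$ then excludes type (3), so $k=5$ forces types (1) or (2); and $\Phi(P_n)<-1$ excludes type (2) as well as every $k=3$ terminal, forcing $k=5$ and type (1).

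The main obstacle I anticipate is the regime-(c) bookkeeping: reconciling the coarse conclusion $I(P')\leq I(P)$ of Lemma~\ref{l:p.ind} with the sharp $(I,c)$-dichotomy requires tracing back through Lemma~\ref{l:red} and Lemma~\ref{l:cases} to identify exactly which contraction is performed, and in particular controlling the change in $c$ in each sub-case — a quantity those lemmas do not record, and which behaves differently depending on whether the contracted $(-2)$-vector is isolated, final, or joined to its neighbour. The correct computation of $c$ on the base graphs of Lemma~\ref{l:base} (noticing their disconnectedness) is equally essential, since an incorrect count would shift the thresholds in part (2) by one and break the deduction.
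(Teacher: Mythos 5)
Your overall strategy coincides with the paper's: take the contraction sequence of Proposition~\ref{p:coef.}, split each step into the three regimes ($p_1>0$ via Proposition~\ref{p:p1>0_final}; complementary legs via Lemma~\ref{l:cl}\,$(4)$; neither, via Lemma~\ref{l:p.ind} together with Lemmas~\ref{l:red} and \ref{l:cases}, using Proposition~\ref{p:coef.}\,$(3)$ to kill the case of Lemma~\ref{l:cases}\,$(4)$), and then run the monotone quantity $I+c$ down the sequence --- this is exactly the chain of inequalities the paper writes for part $(2)$. Your computation of the connected-component counts of the three base graphs of Lemma~\ref{l:base} (types $(2)$ and $(3)$ are disconnected, with $\Phi=-1$ and $\Phi=0$) is correct and is indeed what the paper implicitly uses to sort out the $k=5$ terminals.

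There is, however, a genuine gap in your treatment of the final claim of $(2)$, namely that $I(P_n)+c(P_n)<-1$ forces $k=5$. You assert that ``the linear $\Z^3$ terminals satisfy $\Phi\geq -1$,'' but this is false: the connected good linear subset of $\Z^3$ classified in \cite[Lemma~2.4\,$(1)$]{b:Li} (the set $(e_1+e_2,\pm e_3-e_2,e_2-e_1)$ appearing in Lemma~\ref{l:n.e}\,$(4)$) has $(I,c)=(-3,1)$, hence $\Phi=-2$, which is perfectly compatible with $\Phi(P_n)\leq -2$. So the potential function alone cannot exclude $k=3$. The paper's actual argument at this point is the most delicate part of the proof: assuming $k=3$ and $\Phi(P_n)<-1$, one first pins down $(I(P_n),c(P_n))=(-3,1)$ and hence $I(P_i)=-3$ for every $i$; one then locates the unique step $P_{i+1}\searrow P_i$ at which the trivalent vertex disappears, observes that $p_1(P_i)=1$ by \cite[Proposition~6.1]{b:Li} while $p_1(P_{i+1})=0$ by Proposition~\ref{p:p1>0_final} (since $I(P_{i+1})\neq -4$), and derives a contradiction by analyzing the set $E_\ell$ for the index $\ell$ with $|E_\ell(P_i)|=1$. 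This is the content recorded in Remark~\ref{r:out}, and it must be supplied; without it your argument proves only the first sentence of $(2)$.
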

\begin{proof}
We take as sequence of contractions, $P_n\searrow P_{n-1}\searrow\cdots\searrow P_k$, of good sets with no bad components of any type, the one given in Proposition~\ref{p:coef.}, which satisfies by construction $I(P_{i+1})\geq I(P_i)$, for every $i=k,k+1,...,n-1$. Given a contraction $P_{i+1}\searrow P_i$ in the sequence, we analyze the numbers $c(P_{i+1})$ and $c(P_i)$. Since $-1>I(P_n)\geq I(P_{i+1})$, then by Lemma~\ref{l:lisca_1}, inequality \eqref{e:p_i} holds and we deduce that either $p_1(P_{i+1})>0$ or $p_2(P_{i+1})>0$. If $p_1(P_{i+1})>0$, then we use Proposition~\ref{p:p1>0_final} to build $P_i$ and the equality $(I(P_{i+1}),c(P_{i+1}))=(I(P_i),c(P_i))$  as well as $(1)$ follow. Moreover, in this case $(2)$ follows from $(1)$ because $P_5$ is given, up to replacing $P_5$ with $\Omega P_5$ where $\Omega\in\Upsilon_{P_5}$, by Lemma~\ref{l:base}\,$(1)$. If $P_{i+1}$ has two complementary legs, then $P_i$ is built using Lemma~\ref{l:cl} and Lemma~\ref{l:cl}\,$(4)$ gives the claim. 

From now on we shall assume that $p_1(P_{i+1})=0$, $p_2(P_{i+1})>0$ and that in $P_{i+1}$ there are no complementary legs. Under these assumptions there exist $j\in\{1,...,n\}$ and $(s,\alpha),(t,\beta)\in J$ such that $E_j(P_{i+1})=\{(s,\alpha),(t,\beta)\}$. If $a_{s,\alpha},a_{t,\beta}>2$ the hypothesis of Lemma~\ref{l:red} are satisfied and, since in $P_{i+1}$ there are no complementary legs, either Lemma~\ref{l:red}\,$(1)$ or Lemma~\ref{l:red}\,$(2)$ hold. If Lemma~\ref{l:red}\,$(2)$ holds, then so does Lemma~\ref{l:cases}\,$(4)$. But this is impossible since, by Proposition~\ref{p:coef.}, we have $|v\cdot e_h|\leq 1$ for every $v\in P_{i+1}$ and every $h\in\{1,...,n\}$. On the other hand, if the conclusion of Lemma~\ref{l:red}\,$(1)$ holds then the good set without bad components
$P_i=(P_{i+1}\setminus\{v_{s,\alpha},v_{t,\beta}\})\cup\{\pi_{e_j}(v_{t,\beta})\}$
clearly satisfies $I(P_{i+1})-1\geq I(P_i)$ and $c(P_{i+1})+1\geq c(P_i)$. 

Notice that there is only one possibility left, namely that for each $j\in\{1,...,n\}$ and each $(s,\alpha),(t,\beta)\in J$ such that $E_j(P_{i+1})=\{(s,\alpha),(t,\beta)\}$ we have either $a_{s,\alpha}=2$ or $a_{s,\beta}=2$. By \cite[Lemma~4.4]{b:Li}, for at least one choice of $j$, $(s,\alpha)$, $(t,\beta)$ we have $a_{s,\alpha}=2$ and either $v_{s,\alpha}$ is not internal or $v_{s,\alpha}\cdot v_{t,\beta}=1$. Therefore, since $P_{i+1}$ has no bad components and no complementary legs, the conclusion of either Lemma~\ref{l:cases}\,$(3)$ or Lemma~\ref{l:cases}\,$(4)$ holds. But, as we pointed out above, Lemma~\ref{l:cases}\,$(4)$ contradicts Proposition~\ref{p:coef.}\,$(3)$, therefore Lemma~\ref{l:cases}\,$(3)$ must hold. Thus, since $|v_{t,\beta}\cdot e_j|=1$ and $v_{s,\alpha}$ is not internal, the good set with no bad components $P_i$ satisfies $(I(P_{i+1}),c(P_{i+1}))=(I(P_i),c(P_i))$.

In order to conclude we must prove $(2)$. If $I(P_n)+c(P_n)\leq -1$ we have the following inequalities
\begin{equation}\label{a}
\begin{split}
I(P_k)+c(P_k)\leq I(P_{k+1})+c(P_{k+1})\leq\cdots\leq I(P_n)+c(P_n)\leq -1.
\end{split}
\end{equation}
If $k=5$ it follows from Lemma~\ref{l:base} that, up to replacing $P_5$ with $\Omega P_5$ where $\Omega\in\Upsilon_{P_5}$, $P_5$ must be either of type Lemma~\ref{l:base}\,$(1)$ or Lemma~\ref{l:base}\,$(2)$. Inequalities \eqref{a} imply that if $I(P_n)+c(P_n)< -1$ then $I(P_k)+c(P_k)\leq -2$. Hence, $P_5$ is given, up to replacing $P_5$ with $\Omega P_5$ where $\Omega\in\Upsilon_{P_5}$, by Lemma~\ref{l:base}\,$(1)$.

Finally, let us assume by contradiction $I(P_n)+c(P_n)<-1$ and $k=3$. It follows from \cite[Lemma~2.4]{b:Li} and inequalities \eqref{a} that, up to replacing $P_3$ with $\Omega P_3$ where $\Omega\in\Upsilon_{P_3}$, $P_3$ is given by \cite[Lemma~2.4(1)]{b:Li}, which satisfies $(I(P_3),c(P_3))=(-3,1)$ and therefore, $(I(P_n),c(P_n))=(-3,1)$. Since $P_n$ has a trivalent vertex while $P_3$ is a linear set, there must be an index $i\in\{3,...,n-1\}$ such that in the contraction $P_{i+1}\searrow P_i$, where
$$P_i=(P_{i+1}\setminus\{v_{1,\alpha},v_{t,\beta}\})\cup\{\pi_{e_j}(v_{t,\beta})\},\s j\in\{1,...,n\},\ \ (1,\alpha),(t,\beta)\in J(P_{i+1}),$$
the set $P_{i+1}$ is three-legged while $P_i$ is a linear set. Since $I(P_i)=-3$, then by \cite[Proposition~6.1]{b:Li} we have $p_1(P_i)=1$ and since $I(P_{i+1})\neq -4$, by Proposition~\ref{p:p1>0_final} we know $p_1(P_{i+1})=0$.  Let us denote $\ell$ the only index in $\{1,...,n\}$ that satisfies $|E_\ell(P_i)|=1$ and clearly $|E_\ell(P_{i+1})|=2$. If $v_{t,\beta}\cdot v_{1,\alpha}=0$ then $E_\ell(P_{i+1})=\{(1,\alpha),(t,\beta)\}$ and this contradicts $v_{1,\alpha}\cdot v_0=1$. On the other hand, if $v_{t,\beta}\cdot v_{1,\alpha}=1$, and therefore $v_{t,\beta}=v_0$, then we have necessarily $|E_\ell(P_{i+1})|=1$ which is a contradiction. Therefore, we conclude that if $I(P_n)+c(P_n)< -1$, the sequence of contractions $P_n\searrow\cdots\searrow P_k$ must end with $k=5$.
\end{proof} 

\begin{rem}\label{r:out}
In the last paragraph of the proof of Proposition~\ref{p:clave} we have proved that we cannot have a contraction of standard sets $P_{i+1}\searrow P_i$ such that $P_{i+1}$ has a trivalent vertex, $I(P_{i+1})\neq -4$ and $P_i$ is a linear set with $I(P_i)=-3$.
\end{rem}

\section{Proof of Theorems~\ref{l:strings1} and \ref{l:2}}\label{s:ss}

In this section we specialize the analysis done in Sections~\ref{s:contractions} to \ref{s:clas} to the case of standard subsets with $I<-1$ and we finally prove Theorems~\ref{l:strings1} and \ref{l:2}.

The graph of a standard set with a trivalent vertex has at least $4$ vertices. The following lemma justifies the fact that in most of the statements of Section~\ref{s:clas} we have assumed $n\geq 5$. 

\begin{lem}\label{l:no4}
In $\Z^4=\abra{e_1,e_2,e_3,e_4}$ there are no standard subsets $P$ with a trivalent vertex and $I(P)<-1$. 
\end{lem}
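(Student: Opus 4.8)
The plan is to reduce the statement to a short, finite determinant check. First I would invoke Lemma~\ref{l:ind}: a standard subset is good, so its vectors are linearly independent over $\Z$. A standard subset with a trivalent vertex consists of the central vertex together with three legs, so its cardinality is $1+n_1+n_2+n_3\geq 4$; but $\Z^4$ cannot contain more than four linearly independent vectors, hence $|P|=4$ and each leg has length exactly one. Thus $P=\{v_0,v_{1,1},v_{1,2},v_{1,3}\}$, with $v_0$ the central vertex joined to the three final vectors, and (taking all $\gamma_{s,\alpha}=1$ as for standard sets) the incidence matrix has the arrow form
\[
Q_P=\begin{pmatrix} -a_0 & 1 & 1 & 1\\ 1 & -a_{1,1} & 0 & 0\\ 1 & 0 & -a_{1,2} & 0\\ 1 & 0 & 0 & -a_{1,3}\end{pmatrix},
\qquad a_0\geq 3,\ a_{1,\alpha}\geq 2.
\]

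Next I would enumerate the admissible weights. Writing $I(P)=(a_0-3)+\sum_{\alpha=1}^3(a_{1,\alpha}-3)$, the constraints $a_0-3\geq 0$, $a_{1,\alpha}-3\geq -1$ together with $I(P)\leq -2$ force $\sum_{\alpha}(a_{1,\alpha}-3)\in\{-2,-3\}$. This leaves exactly three possibilities, up to permuting the legs: $(a_0;a_{1,1},a_{1,2},a_{1,3})=(3;2,2,2)$, $(4;2,2,2)$ and $(3;3,2,2)$.

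The key step is a square obstruction. As in the proof of Lemma~\ref{l:ind}, let $M$ be the integer matrix whose rows are the coordinates of the four vectors of $P$, so that $Q_P=-MM^t$. Since $P\subseteq\Z^4$ has rank $4$, the matrix $M$ is a genuine $4\times 4$ integer matrix, and therefore $\det Q_P=(-1)^4(\det M)^2=(\det M)^2$ is forced to be a perfect square. On the other hand, expanding the arrow matrix gives
\[
\det Q_P=a_0\,a_{1,1}a_{1,2}a_{1,3}-\bigl(a_{1,2}a_{1,3}+a_{1,1}a_{1,3}+a_{1,1}a_{1,2}\bigr),
\]
which evaluates to $12$, $20$ and $20$ in the three cases above (the same values may be read off from \eqref{e:det}). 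Since neither $12$ nor $20$ is a perfect square, each case yields a contradiction, so no such $P$ exists.

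I expect the only genuine content to be the recognition that $\det Q_P$ must be a perfect square in rank four; once this is in place, the argument collapses to a one-line finite check. The point that requires care is the very first reduction—using Lemma~\ref{l:ind} to conclude that all three legs have length one—since it is precisely this that pins $M$ down to a fixed $4\times 4$ matrix and allows the determinant identity $\det Q_P=(\det M)^2$ to be applied, rather than to some a priori larger configuration.
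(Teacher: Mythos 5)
Your argument is correct, but it proves the lemma by a different mechanism than the paper. The paper's proof is a hands-on analysis of the supports $V_{v}$: for $a_0=4$ it shows the three $(-2)$-vectors would force $V_{v_0}=\{1,2,3,4\}$ with each $|V_{v_{1,i}}\cap V_{v_0}|=1$, which is incompatible with their mutual orthogonality; for $a_0=3$ it shows two of the $(-2)$-legs must share the index $4\notin V_{v_0}$, leaving no room for the third leg. Your route instead reduces everything to the discriminant obstruction: since $|P|=4$ in ambient rank $4$, the coordinate matrix $M$ is square, $\det Q_P=\det(-MM^t)=(\det M)^2$ must be a perfect square, and the three admissible weight vectors $(3;2,2,2)$, $(4;2,2,2)$, $(3;3,2,2)$ give $\det Q_P=12,20,20$ (I checked these against \eqref{e:det}), none of which is a square. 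Your preliminary reductions are also sound: Lemma~\ref{l:ind} pins $|P|=4$ and forces all three legs to have length one, and the inequality $I(P)\leq -2$ with $a_0\geq 3$, $a_{1,\alpha}\geq 2$ does leave exactly those three weight configurations. The trade-off is that your argument is more systematic — it is the standard full-rank lattice-embedding obstruction, and it would generalize mechanically to other small cases — whereas the paper's support analysis is more elementary and is of a piece with the combinatorial techniques used throughout Sections~\ref{s:contractions}--\ref{s:ss}; both are complete finite checks.
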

\begin{proof}
Suppose by contradiction that there exists a standard subset $P\subseteq Z^4$ with a trivalent vertex and such that $I(P)<-1$. Since $P$ is standard, we have $a_0\geq 3$ and the condition $I(P)<-1$ forces $a_0\leq 4$. Assume first that $a_0=4$, which implies, since $I(P)<-1$, that $a_{1,1}=a_{1,2}=a_{1,3}=2$. It follows that, in order to have $v_{1,i}\cdot v_0=1$ for $i\in\{1,2,3\}$, both $|V_{v_{i,1}}\cap V_{v_0}|=1$ and $V_{v_0}=\{1,2,3,4\}$ must hold, a contradiction.   

We deal now with the case $a_0=3$, which together with the condition $I(P)<-1$ implies that at least two among $a_{1,1},a_{1,2}$ and $a_{1,3}$ are equal to $2$. Assume, without loss of generality, $a_{1,1}=a_{1,2}=2$ and $V_{v_0}=\{1,2,3\}$. It follows that $V_{v_{1,1}}=V_{v_{1,2}}$ and that $4\in V_{v_{1,1}}\setminus V_{v_0}$. Since $v_{1,3}\cdot v_{1,1}=v_{1,3}\cdot v_{1,2}=0$, we have $V_{v_{1,3}}\cap V_{v_{1,1}}=\emptyset$ and therefore $|V_{v_{1,3}}|\leq 2$. This last inequality is incompatible with $v_{1,3}\cdot v_0=1$ and $a_{1,3}\geq 2$. Thus, the statement follows.
\end{proof}

\begin{thm}\label{t:todo}
Let $n\geq 5$ and let $P_n\subseteq\Z^n$ be a standard subset such that $I(P_n)<-1$. Then, $I(P_n)\in\{-4,-3,-2\}$ and there is a sequence of contractions
$$P_n\searrow\cdots\searrow P_{k+1}\searrow P_k,$$
where either $k=3$ or $k=5$, such that, for every $i=k,k+1,...,n-1$, the set $P_i$ is standard and $I(P_{i+1})\geq I(P_i)$.
\end{thm}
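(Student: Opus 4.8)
The plan is to deduce the statement almost entirely from Proposition~\ref{p:clave} and Proposition~\ref{p:coef.}, the only genuinely new point being the upgrade from \emph{good} to \emph{standard} (i.e.\ connected) along the whole sequence. Since $P_n$ is standard it is good, connected, has a trivalent vertex and satisfies $I(P_n)<-1$; by Lemma~\ref{l:st.sin.bad} it has no bad components of any type. Hence Proposition~\ref{p:coef.}\,$(2)$ applies and gives $I(P_n)\in\{-4,-3,-2\}$, while Proposition~\ref{p:clave} produces a sequence of contractions $P_n\searrow\cdots\searrow P_k$, $k\in\{3,5\}$, of good sets with no bad components for which at each step either $(I(P_{i+1}),c(P_{i+1}))=(I(P_i),c(P_i))$ or $I(P_{i+1})-1\ge I(P_i)$. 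In both alternatives $I(P_{i+1})\ge I(P_i)$, so the required monotonicity of $I$ is automatic, and the base sets $P_k$ (the graphs of Lemma~\ref{l:base}\,$(1)$--$(2)$ or the set of \cite[Lemma~2.4(1)]{b:Li}) are connected, hence standard.

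It remains to show that the sequence can be chosen so that every $P_i$ is connected, and I would do this by downward induction, revisiting the four cases that occur in the construction of the sequence in Proposition~\ref{p:clave}. If $p_1(P_{i+1})>0$ then Proposition~\ref{p:p1>0_final} yields a tail $P_{i+1}\searrow\cdots\searrow P_5$ consisting of standard sets; if $P_{i+1}$ has two complementary legs then Lemma~\ref{l:cl}\,$(5)$ does the same; and if $P_{i+1}$ has $p_1=0$ and no complementary legs but the contraction is of the ``equal'' type of Lemma~\ref{l:cases}\,$(3)$, then $c(P_i)=c(P_{i+1})=1$, so $P_i$ is again connected. In each of these situations connectedness, hence standardness, propagates with no extra work.

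The one delicate case is the ``strict'' contraction coming from Lemma~\ref{l:red}\,$(1)$, where a priori $c(P_i)\le c(P_{i+1})+1$ and the graph could break into two components. The key observation is that the underlying graph of a connected standard set is a tree, so deleting an \emph{internal} vertex always disconnects it; moreover the projection cannot reconnect the pieces, because $E_i(P_{i+1})=\{(s,\alpha),(t,\beta)\}$ forces $u\cdot e_i=0$ for every vector $u$ other than $v_{s,\alpha},v_{t,\beta}$, whence $\pi_{e_i}(v_{t,\beta})\cdot u=v_{t,\beta}\cdot u$ and no new edges are created. Thus connectedness is preserved precisely when the deleted vector is \emph{final}. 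I therefore expect the main obstacle to be showing that, for a connected $P_{i+1}$, the contraction of Lemma~\ref{l:red}\,$(1)$ can always be realised by removing a final vector: one uses the freedom of interchanging the roles of $v_{s,\alpha}$ and $v_{t,\beta}$ together with the internal-vertex analysis carried out inside the proof of Lemma~\ref{l:red} (where a deleted internal, non-central vector is shown to force either a linear or a three-legged bad component, contradicting the hypotheses) to exclude the possibility that both contracted vectors are internal. Remark~\ref{r:out} rules out the degenerate end behaviour, and once the deleted vector is final the resulting $P_i$ is good (Lemma~\ref{l:bad} disposes of any incidental bad component) and connected, completing the induction and the proof.
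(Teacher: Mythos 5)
Your reduction of the problem is sound up to the last paragraph: Proposition~\ref{p:coef.}\,$(2)$ does give $I(P_n)\in\{-4,-3,-2\}$, and the cases $p_1>0$, complementary legs, and the $(I,c)$-preserving contraction of Lemma~\ref{l:cases}\,$(3)$ do propagate connectedness exactly as you say. Your observation that the projection creates no new edges (since $E_i(P_{i+1})=\{(s,\alpha),(t,\beta)\}$ forces $\pi_{e_i}(v_{t,\beta})\cdot u=v_{t,\beta}\cdot u$ for all other $u$), so that connectedness survives precisely when the deleted vector is a leaf, is also correct. But the step you yourself flag as ``the main obstacle'' --- that the contraction of Lemma~\ref{l:red}\,$(1)$ can always be realised by removing a \emph{final} vector --- is left unproven, and the ingredients you cite do not deliver it. The ``internal-vertex analysis'' inside the proof of Lemma~\ref{l:red} lives in the sub-branch where $P\setminus\{v_{s,\alpha}\}$ is \emph{reducible} and $|V_{v_{s,\alpha}}\cap V_{P_1}|=1$; it says nothing about the generic situation in which $P'$ is already good, and conclusion $(1)$ of that lemma explicitly allows $c(P')=c(P)+1$, i.e.\ a disconnecting contraction (recall that ``good'' requires irreducibility in the linking sense, not connectedness of the plumbing graph). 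Interchanging the roles of $v_{s,\alpha}$ and $v_{t,\beta}$ only helps if you can first exclude that both are internal, and no lemma in the paper does that directly. So as written the argument does not close.

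The paper closes this gap by an entirely different mechanism, which you should adopt or reconstruct: it argues by induction on $n$ and only needs to control the \emph{first} contraction $P_n\searrow P_{n-1}$. Using the bookkeeping of Proposition~\ref{p:clave} (at each step either $(I,c)$ is preserved, or $I$ drops by at least $1$ while $c$ grows by at most $1$) one sees $c(P_{n-1})\le 2$; if $I(P_n)\in\{-4,-3\}$ then $I(P_n)+c(P_n)<-1$ forces the sequence to end at the connected set of Lemma~\ref{l:base}\,$(1)$ and $c\equiv 1$ throughout, while if $I(P_n)=-2$ and the sequence ends at $P_3$, the hypothesis $c(P_{n-1})=2$ would force $(I(P_3),c(P_3))$ into the list $\{(-4,2),(-4,1),(-4,3),(-3,2)\}$, every entry of which contradicts the classification of good subsets of $\Z^3$ in \cite[Lemma~2.4]{b:Li}. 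Hence $c(P_{n-1})=1$, $P_{n-1}$ is standard, and the induction hypothesis finishes the proof. In short: connectedness of $P_{n-1}$ is obtained indirectly from the terminal classification, not by showing the contracted vector is final; that is the missing idea in your proposal.
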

\begin{proof}
We argue by induction on $n\geq 5$. If $n=5$ the theorem follows immediately from Lemma~\ref{l:base}, so let us assume that $n>5$ and that the statement holds true for sets of cardinality between $5$ and $n-1$. By Proposition~\ref{p:clave} we have $I(P_n)\in\{-4,-3,-2\}$ and there is a sequence of contractions
$P_n\searrow\cdots\searrow P_{k+1}\searrow P_k$, with $k\in\{3,5\}$, such that for every $i=k,...,n-1$, $P_i$ is good, it has no bad components of any type, and we have either
\begin{align}\label{e:igual}
(I(P_{i+1}),c(P_{i+1}))=(I(P_i),c(P_i)),\quad\mbox{or}
\end{align}
\begin{align}\label{e:inec}
I(P_{i+1})-1\geq I(P_i)\ \ \mbox{and}\ \ c(P_{i+1})+1\geq c(P_i).
\end{align}

If $I(P_n)=-4$ or $I(P_n)=-3$, since by assumption $c(P_n)=1$, we have $I(P_n)+c(P_n)<-1$ and hence, by Proposition~\ref{p:clave}\,$(2)$, the sequence of contractions finishes with a set $P_5$ which satisfies $(I(P_5),c(P_5))=(-4,1)$. Then, \eqref{e:igual} and \eqref{e:inec} force $c(P_i)=1$ for every $i=5,...,n-1$ and the statement follows in this case.

Now assume $I(P_n)=-2$. If the sequence of contractions ends with $P_5$, then, by Lemma~\ref{l:base}, we know that in $P_5$ there are two complementary legs and therefore, by Lemma~\ref{l:cl2}, $P_n$ has two complementary legs too. Since $P_n$ is standard we obtain the claim by Lemma~\ref{l:cl}\,$(5)$. Therefore we may assume that the sequence of contractions ends with $P_3$. By \eqref{e:igual} and \eqref{e:inec} we have $c(P_{n-1})\leq 2$. If $c(P_{n-1})=1$ we can apply the induction hypothesis and immediately obtain the result. Therefore, we may assume $(I(P_{n-1}),c(P_{n-1}))\in\{(-3,2),(-4,2)\}$. On the one hand, if $(I(P_{n-1}),c(P_{n-1}))=(-4,2)$ then, by \eqref{e:igual} and \eqref{e:inec}, it follows that $(I(P_3),c(P_3))=(-4,2)$, which contradicts \cite[Lemma~2.4]{b:Li}. On the other hand, if $I(P_{n-1}),c(P_{n-1}))=(-3,2)$ it follows, again by \eqref{e:igual} and \eqref{e:inec}, that 
$(I(P_3),c(P_3))\in\{(-4,2),(-4,1),(-4,3),(-3,2)\}.$
All these possibilities contradict  \cite[Lemma~2.4]{b:Li} and therefore we must have $c(P_{n-1})=1$ and the theorem is proved.
\end{proof}

In order to prove Theorems~\ref{l:strings1} and \ref{l:2} we will apply Theorem~\ref{t:todo} to identify the numbers $\{a_0,...,a_{n_3,3}\}$ corresponding to standard subsets $P\subseteq\Z^n$ with a trivalent vertex and $I(P)<-1$.

\begin{proof}[Proof of Theorem~\ref{l:strings1}]
Since $L_2$ and $L_3$ are complementary legs they have associated strings, $(a_{1,2},...,a_{n_2,2})$ and $(a_{1,3},...,a_{n_3,3})$, related to each other by Riemenschneider's point rule.

By Lemma~\ref{l:cl}\,$(2)$ we know that in the standard set $P$ we can define a linear standard subset as $S_{n_1+1}:=L_1\cup\{\tilde v_0\}\subseteq\Z^{n_1+1}$. Taking into account the definition of complementary legs, a straightforward computation gives $I(P)+1=I(S_{n_1+1})$. Since $P$ is a standard subset satisfying $I(P)<-1$, it follows from Theorem~\ref{t:todo} that $I(P)\in\{-4,-3,-2\}$ and hence $I(S_{n_1+1})\in\{-3,-2,-1\}$. The strings associated to standard linear subsets with $I(P)$ equal to $-3,-2$ or $-1$ are respectively described in Remark~\ref{r:lisca}(\ref{p:-3}), (\ref{p:-2}) and (\ref{p:-1}). Since $\tilde v_0\cdot\tilde v_0=v_0\cdot v_0+1$ a direct case by case analysis gives the lists in the statement.
\end{proof}

\begin{rem}\label{r:ult}
Observe that in Theorem~\ref{l:strings1} we have considered all possible standard subsets $P\subseteq\Z^n$ with a trivalent vertex and $I(P)\in\{-4,-3\}$. In fact, in these cases it holds $I(P)+c(P)<-1$ and therefore, by Proposition~\ref{p:clave}\,$(2)$, we know that there is a sequence of standard sets starting with $P$ and ending with the set in Lemma~\ref{l:base}\,$(1)$. This last set has two complementary legs and hence, by Lemma~\ref{l:cl2}, the set $P$ has, necessarily, two complementary legs too. Notice that, since $P\subseteq\Z^n$ is standard, Lemma~\ref{l:no4} guarantees $n\geq 5$ and hence the assumptions of Proposition~\ref{p:clave} are fulfilled.
\end{rem}

\begin{proof}[Proof of Theorem~\ref{l:2}]
Since $P_n$ is standard, by Lemma~\ref{l:no4}, we have $n\geq 5$. The assumption $P_n$ has no complementary legs implies, by Remark~\ref{r:ult},  that $I(P_n)=-2$. By Theorem~\ref{t:todo} there is a sequence of contractions of standard sets starting with $P_n$ and ending with $P_k$ where $k\in\{3,5\}$. If $k=5$, by Lemma~\ref{l:base}, we know that $P_5$ has two complementary legs. Moreover, Lemma~\ref{l:cl2} implies that in $P_n$ there are two complementary legs too, against the assumption in the statement. Therefore, $k=3$. By Theorem~\ref{t:todo} and \cite[Lemma~2.4]{b:Li} there is a sequence of contractions of standard sets, 
$P_n\searrow\cdots\searrow P_i\searrow P_{i-1}\cdots\searrow P_3$
with $I(P_3)=-3$. Therefore, for some $n\geq i>3$, we have $I(P_n)=\cdots=I(P_i)=-2$ and $I(P_{i-1})=\cdots=I(P_3)=-3$. By Remark~\ref{r:out}, we know that $P_i$ must be a linear set. Moreover, since $P_n$ has a trivalent vertex while $P_3$ is a linear set, there must be some $j\in\{i+1,...,n\}$ such that in the contraction
$P_{j-1}=(P_j\setminus\{v_{s,\alpha},v_{t,\beta}\})\cup\{\pi_{e_h}(v_{t,\beta})\}$
the set $P_j$ has a trivalent vertex and $P_{j-1}$ is a linear set. This implies that $s=1$ and $L_\alpha=\{v_{1,\alpha}\}$. Since $I(P_j)=I(P_{j-1})$, it holds $a_{1,\alpha}=2$. Let $V_{v_{1,\alpha}}=\{h,k\}$ and let $v_0\in P_j$ denote, as usual, the central vertex. Since $v_0\cdot v_{1,\alpha}=1$ then $|\{h,k\}\cap V_{v_0}|=1$. Since $I(P_j)\neq -4$ then, by Proposition~\ref{p:p1>0_final}, we have that $|E_k(P_j)|\geq 2$ and by definition of contraction we have that $E_h(P_j)=\{(1,\alpha),(t,\beta)\}$. These two conditions let us conclude that
$k\in V_{v_0}$ and $h\not\in V_{v_0}$. By definition of contraction we know $|E_h(P_j)|=2$ and therefore we necessarily have $E_k(P_j)=\{0,(1,\alpha),(t,\beta)\}$. Even if the subindexes have no meaning in the linear set $P_{j-1}$, let us call $\tilde v_0$ and $\tilde v_{t,\beta}$ the vectors in $P_{j-1}$ that correspond to the vectors $v_0$ and $v_{t,\beta}$ in $P_j$. 

Since $I(P_n)=\cdots=I(P_j)=-2$ and all these sets are standard, it follows that the set $P_n$ is obtained by final $(-2)$-vector expansions of the set $P_j$. Therefore, in order to establish the list of graphs in the statement we have to point out the standard linear graphs with $I=-2$ which can play the role of $P_{j-1}$. Once we determine $P_{j}$ we perform on it the $-2$ vector expansions. Recall that these expansions produce strings of numbers related to one another by Riemenschneider's point rule.

The possible strings associated to standard linear sets $P_{j-1}$ with $I(P_{j-1})=-2$ are listed in Remark~\ref{r:lisca}\,(\ref{p:-2}) above. Notice that in it, when considering $t=s=0$ for cases $(1)$ and $(2)$ and $k=1, b_{1}=2$ in case $(3)$ it yields the same graph $\bar P\subseteq\Z^{4}$, which, up to replacing $\bar P$ with $\Omega\bar P$ where $\Omega\in\Upsilon_{\bar P}$, is the following.
\begin{center}
\frag{e_1-e_2}{$e_1-e_{2}$}
\frag{e_2-e_3}{$e_2-e_{3}$}
\frag{-e_2-e_1+e_4}{$-e_2-e_{1}+e_{4}$}
\frag{e_2+e_3+e_4}{$e_2+e_{3}+e_{4}$}
\includegraphics{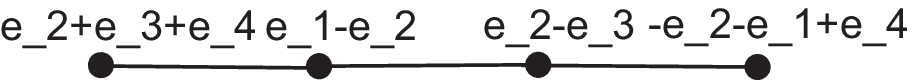}
\end{center}

If we expand $\bar P$ by final $-2$ vectors we obtain $(3)$ in Remark~\ref{r:lisca}\,(\ref{p:-2}). It is immediate to check that no graph in $(3)$ can play the role of $P_{j-1}$ described above. This is so because there is no pair of vectors $\tilde v_0,\tilde v_{t,\beta}$, such that $\tilde v_0$ is an internal vector with $a_{\tilde v_0}\geq 3$ and such that there exists $k$ with $E_k(P_{j-1})=\{0,(t,\beta)\}$.

If we expand $\bar P$ as described in $(1)$ in Remark~\ref{r:lisca}\,(\ref{p:-2}),  there are two possible pairs of vectors $\tilde v_0,\tilde v_{t,\beta}$ such that $\tilde v_0$ is an internal vector with $a_{\tilde v_0}\geq 3$ and such that there exists $k$ with $E_k(P_{j-1})=\{0,(t,\beta)\}$. In both pairs $\tilde v_{t,\beta}$ is final. By symmetry, these two pairs yield graph $(a)$ in the statement. The condition $s>0$ in the statement comes from the requirement $a_{\tilde v_0}\geq 3$ with $\tilde v_{0}\in P_{j-1}$.

Finally, if we expand $\bar P$ as described in $(2)$ in Remark~\ref{r:lisca}\,(\ref{p:-2}), there are  different pairs of vectors $\tilde v_0,\tilde v_{t,\beta}$ such that $\tilde v_0$ is an internal vector with $a_{\tilde v_0}\geq 3$ and such that there exists $k$ with $E_k(P_{j-1})=\{0,(t,\beta)\}$. Now, it is not difficult to check that these pairs yield to the graphs $(b),(c),(d)$ and $(e)$ in the statement.
\end{proof}


\newcommand{\paper}[8]
{\bibitem[#1]{#2} \textsc{#3}, \emph{#4}, #5 \textbf{#6} (#7), #8.}

\newcommand{\preprint}[6]
{\bibitem[#1]{#2} \textsc{#3}, \emph{#4}, preprint (#5), \texttt{#6}.}

\newcommand{\toappear}[6]
{\bibitem[#1]{#2} \textsc{#3}, \emph{#4},  preprint (#5), to appear in #6.}

\newcommand{\book}[5]
{\bibitem[#1]{#2} \textsc{#3}, \textbf{#4}, #5.}

\newcommand{\bookseries}[7]
{\bibitem[#1]{#2} \textsc{#3}, \textbf{#4}, #5 \textbf{#6}, #7.}

\newcommand{\chapt}[7]
{\bibitem[#1]{#2} \textsc{#3}, \emph{#4}, #5 (#6), #7.}

\footnotesize

\vspace{1cm}

\end{document}